\newcommand{\HH}{\CohomH}
\DeclareMathOperator{\M}{M}
\DeclareMathOperator{\U}{U}
\DeclareMathOperator{\SU}{SU}
\DeclareMathOperator{\PU}{PU}
\DeclareMathOperator{\SL}{SL}
\DeclareMathOperator{\GL}{GL}
\DeclareMathOperator{\PGL}{PGL}
\DeclareMathOperator{\SO}{SO}
\DeclareMathOperator{\End}{End}
\DeclareMathOperator{\Lie}{Lie}
\DeclareMathOperator{\rad}{rad}
\DeclareMathOperator{\Opp}{Opp}
\DeclareMathOperator{\Spec}{Spec}
\DeclareMathOperator{\Proj}{Proj}
\DeclareMathOperator{\Stab}{Stab}
\DeclareMathOperator{\Trans}{Trans}
\DeclareMathOperator{\Sym}{Sym}
\DeclareMathOperator{\id}{id}
\DeclareMathOperator{\rk}{rk}
\DeclareMathOperator{\CohomH}{H}
\DeclareMathOperator{\RealPart}{Re}
\DeclareMathOperator{\Tr}{Tr}
\DeclareMathOperator{\Gr}{Gr}
\DeclareMathOperator{\Fl}{Flag}
\DeclareMathOperator{\pr}{pr}
\DeclareMathOperator{\interior}{int}
\DeclareMathOperator{\Flag}{Flag}
\DeclareMathOperator{\Gal}{Gal}
\DeclareMathOperator{\Res}{Res}
\DeclareMathOperator{\gen}{gen}
\renewcommand{\ss}{\textup{ss}}
\newcommand{\s}{\textup{s}}
\renewcommand{\sp}{\textup{sp}}
\newcommand{\op}{\textup{op}}
\newcommand{\A}{\mathbb{A}}
\newcommand{\C}{\mathbb{C}}
\newcommand{\G}{\mathbb{G}}
\newcommand{\Gm}{\mathbb{G}_m}
\renewcommand{\H}{\mathbb{H}}
\renewcommand{\P}{\mathbb{P}}
\newcommand{\R}{\mathbb{R}}
\renewcommand{\SS}{\mathbb{S}}
\newcommand{\Z}{\mathbb{Z}}
\renewcommand{\O}{\mathcal{O}}
\renewcommand{\epsilon}{\varepsilon}
\newcommand{\df}{:=}
\newcommand{\too}{\longrightarrow}
\newcommand{\iso}{\simeq}
\newcommand{\ol}{\overline}
\renewcommand{\Re}{\RealPart}
\renewcommand{\int}{\interior}
\theoremstyle{plain}
\newtheorem{theorem}{Theorem}[section]
\newtheorem{lemma}[theorem]{Lemma}
\newtheorem{proposition}[theorem]{Proposition}
\newtheorem{corollary}[theorem]{Corollary}
\theoremstyle{definition}
\newtheorem{definition}[theorem]{Definition}
\newtheorem{example}[theorem]{Example}
\theoremstyle{remark}
\newtheorem{remark}[theorem]{Remark}
\numberwithin{equation}{subsection}
\begin{document}

\title[]{Configurations of flags in orbits of real forms}

\author[Elisha Falbel]{Elisha Falbel}
\author[Marco Maculan]{Marco Maculan}
\author[Giulia Sarfatti]{Giulia Sarfatti}

\address[Elisha Falbel, Marco Maculan]{Institut de Math\'ematiques de Jussieu, Universit\'e Pierre et Marie Curie, 4 place Jussieu, F-75252 Paris, France}
\email{\{elisha.falbel, marco.maculan\}@imj-prg.fr}

\address[Elisha Falbel]{INRIA, Ouragan, Paris-Rocquencourt}

\address[Giulia Sarfatti]{Dipartimento di Matematica e Informatica ``U. Dini'', Universit\`a di Firenze, Viale Morgagni 67/A, 50134 Firenze, Italy}
\email{sarfatti@math.unifi.it}

\thanks{The third author is partially supported by INDAM-GNSAGA, by the 2014 SIR grant {\em Analytic Aspects in Complex and Hypercomplex Geometry} and by Finanziamento Premiale FOE 2014 {\em Splines for accUrate NumeRics: adaptIve models for Simulation Environments} of the Italian Ministry of Education (MIUR). Part of this project has been developed while she was an INdAM-COFUND fellow at IMJ-PRG Paris}

\date{}

\begin{abstract}
In this paper we start the study of configurations of flags in closed orbits of real forms using mainly tools of GIT.   As an application, using cross ratio coordinates for generic configurations, we identify boundary unipotent representations
of the fundamental group of  the figure eight knot complement  into real forms of $\PGL(4,\C)$. 

\end{abstract}

\maketitle

\tableofcontents

\section{Introduction}

\subsection{Configurations of flags} Let $n\ge 1$ be a positive integer and $V = \C^n$. Given integers $1 \le d \le n-1$ and $r \ge 2$, what are the possible configurations of $r$ ordered sub-vector spaces of dimension $d$?

A precise way to formulate this question is to consider the Grassmannian $\Gr_d(V)$ of $d$-dimensional subspaces, which is a complex manifold, and study the quotient $\Gr_d(V)^r / \GL(V)$. Yet the quotient topology on $\Gr_d(V)^r / \GL(V)$ is wildly pathological.

The idea behind Geometric Invariant Theory (GIT) is to circumvent this obstacle by excluding some degenerate configurations. To describe the non-degenerate configurations, embed first the Grassmannian $\Gr_d(V)^r$ into the projective space $\P((\textstyle \bigwedge^d V)^{\otimes r})$ by composing the Pl\"ucker and the Segre embedding:
$$
\begin{tikzcd}[row sep=0]
\Gr_d(V)^r \ar[r, "\textup{Pl\"ucker}"] & \P(\textstyle \bigwedge^d V)^r \ar[r, "\textup{Segre}"]& \P((\textstyle \bigwedge^d V)^{\otimes r}) \\
(W_1, \dots, W_r) \ar[r, mapsto]  & (\bigwedge^d W_1, \dots, \bigwedge^d W_r) \ar[r, mapsto]&  \bigwedge^d W_1 \otimes \cdots \otimes \bigwedge^d W_r.
\end{tikzcd}
$$
Then, the non-degenerate configurations, called semi-stable, are defined as follows:

\begin{definition} A point $x \in \Gr_d(V)^r$ is said to be \emph{semi-stable} if there is a homogeneous polynomial  on $(\textstyle \bigwedge^d V)^{\otimes r}$ of positive degree which is invariant under the action of $\SL_n(\C)$ and does not vanish at $x$.
\end{definition}

The subset of semi-stable points $\Gr_d(V)^{r, \ss}$ is open and contains the closure of the orbits of its points. The set $Y$ of the closed orbits of $\Gr_d(V)^{r, \ss}$, which can be seen also as the quotient by the equivalence relation
$$ x \sim x' \Longleftrightarrow \textup{the closure of their orbits meet in $\Gr_d(V)^{r, \ss}$},$$
is a Hausdorff topological space. Even better, $Y$ inherits a structure of complex algebraic variety, even though it is often singular. 

A warning on notations: in the literature, both the symbols $\Gr_d(V)^{r, \ss} / \SL_n(\C)$ and  $\Gr_d(V)^{r, \ss} /\!\!/ \SL_n(\C)$ are used to indicate the quotient $Y$. Throughout this article, we will prefer the first, as the whole set of orbits in $\Gr_d(V)^{r, \ss}$ with its quotient topology will never be considered.

Understanding $Y$ for $n = 2$ and $d = 1$, \emph{i.e.} configurations of $r$ ordered points on the complex projective line $\P^1(\C)$, is a non trivial task already for $r = 5$  and for $r \ge 11$ a complete answer is not known\footnote{The quotient $Y / \mathfrak{S}_r$ is described by the ring of invariants of \emph{binary quantics}: it is the ring of polynomials in the variables $a_0, \dots, a_r$ that are invariant under the action of $\SL_2(\C)$ on $\C[a_0, \dots, a_r]$ defined by  $(g\cdot f)(x,y) = f(g^{-1}(x,y))$
where $g \in \SL_2(\C)$ and $f = \sum_i a_i x^{r-i} y^i$.}.

The question undertaken here is different in two directions. First, instead of a single subspace of $V$, we consider \emph{flags} of subspaces
$$ 0 \subsetneq W_1 \subsetneq W_2 \subsetneq \cdots \subsetneq W_N \subsetneq V.$$
Second, the sub-vector spaces have additional properties, like being totally isotropic with respect to a given hermitian form. The following example are studied in detail:
\begin{enumerate}
\item $r$ couples $(L_i, H_i)$ made of a vector line $L_i$ and a hyperplane $H_i$ containing it, with one of the following additional properties:
\begin{enumerate}
\item $L_i$ and $H_i$ are complexifications of real sub-vector spaces of $\R^n$;
\item $L_i$ is an isotropic line with respect to a hermitian form of signature $(1, n-1)$, and $H_i$ is its orthogonal.
\end{enumerate}
An explicit description is given for $r = 3,4$.
\item Four planes in $\C^4$, with one of the following additional properties:
\begin{enumerate}
\item the planes are complexifications of real sub-vector spaces of $\R^4$;
\item the planes are totally isotropic with respect to a hermitian form of signature $(2,2)$;
\item the planes correspond to quaternionic lines in the quaternionic plane.
\end{enumerate}
\end{enumerate}
The original motivation for this work was the general idea that from decorated triangulations of a manifold one obtains precise informations on representations of its fundamental group and eventually on geometric structures associated to it (see Section \ref{proposition:generic4toinvariants}).  A decorated triangulation of a manifold
is an assignment of a configuration of flags to each simplex of the triangulation.  The flags are thought to be attached to the vertices of the simplices and one imposes that the pairings between the simplices are compatible with the configurations of flags.  If the stabilizer of the face configurations 
for each simplex is trivial one can define a representation of the fundamental group of the triangulated manifold.  This was implemented in the case of surfaces in \cite{FG} and for 3-manifolds in \cite{BFG,GGZ,DGG}.  The most studied case consists of complete flags in $V$ acted upon by 
$\PGL(n,\C)$.  In the case of 3-manifolds, it was observed in certain cases (see \cite{F,FKR,FS}), that if the decoration has values in configurations in an orbit of a real form then the 3-manifold has a geometric structure whose holonomy coincides with the representation obtained from the decorated triangulation.  This justifies the search of decorated triangulations with values in configurations of real flags.

In computations, coordinates for generic configurations of flags \cite{FG,BFG,GGZ} were used to obtain a census of decorated triangulations (see \cite{FKR,C}).  We will describe these coordinates (Section \ref{section:coordiantesgeneric}) defined on an open set of the configuration space and obtain, in these coordinates, the projection onto the previous spaces of partial flags configurations (see Propositions \ref{propositionz>w} and \ref{proposition:generic4toinvariants}).  This allows us to write  equations for configurations to be 
in real orbits in the cases of $\PU(n,1)$ and all real forms of $\PGL(4,\C)$.  In the last section we explain the use of decorated triangulations to obtain representations with values in real forms. 
Finally, from the computation of boundary unipotent generically decorated representations of the fundamental group of the figure eight 
knot obtained in \cite{C}, we obtain that the only one of these representations decorated by configurations in a real orbit (for a real form of  $\PGL(4,\C)$) is a representation into $\SO(3,1)$ (see Proposition \ref{proposition:figureeight}). 

\subsection{The role of real GIT} Let  $G_0$ be a real form of $\SL_n(\C)$. The Cartan involution associated with $G_0$ allows to define a real algebraic group $G$ whose real points $G(\R)$ are naturally identified with $G_0$, and the complex points $G(\C)$ with $\SL_n(\C)$.

Given integers $0 < d_1\le  \dots \le d_N < n$, the group $G_0$ acts on the flag variety $\Fl_{d}(V)$ of flags of $V$ of type $d = (d_1, \dots, d_N)$. A theorem of J.  Wolf (\cite{W}) asserts that there exists a unique
closed orbit $F_0$ of $G_0$ in $\Fl_{d}(V)$, and 
\begin{equation} \label{eq:WolfsInequality}\dim_\R F_0 \ge \dim_\C \Fl_{d}(V). \end{equation}
Moreover, equality comes exactly when there is a flag $x \in \Flag_d(V)$ whose stabilizer $\Stab_{\GL(V)}(x)$ is stable under the Cartan involution defining $G_0$. If this is the case, the stabilizer $\Stab_{\GL(V)}(x)$ is the group of complex points of a parabolic subgroup $P$ of the real algebraic $G$, and the real projective variety $F:= G/P$ verifies $F(\R) = F_0$ and $F(\C) = \Flag_d(V)$.

The map induced between quotients,
$$ F_0^r / G_0 \too \Flag_d(V)^r / \SL_n(\C),$$
is in general far from being injective, and inequality \eqref{eq:WolfsInequality}  propagates to the quotient: there are dense open subsets $U_0 \subset F_0^r / G_0$ and $U \subset  \Flag_d(V)^r / \SL_n(\C)$ that are respectively a real and a complex manifold, and
$$ \dim_\R U_0 \ge \dim_\C U,$$
with equality precisely when $\dim_\R F_0 = \dim_\C \Fl_{d}(V)$.

In this paper we work under the hypothesis $\dim_\R F_0 = \dim_\C \Fl_{d}(V)$. As explained above, the closed orbit $F_0$ is the set of real points of a real projective variety $F$ whose complex points are identified with $\Flag_d(V)$. The problem of studying the quotient $F_0^r / G_0$ can be approached from the point of view of real Geometric Invariant Theory, as developed by Luna, Birkes, Richardson-Slodowy (\cite{LunaRealGIT}, \cite{LunaFonctionsDifferentiables}, \cite{Birkes}, \cite{RichardsonSlodowy}).

Let $X := F^r$. The set of semi-stable points of $X(\C) = \Flag_d(V)^r$ is stable under complex conjugation and defines a Zariski open subset $X^{\ss}$. Unwinding the definitions, a point $x$ of $X(\R) = F_0^r$ is semi-stable if it is so as a point of $\Flag_d(V)^r$ under the action of $\SL_n(\C)$. The GIT quotient of $X^\ss$ by $G$ inherits the structure of a real projective variety $Y$: the set of its complex points is the set of closed orbits of $\SL_n(\C)$ in $X^\ss(\C) = \Flag_d(V)^r$. 

However, the set of real points does not necessarily verify the analogous property for real points. To be more precise let us introduce the set $X^\ss(\R) / G(\R)$ of closed $G(\R)$-orbits in $X(\R)$: following Luna, since the set of all $G(\R)$-orbits is not so interesting, in this paper we reserve the symbol $X^\ss(\R) / G(\R)$ for the set of closed orbits. 

\begin{theorem} With the notations introduced above, the following holds:
\begin{enumerate}
\item for $x \in X^\ss(\R)$ there is a unique closed $G(\R)$-orbit contained in $$\ol{G(\R) \cdot x} \cap X^\ss(\R); $$
\item the set $X^\ss(\R) / G(\R)$, endowed with the quotient topology given by seeing it as the quotient by the equivalence relations
$$ x \sim x' \Longleftrightarrow \ol{G(\R) \cdot x} \cap \ol{G(\R) \cdot x'} \cap X^\ss(\R) \neq \emptyset,$$
is Hausdorff and compact;
\item the natural map  $\theta \colon X^\ss(\R) / G(\R) \too Y(\R) $ is proper, with finite fibres and open onto its image.
\end{enumerate}
\end{theorem}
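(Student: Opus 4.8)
The plan is to reduce everything to the corresponding statements over $\C$ together with the Kempf--Ness type description of closed orbits over $\R$ furnished by Richardson--Slodowy and Luna. First I would fix, once and for all, a maximal compact subgroup $K$ of $G(\C) = \SL_n(\C)$ that is stable under complex conjugation and compatible with the Cartan involution defining $G_0$; concretely $K = \SU(n)$ after conjugating, and $K \cap G(\R)$ is then a maximal compact of $G(\R) = G_0$. Since $X = F^r$ is a real projective variety, I would choose a $K$-invariant Hermitian metric on the ample line bundle coming from the Pl\"ucker--Segre embedding, whose underlying real structure is defined over $\R$, so that the associated moment map $\mu \colon X^{\ss}(\C) \to \Lie(K)^\vee$ intertwines complex conjugation with the conjugation on $\Lie(K)^\vee$ induced by the Cartan involution. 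The Kempf--Ness theorem then says that a point $x \in X^{\ss}(\C)$ has closed $\SL_n(\C)$-orbit if and only if its orbit meets $\mu^{-1}(0)$, and Richardson--Slodowy's real analogue says that for $x \in X^{\ss}(\R)$ the orbit $G(\R)\cdot x$ is closed in $X^{\ss}(\C)$ (equivalently in $X^{\ss}(\R)$, these being the same by a standard argument since $G(\R)$ has finite index in the real points of its Zariski closure and orbits are locally closed) if and only if $G(\R)\cdot x$ meets $\mu^{-1}(0) \cap X(\R)$, and any two points of $\mu^{-1}(0)\cap X(\R)$ in the same $G(\R)$-orbit lie in the same $(K\cap G(\R))$-orbit.

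With this in hand, part (1) follows exactly as in the complex case: given $x \in X^{\ss}(\R)$, the closure $\overline{G(\C)\cdot x}$ contains a unique closed $G(\C)$-orbit $O$, and $O$ is defined over $\R$ because $x$ and the whole construction are; then $O(\R)$ is non-empty — here one invokes the real Kempf--Ness picture to produce a real point of $O$ in $\overline{G(\R)\cdot x}$, using that $\mu^{-1}(0)\cap \overline{G(\R)\cdot x} \cap X(\R)$ is non-empty by a gradient-flow/limit argument on the real locus — and the minimal orbit among $G(\R)$-orbits in $O(\R)\cap X^{\ss}(\R)$ is the desired closed orbit. Uniqueness: two closed $G(\R)$-orbits in $\overline{G(\R)\cdot x}\cap X^{\ss}(\R)$ both meet $\mu^{-1}(0)\cap X(\R)$, hence both map into $O\cap\mu^{-1}(0)$, which is a single $K$-orbit; intersecting with $X(\R)$ and using the real Kempf--Ness uniqueness forces them to coincide. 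For part (2), Hausdorffness is formal from (1): the equivalence relation identifies $x$ and $x'$ exactly when they have the same associated closed orbit, and one checks the graph of $\sim$ is closed using properness of the $\mu$-picture; compactness follows because $X(\R) = F_0^r$ is compact (a closed orbit of $G_0$ in a flag variety, by Wolf's theorem, is compact, being an orbit of the compact group... more precisely $F_0$ is a closed real-analytic submanifold of the compact $\Flag_d(V)$, hence compact) and $X^{\ss}(\R)/G(\R)$ is a continuous image of a suitable compact slice, namely the image of $\mu^{-1}(0)\cap X(\R)$, which is closed in $X(\R)$ hence compact.

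Finally part (3): the map $\theta$ sends the closed $G(\R)$-orbit of $x$ to the closed $G(\C)$-orbit of $x$, which makes sense by part (1) and is well-defined on equivalence classes. Continuity is immediate; to see it is proper with finite fibres, describe both sides via moment maps: $Y(\R)$ is a quotient of $\mu^{-1}(0)\cap X(\R)$ by $K\cap G(\R)$, while $X^{\ss}(\R)/G(\R)$ is as well, but the fibre of $\theta$ over the class of $y$ consists of the $(K\cap G(\R))$-orbits inside $(K\cdot y)\cap X(\R) \cap \mu^{-1}(0)$, i.e. the set of real points of a single $K$-orbit modulo $K\cap G(\R)$ — and $(K\cdot y)(\R)/(K\cap G(\R))$ is finite by a classical finiteness theorem for the number of real forms / real orbits in a complex homogeneous space (Borel--Serre / Whitehead, or directly from the fact that $H^1(\Gal(\C/\R), \Stab_K(y))$ is finite since $\Stab_K(y)$ is a compact Lie group with finitely many components). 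Properness then follows by combining finiteness of fibres with compactness of the source (part (2)) and Hausdorffness of the target, and openness onto the image is a consequence of the local structure: near any point both quotients are, by Luna's slice theorem in the real-analytic category, modelled on $\Stab$-quotients of the same slice, so $\theta$ is locally a quotient of an open map.

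I expect the main obstacle to be establishing non-emptiness of $O(\R)$ together with the precise compatibility of the real Kempf--Ness correspondence with the one over $\C$ — that is, verifying that the moment map can be chosen defined over $\R$ so that $\mu^{-1}(0)\cap X(\R)$ simultaneously computes closed $G(\R)$-orbits and maps correctly to $O\cap\mu^{-1}(0)$. Once this compatibility is set up cleanly (invoking \cite{RichardsonSlodowy} and \cite{Birkes}), the three assertions are largely bookkeeping; the finiteness of the Galois cohomology set controlling the fibres of $\theta$ is the only other point needing care.
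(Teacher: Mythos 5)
Your overall strategy (minimal vectors / real Kempf--Ness, following Richardson--Slodowy) is a viable route and in fact overlaps with what the paper uses: Hausdorffness, local compactness and properness of $\theta$ are quoted from Richardson--Slodowy, and your finiteness-of-fibres argument is a moment-map-flavoured version of the paper's cleaner one, which identifies the fibre of $\theta$ over $\pi(x)$ with $T(\R)/G(\R) \iso \ker(\HH^1(\R,G_x)\to\HH^1(\R,G))$ for the algebraic stabilizer $G_x$, finite by Borel--Serre; your substitute, $\HH^1$ of the Galois involution with values in the compact stabilizer inside $K$, would need extra justification to be more than a heuristic. Likewise, for the existence of a real point on the unique closed complex orbit in $\ol{G(\C)\cdot x}$ (the heart of part (1)), you yourself flag it as the ``main obstacle'' and only gesture at a gradient-flow argument; the paper gets it directly from Kempf's instability theory (rational destabilizing one-parameter subgroups, Birkes--Kempf--Rousseau), so this gap is fillable by citation, but as written it is not filled.

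The genuine gap is the openness of $\theta$ onto its image. This is exactly the statement the authors say they could not locate in the literature and prove themselves, and your one-sentence justification (``both quotients are modelled on $\Stab$-quotients of the same slice, so $\theta$ is locally a quotient of an open map'') does not engage with the actual difficulty: the real points of a single closed complex orbit can split into several $G(\R)$-orbits, so a $G(\R)$-saturated open subset $U$ and its closed complement $F$ may have overlapping images under $\pi$, and the naive ``image of the closed complement is closed'' argument fails. The paper's proof handles this by reducing, via a Luna \'etale slice defined over $\R$ at a point $x$ with closed $G(\R)$-orbit, to the stabilizer action on the slice, where $\{x\}$ is the unique closed orbit over $p(x)$, so that $p(x)\notin p(F)$ and $p(U)$ contains a neighbourhood of $p(x)$ in $p(V(\R))$; openness is then transported back using the open-mapping property of \'etale morphisms on real points (Moret-Bailly). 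Without an argument of this kind (or an equivalent moment-map substitute), part (3) of the theorem is asserted rather than proved in your proposal.
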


The map $\theta$ is neither injective nor surjective in general, as the examples discussed in this text show. The openness of $\theta$ is proved in Section \ref{section:GIT} as we are unable to retrieve this statement from the literature.

\subsection{Statement of the results}

\subsubsection{Arrangements of flags line-hyperplane} In the first example we study configurations of flags of the type $(L, H)$ where $L$ is a line of $V$ and $H$ is a hyperplane containing it.

We consider the following two real forms of $\SL_n(\C)$ admitting a parabolic subgroup stabilizing such a flag:
\begin{itemize}
\item[(a)] the split form $\SL_n(\R)$;
\item[(b)] the special unitary group $\SU(1, n-1)$ with respect to a hermitian form on $V$ of signature $(1, n-1)$.
\end{itemize}

A $r$-tuple $\{ (L_i, H_i) \}_{i = 1, \dots, r}$ of flags line-hyperplane is semi-stable if and only if there exists a permutation $\sigma \in \mathfrak{S}_r$ (necessarily without fixed points) such that, for all $i = 1, \dots, r$, the line $L_i$ is not contained in $H_{\sigma(i)}$.

To a fixed-point free permutation $\sigma$, called also derangement, one associates an invariant $s_\sigma$ consisting in evaluating a linear form defining the hyperplane $H_{\sigma(i)}$ on a generator of the line $L_{i}$.

Through the invariants $s_\sigma$, the GIT quotient $Y_{n, r}$ of $\Fl_{1, n-1}(V)^{r, \ss}$, which is a variety of dimension $$r^2 - 3r +1 - \min \{ 0, n-r\}^2,$$ is embedded in the projective space $\P^{!r -1}(\C)$ where $!r$ is the number of derangements in $\mathfrak{S}_r$. Similarly to the factorial of a number, for $r \ge 3$ the number of derangements satisfies the recurrence relation
$$ !r = (r-1)(!(r-1) + !(r-2)).$$ 
In particular $!3 = 2$ (the only derangements being the two $3$-cycles) and $!4 = 9$.
\begin{itemize}
\item $Y_{n, 3}= \P^1(\C)$ and the quotient map is given by the triple ratio;
\item For $r = 4$ and $n \ge 4$, $Y_{n, 4}$ is the complete intersection in $\P^8(\C)$ of the quadrics
\begin{align*}
x_1 x_5 &= x_3 x_4, &
x_1 x_9 &= x_2 x_7, &
x_5 x_9 &= x_6 x_8.
\end{align*}
If $n = 3$ the quotient $Y_{3, 4}$ is a hyperplane section of the previous variety.
\end{itemize}
Let  $\{ (L_i, H_i)\}_{i = 1, \dots, r}$ be a $r$-tuple of flags. If $n \ge r$, it is said to be in \emph{general position} if the following properties are satisfied:
\begin{enumerate}
\item $\dim L_1 + \cdots + L_r = r$;
\item $\dim H_1 \cap \cdots \cap H_r = n-r$;
\item $L_i \cap H_j = 0$ whenever $i \neq j$;
\item $(L_1 + \cdots + L_r) \cap H_1 \cap \cdots \cap H_r = 0$.
\end{enumerate}
If $r > n$, the $r$-tuple is in general position if every subset of $n$ elements verifies the previous conditions. 

When $n = 3$ the flags of the type line-hyperplane are complete flags. The notion of ``general position'' then competes with that of ``generic configurations of flags'' of Goncharov. However, we show in Example \ref{ex:GenericVsGeneralPosition} (see also Proposition \ref{Prop:QuotientLineHyperplaneSingularFiber}) that the orbits of generic configurations are not necessarily closed. In this sense, configurations in general position are better behaved with respect to GIT. 

Consider the real form $G_0 = \SL_n(\R)$ and the associated real algebraic group $G = \SL_{n, \R}$. The set of real points of the associated flag variety $F$ is the set of $(L, H)$ where $L$ is a real line of $\R^n$ and $H$ is a real hyperplane containing it. Let $U$ be the open subset of $X = F^r$ made of configurations of flags in general position.
\begin{theorem} With the notations introduced above, the map
$$ \theta \colon U(\R) / \SL_n(\R) \too \P^{!r - 1}(\R),$$
induced by the invariants $s_\sigma$, is
\begin{itemize}
\item injective if $n$ is odd or $n > r$;
\item two-to-one if $n$ is even and $n \ge r$.
\end{itemize}
\end{theorem}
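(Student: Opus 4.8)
\emph{Plan and reduction to Galois cohomology.} I would compute each fibre of $\theta$ and identify it with a Galois cohomology set $H^1(\R,S)$ attached to the stabiliser $S$ of a configuration, in the spirit of real GIT. Fix $x\in U(\R)$ and let $x'\in U(\R)$ satisfy $\theta([x'])=\theta([x])$: then the tuples $(s_\sigma(x))_\sigma$ and $(s_\sigma(x'))_\sigma$ are proportional over $\R$, hence over $\C$, so $x$ and $x'$ have the same image in $Y_{n,r}\subset\P^{!r-1}(\C)$; since configurations in general position have closed $\SL_n(\C)$-orbits (so that $U(\C)/\SL_n(\C)$ embeds into $Y_{n,r}$), this forces $x'\in\SL_n(\C)\cdot x$. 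Because the point $[(s_\sigma)_\sigma]\in\P^{!r-1}$ is $\GL_n$-invariant, the whole orbit $O\df\SL_n\cdot x$ maps to $\theta([x])$, so the fibre of $\theta$ over $\theta([x])$ equals $O(\R)/\SL_n(\R)$. As $x$ is real, $O\iso\SL_n/S$ is defined over $\R$ with $S\df\Stab_{\SL_n}(x)$, and the exact sequence of pointed sets $\SL_n(\R)\to(\SL_n/S)(\R)\to H^1(\R,S)\to H^1(\R,\SL_n)$ identifies $O(\R)/\SL_n(\R)$ with $\ker\bigl(H^1(\R,S)\to H^1(\R,\SL_n)\bigr)$. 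Finally $H^1(\R,\SL_n)=1$ (from $1\to\SL_n\to\GL_n\xrightarrow{\det}\Gm\to1$, the surjectivity of $\det$ on real points, and $H^1(\R,\GL_n)=1$), so the fibre of $\theta$ over $\theta([x])$ is in bijection with $H^1(\R,S)$. Note that $S$ will depend only on the general position hypothesis, not on the chosen $x$, so the cardinality of the fibres is constant, which is what lets the alternative "injective / two-to-one'' make sense.

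\emph{The stabiliser of a configuration in general position.} Assume $r\ge3$ and first $n\ge r$. Set $W\df L_1+\cdots+L_r$ and $K\df H_1\cap\cdots\cap H_r$; conditions (1), (2), (4) say exactly that $\dim W=r$, $\dim K=n-r$ and $V=W\oplus K$, while (3) says $L_j\not\subset H_i$ for $j\neq i$. Any $g\in\GL(V)$ fixing all the $L_i$ and all the $H_i$ preserves $W=\sum_iL_i$ and $K=\bigcap_iH_i$, hence is block diagonal $g=g_W\oplus g_K$; since $K\subset H_i$ and $\dim(H_i\cap W)=r-1$ one has $H_i=(H_i\cap W)\oplus K$, so $g_K$ is arbitrary while $g_W$ must fix the $r$ lines $L_i$ (which form a basis of $W$) and the $r$ hyperplanes $H_i\cap W$. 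Writing $g_W$ as a diagonal matrix in the basis given by the $L_i$ and using (3) together with $r\ge3$ forces $g_W$ to be scalar. Hence $\Stab_{\GL(V)}(x)=\Gm\times\GL(K)$ and, $W$ and $K$ being defined over $\R$,
$$ S=\{(\lambda,g)\in\Gm\times\GL_{n-r}\ :\ \lambda^{r}\det g=1\} $$
as real algebraic groups. (If $r>n$, the same rigidity applied to $n$ among the flags gives that $S$ is the centre $\mu_n$ of $\SL_n$.)

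\emph{Computation of $H^1(\R,S)$ and conclusion.} From $1\to S\to\Gm\times\GL_{n-r}\xrightarrow{(\lambda,g)\mapsto\lambda^{r}\det g}\Gm\to1$ and $H^1(\R,\Gm)=H^1(\R,\GL_{n-r})=1$ we obtain $H^1(\R,S)=\R^\times/\operatorname{im}\bigl(\R^\times\times\GL_{n-r}(\R)\bigr)$. If $n>r$ then $n-r\ge1$ and $\det\colon\GL_{n-r}(\R)\to\R^\times$ is onto, so $H^1(\R,S)=1$ and $\theta$ is injective. If $n=r$ then $K=0$, $S=\mu_r$, and $H^1(\R,S)=\R^\times/(\R^\times)^{r}$, which is trivial when $r$ is odd (the $r$-th power map is bijective on $\R^\times$) and of order $2$ when $r$ is even (its image is $\R_{>0}$); the case $r>n$ is identical with $\mu_n$, giving $\R^\times/(\R^\times)^{n}$. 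Collecting the cases yields the stated alternative.

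\emph{Main obstacle.} The delicate point is the reduction step: one must be sure that on the general-position locus the complex invariants $s_\sigma$ separate $\SL_n(\C)$-orbits --- equivalently, that those orbits are closed, so that $U(\C)/\SL_n(\C)\hookrightarrow Y_{n,r}\hookrightarrow\P^{!r-1}(\C)$ --- and that passing to real points introduces no further correction term beyond $H^1(\R,S)$, which is exactly what the vanishing of $H^1(\R,\SL_n)$ guarantees. Once this is secured, the stabiliser computation and the cohomology computations are routine linear algebra and standard Galois cohomology.
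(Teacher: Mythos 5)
Your argument is correct and coincides with the paper's own proof of this statement (Proposition \ref{prop:GeneralRealSplitFormLineHyperplane}): there too the fibres of $\theta$ are identified, via Proposition \ref{Prop:RealPointsOrbitGaloisCohomology} together with $\HH^1(\R,\SL_n)=0$, with $\HH^1(\R,S)$ for the stabiliser $S$ computed in Theorem \ref{Thm:Quotient(1,3)flags}\,(5), and $\HH^1(\R,S)$ is then evaluated by Hilbert~90 (trivial for $n>r$, equal to $\HH^1(\R,\mu_n)$ for $n\le r$). Note only that your case analysis gives ``two-to-one'' precisely when $n$ is even and $n\le r$, which is the statement of Proposition \ref{prop:GeneralRealSplitFormLineHyperplane}; the inequality $n\ge r$ in the introduction's formulation is a misprint.
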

This lack of injectivity can be prevented by taking the quotient by the group $\GL_n(\R)$ instead of $\SL_n(\R)$.

Consider the special unitary group $\SU(1, n-1)$ with respect to a hermitian form $h$ of signature $(1, n-1)$. The unique closed orbit $F_0$ in $\Fl_{1, n-1}(V)$ is made of couples $(L, H)$ where $L$ is an isotropic line and $H$ its orthogonal: therefore $F_0$ can be identified with the set of isotropic lines in $V$. Consider the open subset $W \subset F_0^{r, \ss}$ made of $r$-tuples of lines $L_1, \dots, L_r$ generating a space of dimension $\ge 3$. 
\begin{theorem} The map $\theta \colon W / \SU(1, n-1) \to \P^{!r - 1}(\R)$ induced by the invariants $s_\sigma$ is injective.
\end{theorem}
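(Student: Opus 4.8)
The plan is to exhibit an explicit reconstruction of a configuration of isotropic lines from the invariants $s_\sigma$, up to the action of $\SU(1,n-1)$. Fix a hermitian form $h$ of signature $(1,n-1)$ on $V$, and for isotropic lines $L_i$ choose generators $v_i$. Since the linear form defining $H_{\sigma(i)}=L_{\sigma(i)}^\perp$ evaluated on $v_i$ is, up to scalar, $h(v_i,v_{\sigma(i)})$, the invariant $s_\sigma$ records the products $\prod_i h(v_i,v_{\sigma(i)})$ for each derangement $\sigma$; more precisely, in $\P^{!r-1}$ these are recorded projectively, and the individual entries $h(v_i,v_j)$ are themselves only well-defined up to the rescaling $v_i\mapsto \lambda_i v_i$ (with the constraint that the $v_i$ stay on the isotropic cone, which imposes no condition since $h(v_i,v_i)=0$ automatically). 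So the first step is to pin down the Gram-type matrix $M=(h(v_i,v_j))_{i,j}$ from the $s_\sigma$ up to the torus action $M\mapsto DM\bar D$ for $D$ diagonal; this is the standard fact that, on the open locus $W$ where enough of the $h(v_i,v_j)$ are nonzero, the off-diagonal entries of a matrix are determined up to such a rescaling by the monomials along derangements (indeed already by triple ratios / $2\times 2$ minors of $\log$-type). The hypothesis that $L_1,\dots,L_r$ span a space of dimension $\ge 3$ is exactly what guarantees that the relevant entries are nonzero and that the configuration is not contained in a line or point where the argument degenerates.

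Next I would invoke the uniqueness half of Witt's theorem: two families $(v_i)$ and $(v_i')$ in $V$ with the same Gram matrix $h(v_i,v_j)=h(v_i',v_j')$ differ by an element of the unitary group $\U(1,n-1)$ (Witt extension, since the hermitian form is nondegenerate and $n\ge 3$ gives enough room in the complement of the span). To land in $\SU(1,n-1)$ rather than $\U(1,n-1)$, I would use the freedom in the choice of generators $v_i$: rescaling one $v_i$ by a unit complex number of modulus one changes the isometry sending one family to the other by a diagonal unitary matrix, whose determinant can be adjusted to $1$; this is where the condition that the span has dimension $\ge 3$ again matters, so that we genuinely have a free coordinate to absorb the determinant. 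Combining these, configurations with the same image under $\theta$ are conjugate under $\SU(1,n-1)$, which is injectivity.

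The key steps, in order: (1) translate the invariants $s_\sigma$ into the statement that the off-diagonal part of the Gram matrix $M=(h(v_i,v_j))$ is determined up to $M\mapsto DM\bar D$ with $D$ diagonal with entries of modulus one — here one must check that on $W$ the derangement-monomials separate diagonal-conjugacy classes, using spanning dimension $\ge 3$ to ensure nonvanishing; (2) observe the diagonal entries vanish automatically (isotropy), so $M$ is recovered entirely up to this torus; (3) apply Witt's extension/cancellation theorem to get an element of $\U(1,n-1)$ carrying one chosen lift to the other; (4) correct the determinant to pass from $\U$ to $\SU$ using the remaining scaling freedom on the $v_i$. The main obstacle I anticipate is step (1): making precise \emph{which} $2\times 2$-minor-type combinations of the $h(v_i,v_j)$ are recovered from the $s_\sigma$ and checking that the spanning condition $\dim\langle L_1,\dots,L_r\rangle\ge 3$ suffices to reconstruct the full diagonal-conjugacy class of $M$ — essentially a combinatorial/linear-algebra lemma about when derangement monomials generate the character lattice of the diagonal torus acting on a matrix. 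The passage from $\U$ to $\SU$ (step 4) is routine once one has tracked the scalings carefully, but it does genuinely require $n\ge 3$, which is why the hypothesis appears.
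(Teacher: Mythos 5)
Your endgame (steps (2)--(4)) is essentially the paper's: one produces generators whose Gram matrices agree, applies Witt's extension theorem to get an element of $\U(1,n-1)$, and corrects the determinant. (Two small points there: the correction to $\SU$ is done simply by dividing the isometry by an $n$-th root of its determinant, which acts trivially on lines, so neither $n\ge 3$ nor the spanning hypothesis is needed for that step; and on $W$ repeated lines are allowed, so some entries $h(v_i,v_j)$ do vanish -- the spanning condition is not what guarantees nonvanishing.)

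The genuine gap is your step (1). The assertion that the projectivized derangement monomials determine the off-diagonal Gram matrix up to the constrained rescaling $M\mapsto \bar D M D$ is false as a statement about matrices, and it is exactly here that the geometry must enter. Already for $r=3$ the invariant data $[s_{(123)}:s_{(132)}]$ is the triple ratio $u^{2}$, where $u=\arg\bigl(h(v_1,v_2)h(v_2,v_3)h(v_3,v_1)\bigr)$ is the Cartan invariant, while the class of $M$ modulo $\bar D M D$ is $u$ itself; so the invariants recover $u$ only up to sign, and the two triples with $u=\pm i$ (collinear isotropic triples) are not $\SU(1,n-1)$-conjugate although all their $s_\sigma$ agree. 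What rescues injectivity is not combinatorics of derangement monomials but the signature: isotropy of $v_3$ forces $\Re(u)\le 0$, and $\dim(L_1+L_2+L_3)=3$ forces $\Re(u)<0$, so the square root is unambiguous. This is precisely the paper's Proposition on triples of isotropic lines, and its generalization -- showing that the comparison ratios $\lambda_{ijk}(v,v')$ are independent of $(i,j)$, real, and \emph{positive} whenever $L_i,L_j,L_k$ span a $3$-dimensional space -- is the heart of the proof (Lemmas \ref{lem:ExistenceOfIsometry} and \ref{lemma:IndependencyTriratioHermitian}); the hypothesis defining $W$ is used exactly to produce such a triple. Your proposal also leaves untreated the global real (possibly negative) scalar ambiguity coming from $\P^{!r-1}(\R)$ and the presence of zero entries from repeated lines; the paper sidesteps both by first using real GIT (closed $G(\R)$-orbits with the same image lie in a single closed $\SL_n(\C)$-orbit) and then comparing honest ratios $s_\sigma/s_\tau$ on sub-triples and sub-quadruples, choosing derangements that avoid the vanishing entries. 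Without an argument supplying the positivity input, your reconstruction of the Gram matrix up to the torus $\bar D M D$ does not follow, so the proof as proposed is incomplete.
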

As the case $r = 3$ shows, the map $\theta$ on the complement of $W$ is not injective, so that the preceding theorem is sharp. Nonetheless, a more precise statement is proved when $r$ is odd. A $r$-tuple $x = (L_1, \dots, L_r)$ of isotropic lines is seen to be semi-stable if and only if none of them is repeated more than $r/2$ times. Pick for every line $L_i$ a generator $v_i$ and, for a derangement $\sigma$, set
$$ \epsilon_\sigma(x) = \prod_{i = 1}^r h( v_i, v_{\sigma(i)} ).$$
Note that $\epsilon_{\sigma}(x)$ is the complex conjugated of $\epsilon_{\sigma^{-1}}(x)$. Let $D$ be a subset of the derangements of $\mathfrak{S}_r$ with $!r/2$ elements and the property: $ \sigma \in D \Rightarrow \sigma^{-1} \not \in D$.
\begin{theorem} With the notations introduced above, the map 
$$\epsilon \colon F_0^{r, \ss}(\R) / \SU(1, n-1) \too (\C^{!r/2} \smallsetminus \{ 0 \}) / \R_{>0},$$ defined by sending a point $F_0^{r, \ss}(\R)$ to the open ray generated by $(\epsilon_\sigma(x))_{\sigma \in D}$, is injective.
\end{theorem}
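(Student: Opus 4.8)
The plan is to turn the statement into a reconstruction problem for Hermitian Gram matrices, and then to split the argument according to the dimension of the span of the lines, the borderline (and genuinely new) case being that of lines spanning a plane.

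Fix generators $v_i$ of $L_i$ and record the Gram matrix $G(x)=(h(v_i,v_j))_{i,j}$. Since the $L_i$ are isotropic, $G(x)$ is Hermitian with vanishing diagonal, and for a derangement $\sigma$ the number $\epsilon_\sigma(x)=\prod_i h(v_i,v_{\sigma(i)})$ is exactly the monomial in the off-diagonal entries of $G(x)$ indexed by $\sigma$. Rescaling $v_i\mapsto\lambda_i v_i$ replaces $G(x)$ by $D\,G(x)\,D^{\ast}$ with $D=\mathrm{diag}(\lambda_i)$ and multiplies every $\epsilon_\sigma(x)$ by the single positive real $|\lambda_1\cdots\lambda_r|^2$ (here one uses that $\sigma$ is a bijection), which is precisely what makes $\epsilon$ well defined with values in $(\C^{!r/2}\smallsetminus\{0\})/\R_{>0}$. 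By Witt's extension theorem two tuples of isotropic lines lie in the same $\SU(1,n-1)$-orbit if and only if generators can be chosen making their Gram matrices equal — passing from the isometry group of the span to $\SU(1,n-1)$, and from $\U$ to $\SU$, is harmless since $n$ exceeds the rank of the Gram matrix and the determinant can be corrected — and a semistable orbit is closed exactly when its Gram matrix is, up to diagonal congruence, the one sitting in the unique closed orbit of its fibre. Thus the theorem is equivalent to the assertion that $(\epsilon_\sigma(x))_{\sigma\in D}$ up to a positive scalar determines $G(x)$ up to the congruence $G\mapsto D\,G\,D^{\ast}$, $D$ invertible diagonal.

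For tuples whose span has dimension $\ge 3$, i.e. on the open set $W$, I would invoke the preceding theorem directly. Choosing for the hyperplane $L_i^{\perp}$ the equation $w\mapsto h(w,v_i)$ makes $s_\sigma$ coincide with $\epsilon_\sigma$, so the map there is $[x]\mapsto[(\epsilon_\sigma(x))_\sigma]$ with values in $\P^{!r-1}(\R)$; because $r$ is odd no derangement is an involution, the derangements fall into pairs $\{\sigma,\sigma^{-1}\}$, $!r$ is even, and via $z_\sigma\mapsto\overline{z_{\sigma^{-1}}}$ the target $\P^{!r-1}(\R)$ is identified with $(\C^{!r/2}\smallsetminus\{0\})/\R^{\ast}$. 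With this identification $\theta$ is the composition of $\epsilon$ with the natural two-to-one quotient $(\C^{!r/2}\smallsetminus\{0\})/\R_{>0}\to(\C^{!r/2}\smallsetminus\{0\})/\R^{\ast}$; since $\theta$ is injective on $W/\SU(1,n-1)$ by that theorem, $\epsilon$ is injective on $W/\SU(1,n-1)$ as well.

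It remains to handle the tuples spanning only a plane $\Pi$ — spanning a line is impossible for a semistable tuple — and to rule out that such a tuple shares an $\epsilon$-value with one of $W$. For a semistable plane tuple $h|_\Pi$ necessarily has signature $(1,1)$ (totally isotropic subspaces have dimension $\le1$, and the rank-one or definite possibilities leave at most one isotropic line), so the isotropic lines of $\Pi$ form the circle $\partial\mathbf{H}^1_{\C}$; in the model $h|_\Pi=\mathrm{diag}(1,-1)$ with $L_i=[\zeta_i:1]$, $|\zeta_i|=1$, $\zeta_i=e^{i\theta_i}$, one computes $h(v_i,v_j)=\zeta_i\overline{\zeta_j}-1=2i\,e^{i(\theta_i-\theta_j)/2}\sin\tfrac{\theta_i-\theta_j}{2}$, whence
$$ \epsilon_\sigma(x)=(2i)^r\prod_{i=1}^r\sin\!\Big(\frac{\theta_i-\theta_{\sigma(i)}}{2}\Big). $$
Since $r$ is odd, $(2i)^r$ is purely imaginary, so all $\epsilon_\sigma(x)$ of a plane tuple lie on the line $i\R$; on $W$, by contrast, at least one $\epsilon_\sigma(x)$ does not — already for $r=3$ one has $\Re\epsilon_{(123)}(x)=\tfrac12\det G(x)\ne 0$, and a short argument with the relations $\Re\epsilon_\sigma=\tfrac12(\epsilon_\sigma+\epsilon_{\sigma^{-1}})$ and nonvanishing $3\times 3$ principal minors extends this to general $r$ — so the images are disjoint. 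Finally, for two plane tuples I would show that the real tuple $\big(\prod_i\sin\tfrac{\theta_i-\theta_{\sigma(i)}}{2}\big)_\sigma$, taken modulo a common positive factor, recovers $(\theta_i)$ up to the action on $S^1$ of the boundary Möbius group $\PU(1,1)$, which is the image of the $\SU(1,n-1)$-stabilizer of $\Pi$: the moduli of these products yield the cross-ratios of consecutive quadruples of the $\zeta_i$, and their signs yield the cyclic orientation. That $r$ is odd is essential here: for $r$ even the involutive derangements would contribute only the squared moduli $|h(v_i,v_j)|^2$ and the orientation would be lost, and indeed complex conjugation of $\Pi$ multiplies every $\sin\tfrac{\theta_i-\theta_{\sigma(i)}}{2}$, hence every $\epsilon_\sigma(x)$, by $(-1)^r$, which is detected in $(\C^{!r/2}\smallsetminus\{0\})/\R_{>0}$ precisely when $r$ is odd. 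The main obstacle is exactly this last step — proving that the products of sines along all derangements, modulo a common positive factor, form a complete invariant of an ordered $r$-tuple of points of $S^1$ up to boundary Möbius transformations; choosing which derangements to combine so as to isolate the individual cross-ratios, and the sign bookkeeping that makes the orientation recoverable for odd $r$, is where the real work lies, the remaining ingredients (Witt's theorem, the previous theorem, and the displayed identity for $\epsilon_\sigma$ on a plane) being comparatively soft.
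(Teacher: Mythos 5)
The decisive case of the theorem is the one you leave unproven. Your reduction splits closed orbits into those meeting $W$ (span $\ge 3$) and those consisting of tuples of isotropic lines lying in a common $2$-plane; the first case indeed follows from the preceding theorem exactly as you say, and your formula $\epsilon_\sigma(x)=(2i)^r\prod_i\sin\tfrac{\theta_i-\theta_{\sigma(i)}}{2}$ for plane tuples is correct. But the plane--plane case is precisely the new content of the statement --- it is the locus where $\theta$ fails to be injective (already for $r=3$ the Cartan invariants $\pm i$ are identified by $\theta$), and you explicitly defer it: ``proving that the products of sines along all derangements, modulo a common positive factor, form a complete invariant of an ordered $r$-tuple of points of $S^1$ up to boundary M\"obius transformations \dots is where the real work lies.'' That is not a soft remaining step; with repeated lines allowed up to multiplicity $\tfrac{r-1}{2}$, many of the products vanish and the extraction of ``consecutive cross-ratios'' and of the orientation from the surviving ones is exactly the combinatorial/positivity problem the paper has to solve. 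The paper's proof does not split by span at all: for two closed orbits with the same ray one has $gx=x'$ with $g\in\SL_n(\C)$; Lemma \ref{lemma:IndependencyTriratioHermitian} shows the ratios $\lambda_{ijk}(v,v')$ are independent of $(i,j)$ and real; the ray hypothesis $p(x)=p(x')$ together with Lemma \ref{Lemma:ExistenceOfNonVanishing3Cycles} (for a suitable $k$ and every admissible $(i,j)$ there is a derangement restricting to $(ijk)$ on $\{i,j,k\}$, to an involution elsewhere, and with $s_\sigma(x)\neq 0$, the involutive part contributing a positive factor) forces $\lambda_{ijk}(v,v')>0$; Witt's theorem (Lemma \ref{lem:ExistenceOfIsometry}) then yields an element of $\SU(1,n-1)$. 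This mechanism is what handles the $2$-dimensional span case, and nothing in your proposal replaces it.

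Two secondary points. First, your disjointness argument (a $W$-tuple has some $\epsilon_\sigma$ with nonzero real part) is true but not as soft as presented: you must complete a $3$-cycle on a pairwise-distinct triple spanning a $3$-dimensional space by a fixed-point-free involution on the remaining $r-3$ indices pairing only distinct lines, and a line may occur $\tfrac{r-1}{2}>\tfrac{r-3}{2}$ times, so the triple has to be chosen with care; this is essentially the content of Proposition \ref{Prop:SemistableIsotropicLines} and Lemma \ref{Lemma:ExistenceOfNonVanishing3Cycles}, and ``nonvanishing $3\times3$ principal minors'' alone does not deliver it. Second, your opening claim that the theorem is \emph{equivalent} to ``$(\epsilon_\sigma(x))_{\sigma\in D}$ up to a positive scalar determines the Gram matrix up to diagonal congruence'' for semi-stable tuples is too strong: a non-closed semi-stable $G(\R)$-orbit has the same ray as the unique closed orbit in its closure while being a different orbit, so the correct statement (and the domain of $\epsilon$) must be restricted to closed orbits. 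This does not damage the parts of your argument that only compare closed orbits, but it should be said; the genuine gap remains the unproven plane--plane case.
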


\subsubsection{Four planes} The second example studies the real forms $G_0$ of $\SL_4(\C)$ admitting a parabolic subgroup $P$ stabilizing  a plane in $\C^4$. There are three (isomorphism classes) of this kind:
\begin{itemize}
\item[(a)] the split form $\SL_4(\R)$;
\item[(b)] the special unitary group $\SU(2,2)$ with respect to a hermitian form on $\C^4$ of signature $(2,2)$;
\item[(c)] if $\H$ denotes the Hamilton quaternions, the group $\SL_2(\H)$ of $\H$-linear automorphisms of $\H^2$ whose associated complex matrix has determinant $1$\footnote{To a quaternion $q = a + bi + cj +dk \in \H$ one can associate the $2 \times 2$ matrix with complex entries 
$$m(q) := \begin{pmatrix} a + bi & c + di \\ -c +di & a - bi \end{pmatrix}.$$
This induces an isomorphism of non-commutative $\C$-algebras $m \colon H \otimes_\R \C \to \End(\C^2)$. A $2 \times 2$ matrix with quaternionic coefficients,
$$ \begin{pmatrix} q_1 & q_2 \\ q_3 & q_4\end{pmatrix} \in \GL_2(\H)$$
belongs to $\SL_2(\H)$ if and only if the associated $4 \times 4$ matrix with complex coefficients,
$$ \begin{pmatrix} m(q_1) & m(q_2) \\ m(q_3) & m(q_4) \end{pmatrix},$$
has determinant $1$. If $q_1 \neq 0$ this is equivalent to saying that the norm of $q_1 q_4 - q_1 q_3 q_1^{-1} q_2$ is $1$.}.
\end{itemize}

A quadruple $(W_1, W_2, W_3, W_4)$ of complex planes of $\C^4$ is semi-stable with respect to $\SL_4(\C)$ if and only $W_{\sigma(1)} \cap W_{\sigma(2)} = 0$ and $ W_{\sigma(3)} \cap W_{\sigma(4)} = 0$ for some permutation $\sigma \in \mathfrak{S}_4$. 

The GIT quotient of $\Gr_2(\C^4)^\ss$ by $\SL_4(\C)$ is the complex projective plane $\P^2(\C)$, and the quotient map is given by $3$ invariants $s_{1234}$, $s_{1324}$ and $s_{1423}$: for a semi-stable quadruple of planes $(W_1, W_2, W_3, W_4)$ and for basis $w_{i1}, w_{i2}$ of $W_i$, the invariant $s_{1234}$ is defined as the product of determinants
$$ s_{1234} = \det(w_{11}, w_{12}, w_{21}, w_{22}) \cdot \det(w_{31}, w_{32}, w_{41}, w_{42}).$$
The invariants $s_{1324}$ and $s_{1423}$ are defined similarly.

For a real form $G_0$ of $\SL_4(\C)$ among the ones mentioned above, let $G$ the be associated real reductive groupe, $F$ the flag variety $G / P$ and $X = F^4$. The GIT quotient $Y$ of $X^{\ss}$ by $G$ is a form of the projective space, \emph{i.e.} it becomes isomorphic to $\P^2(\C)$ as soon as one extends scalars to $\C$. 

In other words, it is a Severi-Brauer variety: it is the ``trivial'' one, that is, the one isomorphic to $\P^2_\R$ as a real algebraic variety. In particular, $Y(\R) = \P^2(\R)$.

\begin{theorem} With the notations introduced above, consider the natural map
$$\theta \colon F_0^{4, \ss} / G(\R) \too \P^2(\R). $$

Then, in cases (a) and (b) it is bijective (thus a homeomorphism). In case (c) it is injective and its image is the set of points $[t_0:t_2 : t_2] \in \P^2(\R)$ satisfying the inequality
$$t_0^2 + t_1^2 + t_2^2 \le 2(t_0 t_1 + t_1 t_2 + t_0 t_2). $$
\end{theorem}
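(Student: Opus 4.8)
The plan is to analyze each of the three real forms $G_0$ of $\SL_4(\C)$ separately, exploiting in all cases the explicit description of the three invariants $s_{1234}, s_{1324}, s_{1423}$ as products of determinants. First I would fix a semi-stable quadruple $(W_1, W_2, W_3, W_4)$ of $G_0$-planes in general position, choose bases $w_{i1}, w_{i2}$ of each $W_i$, and observe that the ambiguity in the choice of these bases changes each $s$-invariant by a common scaling factor coming from $\GL_4$-versus-$\SL_4$ normalizations; so the well-defined object is the point $[s_{1234}:s_{1324}:s_{1423}] \in \P^2$. The key reduction is that $\theta$ being injective (resp. bijective, resp. having the stated image) can be checked after putting the quadruple into a normal form under the action of $G(\R)$: use the fact that a semi-stable quadruple of planes contains two disjoint pairs, say $W_1 \cap W_2 = 0$ and $W_3 \cap W_4 = 0$, so that $W_1 \oplus W_2 = \C^4$ and $W_3, W_4$ are graphs of linear isomorphisms $W_1 \to W_2$; this lets me coordinatize the configuration space by a pair of $2\times 2$ matrices modulo the relevant stabilizer, and translate the $s$-invariants into determinants of these matrices and their difference.

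Next I would carry out the three cases. For case (a), $G_0 = \SL_4(\R)$: the normal form above can be taken with real entries, $W_1 = \R^2 \times 0$, $W_2 = 0 \times \R^2$, $W_3 = \mathrm{graph}(A)$, $W_4 = \mathrm{graph}(B)$ with $A, B \in \GL_2(\R)$, and the residual group acting is $\GL_2(\R)$ by simultaneous conjugation together with the freedom of rescaling $A$ and $B$ separately (and an overall $\SL_4$ condition). One computes $s_{1234} = \det(A)\cdot(\text{const})$, etc., up to the common scaling, so $[s_{1234}:s_{1324}:s_{1423}]$ is essentially $[\det A : \det(A-B) : \det B]$ and any target point in $\P^2(\R)$ is realized; injectivity follows because the pair $(A,B)$ is determined up to simultaneous conjugation and scaling by these three determinants (here $r = n = 4$ is even, but the previous line-hyperplane theorem's ``two-to-one'' phenomenon does not arise because planes are self-dual and the $-\id$ ambiguity is absorbed). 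For case (b), $G_0 = \SU(2,2)$: a semi-stable quadruple of totally isotropic planes can likewise be normalized using the hermitian form; a totally isotropic plane $W_1$ has an orthogonal complement which is again totally isotropic, so $W_1 \cap W_1^\perp$ considerations give the disjoint pair, and the same matrix model goes through but now with the constraint that the graphs be isotropic, which forces $A, B$ to satisfy a hermitian-symmetry condition ($A = A^*$ type after a suitable choice). The invariants then become determinants of hermitian matrices, which are real, and the $\SU(2,2)$-residual action is $\GL_2(\C)$ acting by $A \mapsto g A g^*$; since every real point $[t_0:t_1:t_2]$ of $\P^2$ is hit by choosing appropriate hermitian $A, B$ and the triple of determinants determines the pair up to this action, $\theta$ is again bijective.

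For case (c), $G_0 = \SL_2(\H)$: here the planes correspond to quaternionic lines in $\H^2$, i.e. to the complex planes stable under a fixed quaternionic structure $J$ (an antilinear involution with $J^2 = -\id$). Normalizing two disjoint such planes and writing the other two as graphs, the graphs must now be $J$-compatible, which translates (via the $m$-embedding of quaternions into $2\times 2$ complex matrices from the footnote) into $A, B$ being of the form $m(q)$ for quaternions $q$; equivalently $A, B$ lie in a copy of $\H \subset M_2(\C)$. Then $\det A = |q_A|^2 \ge 0$ and similarly for $B$ and $A - B = m(q_A - q_B)$, so all three coordinates $\det A, \det(A-B), \det B$ are nonnegative reals — but they are not independent: for three quaternionic norms $|q_A|^2, |q_B|^2, |q_A-q_B|^2$ the triangle-type inequality in $\H \cong \R^4$ gives exactly a relation, and writing $t_0 = |q_A|^2$, $t_2 = |q_B|^2$, $t_1 = |q_A - q_B|^2$ and expanding $|q_A - q_B|^2 = |q_A|^2 + |q_B|^2 - 2\langle q_A, q_B\rangle$ with the Cauchy–Schwarz bound $|\langle q_A,q_B\rangle| \le |q_A||q_B|$ yields precisely the stated constraint $t_0^2 + t_1^2 + t_2^2 \le 2(t_0t_1 + t_1t_2 + t_0t_2)$ (this is the region bounded by the conic tangent to the three coordinate lines, i.e. the ``inscribed'' region). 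Injectivity in case (c) again comes from the fact that $(q_A, q_B)$ is recovered up to the residual $\GL_1(\H)$-conjugation and scaling — which acts on $\H$ preserving norms up to a common factor — by the triple $(|q_A|^2, |q_A - q_B|^2, |q_B|^2)$.

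The main obstacle I expect is the careful bookkeeping of the residual groups and the scaling ambiguities: making sure that, after fixing the disjoint-pair normal form, the remaining group action is exactly what I claim (simultaneous conjugation plus independent rescalings of the two graph-matrices, subject to an overall determinant-one condition), and that the three $s$-invariants, as functions on this reduced space, separate orbits. In particular in cases (b) and (c) one must verify that \emph{every} semi-stable $G_0$-quadruple in general position can be brought to the graph normal form with the isotropy/quaternionic constraint — i.e. that the two disjoint planes can be chosen $G_0$-equivariantly and that $G_0$ acts transitively enough on such pairs — and one must check the degenerate strata (quadruples not in general position, or with coincidences) separately to conclude the global bijectivity/image statements; handling those boundary cases, and confirming that the conic inequality in (c) is sharp (every boundary point is attained), will require the most delicate argument.
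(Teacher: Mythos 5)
Your overall route is essentially the paper's: it also works on the affine chart where two of the planes are transverse and the other two are graphs over them (written as $\exp(A_2)W_1$, $\exp(A_1)W_2$ with $A_i\in\Lie\rad^u P_i$), computes $\pi(x)=[\det(A_1A_2-\id):\det(A_1A_2):1]$, reduces cases (a) and (b) to the conjugacy classification of a single semisimple $2\times 2$ matrix by its characteristic polynomial, and in case (c) obtains the conic inequality exactly from your estimate, in the form $\Re(q)^2\le |q|^2$ for the quaternionic cross-ratio. So the strategy is sound; the problem is in how you propose to get injectivity.

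The genuine gap is that $\theta$ is defined on the separated quotient, i.e.\ on \emph{closed} $G(\R)$-orbits, and your central claim --- that the triple of determinants determines the pair $(A,B)$ up to the residual action --- is false without first restricting to closed orbits: take $A=\id$ and let $B$ be a nontrivial Jordan block; then $(\id,B)$ and $(\id,B_{\ss})$ ($B_{\ss}$ the semisimple part) have the same invariants but lie in different orbits. What makes the argument work in the paper is a closedness criterion (the orbit of $(A_1,A_2)$ is closed iff $A_1,A_2$ are semisimple of equal rank), after which only characteristic polynomials of semisimple matrices need to be separated; you never identify which orbits are closed. Two further points where your reduction as written would fail: (i) the residual group after fixing $W_1,W_2$ acts by $(A,B)\mapsto (M_2AM_1^{-1},M_2BM_1^{-1})$, so there is \emph{no} independent rescaling of $A$ and $B$ (indeed your invariants are not invariant under such rescalings), and the $\SL_4$ constraint only leaves conjugations of determinant $\pm 1$ after normalizing $A=\id$ --- this is precisely where a potential two-to-one could enter, and it needs the argument the paper gives (rescale $g$ to $|\det g|=1$), not the one-line remark about self-duality; (ii) your normal form with $A,B\in\GL_2(\R)$ (resp.\ invertible hermitian/quaternionic data) excludes the rank-deficient closed orbits and the whole coordinate lines of $\P^2(\R)$, and these are not a negligible "degenerate stratum": the paper must treat them separately, using $\HH^1(\R,\Gm^2)=0$ and Proposition \ref{Prop:RealPointsOrbitGaloisCohomology} for the rank-one closed orbits, and an explicit surjectivity computation in the $\SU(2,2)$ case. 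Until the closed-orbit criterion and these boundary cases are supplied, the injectivity and the global bijectivity/image statements remain unproved.
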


The results are better resumed in the following table:
\renewcommand{\arraystretch}{2}

\begin{center}
\begin{tabular}{c|c|c}
$G_0$ & $F_0$ & $F_0^{4, \ss} / G_0$ \\
\hline
$\SL_4(\R)$ & $\{ \textup{real planes in $\R^4$} \}$ & $\P^2(\R)$ \\
$\SU(2,2)$ & $\{ \textup{totally isotropic planes} \}$ & $\P^2(\R)$ \\
$\SL_2(\H)$ & $\{ \textup{quaternionic lines in $\H^2$} \}$ & 
$
\renewcommand{\arraystretch}{1}
\begin{matrix} \textup{closed interior in $\P^2(\R)$ of the conic} \\ t_0^2 + t_1^2 + t_2^2 \le 2(t_0 t_1 + t_1 t_2 + t_0 t_2) \end{matrix}
$
\end{tabular}
\end{center}
\renewcommand{\arraystretch}{1}

In case (c), if the four quaternionic lines are pairwise distinct, the projection on the quotient makes intervene the quaternionic cross-ratio studied in \cite{BisiGentili, GwynneLibine}.

\subsection{Organization of the paper} 

In Section \ref{sec:conventions} we recall some basic facts on real algebraic groups and flag varieties. In Section \ref{section:GIT} we pass in review GIT over the real numbers. In Section \ref{section:Sequences} we treat the example of flags line-hyperplane. In Section \ref{sec:QuadruplesOfPlanes}, the case of four plane is treated. In Section \ref{section:coordiantesgeneric} the link with generic configurations of flags is studied, while in the last section we discuss the relation with decorated representations.

\subsection{Acknowledgements} We thank Claudio Gorodski, Antonin Guilloux, Greg Kuperberg and Maxime Wolff for several discussions.

\section{Conventions and reminders} \label{sec:conventions}

\subsection{Real algebraic varieties} In this paper we try not to confuse algebraic varieties with their set of points: while it may seem pedantic in the complex case, it is worth it in the real one. If $X$ is a complex algebraic variety, its set complex points is denoted by $X(\C)$; if $X$ is real the set of its real points is denoted by $X(\R)$, its complexification $X_\C$ and its complex points $X(\C)$. The set of complex (resp. real) points will be always endowed with its natural complex (resp. real) topology.

For a positive integer $n \ge 1$ the $n$-dimensional affine space is denoted $\A^n$, the $n$-dimensional projective space $\P^n$. The associated real/complex points are
\begin{align*}
\A^n(\R) &= \R^n, &\A^n(\C)&= \C^n, \\ \P^n(\R)&= (\R^n \smallsetminus \{ 0\})/\R^\times, & \P^n(\C)&= (\C^n \smallsetminus \{ 0\})/\C^\times .
\end{align*}
 If we want to insist on the field of definition we write $\A^n_\R$, $\A^n_\C$, $\P^n_\R$ or $\P^n_\C$. 

The datum of a quasi-projective real algebraic variety $X$ is equivalent to the datum of a complex algebraic variety $X_\C$ (its \emph{complexification}) together with an anti-holomorphic involution $\tau_X$ such that there exists a locally closed embedding $\epsilon \colon X \to \P^n_\C$ verifying, for all $x \in \P^n(\C)$,
$$ \epsilon(\tau_X(x)) = \overline{\epsilon(x)},$$
where, for a complex number $z \in \C$, $\bar{z}$ stands for its conjugate and, for a point $x = [x_0 : \cdots : x_n] \in \P^n(\C)$, 
$$ \bar{x} = [\bar{x}_0 : \cdots : \bar{x}_n]. $$
This is equivalent to asking that the polynomials defining $X_\C$ in $\P^n_\C$ have real coefficients. 

A map $f \colon X \to Y$ between real algebraic varieties is a complex algebraic map $f_\C \colon X_\C \to Y_\C$ compatible with the anti-holomorphic involutions $\tau_X, \tau_Y$ of $X, Y$, that is, $\tau_Y \circ f_\C = f_\C \circ \tau_X$.

\subsection{Real algebraic groups and forms} A real algebraic group is an algebraic variety $G$ together with maps
\begin{align*}
\textup{mult} &\colon G \times_\R G \to G, \\
\textup{inv} &\colon G  \to G,
\end{align*}
defining respectively the group law and the inverse, and a point $e \in G(\R)$. The datum is supposed to verify the usual relations \cite[Chapter I, \S1.5]{Borel}.

Given a complex algebraic group $G$, a real form of $G$ is a real algebraic group $H$ whose complexification $H_{\C}$ is isomorphic to $G$. It is well known that real forms do not need to be isomorphic. For instance, the following real algebraic groups defined as closed subvarieties of $\A^2_\R$,
\begin{align*}
\Gm &: xy = 1, &\U(1) &: x^2 + y^2 = 1,
\end{align*}
are isomorphic over $\C$ (this is seen taking the change of coordinates $x \mapsto x + iy$, $y \mapsto x - iy$) but not over $\R$. 

Some words on notation. The multiplicative group is denoted $\Gm$. At the level of real or complex points one has
\begin{align*}
\Gm(\R) &= \R^\times, & \Gm(\C) &= \C^\times.
\end{align*}
If the field of definition is not clear from the context, we specify it by writing $\mathbb{G}_{m, \R}$ or $\mathbb{G}_{m, \C}$.  Similarly for the general linear group: $\GL_n$ denotes the algebraic group, $\GL_n(\R)$ and $\GL_n(\C)$ the groups over matrices, $\GL_{n, \R}$ and $\GL_{n, \C}$ respectively the real and complex algebraic group. For a real algebraic group $G$, we denote by $G_0$ the set of its real points $G(\R)$.

A connected complex algebraic group is said to be reductive if every representation is completely reducible: for every algebraic representation $G \to \GL(V)$ and every $G$-stable subspace $W \subset V$  there exists a $G$-stable complement $W'$. This is equivalent to the fact that the complex Lie group $G(\C)$ contains a Zariski-dense compact subgroup. A semi-simple group is a reductive group whose center is finite.

A real algebraic group is said to be reductive (resp. semi-simple) if its complexification is reductive (resp. semi-simple).

\subsection{Parabolic subgroups} A subgroup $P$ of a complex reductive group is said to be parabolic if the quotient variety $G/P$ is projective. 

A parabolic subgroup of a real reductive group $G$ is an algebraic subgroup $P$ whose complexification $P_\C$ is a parabolic subgroup of $G_\C$. The quotient is then a projective real algebraic variety. A complex parabolic subgroup $Q$ of $G_\C$ is the complexification of a real parabolic subgroup of $G$ if and only if the dimension of $Q$ equals the dimension of the real Lie group $Q(\C) \cap G(\R)$. Over both real and complex numbers, two parabolic subgroups are said to be opposite if their intersection is a common Levi factor. This turns out to be equivalent to say that their intersection is of minimal dimension.

Let $G$ be a real or complex reductive group and let $P$ be a parabolic subgroup of $G$. Then there is a parabolic subgroup $P_\op$ opposite to $P$. Moreover the subset $\Opp(P)$ of $X = G / P_\op$ made of points $x \in X$ whose stabilizer is a parabolic subgroup opposite to $P$ is Zariski open. 
Let $x_\op \in X$ be point associated to $P_\op$. The natural map
\begin{eqnarray*}
P &\too & \Opp(P) \\
g & \longmapsto &  g x_\op
\end{eqnarray*}
induces an isomorphism
$$ \rad^u P \stackrel{\sim}{\too} \Opp(P).$$

\begin{example} Let $V$ be a finite-dimensional real or complex vector space. A parabolic subgroup of $\GL(V)$ is the subgroup stabilizing a flag of vector subspaces 
$$ F_\bullet : 0 \subset F_1 \subset F_2 \subset \cdots \subset F_r,$$
(the inclusions are strict). Conversely the stabilizer of a flag is a parabolic subgroup of $\GL(V)$. Let $F_\bullet = (F_i)_{i = 1, \dots, r}$ be a flag and $P = \Stab(F_\bullet)$ its stabilizer. Set $d_i = \dim F_i$ for all $i = 1, \dots, r$. Then the projective variety $\GL(V) / P$ is the variety of flags $\Flag(d_1, \dots, d_r)$. 
\end{example}

\begin{example}[Incidence variety] Let $V = \C^n$. The \emph{incidence variety} is the flag variety $\Flag_{(1, n-1)}$ of couples $(L, H)$ made of a line $L \subset V$ and of a hyperplane $H \subset $ containing $L$.

It can be seen as a subvariety of $\P(V) \times \P(V^\ast)$ sending a couple $(L, H)$ to the couple $(L, H^\bot)$ where $H^\bot$ is the set of linear forms vanishing on $H$. The equation is
$$ \phi_1 x_1 + \cdots + \phi_n x_n = 0,$$
where $x = (x_1, \dots, x_n) \in V$ and $\phi = \phi_1 e_1^\ast + \cdots + \phi_n e_n^\ast$ (here $e_1^\ast, \dots, e_n^\ast$ is the dual basis of standard one of $\C^n$).
\end{example}

\begin{example}[Klein quadric] Let $V = \C^4$. The flag manifolds correspondences are gathered in the following diagram:
\begin{center}
\begin{tikzcd}[row sep= large, column sep= large]
 & \Flag_{123} \ar[dl] \ar[dr] \ar[d]& \\
\Flag_{12} \ar[d]  & \Flag_{13} \ar[dl] \ar[dr] &\Flag_{23} \ar[d] \ar[dl, crossing over]\\
\Flag_{1}  \ar[dr]& \Flag_{2} \ar[<-, ul, swap, crossing over] \ar[d]&\Flag_{3} \ar[dl]\\
& \{ \textup{pt} \} & 
\end{tikzcd}
\end{center}
Consider the Grassmannian $\Gr(2, 4)$ of planes in $V$. Through the Pl\"ucker embedding
\begin{eqnarray*}
\Gr(2, 4) & \too & \P(\textstyle \bigwedge^2 V) \\
W & \longmapsto & \textstyle \bigwedge^2 W,
\end{eqnarray*}
it can be seen as subvariety of $\P(\textstyle \bigwedge^2 V)$. The wedge product induces a non-degenerate quadratic form on $\bigwedge^2 V$,
\begin{eqnarray*}
Q\colon \textstyle \bigwedge^2 V &\too& \textstyle \bigwedge^4 V\\
w &\longmapsto& w \wedge w,
\end{eqnarray*}
and $\Gr(2, 4)$ is the quadric given by the equation $Q = 0$. It is classically known as \emph{Klein's quadric}. Let $e_1, \dots, e_4$ be the standard basis of $V$ and fix $e_1 \wedge e_2 \wedge e_3 \wedge e_4$ as basis of $\bigwedge^4 V$. Writing a vector $w \in \bigwedge^2 V$ as 
$$ w = \sum_{1 \le i < j \le 4} w_{ij} e_i \wedge e_j,$$
one obtains $$ Q(w) = w \wedge w = w_{12} w_{34} - w_{13}w_{24} + w_{14}w_{23} , $$ recovering the classical equation of Klein's quadric.
\end{example}

\subsection{Galois cohomology} Let $G$ be an affine real algebraic group. A  \emph{(right) principal homogenous $G$-space} is a non-empty real algebraic variety $P$ endowed with a (right) action of $G$ such that the map
\begin{eqnarray*}
P \times_\R G &\too& P \times_\R P \\
(p, g) & \longmapsto & (p, pg),
\end{eqnarray*}
is an isomorphism. A principal homogeneous $G$-space is \emph{trivial} if $P(\R) \neq \emptyset$.

The set of isomorphism classes of principal homogeneous $G$-spaces is denoted $\HH^1(\R, G)$. It can be computed also as the set of $1$-cocyles $\HH^1(\Gal(\C / \R), G(\C))$ (see \cite[Chapitre III]{SerreCohomologieGaloisienne}). The set $\HH^1(\R, G)$ is finite \cite[Chapitre III, Th\'eor\`eme 4]{SerreCohomologieGaloisienne} and is pointed by considering the class of trivial principal homogenous spaces.

\begin{theorem} \label{thm:Satz90} Let $n \ge 1$ be an integer. Then $\HH^1(\R, \GL_n) = 0$.
\end{theorem}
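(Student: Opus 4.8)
The plan is to prove the vanishing of $\HH^1(\R, \GL_n)$ by identifying a principal homogeneous $\GL_n$-space with the functor of frames (bases) of a suitable vector bundle, and then showing this vector bundle is trivial over $\Spec \R$ because every finite-dimensional $\R$-vector space admits a basis. Concretely, given a principal homogeneous space $P$, one forms the associated rank-$n$ vector bundle $E := P \times^{\GL_n} \A^n$ (the quotient of $P \times_\R \A^n$ by the diagonal $\GL_n$-action $(p, v) \cdot g = (pg, g^{-1} v)$), which is a locally free sheaf of rank $n$ on the point $\Spec \R$; conversely $P$ is recovered as the frame bundle $\mathrm{Isom}(\O^n, E)$. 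This is the standard dictionary between $\HH^1$ with coefficients in $\GL_n$ and rank-$n$ vector bundles, valid over any base, and in particular over $\Spec \R$.

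First I would recall (or invoke) this equivalence of groupoids: principal $\GL_n$-bundles over a scheme $S$ correspond to rank-$n$ vector bundles over $S$, with trivial bundles corresponding to the trivial (i.e. having a rational point) torsor. Second, I would observe that a rank-$n$ vector bundle over $\Spec \R$ is nothing but a finite-dimensional $\R$-vector space $M$ of dimension $n$, since $\R$ is a field and the category of quasi-coherent sheaves on $\Spec \R$ is the category of $\R$-modules. Third, since every such $M$ is isomorphic to $\R^n$ (it has a basis), the corresponding torsor $\mathrm{Isom}(\R^n, M)$ has an $\R$-point, hence is trivial. Therefore $\HH^1(\R, \GL_n) = 0$.

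Alternatively, and perhaps more in the spirit of the cocycle description already cited in the text, I would argue via Galois descent: a class in $\HH^1(\Gal(\C/\R), \GL_n(\C))$ is represented by a cocycle, i.e. a single matrix $a \in \GL_n(\C)$ with $a \bar{a} = I$ (the cocycle condition for the nontrivial element $c$ of $\Gal(\C/\R)$, using $c$ acts by complex conjugation). One must then produce $b \in \GL_n(\C)$ with $a = b \bar{b}^{-1}$, i.e. $a \bar{b} = b$; equivalently, the twisted conjugation action of $\Gal(\C/\R)$ on $\C^n$ (where $c$ sends $v \mapsto a \bar{v}$) is a semilinear involution, and one takes $M$ to be its fixed $\R$-subspace. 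The content is exactly the classical Speiser/Hilbert 90 argument: the semilinear $\Gal(\C/\R)$-action on $\C^n$ descends to an $\R$-form $M$ with $M \otimes_\R \C \cong \C^n$, and choosing an $\R$-basis of $M$ gives the required $b$. That $M$ has the right dimension follows because the natural map $M \otimes_\R \C \to \C^n$ is an isomorphism (injectivity from $\R$-linear independence staying $\C$-linear independence for a Galois-stable spanning set; surjectivity by an averaging $v \mapsto v + a\bar v$ together with $v \mapsto i v - a \overline{iv}$ to see every vector is a $\C$-combination of fixed vectors).

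The main obstacle is essentially bookkeeping rather than a deep difficulty: one must be careful that the cohomology set $\HH^1(\R, \GL_n)$ defined via principal homogeneous spaces of the \emph{real algebraic group} $\GL_n$ agrees with the Galois cohomology set $\HH^1(\Gal(\C/\R), \GL_n(\C))$ — but this is precisely the identification already recorded in the text just before the statement, so it may be cited. The only genuinely substantive point is the surjectivity of the averaging argument showing $\dim_\R M = n$ (equivalently, that the semilinear involution has ``enough'' fixed vectors), and this is the classical descent computation. I would present the vector-bundle argument as the main proof for its brevity, and optionally remark that it is the geometric incarnation of Hilbert's Theorem 90, which explains the label \texttt{thm:Satz90}.
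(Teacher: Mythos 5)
The paper does not prove this statement: it is quoted as the classical Theorem 90 (Hilbert--Speiser), with the surrounding Galois-cohomology facts referred to \cite{SerreCohomologieGaloisienne}, so there is no in-paper argument to compare with. Both of your proofs are correct and standard, and either one legitimately fills the gap. The torsor--vector bundle dictionary works here because $\GL_n$ is smooth, so a principal homogeneous space in the paper's sense is \'etale-locally trivial and its class is computed by $\HH^1(\Gal(\C/\R), \GL_n(\C))$, exactly the identification recorded in the text just before the statement; over $\Spec \R$ a rank-$n$ bundle is an $n$-dimensional $\R$-vector space, hence free, hence the frame torsor has a real point. Your descent argument is the classical Speiser proof; two small bookkeeping remarks. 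First, with Serre's convention a cocycle $a$ (satisfying $a\bar a = I$) is a coboundary when $a = b^{-1}\bar b$; your normalization $a = b\,\bar b^{-1}$ amounts to replacing the cocycle by an equivalent one and is harmless, since $M = \{v \in \C^n : a\bar v = v\}$ (the fixed space of the semilinear involution $\sigma(v) = a\bar v$) yields a matrix $b$ of basis vectors with $a\bar b = b$. Second, there is a sign slip in your averaging step: the second fixed vector should be $iv + a\overline{iv} = i(v - a\bar v)$, not $iv - a\overline{iv}$ (the latter equals $i(v + a\bar v)$, which is not $\sigma$-fixed); with this correction one has
$$ v = \tfrac{1}{2}\bigl(v + a\bar v\bigr) - \tfrac{i}{2}\bigl(iv + a\overline{iv}\bigr), $$
so the fixed vectors span $\C^n$ over $\C$, and together with the standard fact that $\R$-independent fixed vectors remain $\C$-independent this gives $\dim_\R M = n$ and $b \in \GL_n(\C)$, completing the proof.
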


Let $f \colon G \to G'$ a map of affine real algebraic groups and $P$ a right principal homogenous $G$-space. Then the quotient $(P \times_\R G') / G$ of $P \times_\R G'$ by the (left) action $g(g',p) = (f(g) g', p g^{-1})$ of $G$ exists as a real algebraic variety and is a principal homogenous $G'$-space. This construction leads to a map of pointed sets
$$ \HH^1(\R, G) \too \HH^1(\R, G').$$

Suppose $G$ acts on affine variety $X$. Let $x \in X(\R)$ and $T = G \cdot x$ be its orbit (as a real algebraic variety). Then $T(\R)$ is not necessarily made of one single $G(\R)$ orbit. For $x' \in T(\R)$ consider the transporter
$$ \Trans(x, x') = \{ g \in G : gx = x'\}. $$
It is a principal homogeneous $G_x$-space where $G_x$ is the stabilizer of $x$. This leads to a map
$$ \delta \colon T(\R)  \too \HH^1(\R, G_x).$$

\begin{proposition} \label{Prop:RealPointsOrbitGaloisCohomology} The map $\delta$ induces a bijection
$$ T(\R) / G(\R) \stackrel{\sim}{\too} \ker( \HH^1(\R, G_x) \to \HH^1(\R, G) ).$$
In particular, if $\HH^1(\R, G) = 0$ then $T(\R) / G(\R) \iso \HH^1(\R, G_x)$.
\end{proposition}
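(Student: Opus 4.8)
The plan is to unwind the definition of $\delta$ and check surjectivity onto the kernel and injectivity of the induced map separately, using standard twisting arguments from non-abelian Galois cohomology.

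First I would recall how $\delta$ is built: for $x' \in T(\R)$, since $T = G \cdot x$ and both $x, x'$ are real points of the same $G$-orbit, the transporter $\Trans(x, x')$ is non-empty over $\C$ (because $G(\C)$ acts transitively on $T(\C)$) and is stable under the anti-holomorphic involution, hence is a principal homogeneous space under $G_x$; its class in $\HH^1(\R, G_x)$ is $\delta(x')$. I would first verify $\delta(x')$ lies in the kernel of $\HH^1(\R, G_x) \to \HH^1(\R, G)$: the image of $\Trans(x,x')$ in $\HH^1(\R, G)$ is the class of the $G$-space $(\Trans(x,x') \times_\R G)/G_x$, which admits the real point coming from any $g \in \Trans(x,x')(\C)$ paired appropriately — more cleanly, $\Trans(x,x')$ embeds $G_x$-equivariantly into $G$ by $g \mapsto g$, exhibiting the pushforward as the trivial $G$-torsor $G$ itself, whose class is trivial. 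Hence $\delta$ factors through the kernel.

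Next, \textbf{surjectivity onto the kernel.} Given a class $[P] \in \HH^1(\R, G_x)$ whose image in $\HH^1(\R, G)$ is trivial, the pushed-forward torsor $(P \times_\R G)/G_x$ has a real point, i.e. is isomorphic to $G$ with its right translation action. Using this trivialization I would produce a $G_x$-equivariant map $P \to G$ over $\R$ (the composite $P \hookrightarrow P \times_\R G \to (P\times_\R G)/G_x \cong G$ sending $p \mapsto (p, e)$), and then set $x' := g \cdot x$ where $g$ is the image of any real point of $P$ — wait, $P$ need not have a real point. Instead I would argue as follows: the $G_x$-equivariant real map $f \colon P \to G$ lets us form the real point $x' \in T$ defined as the image of $P$ under $P \xrightarrow{f} G \xrightarrow{\cdot x} T$; this map is constant since $f$ is $G_x$-equivariant and $G_x$ fixes $x$, so it factors through $\Spec \R$ and defines $x' \in T(\R)$. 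One then checks $\Trans(x, x') \cong P$ as $G_x$-torsors, giving $\delta(x') = [P]$.

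Then, \textbf{injectivity modulo $G(\R)$.} I would show $\delta(x') = \delta(x'')$ iff $x''= g x'$ for some $g \in G(\R)$. If $x'' = g x'$ with $g \in G(\R)$, then left multiplication by $g$ gives a $G_x$-torsor isomorphism $\Trans(x, x') \to \Trans(x, x'')$, so the classes agree. Conversely, an isomorphism of $G_x$-torsors $\Trans(x,x') \cong \Trans(x,x'')$ corresponds, after composing with the natural $G_x$-equivariant maps to $G$, to an element $g$ intertwining them; tracing through, $g \in G(\R)$ and $g x' = x''$. Finally, the "in particular" clause is immediate: if $\HH^1(\R, G) = 0$ the kernel is all of $\HH^1(\R, G_x)$, so $\delta$ induces a bijection $T(\R)/G(\R) \xrightarrow{\sim} \HH^1(\R, G_x)$.

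The main obstacle is the surjectivity step — specifically, correctly manipulating the twisting construction to extract a genuine real point $x' \in T(\R)$ from an abstract torsor $P$ that may itself have no real points, and verifying that the resulting $\Trans(x,x')$ recovers the original class rather than something twisted. This is the classical exactness of the orbit sequence in Galois cohomology (cf. \cite[Chapitre I, \S 5.4]{SerreCohomologieGaloisienne}), adapted to the setting of real algebraic varieties rather than Galois modules, so the bookkeeping of anti-holomorphic involutions must be done with care.
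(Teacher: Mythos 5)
Your proof is correct, but it takes a more self-contained route than the paper: the paper's proof is a one-line citation of the exact sequence of pointed sets $1 \to G_x(\R) \to G(\R) \to T(\R) \to \HH^1(\R, G_x) \to \HH^1(\R, G)$ from Serre (Chapitre I, \S 5.4, Proposition 36), from which the bijection onto the kernel is immediate, whereas you re-derive that exactness directly in the language of real varieties and torsors. Your three steps (the class $[\Trans(x,x')]$ dies in $\HH^1(\R,G)$ because the pushforward $(\Trans(x,x')\times_\R G)/G_x$ is trivialized by $(t,g)\mapsto tg$; a class $[P]$ killed in $\HH^1(\R,G)$ yields a $G_x$-equivariant real map $f\colon P\to G$, and $p\mapsto f(p)\cdot x$ factors through $P/G_x=\Spec \R$, producing $x'\in T(\R)$ with $\Trans(x,x')\iso P$; and fibres of $\delta$ are exactly $G(\R)$-orbits, via $g=\phi(t)t^{-1}$ for a torsor isomorphism $\phi$, which is independent of $t$ and Galois-fixed) are precisely the standard proof of Serre's proposition, so mathematically the content is the same. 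What your version buys is that the identification of $T(\C)$ with $G(\C)/G_x(\C)$ compatibly with the anti-holomorphic involutions, which the citation leaves implicit, is handled explicitly and without cocycle bookkeeping; the cost is length. The only places to tighten are the phrase exhibiting the pushforward as ``the trivial $G$-torsor $G$ itself'' (spell out the $G_x$-invariant, right-$G$-equivariant map $(t,g)\mapsto tg$) and the last step of injectivity, where independence of $t$ and reality of $g$ follow from $G_x$-equivariance of $\phi$ and the fact that it is defined over $\R$.
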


\begin{proof} The maps $\delta$ fits into the following exact sequence of pointed sets (see \cite[Chapitre I \S 5.4 Proposition 36]{SerreCohomologieGaloisienne}): 
$$ 1 \too G_x(\R) \too G(\R) \too T(\R) \too \HH^1(\R, G_x) \too \HH^1(\R, G).$$
In particular,
\begin{equation*}T(\R) / G(\R) \stackrel{\sim}{\too} \ker(\HH^1(\R, G_x) \to \HH^1(\R, G)). \qedhere \end{equation*}
\end{proof}

\section{GIT over the complex and the real numbers}\label{section:GIT}

\subsection{The complex affine case} Let $G$ be a complex reductive group acting on an affine variety $X$. Geometric Invariant Theory (GIT) deals with the problem of constructing a quotient of $X$ by $G$ as an algebraic variety.

One cannot expect the underlying topological space of the putative quotient to be the quotient of $X$ by $G(\C)$ as a topological space, as the latter is too non separated in general to be an algebraic variety. For instance, looking at the action of $\G_{m, \C}$ on $\A^2_\C$ by $t(x, y) = (tx, ty)$, one sees that the origin lies in the closure of every orbit.

Instead, consider the $\C$-algebra $A$ of regular functions of $X$ and denote by $A^G$ the sub-$\C$-algebra of $G$-invariant elements.

\begin{theorem} The $\C$-algebra $A^G$ is finitely generated. Let $Y = \Spec A^G$. Then, the morphism $\pi \colon X \to Y$ induced by the inclusion $A^G \subset A$ enjoys the following properties:
\begin{enumerate}
\item $\pi$ is $G$-invariant et surjective;
\item if $F \subset X$ is a Zariski closed $G$-stable subset, then $\pi(F) \subset Y$ is closed;
\item let $x, x' \in X(\C);$ then $\pi(x) = \pi(x')$ if and only if
$$ \ol{G \cdot x} \cap \ol{G \cdot x'} \neq \emptyset;$$
\item if $U \subset X$ is a Zariski open $G$-saturated\footnote{That is, for every $x \in X(\C)$, the closure of the orbit $G(\C) \cdot x$ is contained in $U(\C)$.}, then $\pi(U)$ is Zariski open in $Y$;
\item the homomorphism of sheaves $\pi^\ast \colon \O_Y \to \pi_\ast \O_X$ induces an isomorphism
$$ \pi^\ast \colon  \O_Y \stackrel{\sim}{\too} (\pi_\ast \O_X)^G.$$
\end{enumerate}
\end{theorem}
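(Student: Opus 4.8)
The whole theorem is powered by a single construction, the \emph{Reynolds operator}. Since $G$ is reductive every algebraic representation is completely reducible; as the coordinate ring $A$ of $X$ is a filtered union of finite-dimensional $G$-stable subspaces, $A$ decomposes canonically as $A = A^G \oplus A_G$, where $A_G$ is spanned by the non-trivial irreducible $G$-submodules of $A$. The associated projection $R \colon A \to A^G$ is $G$-equivariant, satisfies $R(af) = a\, R(f)$ for $a \in A^G$ and $f \in A$, is natural with respect to $G$-equivariant linear maps, and sends every $G$-stable ideal $I$ onto $I \cap A^G$. My first step would be to set up $R$ and record these formal properties, paying attention to the point that complete reducibility on the infinite-dimensional module $A$ follows from the local finiteness of the $G$-action.

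The second and main step is the finite generation of $A^G$ (Nagata's theorem). Choosing a finite-dimensional $G$-stable subspace of $A$ that generates it as a $\C$-algebra, one obtains a $G$-equivariant surjection $\C[x_1,\dots,x_N] \too A$ from a polynomial ring on which $G$ acts by graded linear automorphisms, with $G$-stable kernel; by naturality of $R$ the induced map $\C[x]^G \too A^G$ is again surjective, so it suffices to treat $A = \C[x_1,\dots,x_N]$ graded. Let $\mathfrak{a} \subset A$ be the ideal generated by the homogeneous invariants of positive degree; by the Hilbert basis theorem $\mathfrak{a} = (f_1,\dots,f_m)$ with $f_i$ homogeneous invariants, and I would then show $A^G = \C[f_1,\dots,f_m]$ by induction on degree: a homogeneous invariant $f$ of positive degree lies in $\mathfrak{a}$, hence $f = \sum_i a_i f_i$, and applying $R$ gives $f = \sum_i R(a_i) f_i$ with each $R(a_i)$ homogeneous of degree $< \deg f$, so in $\C[f_1,\dots,f_m]$ by induction. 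This makes $A^G$ a finitely generated reduced $\C$-algebra and $Y \df \Spec A^G$ an affine variety; $\pi$ is the morphism induced by $A^G \subset A$.

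The remaining assertions follow formally from $R$ and finite generation, and are essentially bookkeeping. For (1), $\pi$ is $G$-invariant since it is defined by invariant functions, and surjective because for a maximal ideal $\mathfrak{m} \subset A^G$ one has $\mathfrak{m}A \neq A$ (else $1 = \sum a_i m_i$ would give $1 = \sum R(a_i) m_i \in \mathfrak{m}$), so any maximal ideal of $A$ above $\mathfrak{m}A$ contracts to $\mathfrak{m}$. For (2), writing a closed $G$-stable $F$ as $V(I)$ with $I$ a $G$-stable ideal, the same Reynolds trick applied to $I + \mathfrak{m}A$ yields $\pi(F) = V(I \cap A^G)$, which is closed. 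For (3), ``$\Leftarrow$'' holds because $\pi$ is continuous and $G$-invariant and $\pi(x)$ is a closed point, so $\pi$ is constant equal to $\pi(x)$ on $\ol{G \cdot x}$ and to $\pi(x')$ on $\ol{G \cdot x'}$, and a common point of the two closures forces $\pi(x) = \pi(x')$; for ``$\Rightarrow$'' one argues by contraposition, noting that disjoint closed $G$-stable sets $V(I), V(I')$ satisfy $I + I' = A$, so writing $1 = f + f'$ with $f \in I$, $f' \in I'$ and applying $R$ produces an invariant $h \df R(f)$ vanishing on $\ol{G \cdot x}$ and equal to $1$ on $\ol{G \cdot x'}$, whence $\pi(x) \neq \pi(x')$. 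For (4), if $U$ is open and $G$-saturated then $Z \df X \smallsetminus U$ is closed and $G$-stable, so $\pi(Z)$ is closed by (2); by (3) together with saturatedness $\pi(U) \cap \pi(Z) = \emptyset$, and by surjectivity $\pi(U) \cup \pi(Z) = Y$, so $\pi(U) = Y \smallsetminus \pi(Z)$ is open. For (5), it suffices to compare the two sheaves on the basic opens $D(f)$, $f \in A^G$, where $\pi^{-1}(D(f))$ equals the basic open of $X$ defined by (the image of) $f$ and the statement reduces to $(A_f)^G = (A^G)_f$; this holds since the invariants functor is exact on rational $G$-modules (by complete reducibility) and commutes with the filtered colimit computing localization — equivalently, $R$ localizes — and since both $\O_Y$ and $(\pi_\ast \O_X)^G$ are quasi-coherent and agree compatibly on a basis, $\pi^\ast$ is an isomorphism.

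I expect the finite generation step to be the only genuine obstacle: once a Reynolds operator is available, properties (1)--(5) are routine, but Nagata's theorem needs the reduction to a graded polynomial ring and the Hilbert-basis induction above, and even producing $R$ on the infinite-dimensional ring $A$ requires unwinding the definition of reductivity through the local finiteness of $A$ as a $G$-module.
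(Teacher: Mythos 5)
Your proposal is correct and is exactly the classical argument (Reynolds operator, Hilbert--Nagata finite generation via the ideal of positive-degree invariants, then the formal consequences for surjectivity, closed images, orbit-closure separation, saturation and $(A_f)^G=(A^G)_f$); the paper does not prove this theorem but recalls it as standard, citing Mumford's GIT for its real analogue, and that reference's proof is the one you have reconstructed. No gaps to report.
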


This makes $Y$ the \emph{categorical quotient} of $X$ by $G$ in the category of complex algebraic varieties:  for every complex algebraic variety $Z$ and every $G$-invariant map $f \colon X \to Z$ there exists a unique map $\tilde{f} \colon Y \to Z$ such that $f = \tilde{f} \circ \pi$.

The quotient $Y$ is called the \emph{GIT quotient} of $X$ by $G$ and denoted by $X/G$.\footnote{As already mentioned in the Introduction, a competing symbol in the literature is $X /\!\!/ G$.}

\begin{example} Let $\G_{m, \C}$ on $\A^2_\C$ by $t(x, y) = (tx, ty)$. The action induced on the polynomial ring $\C[x, y]$ is given by
$$ \sum_{i, j} a_{ij} x^i y^j \longmapsto \sum_{i, j} a_{ij} t^{i+j} x^i y^j.$$
No non-constant polynomial invariant is left invariant. In other terms, the quotient of $\A^2_\C$ by $\Gm$ is a point.
\end{example}

\begin{example} \label{ex:GITQuotientConjugation} Let $n \ge 1$ be an integer and $\M_{n, \C}$ be the space of $n \times n$ matrices. The group $\GL_{n, \C}$ acts on it by conjugation. For a matrix $A \in \M_n(\C)$ let 
$$ P_A(T) := \det(T \cdot \id_n - A) = T^n - \sigma_1(A)T^{n-1} + \cdots + (-1)^{n} \sigma_n(A),$$
be the characteristic polynomial of $A$. The map
\begin{eqnarray*}
\sigma \colon \M_{n, \C} &\too& \A^n_\C\\
A &\longmapsto& (\sigma_1(A), \dots, \sigma_n(A)),
\end{eqnarray*}
is $\GL_{n, \C}$-invariant and makes $\A^n_\C$ the GIT quotient of $\M_{n, \C}$ by $\GL_{n, \C}$ (see \cite[Proposition 2]{MumfordOslo}).
\end{example}

\subsection{The real affine case}

Let $X$ be an affine real algebraic variety acted upon by a real reductive group $G$. Let $A$ denote de $\R$-algebra of regular functions on $X$. Then $G$ acts on the $\R$-algebra $A$ and we consider the sub-$\R$-algebra $A^G$ of $G$-invariants elements.

\begin{theorem} \label{Thm:AffineRealGIT}The $\R$-algebra $A^G$ is finitely generated. Let $Y = \Spec A^G$. Then, the morphism $\pi \colon X \to Y$ induced by the inclusion $A^G \subset A$ enjoys the following properties:
\begin{enumerate}
\item $\pi$ is $G$-invariant et surjective;
\item if $F \subset X$ is a Zariski closed $G$-stable subset, then $\pi(F) \subset Y$ is closed;
\item let $x, x' \in X(\C);$ then $\pi(x) = \pi(x')$ if and only if
$$ \ol{G(\C) \cdot x} \cap \ol{G(\C) \cdot x'} \neq \emptyset;$$
\item if $U \subset X$ is a Zariski open $G$-saturated, then $\pi(U)$ is Zariski open in $Y$;
\item we have  $ A^G \otimes_{\R} \C = (A \otimes_\R \C)^{G_{\C}}$.
\end{enumerate}
\end{theorem}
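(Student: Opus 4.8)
The plan is to deduce the real statement from the complex one (the theorem in the preceding subsection) by descent along the extension $\C/\R$, using the basic observation that $G$-invariant regular functions on $X$ correspond under complexification to $G_\C$-invariant regular functions on $X_\C$ that are fixed by complex conjugation. Concretely, write $A_\C = A \otimes_\R \C$ for the $\C$-algebra of regular functions on $X_\C$, equipped with the semilinear involution $\tau$ induced by $\tau_X$, so that $A = A_\C^\tau$; the $G_\C$-action on $A_\C$ commutes with $\tau$ (because the $G$-action on $X$ is defined over $\R$), hence $\tau$ restricts to an involution of $(A_\C)^{G_\C}$, and the crux is the identity
$$ (A_\C)^{G_\C} = A^G \otimes_\R \C, $$
which is exactly item (5). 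Granting this, finite generation of $A^G$ follows: by the complex theorem $(A_\C)^{G_\C}$ is a finitely generated $\C$-algebra, hence so is $A^G = \big((A_\C)^{G_\C}\big)^\tau$ as an $\R$-algebra, since an $\R$-algebra whose complexification is finitely generated is itself finitely generated (pick $\C$-algebra generators and take their real and imaginary parts). Then $Y = \Spec A^G$ is a real affine variety with $Y_\C = \Spec (A_\C)^{G_\C} = X_\C /\!\!/ G_\C$, and $\pi \colon X \to Y$ complexifies to the complex GIT quotient map $\pi_\C \colon X_\C \to Y_\C$.

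Next I would transfer properties (1)--(4) from $\pi_\C$ to $\pi$. Each is a statement that is either insensitive to base change or can be checked after base change to $\C$. For (1): $\pi$ is $G$-invariant by construction, and surjectivity of $\pi$ can be checked on $\C$-points, where it is surjectivity of $\pi_\C$, given by the complex theorem. For (2): if $F \subset X$ is Zariski closed and $G$-stable, then $F_\C \subset X_\C$ is Zariski closed and $G_\C$-stable, so $\pi_\C(F_\C)$ is closed in $Y_\C$ by the complex result; since $F$ is defined over $\R$ the image $\pi(F)$ is the descent of $\pi_\C(F_\C)$, which is therefore closed in $Y$ (a subvariety of $Y_\C$ stable under conjugation and Zariski closed descends to a closed subvariety of $Y$). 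For (3): this is literally a statement about $\C$-points $x, x' \in X(\C)$ and $\C$-orbits, and since $\pi$ and $\pi_\C$ agree on $\C$-points it is exactly item (3) of the complex theorem applied to $X_\C$, $G_\C$. For (4): if $U \subset X$ is Zariski open and $G$-saturated, then $U_\C \subset X_\C$ is Zariski open and $G_\C$-saturated, so $\pi_\C(U_\C)$ is Zariski open in $Y_\C$; again $\pi(U)$ is the descent of this conjugation-stable open set, hence Zariski open in $Y$.

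The main obstacle — and really the only substantive point — is item (5), the commutation of invariants with base change, $(A \otimes_\R \C)^{G_\C} = A^G \otimes_\R \C$. The inclusion $\supseteq$ is immediate. For $\subseteq$, the clean way is to use complete reducibility: the $G$-representation $A$ (a possibly infinite union of finite-dimensional subrepresentations) decomposes, and taking $G$-invariants is an exact functor that commutes with the flat base change $-\otimes_\R \C$; more precisely, for any real reductive $G$ and any algebraic $G$-module $M$ over $\R$ one has $(M \otimes_\R \C)^{G_\C} = M^G \otimes_\R \C$, because invariants are a kernel of a map of $\R$-vector spaces, $M \to M \otimes_\R \O(G)$ (or $M \to \prod_g M$ at the level of points), and $-\otimes_\R\C$ is exact. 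The point requiring care is that ``$G_\C$-invariant'' means invariant under the algebraic group $G_\C$, equivalently under the Zariski-dense subgroup $G(\C)$, and one must know that a vector in $M \otimes_\R \C$ fixed by all of $G(\C)$ is fixed by $G(\R)$ — which is clear since $G(\R) \subset G(\C)$ — and that the $\C$-span of $M^G$ exhausts $(M\otimes_\R\C)^{G_\C}$, which is the exactness statement above. I would reduce to the finite-dimensional case by writing $A$ as a directed union of finite-dimensional $G$-stable (and $\tau$-stable) subspaces, handle each by the linear-algebra argument, and pass to the colimit. Once (5) is in hand, everything else is formal descent as sketched.
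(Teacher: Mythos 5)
Your argument is correct, but it is not the route the paper takes: the paper gives no proof at all here, simply citing \cite[Theorem 1.1]{GIT}, i.e.\ the fact that affine GIT quotients with all the listed properties exist for reductive groups over an arbitrary (here: the real) base field, established there directly via reductivity over that field. You instead \emph{deduce} the real statement from the complex theorem already quoted in the paper, by flat base change along $\R \to \C$ and Galois descent, with item (5) as the pivot. That pivot is handled well: your kernel argument (invariants are the kernel of the coaction-minus-identity map, and $-\otimes_\R \C$ is exact) is the right one, and in fact it needs no reductivity at all, so your aside about complete reducibility is unnecessary; reductivity enters only through the complex theorem you invoke for finite generation and for (1)--(4). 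The descent of (1)--(4) is also sound, resting on the standard facts that surjectivity and set-theoretic images commute with field extension and that conjugation-stable closed (resp.\ open) subsets of $Y_\C$ descend to $Y$; the only point worth making explicit is that ``$G$-saturated'' for $U \subset X$ must be read as the condition on $U_\C$ (closures of $G(\C)$-orbits of points of $U(\C)$ stay in $U(\C)$), which is the intended meaning and makes the transfer immediate. In short: the paper's approach buys brevity by outsourcing to GIT over a general field, while yours buys a self-contained reduction to the complex case at the modest cost of the (routine) descent bookkeeping; both are legitimate, and your finite-generation step (real and imaginary parts of complex generators, then averaging coefficients with $\tau$) is correct as written.
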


\begin{proof} See \cite[Theorem 1.1]{GIT}.
\end{proof}

The real algebraic variety $Y$ is called the \textit{GIT quotient} of $X$ by $G$ and is denoted $X/G$.

\begin{remark} \label{rmk:RemarksOnGIT} \
\begin{enumerate} 
\item While stating $(2)$ we did not specify whether the closure is taken with respect to the Zariski or the usual topology: this does not matter as those closures coincide (see \cite{SerreGAGA}). 

\item Statement $(2)$ implies that, given a point $x \in X(\C)$, there exists a unique closed orbit contained in $\ol{G(\C) \cdot x}$.

\item Statement (5) can be restated by saying that $Y_\C$ is the GIT quotient of $X_\C$ by $G_\C$.

\item Statements (3) and (4) still hold when we replace the Zariski topology by the usual one, \textit{e.g.} if $F \subset X(\C)$ is a closed $G(\C)$-stable subset, then $\pi(F)$ is closed in $Y(\C)$.
\end{enumerate}
\end{remark}

The task undertaken in this section is to understand what happens at the level of real points. Luna \cite{LunaRealGIT, LunaFonctionsDifferentiables}, Birkes \cite{Birkes}, Richardson-Slodowy \cite{RichardsonSlodowy} and others have extensively studied the problem and the following results will be of particular interest for us:

\begin{theorem} \label{thm:LunaRealGIT}With the notations introduced above, let $x \in X(\R)$. Then the following hold:
\begin{enumerate}
\item the unique closed orbit contained in the closure of $G(\C) \cdot x$ has a real point. In particular, it is defined over $\R$.
\item the orbit $G(\R)\cdot x$ is closed if and only if $G(\C) \cdot x$ is closed;
\item the closure of $G(\R) \cdot x$ in $X(\R)$ contains a unique closed orbit;
\item if $F \subset X(\R)$ is a closed $G(\R)$-stable subset then $\pi(F)$ is closed in $Y(\R)$.
\end{enumerate}
\end{theorem}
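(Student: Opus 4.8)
The plan is to reduce all four assertions — which are classical, going back to Luna, Birkes and Richardson--Slodowy — to the theory of \emph{minimal vectors} for real reductive groups, i.e. the real avatar of the Kempf--Ness theorem, combined with the complex GIT picture already recalled and with the finiteness of Galois cohomology over $\R$. First I would fix a $G$-equivariant closed embedding $X \hookrightarrow \A(V)$, defined over $\R$, into (the affine space of) a finite-dimensional \emph{real} representation $V$ of $G$, and endow $V(\C)$ with a Hermitian norm $\|\cdot\|$ that is real on $V(\R)$ and \emph{adapted}: invariant under a $\tau$-stable maximal compact subgroup $K$ of $G(\C)$ (so that $K_0 \df K \cap G(\R)$ is maximal compact in $G_0$) and such that $\Lie(G_0)=\mathfrak k_0 \oplus \mathfrak p_0$ acts by skew-symmetric, resp. symmetric, operators. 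Call $v \in V(\R)$ a $G_0$-minimal vector if $\|v\|$ is minimal along $G_0\cdot v$, and similarly over $\C$. The inputs I would quote from Richardson--Slodowy are: \textbf{(i)} a $G_0$-orbit in $X(\R)$ is closed if and only if it contains a $G_0$-minimal vector, and the $G_0$-minimal vectors lying in a fixed orbit closure $\ol{G_0\cdot v}$ form a single $K_0$-orbit; \textbf{(ii)} for $v\in V(\R)$, being $G_0$-minimal is equivalent to being $G(\C)$-minimal (the extra conditions for complex minimality hold automatically, because $\mathfrak k_0$ and $i\mathfrak p_0$ act ``imaginarily'' on real vectors). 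I would also use that $\pi$ is proper on the set of minimal vectors, over $\C$ (Kempf--Ness, Neeman) and hence, by (ii), over $\R$.

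Statements (1) and (3) then come out together. Since $X$ is affine, $\|\cdot\|^2$ is proper on $X(\R)$, so it attains a minimum on the closed set $\ol{G_0\cdot x}$ at a point $x_0$; as $\ol{G_0\cdot x}$ is $G_0$-stable, $x_0$ is $G_0$-minimal, so by (i) the orbit $G_0\cdot x_0\subset\ol{G_0\cdot x}$ is closed — this is the existence part of (3). By (ii), $x_0$ is $G(\C)$-minimal, hence $G(\C)\cdot x_0$ is closed; being contained in $\ol{G(\C)\cdot x}$ it must be the unique closed orbit there (Remark \ref{rmk:RemarksOnGIT}(2)), which by uniqueness is $\tau$-stable, hence defined over $\R$; since $x_0\in X(\R)$ this proves (1). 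For the uniqueness in (3): a closed $G_0$-orbit in $\ol{G_0\cdot x}$ contains a $G_0$-minimal vector by (i), all such vectors lie in one $K_0$-orbit, and $K_0\subset G_0$, so the orbit is unique.

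For (2): if $G_0\cdot x$ is closed it contains a $G_0$-minimal vector $x_1$ by (i); by (ii) $x_1$ is $G(\C)$-minimal, so $G(\C)\cdot x_1=G(\C)\cdot x$ is closed. Conversely, if $G(\C)\cdot x$ is closed then, $x$ being real, it is $\tau$-stable and defined over $\R$; by Proposition \ref{Prop:RealPointsOrbitGaloisCohomology} and the finiteness of $\HH^1(\R,\Stab_G(x))$, its set of real points is a finite union of $G_0$-orbits, all of dimension $\dim_\C G(\C)\cdot x$ since a real point exists; each such orbit is locally closed in $X(\R)$, so its boundary, being a union of orbits of the same dimension yet of strictly smaller dimension, is empty — each orbit is closed, in particular $G_0\cdot x$.

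Finally, (4) is the quotient statement, deduced from properness of $\pi$ on minimal vectors. Let $F\subset X(\R)$ be closed and $G_0$-stable and let $\bar y$ lie in the closure of $\pi(F)$; pick $x_n\in F$ with $\pi(x_n)\to\bar y$, and replace $x_n$ by a norm-minimizing point $z_n\in\ol{G_0\cdot x_n}$: then $z_n\in F$ (since $F$ is $G_0$-stable and closed), $z_n$ is $G_0$-minimal, and $\pi(z_n)=\pi(x_n)$ as $\pi$ is continuous and $G_0$-invariant. Since $\pi$ is proper on minimal vectors and $(\pi(z_n))$ lies in a compact set, $(z_n)$ is bounded; a subsequential limit $z_\infty\in F$ satisfies $\pi(z_\infty)=\bar y$, so $\bar y\in\pi(F)$. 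The genuine difficulty of the whole argument is concentrated in the inputs (i), (ii) and the properness of $\pi$ on minimal vectors: if one does not cite Richardson--Slodowy, one has to redevelop real Kempf--Ness theory — choosing the adapted norm, proving convexity of $t\mapsto\log\|\exp(t\xi)v\|^2$ along $\xi\in\mathfrak p_0$, and the closed-orbit/minimal-vector dictionary — and this is the step I expect to be the main obstacle.
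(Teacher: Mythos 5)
Your proposal is correct, but it takes a genuinely different route from the paper's. The paper handles (1) by invoking Kempf's theory of rational destabilizing one-parameter subgroups (\cite[Corollary 4.4]{KempfInstability}, resting on Birkes and Rousseau), proves the nontrivial direction of (2) by producing a real one-parameter subgroup $\lambda$ with $\lim_{t\to 0}\lambda(t)\cdot x$ landing in the closed orbit, and simply cites Luna (Th\'eor\`eme 2.7, after a closed $G$-equivariant embedding into affine space to remove the smoothness hypothesis) for (3) and Luna's Lemme 2.6 or \cite[Proposition 6.8]{RichardsonSlodowy} for (4). You instead concentrate everything in the real Kempf--Ness (minimal-vector) theory of Richardson--Slodowy: norm minimization on $\ol{G(\R)\cdot x}$ together with the equivalence ``$G(\R)$-minimal $\Leftrightarrow$ $G(\C)$-minimal'' for real vectors yields (1), the existence half of (3), and one direction of (2), while properness of $\pi$ on minimal vectors (Neeman over $\C$, transferred to $\R$ by that same equivalence) yields (4). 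Both routes are legitimate: the paper trades on the instability/one-parameter-subgroup machinery, you trade on the Kempf--Ness machinery, and your version has the mild advantage of also supplying the converse direction of (2) (closed complex orbit $\Rightarrow$ closed real orbit) via finiteness of $\HH^1(\R,\cdot)$ and equidimensionality of the $G(\R)$-orbits in $T(\R)$ --- a direction the printed proof leaves implicit --- provided you quote the standard fact that orbits of real algebraic groups are locally closed in the Hausdorff topology with boundary a union of orbits of strictly smaller dimension. One caveat: the strong form of your input (i), namely that the $G(\R)$-minimal vectors in a fixed orbit closure form a single $K_0$-orbit, already encodes the uniqueness assertion of (3) (it is Richardson--Slodowy's uniqueness statement combined with ``minimal vectors in a closed orbit form one $K_0$-orbit''), so on that point you are citing rather than reproving --- which is exactly what the paper does with Luna, and is the step where the real analytic work (adapted norm, convexity along $\exp(t\xi)$ for $\xi\in\mathfrak p_0$) is hidden.
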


\begin{proof} 
(1) See \cite[Corollary 4.4]{KempfInstability}.  The argument ultimately relies on the existence of rational destabilizing one parameter subgroups, which is due to Birkes \cite[Theorem 5.2]{Birkes} over the real numbers whether, over an arbitrary perfect field, is due to Kempf \cite[Theorem 4.2]{KempfInstability} and Rousseau \cite{Rousseau}.

(2) Suppose that the orbit $G(\C) \cdot x$ is not closed. Then by \cite[Theorem 4.2]{KempfInstability} there exists a one parameter subgroup $\lambda \colon \G_{m, \R} \to G$ such that the point
$$ x_0 \df \lim_{t \to 0} \lambda(t) \cdot x \in X(\R),$$
belongs to the unique closed orbit contained in $\ol{G(\C) \cdot x}$. Thus $x_0$ lies in the closure of the map $\R^\times \to X(\R)$, $t \mapsto \lambda(t) \cdot x$ and $G(\R) \cdot x$ is not closed.

(3) To avoid the smoothness assumption in \cite[Th\'eor\`eme 2.7]{LunaRealGIT}, take a closed $G$-equivariant embedding of $X$ in $\A^n_\R$ and apply the cited result to $\A^n_\R$.

(4)  See \cite[Lemme 2.6]{LunaRealGIT} or \cite[Proposition 6.8]{RichardsonSlodowy}.
\end{proof}

In order to complete the analogy with the complex case we prove the following:

\begin{theorem} \label{Thm:ComplementRealGIT} With the notations introduced above, the following hold:
\begin{enumerate}
\item if $U \subset X(\R)$ is a $G(\R)$-saturated\footnote{That is, for every $x \in U(\R)$ the closure of the orbit $G(\R) \cdot x$ is contained in $U(\R)$.} open subset, then $\pi(U)$ is open in the image $\pi(X(\R))$;
\item if $V \subset Y$ is a Zariski open subset such that the induced map 
$$ \pi \colon U \df \pi^{-1}(V) \too V,$$
is smooth, then $\pi(U(\R))$ is a finite union of connected components of $V(\R)$. 
\end{enumerate}
In particular, if $W \subset V(\R)$ is a connected component, either it is contained in $\pi(X(\R))$ or it does not meet $\pi(X(\R))$.
\end{theorem}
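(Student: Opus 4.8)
The plan is to prove the two assertions of Theorem \ref{Thm:ComplementRealGIT} in turn, then observe that the final ``in particular'' is an immediate consequence of (2).

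\textbf{Part (1).} First I would pass to complex points. By Remark \ref{rmk:RemarksOnGIT}(3) the variety $Y_\C$ is the GIT quotient of $X_\C$ by $G_\C$, and Remark \ref{rmk:RemarksOnGIT}(4) tells us that $\pi\colon X(\C)\to Y(\C)$ is an open map onto Zariski-open $G(\C)$-saturated subsets in the usual topology. The strategy is to produce, from the $G(\R)$-saturated open $U\subset X(\R)$, a $G(\C)$-saturated open $\widetilde U\subset X(\C)$ with $\widetilde U\cap X(\R)=U$; then $\pi(\widetilde U)$ is open in $Y(\C)$ by Remark \ref{rmk:RemarksOnGIT}(4), and $\pi(U)=\pi(\widetilde U)\cap\pi(X(\R))$ will be open in $\pi(X(\R))$. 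To build $\widetilde U$: for $x\in X(\C)$ let $c(x)$ denote the unique closed $G(\C)$-orbit in $\overline{G(\C)\cdot x}$ (Remark \ref{rmk:RemarksOnGIT}(2)); set $\widetilde U=\{x\in X(\C): c(x)\subset U_\C'\}$ for a suitable $G(\C)$-stable open $U_\C'$. The cleanest route is to work directly with the quotient: since $\pi(X(\R))$ carries the subspace topology from $Y(\R)$, and since the fibres of $\pi$ over points of $\pi(X(\R))$ each contain a real closed orbit (Theorem \ref{thm:LunaRealGIT}(1)), one checks that a subset $S\subset\pi(X(\R))$ is open iff $\pi^{-1}(S)\cap X(\R)$ is open and $G(\R)$-saturated. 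Applying this criterion to $S=\pi(U)$: we must see that $\pi^{-1}(\pi(U))\cap X(\R)$ is open; but by the fibre structure (Theorem \ref{thm:LunaRealGIT}(1),(3)) a real point $y$ lies in $\pi^{-1}(\pi(U))$ exactly when the unique closed $G(\R)$-orbit in $\overline{G(\R)\cdot y}$ meets $U$, and since $U$ is $G(\R)$-saturated and open this locus is open. Saturation of $\pi^{-1}(\pi(U))\cap X(\R)$ is automatic. This gives (1).

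\textbf{Part (2).} Here the hypothesis is that $\pi\colon U=\pi^{-1}(V)\to V$ is a smooth morphism of real algebraic varieties. A smooth morphism has open image on real points: indeed $\pi(U(\R))\subset V(\R)$ is open because smoothness of $\pi$ implies the real-point map is open (submersion on the smooth real loci, via the implicit function theorem applied to $U(\R)\to V(\R)$, using that $U(\R)$ is a manifold since $U$ is smooth over $V$ which we may take smooth after shrinking — or more robustly, smoothness is stable under base change and $\R$-points of a smooth $\R$-variety with a point are a manifold). On the other hand $\pi(U(\R))$ is closed in $V(\R)$: by Part (1) applied with the saturated open $U(\R)\subset X(\R)$ — note $\pi^{-1}(V)\cap X(\R)$ is automatically $G(\R)$-saturated — we get that $\pi(U(\R))$ is open in $\pi(X(\R))\cap V(\R)$; to get closedness, use Theorem \ref{thm:LunaRealGIT}(4): $X(\R)$ is closed $G(\R)$-stable in itself so $\pi(X(\R))$ is closed in $Y(\R)$, hence closed in $V(\R)$, and combined with openness of $\pi(U(\R))$ inside it plus the fact that $\pi(U(\R))=\pi(X(\R))\cap V(\R)$ (every real point of $V$ in the image of $\pi$ over $\C$ lifts to a real point by Theorem \ref{thm:LunaRealGIT}(1), since $V$ is open and the closed orbit it picks out is defined over $\R$ and lies over $V$), we conclude $\pi(U(\R))$ is both open and closed in $V(\R)$. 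A subset that is open and closed is a union of connected components; finiteness follows because $V(\R)$, being the real locus of a variety, has finitely many connected components. The final ``in particular'' is then immediate: a connected component $W\subset V(\R)$ is connected, so the clopen set $\pi(U(\R))\cap W$ is either all of $W$ or empty, i.e.\ $W\subset\pi(X(\R))$ or $W\cap\pi(X(\R))=\emptyset$.

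\textbf{Main obstacle.} The delicate point is the identification $\pi(U(\R))=\pi(X(\R))\cap V(\R)$, i.e.\ that every real point of $V$ lying in the $\C$-image of $\pi$ is actually hit by a \emph{real} point of $U$. This is precisely where Theorem \ref{thm:LunaRealGIT}(1) (Kempf--Birkes: the unique closed orbit in a closure through a real point is defined over $\R$ and has a real point) does the work, together with the observation that this closed orbit, having the same image in $Y$, lands in $V$; so its real point is a real point of $U=\pi^{-1}(V)$ mapping to the given point. The other subtle step is the openness criterion for subsets of $\pi(X(\R))$ in Part (1); establishing it carefully requires knowing that $\pi\colon X(\R)\to\pi(X(\R))$ is a quotient map for the equivalence relation ``having a common point in the closures of orbits'', which follows by combining Theorem \ref{thm:LunaRealGIT}(1),(3),(4).
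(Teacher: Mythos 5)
Your Part (2) is essentially sound, and in one respect slightly cleaner than the paper's own treatment: since $U=\pi^{-1}(V)$ is a full preimage, the identity $\pi(U(\R))=\pi(X(\R))\cap V(\R)$ is a triviality, closedness in $V(\R)$ follows from Theorem \ref{thm:LunaRealGIT}(4) applied to $X(\R)$ itself (the paper instead covers $V$ by affines), openness comes from smoothness on real points, and a clopen subset of $V(\R)$ is a finite union of components. (Your appeal to Theorem \ref{thm:LunaRealGIT}(1) for that identity is unnecessary, and the lifting statement you attribute to it -- every real point of $Y$ in the complex image lifts to a real point of $X$ -- is false in general, e.g. $\{\pm\id\}$ acting on $\A^1_\R$; but nothing in your argument actually needs it.)

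Part (1), however, has a genuine gap, and it sits exactly where the whole difficulty of the theorem lies. Your reduction is fine: for the full preimage $\pi^{-1}(S)\cap X(\R)$ the complement argument does work, because $\pi(X(\R))\smallsetminus S=\pi\bigl(X(\R)\smallsetminus\pi^{-1}(S)\bigr)$ is closed by Theorem \ref{thm:LunaRealGIT}(4), so ``preimage open $\Rightarrow$ $S$ open in $\pi(X(\R))$'' holds. But you must then prove that $\pi^{-1}(\pi(U))\cap X(\R)$ is open, and your one-line justification fails on two counts. First, the fibre description is wrong: $\pi(y)\in\pi(U)$ does not say that the unique closed $G(\R)$-orbit in $\ol{G(\R)\cdot y}$ meets $U$; it says that the real locus of the unique closed $G(\C)$-orbit $T_y\subset\ol{G(\C)\cdot y}$ meets $U$, and the two differ precisely when $T_y(\R)$ splits into several real orbits. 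In Example \ref{Ex:SL2(R)Conjugation}, take $U$ to be one of the two $\SL_2(\R)$-conjugacy classes of elliptic elements (open, a union of closed orbits, hence saturated); for $y$ in the other class one has $\pi(y)\in\pi(U)$ while the closed real orbit of $y$ is disjoint from $U$. Second, and more seriously, even with the corrected description the openness of $\{y\in X(\R): T_y(\R)\cap U\neq\emptyset\}$ is not a formal consequence of $U$ being open and $G(\R)$-saturated: by your own criterion it is equivalent to statement (1), so asserting it is circular. This is exactly the obstruction the paper isolates (the images of $U$ and of its closed complement may overlap, because one closed complex orbit can carry several closed real orbits), and the paper's proof removes it with Luna's \'etale slice theorem -- passing to the stabilizer $G_x$ acting on a slice defined over $\R$, where the base point is fixed by $G_x$ so no such overlap can occur at it, and transporting openness back through the \'etale map, which is open on real points. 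Your proposal contains no slice argument nor any substitute (e.g. a Kempf--Ness/Richardson--Slodowy minimal-vector argument), so Part (1) is not established.
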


This statement is probably known to experts in GIT over $\R$. Nonetheless we could not find it stated as it is, so we give a proof of it. Before doing this, let us illustrate the theorem through two examples.

\begin{example} Let $X = \A^1_\R$ be the affine line endowed with the action of $G = \{\pm \id\}$ defined by  $ \pm \id \cdot x = \pm x$.
Then the GIT quotient $Y$ is again $\A^1_\R$ and the quotient map is $\pi(x) = x^2$. In particular, we have
$$ \pi(\R) = \R_+,$$
so that $\pi(X(\R))$ cannot be open in $Y(\R)$ in general.

Remark that, if $V = Y \smallsetminus \{ 0\}$, then $V(\R)$ has exactly two connected components, 
$$ W_+ = \R_{> 0}, \quad W_- = \R_{<0},$$
one contained in the image and the other not meeting it.
\end{example}

\begin{example} One may wonder whether in the previous example the non-openness of the image of $X(\R)$ in $Y(\R)$ were due to the fact that $G$ is not connected. This is not true.

Indeed, let $G$ denote the real form of $\SL_2$ whose real points are
$$ G(\R) = \SU(2) = \left\{ 
\begin{pmatrix}
a & - \bar{b} \\ b & \bar{a}
\end{pmatrix} : a, b \in \C, |a|^2 + |b|^2 = 1
\right\}. $$

The GIT quotient of $X = G$ under the action of $G$ by conjugation is the affine $\A^1_\R$ and the quotient map is the trace (\emph{cf.} Example \ref{ex:GITQuotientConjugation}): for $a, b \in \C$ such that $|a|^2 + |b|^2 = 1$,
$$\pi
\begin{pmatrix}
a & - \bar{b} \\ b & \bar{a}
\end{pmatrix} = a + \bar{a} = 2 \Re(a),
$$
hence $\pi(\SU(2)) = [-2, 2]$. The points $\pm 2$ correspond to the points where the trace map is not smooth.
\end{example}

\begin{proof}[{Proof of Theorem \ref{Thm:ComplementRealGIT}}] Let $U \subset X(\R)$ be an open $G(\R)$-saturated subset. The naive argument would go as follows. 

Consider the complement $F = X(\R) \smallsetminus U$ of $U$: it is a closed $G(\R)$-stable subset of $X(\R)$, hence its image in $Y(\R)$ is closed. Nonetheless it may happen that $\pi(F)$ and $\pi(U)$ meet. Indeed, if $T \subset X$ is a closed $G$-orbit, then $T(\R)$ may be made of more than one $G(\R)$-orbit (see Example \ref{Ex:SL2(R)Conjugation} for instance). Then it can occur that one of these $G(\R)$-orbits lies in $U$ and another in the complement $F$. For this reason the argument is more involved.

(1) Let $U \subset X(\R)$ be an open $G(\R)$-saturated subset and let $x \in X(\R)$. One has to show that $\pi(U)$ contains an open neighborhood of $\pi(x)$ in $\pi(X(\R))$.

First of all, one may suppose that the orbit $G(\R)\cdot x$ is closed. If this is not the case, consider the unique closed orbit contained in $\ol{G(\R) \cdot x}$. The latter belongs to $U$ because is $G(\R)$-saturated.

From now on assume that the $G(\R)$-orbit of $x$ is closed. By Luna's Slice \'Etale Theorem \cite{LunaSlicesEtales}, there exists an affine locally closed complex subvariety $Z$ of $X_{\C}$ containing $x$, stable under the action of the stabilizer $G_x$ of $x$, and such that
$$ W = \{ g \cdot z \in X_\C : g \in G, z \in Z\},$$
is a $G_{\C}$-saturated Zariski open subset of $X_\C$. Moreover the induced map between GIT quotients $ \phi \colon V / G_x \to \pi(W)$ is \'etale. Clearly, up to taking $V$ smaller, one may suppose that it is defined over $\R$:  it suffices to consider $V \cap \bar{V}$ where $\bar{V} \subset X_\C$ is the conjugated variety.

It is now possible to conclude the argument. Let $p \colon V \to V/G_x$ be the quotient map. The open subset $U \cap V(\R)$ of $V(\R)$ is $G_x(\R)$-saturated. Remark that it suffices to prove that $p(U \cap V(\R))$ contains an open neighborhood of $p(x)$ in $p(V(\R))$. This is because the map $\phi$ is \'etale, hence an open map at the level of real points \cite[Proposition 2.2.1]{MoretBailly}.

The complement $F = V(\R) \smallsetminus U$ of $U$ in $V(\R)$ is a closed $G_x(\R)$-stable subset, thus its image in $(V/G_x)(\R)$ is closed by Theorem \ref{thm:LunaRealGIT}. It remains to show that $p(x)$ belongs to the open subset 
$$ p(V(\R)) \smallsetminus p(F)$$
of $p(V(\R))$. Suppose the contrary. Then there would exist a point $y \in F$ such that $p(y) = p(x)$, that is, $\{ x\}$ would be the unique closed $G_x(\R)$-orbit contained in the closure of $G_x(\R) \cdot y$. Since $F$ is closed, this would imply that $x$ belongs to $F$, leading to a contradiction.

(2) Let us consider a Zariski open subset $V$ of $Y$ such that the induced map
$$ \pi \colon U \df \pi^{-1}(V) \too V,$$
is smooth. Then, the induced map at the level of real points is open \cite[Proposition 2.2.1]{MoretBailly}. Moreover, $U(\R)$ is a $G(\R)$-saturated open subset by definition and if $F \subset U(\R)$ is a closed $G(\R)$-stable subset then its image in $V(\R)$ is closed (cover $V$ by affine open subsets to reduce to the case of Theorem \ref{thm:LunaRealGIT}). In particular, the image of $U(\R)$ is open and closed in $V(\R)$. 
\end{proof}

As said before for $x \in X(\R)$ there exists a unique closed $G(\R)$-orbit contained in the closure of $G(\R)\cdot x$. Therefore the space of $G(\R)$-orbits does not have that much interest. The notation $X(\R) / G(\R)$ will be reserved for the set of closed $G(\R)$-orbits. Consider the map
$$ p \colon X(\R) \too X(\R) / G(\R), $$
sending a point $x$ to the unique closed $G(\R)$-orbit contained in $\ol{G(\R)\cdot x}$. Endow the set $X(\R) / G(\R) $ with the finest topology such that $p$ is continuous: this is obtained declaring a subset  $V \subset X(\R) / G(\R) $ open if $p^{-1}(V)$ is an open subset of $X(\R)$. Since for a point $x \in X(\R)$ the orbit $G(\R)\cdot x$ is closed if and only if $G(\C)\cdot x$ is closed, one obtains a map
$$ \theta \colon X(\R) / G(\R) \too Y(\R),$$
whose image is contained in $\pi(X(\R))$.

\begin{proposition} \label{Prop:BiggestHausdorffQuotientAffine} With the notations introduced above:
\begin{enumerate}
\item the topological space $X(\R) / G(\R)$ is Hausdorff and locally compact;
\item the map $\theta$ is proper with finite fibers and open onto its image;
\item $X(\R) / G(\R)$ is the categorical quotient of $X(\R)$ by $G(\R)$ in the category of Hausdorff spaces: for all Hausdorff topological space $Z$ and all $G(\R)$-invariant continuous map $f \colon X(\R) \to Z$ there exists a unique continuous map $\tilde{f} \colon X(\R) / G(\R) \to Z$ such that $ f = \tilde{f} \circ p$.
\end{enumerate}
\end{proposition}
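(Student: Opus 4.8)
The plan is to prove the three assertions of Proposition~\ref{Prop:BiggestHausdorffQuotientAffine} by systematically transporting the information already gathered for the algebraic quotient $\pi\colon X(\R)\to Y(\R)$ through the map $\theta$, using that $\theta$ is injective on the level of closed orbits by construction.

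\emph{Hausdorffness and local compactness (1).} First I would record that $\theta$ is injective: two closed $G(\R)$-orbits $O,O'$ with $\theta(O)=\theta(O')$ lie in the same fibre $\pi^{-1}(\pi(x))$, whose $\C$-points contain a unique closed $G(\C)$-orbit $T$ by Remark~\ref{rmk:RemarksOnGIT}(2); by Theorem~\ref{thm:LunaRealGIT}(1) $T$ is defined over $\R$, and by Theorem~\ref{thm:LunaRealGIT}(2),(3) the orbits $O,O'$ are both the (necessarily unique, being closed) closed $G(\R)$-orbit in $\overline{G(\R)\cdot x}$, so $O=O'$. Granting injectivity, to prove $X(\R)/G(\R)$ is Hausdorff it suffices to show $\theta$ is a topological embedding onto its image with the subspace topology from $Y(\R)$; since $Y(\R)$ is Hausdorff (it is the set of real points of a real algebraic variety, hence a metrizable locally compact space), this gives Hausdorffness, and local compactness will follow once we know $\theta$ is proper onto a locally closed subset (item 2). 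For the embedding statement: $\theta$ is continuous because $p^{-1}(\theta^{-1}(A))=\pi^{-1}(A)$ for $A\subset Y(\R)$, and $\theta$ is open onto its image by Theorem~\ref{Thm:ComplementRealGIT}(1), applied to $G(\R)$-saturated open subsets of $X(\R)$ of the form $p^{-1}(V)$ (these are exactly the $G(\R)$-saturated opens, and $p$ is a quotient map so $V$ open $\iff p^{-1}(V)$ open).

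\emph{Properness, finite fibres, openness onto image (2).} Openness onto the image is exactly the content just extracted from Theorem~\ref{Thm:ComplementRealGIT}(1). For finiteness of fibres: a fibre $\theta^{-1}(\pi(x))$ is a set of closed $G(\R)$-orbits inside the single closed $G(\C)$-orbit $T$ over $\pi(x)$; but $T(\R)/G(\R)$ is finite by Proposition~\ref{Prop:RealPointsOrbitGaloisCohomology}, being a subset of $\HH^1(\R,G_x)$, which is finite by the cited result of Serre — and each $G(\R)$-orbit in $T(\R)$ is closed since $T$ is a closed $G(\C)$-orbit (Theorem~\ref{thm:LunaRealGIT}(2)); so the fibre has at most $\#\HH^1(\R,G_x)$ elements, actually it is a single point by the injectivity argument above, but finiteness is all that is claimed. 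Hence $\theta$ already being injective, ``finite fibres'' is immediate. For properness: let $K\subset Y(\R)$ be compact; I would show $\theta^{-1}(K)$ is compact by exhibiting it as a continuous image of a compact set. Choose a closed $G(\R)$-equivariant embedding $X\hookrightarrow\A^n_\R$; then $\pi^{-1}(K)\cap\{$ closed orbits $\}$ — more precisely, I would use that $p\colon X(\R)\to X(\R)/G(\R)$ restricted to the closed subset of $X(\R)$ consisting of points whose $G(\R)$-orbit is closed need not be proper, so instead I argue as in the complex Kempf–Ness picture: it suffices to find, for each point of $\theta^{-1}(K)$, a representative in a fixed compact subset of $X(\R)$. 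This is where properness of the \emph{algebraic} map $\pi$ over $Y(\R)$ (restricted suitably) together with a real Kempf–Ness / distance-minimizing argument enters; alternatively, cover $K$ by finitely many $V_i(\R)$ over which $\pi$ is smooth, apply Theorem~\ref{Thm:ComplementRealGIT}(2) to see $\theta^{-1}(V_i(\R))$ is open and closed and maps homeomorphically (by openness + injectivity + Theorem~\ref{thm:LunaRealGIT}) onto a union of components, and patch. I expect \textbf{properness} to be the main obstacle, because one has to control the topology globally rather than locally, and the cleanest route is to reduce to the complex GIT quotient $Y(\C)$ — which is proper over a point when $X/G$ is, but in the affine case one must instead use that $Y(\R)$ is closed in some $\A^m_\R$ and that the preimage under $\theta$ of a bounded set is bounded, which follows because $A^G\otimes_\R\C$ generates enough invariants to separate closed orbits (Theorem~\ref{Thm:AffineRealGIT}(5)) and a bounded set of invariant values pulls back, by a Birkes-type argument, to a set meeting a bounded collection of closed orbits.

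\emph{Categorical quotient property (3).} Given a Hausdorff $Z$ and a $G(\R)$-invariant continuous $f\colon X(\R)\to Z$, I would first check $f$ factors set-theoretically through $p$: if $p(x)=p(x')$ then $x,x'$ have the same closed orbit $O$ in their closures, so $O\subset\overline{G(\R)\cdot x}$ gives, by continuity and invariance of $f$, that $f(O)\subset\overline{\{f(x)\}}=\{f(x)\}$ since $Z$ is Hausdorff (points are closed), hence $f(x)=f(y)$ for $y\in O$; applying this to both $x$ and $x'$ yields $f(x)=f(x')$. Thus there is a unique set map $\tilde f\colon X(\R)/G(\R)\to Z$ with $f=\tilde f\circ p$, and $\tilde f$ is continuous because $p$ is a topological quotient map (by definition of the topology on $X(\R)/G(\R)$). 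Uniqueness is clear from surjectivity of $p$. This step is routine once (1) is in place; no further input is needed beyond the quotient-topology definition and Hausdorffness of $Z$.
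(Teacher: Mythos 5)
Your argument has a genuine error at its foundation: $\theta$ is \emph{not} injective, and the paper says so explicitly (``the map $\theta$ is neither injective nor surjective in general''). If $O,O'$ are two closed $G(\R)$-orbits with $\theta(O)=\theta(O')$, all you can conclude is that both lie in $T(\R)$ for the same closed $G(\C)$-orbit $T$; but $T(\R)$ may split into several $G(\R)$-orbits, each of which is closed by Theorem \ref{thm:LunaRealGIT}(2). Theorem \ref{thm:LunaRealGIT}(3) gives uniqueness of the closed orbit inside the closure of a \emph{single} $G(\R)$-orbit, and since $O$ and $O'$ are already closed and need not lie in each other's closure, it does not force $O=O'$. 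Example \ref{Ex:SL2(R)Conjugation} is a concrete counterexample: for $|\Tr A|<2$ the closed $\SL_2(\C)$-orbit of $A$ contains two distinct closed $\SL_2(\R)$-orbits ($A$ and $A^{-1}$), both mapping to the same point of $Y(\R)$; indeed the paper's own proof of (2) computes $\#\theta^{-1}(\pi(x))=\#\ker(\HH^1(\R,G_x)\to\HH^1(\R,G))$, which is generally $>1$. This error propagates: your proof of Hausdorffness rests on $\theta$ being a topological embedding into the Hausdorff space $Y(\R)$, which fails exactly when a fibre of $\theta$ contains two distinct points — precisely the case where separating them requires a genuinely real argument. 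The paper does not attempt this by hand; it cites Richardson--Slodowy (Theorem 7.6 and Proposition 7.4), whose minimal-vector techniques (or Luna's results) yield Hausdorffness, local compactness and properness of $\theta$ simultaneously.

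Your treatment of properness is also not a proof: you list several possible strategies (Kempf--Ness, a ``Birkes-type'' boundedness argument, patching over smooth loci) without carrying any of them out, and you acknowledge this is the main obstacle; the paper again disposes of it by the Richardson--Slodowy citation. The parts of your proposal that do work are exactly the parts that coincide with the paper: openness of $\theta$ onto its image as a reformulation of Theorem \ref{Thm:ComplementRealGIT}(1) (your verification that $p^{-1}(V)$ is $G(\R)$-saturated is correct), finiteness of fibres via Proposition \ref{Prop:RealPointsOrbitGaloisCohomology} and the finiteness of $\HH^1(\R,G_x)$ (though your subsequent claim that the fibre is a single point is false), and the categorical quotient property (3), where your factorization and continuity argument is essentially the paper's. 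So items (2)-(except properness) and (3) stand, but (1) and properness need the external input you tried to replace with the faulty injectivity claim.
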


\begin{remark}\
\begin{enumerate}
\item Statement (3) characterizes $X(\R) / G(\R)$ up to a unique homeomorphism. The quotient $X(\R) / G(\R)$ is called the \emph{separated quotient} of $X(\R)$ by $G(\R)$. 

\item The Hausdorff topological space $X(\R) / G(\R)$ is more generally the quotient of $X(\R)$ by $G(\R)$ in the category of $\textup{T}_1$ topological spaces, that is, topological spaces whose points are closed (these are also called ``accessible'', ``Fr\'echet'' or ``Tikhonov'').
\end{enumerate}
\end{remark}

\begin{proof} (1) See \cite[Theorem 7.6]{RichardsonSlodowy}\footnote{The space $X(\R) / G(\R)$ being Hausdorff is due to Luna (see \cite[7.3.3]{RichardsonSlodowy}). However the techniques employed by Richardon and Slodowy permit to give a proof of this which does not have recourse to Luna's Slice \'Etale theorem (see \cite[\S 9]{RichardsonSlodowy}).} and \cite[Proposition 7.4]{RichardsonSlodowy}.

(2) Openness of the map $\theta$ is a reformulation of Theorem \ref{Thm:ComplementRealGIT}. If $x \in X(\R)$ has closed orbit $T = G \cdot x$ then, by Proposition \ref{Prop:RealPointsOrbitGaloisCohomology},
$$ \# \theta^{-1}(\pi(x)) = \# T(\R) / G(\R) = \# \ker(\HH^1(\R, G_x) \to \HH^1(\R, G)),$$
where $G_x$ stands for the stabilizer of $x$. In particular $f^{-1}(\pi(x))$ is a finite set.

(3) For $x \in X(\R)$ set $\tilde{f}(p(x)) = f(x)$. The map $\tilde{f}$ is well-defined: if $x'$ lies in the unique closed $G(\R)$-orbit contained in $\overline{G(\R) \cdot x}$ then $f(x) = f(x')$. To show that $\tilde{f}$ is continuous, remark that for an open subset $U \subset Z$,
$$ f^{-1}(U) = p^{-1}(\tilde{f}^{-1}(U)),$$
is an open $G(\R)$-saturated subset. In particular $\tilde{f}^{-1}(U)$ is open.
\end{proof}

\begin{example} \label{Ex:SL2(R)Conjugation} Consider the action of $G = \SL_{2, \R}$ by conjugation on itself. Then the GIT quotient of $X = \SL_{2, \R}$ by $G$ is $\A^1_\R$ and the quotient map is given by the trace (\emph{cf.} Example \ref{ex:GITQuotientConjugation}):
\begin{eqnarray*}
\Tr \colon \SL_{2, \R} & \too & \A^1_\R \\
A & \longmapsto & \Tr A.
\end{eqnarray*}
For a real matrix $A \in \SL_2(\R)$ three cases need to be distinguished:
\begin{itemize}
\item $|\Tr(A)| > 2$ : the matrix $A$ is diagonalisable over $\R$ with distinct eigenvalues. The $G$-orbit $G \cdot A$ is closed and its real points consists of one $G(\R)$-orbit.

\item $|\Tr(A)| = 2$ : the matrix $A$ has one real eigenvalue of multiplicity $2$ but may be not diagonalisable. Anyway the unique closed orbit in the closure of $G \cdot A$ is $\{\id \}$ if $\Tr(A) = 2$ and it is $\{ - \id \}$ if $\Tr(A) = -2$.

\item $|\Tr(A)| < 2$ : the matrix $A$ is a rotation of angle $\theta \not \equiv 0 \pmod{\pi \Z}$. Since elements in $\SL_2(\R)$ respect the orientation of $\R^2$, the matrices $A$ and $A^{-1}$ cannot lie on the same $G(\R)$-orbit. On the other hand it is easily seen that these are the unique two $G(\R)$-orbits.
\end{itemize}

The situation can be depicted as follows:

\begin{figure}[h]
\begin{tikzpicture}
\draw (-3,2)--(-1,2);
\draw (1,2)--(3,2);
\draw (-3,-0.25)--(3,-0.25);
\draw (0,2) circle (1cm);

\draw[dashed] (-1,2) -- (-1,-0.25);
\draw[dashed] (1,2) -- (1,-0.25);
\draw[->] (0,0.75) -- (0,0) node[midway, right] {$\pi$};
\node[anchor=east] at (-1, 2.25) {$-\id$};
\draw[fill] (-1,2) circle (1pt);
\node[anchor=west] at (1, 2.25) {$\id$};
\draw[fill] (1,2) circle (1pt);
\node[anchor=east] at (-3.25, 2) {$X(\R)/G(\R)$};
\node[anchor=east] at (-3.25, -0.25) {$Y(\R)$};
\node[anchor=west] at (4, -0.25) {};
\node[anchor=north] at (-1,-0.25) {-2};
\node[anchor=north] at (1,-0.25) {2};
  \end{tikzpicture}
\end{figure}

\end{example}

\subsection{The complex projective case} Let $G$ be a complex reductive group acting on a projective variety $X$. In order to be led back to the affine case, let $L$ be a $G$-linearized ample line bundle on $X$ (\cite[\S 1.3]{GIT}). A $G$-linearized very ample line bundle is equivalent to the a $G$-equivariant closed embedding $\iota \colon X \to \P(V)$, where $G$ acts linearly on $V$.\footnote{Under this correspondence $L = \iota^\ast \O(1)$ and $V = \HH^0(X, L)^\ast$.}

Even with this supplementary hypothesis, one cannot hope to obtain theorems analogous to the one in the affine case. For instance, letting $\G_{m, \C}$ acting on the projective line $\P^1_{\C}$ by $t[x:y] = [tx : t^{-1} y]$, both $0$ and $\infty$ lie in the closure of the orbit of $1$. 

To avoid such a situation one has to restrict itself to consider just an open subset of $X$.

\begin{definition} A point $x \in X(\C)$ is said to be \emph{semi-stable} with respect to $G$ and $L$ if there exists a positive integer $d \ge 1$ and a $G$-invariant section $s \in \H^0(X, L^{\otimes d})$ not vanishing at $x$. The subset of semi-stable points is denoted $X^\ss$; it is open in $X$.
\end{definition}

Consider the graded $\C$-algebra of finite type
$$ A \df \bigoplus_{d \ge 0} \HH^0(X, L^{\otimes d}), $$
and its sub-$\C$-algebra of $G$-invariant elements,
$$ A^G \df \bigoplus_{d \ge 0} \HH^0(X, L^{\otimes d})^G.$$

\begin{theorem} The graded $\C$-algebra $A^G$ is finitely generated. Consider the projective variety $Y = \Proj(A^G)$. 

Then, the inclusion $A^G \subset A$ induces a morphism $\pi \colon X^\ss \to Y$ satisfying the following properties:
\begin{enumerate}
\item $\pi$ is $G$-invariant and surjective;
\item let $x, x' \in X^\ss(\C)$; then $\pi(x) = \pi(x')$ if and only if
$$ \ol{G \cdot x} \cap \ol{G \cap x'} \cap X^\ss \neq \emptyset;$$
\item if $F \subset X^\ss$ is a Zariski closed $G$-stable subset, then $\pi(F)$ is Zariski closed in $Y$;
\item if $U \subset X$ is a Zariski open $G$-saturated subset, then $\pi(U)$ is Zariski open;
\item the homomorphism of sheaves $\pi^\ast \colon \O_Y \to \pi_\ast \O_{X^\ss}$ induces an isomorphism
$$ \pi^\ast \colon  \O_Y \stackrel{\sim}{\too} (\pi_\ast \O_{X^\ss})^G.$$
\end{enumerate}
\end{theorem}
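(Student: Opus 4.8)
The plan is to deduce everything from the affine case treated above by covering $X^\ss$ with the standard affine charts attached to invariant sections. For finite generation: since $L$ is ample, the graded $\C$-algebra $A = \bigoplus_{d \ge 0} \HH^0(X, L^{\otimes d})$ is of finite type (up to a bounded piece it is the homogeneous coordinate ring of a projective embedding of $X$), and as $G$ is reductive and acts on $A$ by graded $\C$-algebra automorphisms, the Hilbert--Nagata theorem gives that $A^G = \bigoplus_{d \ge 0} \HH^0(X, L^{\otimes d})^G$ is finitely generated; hence $Y \df \Proj(A^G)$ is a projective $\C$-variety.

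For the construction of $\pi$: for a homogeneous invariant $s \in A^G_d$ with $d \ge 1$, let $X_s \df \{ x \in X : s(x) \neq 0 \}$. This is a $G$-stable Zariski open subset, affine because $L$ is ample, and by the very definition of semi-stability $X^\ss = \bigcup_s X_s$, the union running over homogeneous invariant sections of positive degree. The $\C$-algebra of regular functions on $X_s$ is the degree-zero part $A_{(s)}$ of the localization $A_s$, and its subalgebra of $G$-invariants is $(A_{(s)})^G = (A^G)_{(s)}$, which is precisely the ring of regular functions on the principal open subset $Y_s \df \{ s \neq 0 \} \subset Y$. Applying the affine GIT theorem to the action of $G$ on $X_s$ yields a $G$-invariant morphism $\pi_s \colon X_s \too Y_s = \Spec (A_{(s)})^G$; for two such sections $s, t$ the restrictions of $\pi_s$ and $\pi_t$ to $X_s \cap X_t = X_{st}$ coincide (both are the affine quotient map of $X_{st}$), so the $\pi_s$ glue to a $G$-invariant morphism $\pi \colon X^\ss \too Y$.

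It remains to check (1)--(5), each of which is local on $Y$ for the cover $\{ Y_s \}$ and so follows from the affine case applied to $\pi_s \colon X_s \to Y_s$: surjectivity of $\pi$ from surjectivity of each $\pi_s$ and $Y = \bigcup_s Y_s$; if $F \subset X^\ss$ is closed and $G$-stable then $F \cap X_s$ is closed and $G$-stable in $X_s$, so $\pi(F) \cap Y_s = \pi_s(F \cap X_s)$ is closed in $Y_s$ and hence $\pi(F)$ is closed in $Y$ (and similarly for $G$-saturated opens); and the sheaf isomorphism (5) is, over each $Y_s$, the identity $\O_{Y_s} = (A_{(s)})^G = ((\pi_s)_\ast \O_{X_s})^G$ given by the affine statement. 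For (2): if $\pi(x) = \pi(x')$ then their common image lies in some $Y_s$, so $x, x' \in X_s$, and the affine case provides a point of $\ol{G \cdot x} \cap \ol{G \cdot x'}$ inside $X_s \subset X^\ss$; conversely, if $z \in \ol{G \cdot x} \cap \ol{G \cdot x'} \cap X^\ss$, choose $s$ with $z \in X_s$, and since $X_s$ is open the orbit $G \cdot x$ meets $X_s$, so $G$-stability of $X_s$ forces $x \in X_s$, likewise $x' \in X_s$, and then $z \in \ol{G \cdot x} \cap \ol{G \cdot x'}$ already inside the affine $X_s$, whence $\pi_s(x) = \pi_s(x')$ by the affine case.

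The \emph{main obstacle} is bookkeeping rather than a new idea: one must allow invariant sections of arbitrary positive degree $d$ (equivalently, pass to Veronese powers of $L$) since $A$ need not be generated in degree one, so the $X_s$ genuinely form the cover one works with; and in the proof of (2) one must use $G$-invariance of the vanishing locus of $s$ to pass from ``the closure of an orbit meets $X_s$'' to ``the orbit itself lies in $X_s$''. Once these points are handled, all five properties are a routine gluing of the corresponding affine statements.
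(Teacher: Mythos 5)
Your proposal is correct and is exactly the classical argument (Mumford's reduction of the projective case to the affine case via the $G$-stable affine charts $X_s$ attached to homogeneous invariant sections), which is what the paper relies on by recalling this theorem from the standard GIT literature without reproducing a proof. The two points you flag as the only delicate ones --- allowing invariants of arbitrary positive degree, and using $G$-invariance of $s$ to get $x \in X_s$ from $\ol{G \cdot x} \cap X_s \neq \emptyset$ --- are indeed the right ones, and you handle them correctly.
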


The projective variety $Y$ is called the \textit{GIT quotient} of $X^\ss$ by $G$ and denoted $X^\ss / G$.\footnote{Again in the literature it can be denoted $X /\!\!/ G$ or $X^\ss /\!\!/ G$.}

\begin{example} Let $\G_{m, \C}$ act on $X = \P^1_{\C}$ by $t[x:y] = [tx : t^{-1} y]$. The set of semi-stable points is $X \smallsetminus \{ 0, \infty\}$, so that the orbit of $1$ is closed in $X^\ss$ and the quotient is a point.
\end{example}

\begin{example} Let $\M_{n, \C}$ be the space of $n \times n$ matrices. Then $\GL_{n, \C}$ acts on $\P(\M_{n}(\C))$ by conjugation and a matrix $A \in \M_{n}(\C)$ is semi-stable if and only if its characteristic polynomial is not $T^n$, that is, $A$ is not nilpotent (\emph{cf.} Example \ref{ex:GITQuotientConjugation}).
\end{example}

\subsection{The real projective case} Let $X$ be a real projective variety and let $G$ be a reductive group acting on $X$. Let $L$ be a $G$-linearized ample line bundle on $X$. We consider the open subset $X^\ss$ (resp. $X^\s$) of $X$ made of semi-stable (resp. stable) points with respect to the action of $G$ and the polarization $L$. Consider the graded $\R$-algebra
$$ A \df \bigoplus_{d \ge 0} \HH^0(X, L^{\otimes d}), $$
and its sub-$\R$-algebra of $G$-invariant elements,
$$ A^G \df \bigoplus_{d \ge 0} \HH^0(X, L^{\otimes d})^G.$$

\begin{theorem} The graded $\R$-algebra $A^G$ is finitely generated. Consider the real projective variety $Y = \Proj(A^G)$. 

Then, the inclusion $A^G \subset A$ induces a morphism $\pi \colon X^\ss \to Y$ satisfying the following properties:
\begin{enumerate}
\item $\pi$ is $G$-invariant and surjective;
\item let $x, x' \in X^\ss(\C)$; then $\pi(x) = \pi(x')$ if and only if
$$ \ol{G(\C) \cdot x} \cap \ol{G(\C) \cap x'} \cap X^\ss(\C) \neq \emptyset;$$
\item if $F \subset X^\ss$ is a Zariski closed $G$-stable subset, then $\pi(F)$ is Zariski closed in $Y$;
\item if $U \subset X$ is a Zariski open $G$-saturated subset, then $\pi(U)$ is Zariski open;
\item $A^G \otimes_\R \C = (A \otimes_\R \C)^{G_\C}$;
\item $V \df \pi(X^\s)$ is Zariski open in $Y$ and $\pi^{-1}(V) = X^\s$.
\end{enumerate}
\end{theorem}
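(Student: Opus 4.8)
The plan is to reduce everything to the affine real GIT of Theorem \ref{Thm:AffineRealGIT} by the classical cone construction: over $\C$ the assertions are then an instance of Mumford's theory, and the one genuinely new point over $\R$ is item (5), the commutation of invariant sections with the base change $\R\to\C$. First I would replace $L$ by a sufficiently divisible power $L^{\otimes m}$, which is still ample and $G$-linearized; this changes neither $X^\ss$ (nor $X^\s$) nor $Y=\Proj A^G$, as it amounts to passing to the $m$-th Veronese subalgebra. So one may assume $L$ very ample, giving a $G$-equivariant closed embedding $\iota\colon X\hookrightarrow\P(W)$ with $W=\HH^0(X,L)^\ast$ a finite-dimensional real representation of $G$. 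The affine cone $\widehat{X}\df\Spec A$, with $A=\bigoplus_{d\ge 0}\HH^0(X,L^{\otimes d})$, is an affine real algebraic variety carrying the $G$-action induced by $\iota$ together with the commuting $\Gm$-action by homotheties, whose weight decomposition is exactly the grading of $A$; both actions are defined over $\R$.

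Next I would apply Theorem \ref{Thm:AffineRealGIT} to $\widehat{X}$ with its $G$-action. This yields at once that $A^G$ is a finitely generated $\R$-algebra and that $A^G\otimes_\R\C=(A\otimes_\R\C)^{G_\C}$. Since $G$ commutes with the $\Gm$-action and $\Gm$ is defined over $\R$, taking $G$-invariants preserves the grading, so the last identity is one of graded $\R$-algebras: this is item (5). Because $A^G$ is a finitely generated graded $\R$-algebra whose degree-zero part $A^G_0=\HH^0(X,\O_X)^G$ is a finite $\R$-algebra, $Y\df\Proj A^G$ is projective over $\Spec A^G_0$, hence a real projective variety. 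Moreover, by (5) and the compatibility of $\Proj$ with base change, $Y_\C=\Proj\big((A\otimes_\R\C)^{G_\C}\big)$, which by the complex projective case is the complex GIT quotient $X_\C^\ss/G_\C$; in particular the locus $X^\ss$, cut out in $X$ by the non-vanishing of real invariant sections, has complexification $X_\C^\ss$ (again using (5) to compare real and complex invariant sections), so $X^\ss$ is a well-defined open subvariety over $\R$, and likewise $X^\s$.

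Then I would construct $\pi\colon X^\ss\to Y$ and verify (1)--(4), (6) by localizing. For homogeneous $G$-invariant $s\in\HH^0(X,L^{\otimes d})$, $d\ge 1$, the locus $X_s\df\{s\ne 0\}$ is $G$-stable and affine ($L$ ample), $X^\ss=\bigcup_s X_s$, and $\O(X_s)=(A_s)_0$. As $G$ is reductive, taking invariants commutes with localizing at an invariant element and with the grading, so $\O(X_s)^G=((A^G)_s)_0=\O(Y_s)$ for the corresponding basic affine open $Y_s\subset\Proj A^G$; thus $X_s\to Y_s$ is precisely the affine GIT quotient of $X_s$ by $G$, and these glue to a $G$-invariant surjective morphism of real varieties $\pi\colon X^\ss\to Y$. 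Items (3) and (4) follow by applying the corresponding assertions of Theorem \ref{Thm:AffineRealGIT} on each chart $X_s\to Y_s$ and gluing; (2) follows from the affine statement on $X_s$ together with the remark that if $x,x'\in X^\ss(\C)$ have $\pi(x)=\pi(x')$ then any invariant $s$ with $s(x)\ne 0$ also satisfies $s(x')\ne 0$, so both lie in one chart, while conversely $\pi$ is constant on each orbit closure inside $X^\ss$ since $Y$ is separated. For (6) one unwinds the definition of $X^\s$: a semistable point is stable iff it lies in some $X_s$ in which its $G(\C)$-orbit is closed of maximal dimension, and there $X_s\to Y_s$ restricts to a geometric quotient over an open of $Y_s$; globalizing gives $V\df\pi(X^\s)$ Zariski open in $Y$ with $\pi^{-1}(V)=X^\s$.

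The routine points are the compatibilities with the anti-holomorphic involutions (automatic, since every object is produced from $\R$-algebra data) and the commutation of invariants with localization and grading (standard reductivity, or a consequence of (5)). The step I expect to be the actual crux—rather than a formal transcription of Mumford's arguments—is item (5) itself for the graded algebra $A^G$, i.e.\ that the formation of invariant sections commutes with $\R\to\C$; this is exactly what Theorem \ref{Thm:AffineRealGIT} supplies for the cone and is what makes the whole reduction go through.
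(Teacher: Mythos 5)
Your proof is correct and follows the standard route---the paper itself states this theorem without proof, as a recollection of known results (the affine case being cited to \cite{GIT}), and your reduction via the section ring $A$, the commuting $\Gm$-grading giving item (5), and the invariant charts $X_s \to Y_s$ with $\O(Y_s) = ((A^G)_s)_0$ on which $\pi$ is the affine real GIT quotient is precisely the argument that underlies it. The only telegraphic spot is item (6), where one should invoke the standard equivalence that $x \in X^\ss$ is stable iff $G_x$ is finite and $G(\C)\cdot x$ is closed in $X^\ss(\C)$, which yields $\pi^{-1}(\pi(X^\s)) = X^\s$, and then openness of $\pi(X^\s)$ follows from (4); this is routine and does not affect correctness.
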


The projective variety $Y$ is called the \textit{GIT quotient} of $X^\ss$ by $G$ and denoted $X^\ss / G$. Remarks analogous to the ones in the affine case (see Remark \ref{rmk:RemarksOnGIT}) apply also in this case.

Turning to the situation on real points, Theorems \ref{thm:LunaRealGIT} and \ref{Thm:ComplementRealGIT} imply the following:

\begin{theorem} \label{Thm:ProjectiveRealGIT} With the notations introduced above, let $x \in X(\R)$. Then the following hold:
\begin{enumerate}
\item the unique closed orbit contained in $\ol{G(\C) \cdot x} \cap X^\ss(\C)$ has a real point. In particular, it is defined over $\R$.
\item the orbit $G(\R) \cdot x$ is closed in $X^\ss(\R)$ if and only if the orbit $G(\C)\cdot x$ is closed in $X^\ss(\C)$;
\item the closure of $G(\R) \cdot x$ in $X^\ss(\R)$ contains a unique $G(\R)$-orbit;
\item if $F \subset X^\ss(\R)$ is a closed $G(\R)$-stable subset, then $\pi(F)$ is closed in $Y(\R)$;
\item if $U \subset X^\ss(\R)$ is an open $G(\R)$-saturated subset, then $\pi(U)$ is open in $\pi(X^\ss(\R))$;
\item if $V \subset Y$ is a Zariski open subset such that the induced map 
$$ \pi \colon U \df \pi^{-1}(V) \too V,$$
is smooth, then $\pi(U(\R))$ is a finite union of connected components of $V(\R)$.
\end{enumerate}
\end{theorem}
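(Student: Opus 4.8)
The plan is to deduce all six assertions from their affine analogues, Theorems \ref{thm:LunaRealGIT} and \ref{Thm:ComplementRealGIT}, by covering $X^\ss$ with $G$-stable affine open subsets, defined over $\R$, on which $\pi$ restricts to an affine GIT quotient.

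Keep the notation $A = \bigoplus_{d \ge 0} \HH^0(X, L^{\otimes d})$ introduced above, with $L$ the fixed $G$-linearised ample line bundle. First I would record the following local structure of $\pi$. For a real $G$-invariant section $s \in A^G$ homogeneous of positive degree, the non-vanishing locus $X_s \subset X$ is affine (because $L$ is ample), $G$-stable, defined over $\R$, and contained in $X^\ss$; its ring of regular functions is the degree-zero part $A_{(s)}$ of the localisation $A_s$, and, $s$ being $G$-invariant, one has $(A_{(s)})^G = (A^G)_{(s)}$. Hence $\pi$ sends $X_s$ into the basic open subset $Y_s \subset Y = \Proj A^G$ where $s$ does not vanish, the induced map $X_s \to Y_s$ is exactly the affine GIT quotient furnished by Theorem \ref{Thm:AffineRealGIT}, and $\pi^{-1}(Y_s) = X_s$. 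I would then check that the $X_s$, with $s$ running over the real invariant sections of positive degree, cover $X^\ss$: over $\C$ this follows from the very definition of $X^\ss(\C)$ together with the identity $A^G \otimes_\R \C = (A \otimes_\R \C)^{G_\C}$ provided by the real projective GIT theorem stated above, since for a complex invariant section $t = \sum_i \lambda_i s_i$ with $s_i \in A^G$ one has $X_t \subset \bigcup_i X_{s_i}$; as each $X_s$ is defined over $\R$, the family covers $X^\ss$ itself. In the same way, the $Y_s$ cover $Y$.

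With this cover at hand, fix $x \in X(\R) \cap X^\ss$ and pick a real invariant $s$ with $s(x) \neq 0$, so that $x \in X_s(\R)$. Since $\pi(x) \in Y(\R)$ is a closed point and $X_s = \pi^{-1}(Y_s)$, the closure of $G(\C)\cdot x$ inside $X^\ss$, and hence also the closure of $G(\R)\cdot x$ inside $X^\ss(\R)$, is contained in $X_s$, so these closures can be computed inside the affine variety $X_s$. Assertions (1), (2), (3) then follow immediately by applying Theorem \ref{thm:LunaRealGIT}(1), (2), (3) to the affine quotient $X_s \to Y_s$. For (4) and (5) I would work locally on $Y(\R)$ along the cover $\{Y_s(\R)\}$: if $F \subset X^\ss(\R)$ is closed and $G(\R)$-stable, then $F \cap X_s(\R) = F \cap \pi^{-1}(Y_s(\R))$ is closed and $G(\R)$-stable in $X_s(\R)$, and $\pi(F) \cap Y_s(\R) = \pi(F \cap X_s(\R))$ is closed in $Y_s(\R)$ by Theorem \ref{thm:LunaRealGIT}(4); the same bookkeeping, applied to a $G(\R)$-saturated open $U$ with ``closed'' replaced by ``open in the image'' and using $\pi(X^\ss(\R)) \cap Y_s(\R) = \pi(X_s(\R))$, gives (5) from Theorem \ref{Thm:ComplementRealGIT}(1). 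Finally, for (6), given a Zariski open $V \subset Y$ over which $\pi$ is smooth, I would apply Theorem \ref{Thm:ComplementRealGIT}(2) to the affine quotient $X_s \to Y_s$ and the Zariski open $V \cap Y_s \subset Y_s$, obtaining that $\pi(\pi^{-1}(V)(\R))$ meets $(V \cap Y_s)(\R)$ in a union of connected components; gluing over a finite subcover, the image of $\pi^{-1}(V)(\R)$ is open and closed in $V(\R)$, hence a finite union of its connected components.

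The step that requires the most care is the first one: checking that localising at a $G$-invariant section commutes with taking $G$-invariants, so that $\pi$ genuinely restricts to the affine GIT quotient over each chart $Y_s$, and that the charts can be chosen over $\R$, where the identity $A^G \otimes_\R \C = (A \otimes_\R \C)^{G_\C}$ is the key input. After that the work is routine, if a little fussy: one only has to verify that the affine statements patch together, the sole subtlety being the comparison of orbit closures taken in $X^\ss$ with those taken in the affine charts $X_s$, which is harmless because all the closures involved lie over a single closed point of $Y$.
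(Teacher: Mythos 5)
Your argument is correct and follows exactly the route the paper intends: the text simply asserts that Theorems \ref{thm:LunaRealGIT} and \ref{Thm:ComplementRealGIT} imply Theorem \ref{Thm:ProjectiveRealGIT}, and the implicit mechanism is precisely your reduction to the affine case via the standard charts $X_s = \pi^{-1}(Y_s)$ attached to real invariant sections, using $(A_{(s)})^G = (A^G)_{(s)}$ and $A^G \otimes_\R \C = (A\otimes_\R\C)^{G_\C}$. Your care about comparing orbit closures in $X^\ss$ with those in a chart (harmless since they lie over a single point of $Y$) is exactly the point that makes the patching work, so nothing is missing.
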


As for the affine case, because of statement (3) the set of $G(\R)$-orbits in $X^\ss(\R)$ is not very interesting. The notation $X(\R) / G(\R)$ is therefore reserved for the set of closed $G(\R)$-orbits in $X^\ss(\R)$ endowed with the strongest topology such that 
the natural map 
$$p \colon X^\ss(\R) \too X^\ss(\R) /G(\R) $$
is continuous. Similarly to Proposition \ref{Prop:BiggestHausdorffQuotientAffine}, an immediate consequence of Theorem \ref{Thm:ProjectiveRealGIT} is the following:

\begin{proposition} \label{Prop:ProjectionOfMaxSeparatedQuotientToGITQuotientProj} With the notations introduced above:
\begin{enumerate}
\item the topological space $X^\ss(\R) / G(\R)$ is Hausdorff and locally compact;
\item the map $\theta$ is proper with finite fibers and open onto its image;
\item $X^\ss(\R) / G(\R)$ is the categorical quotient of $X^\ss(\R)$ by $G(\R)$ in the category of Hausdorff spaces: for all Hausdorff topological space $Z$ and all $G(\R)$-invariant continuous map $f \colon X^\ss(\R) \to Z$ there exists a unique continuous map $\tilde{f} \colon X^\ss(\R) / G(\R) \to Z$ such that  $ f = \tilde{f} \circ p$.
\end{enumerate}
\end{proposition}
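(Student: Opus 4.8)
The plan is to deduce the three assertions from the affine statements already established in Proposition \ref{Prop:BiggestHausdorffQuotientAffine}, by restricting to the standard $G$-invariant affine charts of the GIT quotient. Since $X$ and $L$ are defined over $\R$, every homogeneous element of positive degree $f \in A^G$ is real; for such an $f$ the principal open subset $Y_f \subset Y = \Proj A^G$ is an affine real subvariety, its preimage $X^\ss_f \df \pi^{-1}(Y_f)$ is a $G$-stable affine open subvariety of $X^\ss$ defined over $\R$, the restriction $\pi \colon X^\ss_f \to Y_f$ is the affine GIT quotient of Theorem \ref{Thm:AffineRealGIT}, and the $Y_f$ cover $Y$. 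Because $\pi$ is $G$-invariant and continuous it is constant on every orbit closure, so for $x \in X^\ss_f(\R)$ one has $\ol{G(\R) \cdot x} \subset X^\ss_f(\R)$; hence $X^\ss_f(\R)$ is $p$-saturated, the subset $X^\ss_f(\R)/G(\R) \df p(X^\ss_f(\R))$ is open in $X^\ss(\R)/G(\R)$ and carries the quotient topology coming from $X^\ss_f(\R)$, and $\theta^{-1}(Y_f(\R)) = X^\ss_f(\R)/G(\R)$. These elementary compatibilities are collected once and for all in this first step.

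Granting them, assertion (1) is essentially local: every point of $X^\ss(\R)/G(\R)$ lies in a chart $X^\ss_f(\R)/G(\R)$, which is Hausdorff and locally compact by Proposition \ref{Prop:BiggestHausdorffQuotientAffine}(1); local compactness of $X^\ss(\R)/G(\R)$ follows at once, and it is Hausdorff because two distinct points with equal image under $\theta$ already lie in one common chart (choose $f$ with that image in $Y_f(\R)$), while two points with distinct images are separated by pulling back, along the continuous map $\theta$, disjoint neighbourhoods of those images in the Hausdorff space $Y(\R)$. For assertion (3) I would simply repeat the argument of Proposition \ref{Prop:BiggestHausdorffQuotientAffine}(3): given a Hausdorff space $Z$ and a $G(\R)$-invariant continuous map $f \colon X^\ss(\R) \to Z$, set $\tilde f(p(x)) = f(x)$; this is well defined because $f$, being continuous and $G(\R)$-invariant, is constant on every orbit closure and in particular agrees on $x$ and on the closed orbit $p(x) \subset \ol{G(\R) \cdot x}$, it is continuous because $p$ is a topological quotient, and it is unique because $p$ is surjective.

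For assertion (2): openness of $\theta$ onto its image $\pi(X^\ss(\R))$ I would read off Theorem \ref{Thm:ProjectiveRealGIT}(5) through the quotient map, since if $\mathcal U \subset X^\ss(\R)/G(\R)$ is open then $U \df p^{-1}(\mathcal U)$ is open and $G(\R)$-saturated and $\theta(\mathcal U) = \pi(U)$ is open in $\pi(X^\ss(\R))$. Finiteness of the fibres follows, exactly as in the affine case, from Proposition \ref{Prop:RealPointsOrbitGaloisCohomology} applied inside a chart: if $x$ has closed orbit $T = G \cdot x$ and $f$ is chosen with $T \subset X^\ss_f$, then $\theta^{-1}(\pi(x))$ is identified with $T(\R)/G(\R) \cong \ker(\HH^1(\R, G_x) \to \HH^1(\R, G))$, which is finite because $\HH^1(\R, G_x)$ is. Finally, properness of $\theta$ I would check over the members of the cover $\{Y_f(\R)\}$: as $Y(\R)$ is locally compact Hausdorff, any compact $K \subset Y(\R)$ is covered by finitely many open sets with compact closure, each contained in some $Y_f(\R)$; over $Y_f(\R)$ the map $\theta$ restricts to $X^\ss_f(\R)/G(\R) \to Y_f(\R)$, which is proper by Proposition \ref{Prop:BiggestHausdorffQuotientAffine}(2), whence $\theta^{-1}(K)$ is a closed subset of a finite union of compacta, hence compact. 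The one point that is not a purely formal transcription of the affine proof is precisely this passage from local to global — the gluing of the properness and Hausdorffness statements across the charts, which is why local compactness of $Y(\R)$ and the agreement of topologies recorded in the first paragraph are needed; I expect that, rather than anything deep, to be where a little care is required.
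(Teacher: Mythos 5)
Your proof is correct and follows the route the paper intends: the paper simply declares the proposition an immediate consequence of Theorem \ref{Thm:ProjectiveRealGIT}, obtained ``similarly to'' the affine Proposition \ref{Prop:BiggestHausdorffQuotientAffine}, and your argument is exactly the standard reduction that makes this precise — covering $Y=\Proj A^G$ by the invariant affine charts $Y_f$, pulling back to the affine, $G$-stable, $p$-saturated open sets $X^\ss_f=\pi^{-1}(Y_f)$, and gluing the affine statements (Hausdorffness, local compactness, properness, Galois-cohomological finiteness of fibres), with openness read off from Theorem \ref{Thm:ProjectiveRealGIT}(5) and the categorical-quotient property repeated verbatim. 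The ``elementary compatibilities'' you record in the first paragraph (affineness of $X^\ss_f$, agreement of the chart's separated quotient with the corresponding open subset of $X^\ss(\R)/G(\R)$, and $\theta^{-1}(Y_f(\R))=X^\ss_f(\R)/G(\R)$) are indeed routine and suffice; no gap.
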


The topological space $X^\ss(\R) / G(\R)$ is determined (up to a unique homeomorphism) by statement (3) and is called the \emph{separated quotient} of $X^\ss(\R)$ by $G(\R)$.

\subsection{Configurations of real flags}\label{subsection:configurations} Let $G$ be a real reductive group and $G_\sp$ its split form. Let us identify their complexification: $G_\C = G_{\sp, \C}$. Let $P_\sp \subset G_\sp$ be a parabolic subgroup and $F_\sp = G_\sp / P_\sp$. 

\begin{definition}
We say that \emph{the type of the parabolic $P_\sp$ is defined over $\R$ for $G$} if there exists a parabolic subgroup $P$ of $G$ such that $P_\C$ is conjugated to $P_{\sp, \C}$. 
\end{definition}

The group $G_0 = G(\R)$ acts on the flag variety $F_\sp(\C)$.

\begin{theorem}[{\cite[Theorems 3.3 and 3.5]{W}}] With the notations introduced above,
\begin{enumerate}
\item there exists a unique closed $G_0$-orbit $T$ in $F_\sp(\C)$;
\item for $x \in X(\C)$, the orbit $T$ is contained in the closure of $G_0 x$ and
$$ \dim_\R G_0x \ge \dim_\R T \ge \dim_\C F_{\sp}(\C),$$
with equality if and only the type of the parabolic $P_\sp$ is defined over $\R$ for $G$. In this case $T$ is the set of real points of the associated real flag variety.
\end{enumerate}
\end{theorem}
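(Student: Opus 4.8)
This is \cite[Theorems 3.3 and 3.5]{W}; the plan is to separate the existence of the closed orbit from its uniqueness, and then to read off~(2) from a dimension count. \emph{Existence, and the easy half of~(2).} Since $F_\sp$ is projective, $F_\sp(\C)$ is compact, so the continuous $G_0$-action has an orbit of minimal dimension; because the boundary of a $G_0$-orbit is a union of strictly lower-dimensional orbits, a minimal orbit is closed. The same remark applies inside any orbit closure $\overline{G_0 x}$, which is compact and $G_0$-stable: it contains a closed $G_0$-orbit, of real dimension $\le \dim_\R G_0 x$. Granting the uniqueness in~(1), this closed orbit must be $T$, which gives at once the inclusion $T \subseteq \overline{G_0 x}$ and the inequality $\dim_\R G_0 x \ge \dim_\R T$.

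\emph{Uniqueness}, which I expect to be the main obstacle. I would reduce it to the complete flag variety of $G_\C$: a closed $G_0$-orbit in $F_\sp(\C)$ is the image of a closed $G_0$-orbit on the complete flag variety — its preimage there is closed and $G_0$-stable, hence contains a closed $G_0$-orbit, and the image of that one is a $G_0$-orbit contained in our orbit, so equal to it — whence uniqueness upstairs forces uniqueness downstairs. On the complete flag variety the unique closed $G_0$-orbit is the one produced by Wolf's classification of $G_0$-orbits, and I would invoke \cite[Theorem 3.3]{W} rather than reprove this; this is the single point where I do not see how to avoid Wolf's analysis.

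\emph{The dimension count and the equality case.} Write $\sigma$ for complex conjugation on the complexified Lie algebra $\mathfrak{g}_\C$ of $G_\C$ with respect to $\mathfrak{g}_0 = \Lie(G_0)$, fix $x \in T$, let $\mathfrak{p}_x \subseteq \mathfrak{g}_\C$ be the Lie algebra of the parabolic $\Stab_{G_\C}(x)$, and set $\overline{\mathfrak{p}}_x := \sigma(\mathfrak{p}_x)$. Then $\mathfrak{q} := \mathfrak{p}_x \cap \overline{\mathfrak{p}}_x$ is a complex $\sigma$-stable subspace, so $\dim_\R \mathfrak{q}^\sigma = \dim_\C \mathfrak{q}$, while the stabilizer of $x$ in $G_0$ has Lie algebra $\mathfrak{g}_0 \cap \mathfrak{p}_x = \mathfrak{p}_x^\sigma = \mathfrak{q}^\sigma$. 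Using $\dim_\R G_0 = \dim_\C \mathfrak{g}_\C$ one gets
$$ \dim_\R T = \dim_\C \mathfrak{g}_\C - \dim_\C(\mathfrak{p}_x \cap \overline{\mathfrak{p}}_x), \qquad \dim_\C F_\sp(\C) = \dim_\C \mathfrak{g}_\C - \dim_\C \mathfrak{p}_x, $$
hence $\dim_\R T - \dim_\C F_\sp(\C) = \dim_\C \mathfrak{p}_x - \dim_\C(\mathfrak{p}_x \cap \overline{\mathfrak{p}}_x) \ge 0$, with equality exactly when $\mathfrak{p}_x = \overline{\mathfrak{p}}_x$, i.e. when $\Stab_{G_\C}(x)$ is $\sigma$-stable, i.e. when it is the complexification of a real parabolic subgroup of $G$ — which is precisely the condition that the type of $P_\sp$ be defined over $\R$ (a parabolic being connected and the normalizer of its Lie algebra, $\sigma$-stability of $\mathfrak{p}_x$ descends the group to $\R$). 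For the converse, if the type is defined over $\R$ pick a real parabolic $P \subseteq G$ with $P_\C$ conjugate to $P_{\sp,\C}$: then $[P_\C] \in F_\sp(\R)$, its $G_0$-orbit has dimension $\dim_\C \mathfrak{g}_\C - \dim_\C \Lie(P) = \dim_\C F_\sp(\C) \le \dim_\R T$, and since $T \subseteq \overline{G_0 \cdot [P_\C]}$ this forces $T = G_0 \cdot [P_\C] = G(\R)/P(\R)$, which \cite{W} identifies with $F(\R)$, $F = G/P$ — the last assertion of the statement. The only facts here I would take care to verify are routine: that $\Lie(H_1 \cap H_2) = \Lie(H_1) \cap \Lie(H_2)$ for the subgroups at hand, and that a $\sigma$-stable connected algebraic subgroup descends to $\R$.
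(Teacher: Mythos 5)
The paper gives no argument for this theorem: it is quoted from Wolf and the citation \cite[Theorems 3.3 and 3.5]{W} \emph{is} the proof, so your sketch is a supplement rather than an alternative, and as a supplement it is essentially correct. The existence of a closed orbit, the inclusion $T \subseteq \overline{G_0 x}$, and the inequality $\dim_\R G_0 x \ge \dim_\R T$ via minimal-dimensional orbits all work, granted the fact (true here, but worth a word) that $G_0$-orbits are locally closed with boundary a union of strictly lower-dimensional orbits; this holds because $G_0$ acts algebraically, being the real points of $G$ acting on $\Res_{\C/\R}(F_{\sp,\C})$, whereas for a general Lie group action it could fail. The Lie-algebra count $\dim_\R G_0 x = \dim_\C \mathfrak{g}_\C - \dim_\C(\mathfrak{p}_x \cap \sigma(\mathfrak{p}_x))$, the characterization of equality by $\sigma$-stability of $\mathfrak{p}_x$ (hence descent of $\Stab_{G_\C}(x)$ to a real parabolic of the type of $P_\sp$), and the converse — comparing $\dim_\R T$ with the dimension of the $G_0$-orbit of the point stabilized by $P_\C$ and using $T \subseteq \overline{G_0 \cdot [P_\C]}$ — reproduce Wolf's dimension argument correctly. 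Two minor points: the point $[P_\C]$ is a real point of $F = G/P$ (the real structure coming from $G$), not of $F_\sp$, which carries the split real structure; and the reduction of uniqueness to the complete flag variety is harmless but unnecessary, since \cite[Theorem 3.3]{W} already treats an arbitrary flag manifold. Since you, exactly like the paper, defer to Wolf at that single point, your proposal is not an independent proof of part (1), but it does correctly fill in everything else.
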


The orbit $T$ has the following algebraic nature.  Let $W = \Res_{\C/\R}(F_{\sp, \C})$ the $\R$-algebraic variety obtained by Weil restriction of $F_{\sp, \C}$. Concretely, it is the complex variety $F_{\sp, \C} \times F_{\sp, \C}$ together with the anti-holomorphic involution
$$ (x_1, x_2) \longmapsto (\sigma(x_2), \sigma(x_1)),$$
where $\sigma$ is the anti-holomorphic involution on $F_{\sp, \C}$ given by $F_{\sp}$. 

The group $G$ acts naturally on $W$: if $\theta$ denotes the automorphism of $G_{\sp, \C}$ defining the form $G$, the action of $G$ corresponds to the action
$$ g(x_1, x_2) = (g x_1, \theta(g)x_2)$$
for $g \in G_{\sp}(\C)$ and $x_1, x_2 \in F_{\sp}(\C)$. 

For a point $t \in T$ let $S \subset W$ be the $G$-orbit of $x$. Let $F$ be the Zariski closure of $S$ in $W$: it is a projective variety.

\begin{lemma} With the notations introduced above:
\begin{enumerate}
\item $F(\R) = T$;
\item $S$ is projective if and only if the stabilizer of $t$ is a parabolic subgroup of $G$.
\end{enumerate}
\end{lemma}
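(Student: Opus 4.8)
The plan is to analyze the orbit $S = G \cdot t$ inside $W = \Res_{\C/\R}(F_{\sp,\C})$, whose complex points are $F_{\sp}(\C) \times F_{\sp}(\C)$ with the twisted action $g(x_1, x_2) = (gx_1, \theta(g) x_2)$. Write $t$ as the point corresponding to $x_0 \in F_{\sp}(\C)$ sitting in the closed $G_0$-orbit $T$; under the identification $F(\R) = \{(x_1, x_2) : x_2 = \sigma(x_1)\}$, the real structure on $W$ forces the second coordinate to be determined by the first. The key observation is that $G_0 = G(\R)$ acts on the first coordinate exactly as it acts on $F_{\sp}(\C)$, so the $G(\R)$-orbit of $t$ inside $W(\R)$ is precisely the $G_0$-orbit of $x_0$ in $F_{\sp}(\C)$, which is $T$ by Wolf's theorem. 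This will be the mechanism behind part (1).

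**Proof of (1).** First I would show $F(\R) \subseteq T$. A real point of $F$ lies in $S(\C) \cap W(\R)$ or in its boundary; since $F$ is the Zariski closure of $S$ and $S$ is a single $G_\C$-orbit (note $G_\C = G_{\sp,\C}$ acts with $\theta$ trivial after complexification, so $S(\C)$ is the $G_{\sp,\C}$-orbit of $(x_0, x_0')$), the real points of $F$ are governed by the $G(\R)$-action on the ambient real variety $W(\R)$. Concretely, $W(\R) = F_{\sp}(\C)$ via $x_1 \mapsto (x_1, \sigma(x_1))$, and under this identification $G(\R)$ acts through its tautological action on $F_{\sp}(\C)$; moreover $F(\R)$ corresponds to the closure of the $G(\R)$-orbit of $x_0$ under this action — but by Wolf's theorem part (1), the unique closed $G_0$-orbit is $T$, and the orbit of $x_0 \in T$ is already closed, hence equals $T$. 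Thus $F(\R) = T$. I would spell out the identification $W(\R) \iso F_{\sp}(\C)$ carefully, checking that the anti-holomorphic involution $(x_1, x_2) \mapsto (\sigma(x_2), \sigma(x_1))$ indeed has fixed locus the graph of $\sigma$, and that the $G$-action restricts correctly: for $g \in G(\R)$, meaning $\theta(g) = \sigma(g)$ in the appropriate sense, one checks $g \cdot (x_1, \sigma(x_1)) = (g x_1, \theta(g)\sigma(x_1)) = (g x_1, \sigma(g x_1))$, confirming the action is the standard one on the first factor.

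**Proof of (2).** For the second point I would use the general fact recalled in the excerpt: a subgroup $H$ of a reductive group $G$ gives a projective orbit $G/H$ in a quasi-projective $G$-variety if and only if $H$ is parabolic (a homogeneous space $G/H$ is projective iff $H$ is parabolic, by the standard characterization; one direction is that $G/P$ is projective by definition of parabolic, the other is that a projective homogeneous space has parabolic stabilizers since its identity component contains a Borel). The orbit $S$ is $G$-equivariantly isomorphic to $G / \Stab_G(t)$, so $S$ is projective (equivalently, closed in $W$, equivalently equal to $F$) precisely when $\Stab_G(t)$ is parabolic. Here I would also note the compatibility with Wolf's equality case: $\Stab_G(t)$ being parabolic is exactly the condition that the type of $P_{\sp}$ is defined over $\R$ for $G$, linking back to the equality in the displayed inequality $\dim_\R T \ge \dim_\C F_{\sp}(\C)$.

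**Main obstacle.** The delicate point will be part (1), specifically making rigorous the identification of $F(\R)$ — defined as the real points of the Zariski closure of the twisted orbit $S$ — with the closed $G_0$-orbit $T \subset F_{\sp}(\C)$, since $F(\R)$ a priori could contain boundary points of $S$ that happen to be real. The resolution is that the real points of $\overline{S}$ coincide with the closure of the $G(\R)$-orbit of $t$ taken inside $W(\R)$ (using that $W(\R) \iso F_{\sp}(\C)$ is itself projective, so closure in $W(\R)$ makes sense and is compact), and this $G(\R)$-orbit is already closed by Wolf. One must be a little careful that ``real points of the Zariski closure'' equals ``closure of real points'' here — this is not automatic in general, but holds because $W(\R) = F_{\sp}(\C)$ is projective hence compact, and the real locus of $S$ is a single $G(\R) = G_0$-orbit which Wolf guarantees is closed; so its closure in the compact space $W(\R)$ is itself, and it must agree with $F(\R)$ since $F(\R) \supseteq$ this orbit and $F(\R) \subseteq \overline{S(\C)} \cap W(\R)$ projects into $T$.
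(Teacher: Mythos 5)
Your reduction to the picture $W(\R)\iso F_{\sp}(\C)$ with $G(\R)$ acting through its tautological action, and your treatment of (2) (the orbit $S\iso G/\Stab_G(t)$ is projective iff the stabilizer is parabolic, which is the definition used here), are both fine and match the paper. The problem is in (1), exactly at the point you flag as the ``main obstacle'': the two assertions doing all the work --- that the real locus of $S$ is a single $G_0$-orbit, and that $F(\R)$ ``must agree with'' the closure of that orbit because $F(\R)\subseteq \ol{S(\C)}\cap W(\R)$ ``projects into $T$'' --- are never proved, and the second one is just a restatement of the claim $F(\R)\subseteq T$. Wolf's theorem does not say that $S(\R)$ is one $G_0$-orbit (the paper itself stresses, via Galois cohomology, that real points of an orbit defined over $\R$ may split into several $G(\R)$-orbits), and compactness of $W(\R)$ does not make ``real points of the Zariski closure'' equal to ``closure of the real points'': for instance the real points of the closed $\Gm$-orbit $\{xy=1\}$ form two $\R^\times$-orbits, only one of which is the closure of the real orbit of a given real point. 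So as written there is no argument excluding either (a) real points of the boundary $F\smallsetminus S$, or (b) real points of $S$ outside $T$.

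The missing ingredient is the dimension count that the paper uses. Since $F$ is the closure of the $G$-orbit $S$, the complement $F\smallsetminus S$ is a union of $G$-orbits of dimension strictly less than $\dim S=\dim_\R T$; hence a real point $x$ there would have $\dim_\R G(\R)\cdot x<\dim_\R T$, contradicting Wolf's inequality $\dim_\R G_0 x\ge\dim_\R T$. And if $x\in S(\R)\smallsetminus T$, then $\dim_\R G_0 x\le\dim S=\dim_\R T$, while by Wolf $T$ is contained in $\ol{G_0 x}$; since $T\neq G_0x$, $T$ would lie in the boundary of $G_0x$, which has strictly smaller dimension than $G_0 x\le\dim_\R T$ --- a contradiction. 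With this minimality argument inserted, your route becomes the paper's proof; without it, part (1) is not established.
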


\begin{proof} (1) The closure $F$ of the orbit $S$ in $W$ is made of $S$ and lower dimensional orbits \cite[\S 1.8]{Borel}. Therefore if $x \in F(\R)$ does not lie in $T$, then its $G(\R)$-orbit has dimension strictly smaller than $T$, contradicting the minimality of $T$.

(2) Clear by definition.
\end{proof}

Let $L_{\sp}$ be a $G_{\sp}$-equivariant line bundle on $F_{\sp}$. The line bundle  $$ \pr_1^\ast L_{\sp} \otimes \pr_2^\ast L_{\sp}$$ on $F_{\sp, \C} \times F_{\sp, \C}$ descends into a $G$-equivariant line bundle $L$ on $W$, and by restriction, on $F$.

Let $n \ge 1$ be an integer, $X_{\sp} = F_{\sp}^n$ and $X = F^n$. Let $X^\ss$ be the set of semi-stable points of $X$ with respect to $G$ and the polarization induced by $L$ on every copy of $F$. Similarly, let $X_{0}^\ss$ be the set of semi-stable points of $X_{\sp}$ with respect to $G_{\sp}$ and the polarization induced by $L_{\sp}$ on every copy of $F_{\sp}$.

\begin{proposition} With the notations introduced above:
\begin{enumerate}
\item A point $x \in T^n$ is semi-stable if and only if it is so for every one-parameter subgroup $\lambda \colon \mathbb{G}_{m, \R} \to G$ defined over $\R$. 
\item The quotient $T^{n, \ss}/G(\R)$ is compact.
\item The open subset $T^n \cap X_{\sp}(\C)^{n, \ss}$ of $T^{n, \ss}$ is $G(\R)$-saturated.
\item The inclusion $T \subset X_{\sp}(\C)$ induces a continuous map
$$ (T^n \cap X_{\sp}(\C)^{n, \ss}) / G(\R) \too X_{\sp}(\C)^\ss / G(\C)$$
and $\dim_\R T^{n, \ss}/G(\R) \ge \dim_\C X_{\sp}(\C)^\ss / G(\C)$. 
\end{enumerate}
\end{proposition}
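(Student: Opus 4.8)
The plan is to prove the four statements by unwinding the Hilbert–Mumford numerical criterion and transferring it between the real group $G$, its split form $G_\sp$, and the common complexification $G_\C=G_{\sp,\C}$. First I would establish (1). By the Hilbert–Mumford criterion a point $x\in X(\C)=F_{\sp}(\C)^n$ is semi-stable for $G_\C$ if and only if $\mu^{L}(x,\lambda)\le 0$ for every one-parameter subgroup $\lambda\colon\mathbb{G}_{m,\C}\to G_\C$; the content to prove is that for $x\in T^n$ it suffices to test one-parameter subgroups defined over $\R$. This is exactly the kind of rationality statement guaranteed by Birkes' theorem on rational destabilizing one-parameter subgroups over perfect fields (cited in the proof of Theorem~\ref{thm:LunaRealGIT}(1)): if $x$ is not semi-stable, then it is not semi-stable over $\C$, hence there is a destabilizing $\lambda$, and by Birkes' theorem one can choose it rational, i.e.\ defined over $\R$. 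Conversely any $\R$-rational $\lambda$ is in particular a $\C$-one-parameter subgroup, so testing only those is necessary as well. Care must be taken that the relevant one-parameter subgroups of $G$ are the same as those of $G_\sp$ after complexification and that the numerical function $\mu^{L}$ for $L=\pr_1^\ast L_\sp\otimes\pr_2^\ast L_\sp$ computed on $T\subset W(\C)=F_{\sp}(\C)\times F_{\sp}(\C)$ agrees, up to the identification $T\cong F_\sp(\C)$ via the first projection, with $\mu^{L_\sp}$ on $F_\sp(\C)$ — this last point uses that on $T$ the second coordinate is $\sigma(x_1)$ and that $\theta(\lambda)$ for an $\R$-rational $\lambda$ is conjugate to $\lambda$ in a controlled way, so the two pullback contributions combine into a positive multiple of $\mu^{L_\sp}$. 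I expect this identification of the two numerical functions to be the main technical obstacle.

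Statement (2) is then immediate from what has been set up earlier. By the Lemma above, $F(\R)=T$, so $X(\R)=F^n(\R)=T^n$, and $X^{\ss}(\R)=T^{n,\ss}$. Proposition~\ref{Prop:ProjectionOfMaxSeparatedQuotientToGITQuotientProj}(1) applied to the real projective variety $X=F^n$ with the reductive group $G$ states that $X^{\ss}(\R)/G(\R)$ is Hausdorff and locally compact; combined with properness of $\theta$ onto its image in the compact real variety $Y(\R)$ (Proposition~\ref{Prop:ProjectionOfMaxSeparatedQuotientToGITQuotientProj}(2)), one gets compactness. Alternatively, and perhaps more cleanly, invoke the general Theorem in the Introduction, part (2), which asserts precisely that $X^{\ss}(\R)/G(\R)$ is Hausdorff and compact; here $T^{n,\ss}/G(\R)=X^{\ss}(\R)/G(\R)$, so there is nothing further to prove.

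For (3), I would argue that $T^n\cap X_\sp(\C)^{n,\ss}$ is $G(\R)$-saturated inside $T^{n,\ss}$ by showing that for $x\in T^n$ which is semi-stable for $G$, the unique closed $G(\R)$-orbit in $\ol{G(\R)\cdot x}\cap T^{n,\ss}$ — which exists by Theorem~\ref{Thm:ProjectiveRealGIT}(3) — lies again in $X_\sp(\C)^{n,\ss}$. This follows because, by Theorem~\ref{Thm:ProjectiveRealGIT}(2), the $G(\R)$-orbit of $x$ is closed in $T^{n,\ss}$ iff the $G(\C)$-orbit is closed in $X_\sp(\C)^{n,\ss}$, and by the numerical criterion semi-stability for $G$ on $T^n$ (statement (1)) together with the fact that $G(\C)=\SL_n(\C)=G_{\sp}(\C)$ acts, so that the $G_\sp$-semi-stable locus intersected with $T^n$ is exactly $T^n\cap X_\sp(\C)^{n,\ss}$; degeneration of $x$ along an $\R$-rational one-parameter subgroup $\lambda$ to its closed orbit stays inside the complex semi-stable locus because $\mu^{L_\sp}(x,\lambda)\le 0$ implies the limit point $\lim_{t\to 0}\lambda(t)x$ is again semi-stable for $G_\sp$. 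Hence the complex semi-stable locus is saturated for the closure operation, which is precisely what $G(\R)$-saturatedness of $T^n\cap X_\sp(\C)^{n,\ss}$ demands. Finally (4): the inclusion $T\subset X_\sp(\C)$ is $G(\R)$-equivariant (where $G(\R)$ acts on $X_\sp(\C)^n$ through $G(\R)\subset G(\C)=G_\sp(\C)$), and by (1) it carries $T^n\cap X_\sp(\C)^{n,\ss}$ into $X_\sp(\C)^{n,\ss}$; composing with $\pi_\sp\colon X_\sp(\C)^{n,\ss}\to X_\sp(\C)^{n,\ss}/G(\C)$ gives a $G(\R)$-invariant continuous map, which by Proposition~\ref{Prop:ProjectionOfMaxSeparatedQuotientToGITQuotientProj}(3) factors uniquely and continuously through $(T^n\cap X_\sp(\C)^{n,\ss})/G(\R)$. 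The dimension inequality is Wolf's inequality $\dim_\R T\ge\dim_\C F_\sp(\C)$ propagated to the $n$-fold power and to the quotients: on a dense open subset $U\subset X_\sp(\C)^{n,\ss}/G(\C)$ the quotient map is smooth with fibers of dimension $\dim G_\sp$, and likewise for a dense open real subset, and since $\dim_\R T^n-\dim_\R G=n\dim_\R T-\dim_\R G\ge n\dim_\C F_\sp(\C)-\dim_\C G_\sp=\dim_\C(X_\sp(\C)^{n,\ss}/G(\C))$, the inequality on quotient dimensions follows. The main obstacle throughout remains the careful bookkeeping in step (1) of the numerical function under Weil restriction and the twist by $\theta$.
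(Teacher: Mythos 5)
Your handling of (1), (2) and (4) is essentially the intended ("clear") one: one direction of (1) is trivial, and the other is exactly the Kempf--Rousseau/Birkes existence of a destabilizing one-parameter subgroup defined over $\R$, applied to the real point $x$ of the real variety $X=F^n$ with the real linearization $L$; note that no comparison of $\mu^{L}$ with $\mu^{L_{\sp}}$ is needed for this, since statement (1) only involves semi-stability in $X$ for $G$ and $L$, so the "main technical obstacle" you announce is a red herring (also $X(\C)$ is not $F_{\sp}(\C)^n$ but sits inside $(F_{\sp}(\C)\times F_{\sp}(\C))^n$; only $X(\R)=T^n$ is identified with a subset of $F_{\sp}(\C)^n$). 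For (2), properness of $\theta$ into the compact space $Y(\R)$ indeed gives compactness.

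The genuine gap is in (3). First, saturation requires the whole of $\ol{G(\R)\cdot x}\cap T^{n,\ss}$ to remain in $T^n\cap X_{\sp}(\C)^{n,\ss}$, not merely the unique closed orbit it contains, and you never bridge that difference. Second, your appeal to Theorem~\ref{Thm:ProjectiveRealGIT}(2) is a misquotation: it compares closedness of $G(\R)\cdot x$ in $X^{\ss}(\R)$ with closedness of $G(\C)\cdot x$ in $X^{\ss}(\C)$ for the same variety $X$ and polarization $L$; it says nothing about $X_{\sp}(\C)^{n,\ss}$, which is the semi-stable locus of a different variety and polarization, and the equivalence you assert is false (for $G(\R)=\SU(2)$ every $\SU(2)$-orbit in $\P^1(\C)^4$ is closed, while the $\SL_2(\C)$-orbit of a semi-stable quadruple with non-closed complex orbit is not closed in the semi-stable locus). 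Third, the assertion that the limit of a semi-stable point along a one-parameter subgroup with $\mu\le 0$ is again semi-stable is wrong: it holds only when the Hilbert--Mumford weight vanishes, and already for quadruples of points on $\P^1$ a semi-stable configuration degenerates along a suitable $\lambda$ to an unstable one. The short correct argument is invariant-theoretic: for a $G_{\C}$-invariant section $s$ of the $d$-th power of the polarization on $F_{\sp,\C}^{\,n}$, the section $\pr_1^{\ast}s\otimes\pr_2^{\ast}\bar{s}$ on $W^n$ restricts to a $G$-invariant section of $L^{\otimes d}$ on $X_\C$ which, at a real point $y\in T^n$, vanishes exactly when $s(y)=0$ (its value is $|s(y)|^2$ up to the natural identifications). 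Hence $T^n\cap X_{\sp}(\C)^{n,\ss}$ is the trace on $X^{\ss}(\R)=T^{n,\ss}$ of $\pi^{-1}(V)$ for an open subset $V$ of the GIT quotient $Y=X^{\ss}/G$; this gives both the inclusion $T^n\cap X_{\sp}(\C)^{n,\ss}\subset T^{n,\ss}$ and, since $\pi$ is continuous and $G$-invariant and therefore constant on $\ol{G(\R)\cdot x}\cap X^{\ss}(\R)$, the saturation: any such limit point $y$ satisfies $\pi(y)=\pi(x)\in V$, so it stays in $T^n\cap X_{\sp}(\C)^{n,\ss}$. The same observation streamlines the construction of the map in (4).
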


\begin{proof} Clear.
\end{proof}

\begin{example} Let $V = \R^2$ and $h$ be the standard hermitian form on $\C^2$. Let $G_{\sp} = \SL_{2, \R}$ and $G$ be the real form of $\SL_{2, \C}$ whose points are $\SU(2)$.

Let $F_{\sp} = \P^1_{\R}$. The group $\SU(2)$ acts transitively on $\P^1(\C)$ thus, with the notations above, $F = \Res_{\C / \R}(\P^1_\C)$. There is no one-parameter subgroup of $G$ defined over $\R$, thus all points of $F(\R)^n = \P^1(\C)^n$ are semi-stable. Because of this, the superscript $\ss$ is reserved for semi-stable points of $\P^1(\C)^n$ with respect to $\SL_{2}(\C)$. The separated quotient $\P^1(\C)^n / \SU(2)$ is the quotient in the usual sense and has real dimension $2n - 3$ for $n \ge 2$ and it is a point for $n = 1$.

On the other hand, a $n$-tuple $(x_1, \dots, x_n)$ of points of $X_{\sp}(\C) = \P^1(\C)$ is semi-stable under the action of $\SL_{2, \C}$ if and only if none of the points is repeated more than $\tfrac{n}{2}$ times. The quotient $(\P^1)^{n, \ss} / \SL_{2, \C}$ has complex dimension $n - 3$ for $n \ge 3$, is a point for $n = 2$ and  is empty for $n = 1$.

\bigskip

\begin{center}
\begin{tabular}{c|c|c|c}
$n$ & $\P^1(\C)^n / \SU(2)$ & $\P^1(\C)^{n, \ss} / \SU(2)$ & $\P^1(\C)^{n, \ss} / \SL_2(\C)$ \\
\hline
 1 & point & $\emptyset$ & $\emptyset$ \\
 2 & $[0, 1]$ & $\left[0, 1 \right[$ & point \\
 3 & $\P^1(\C)^3 / \SU(2)$ &$ \{ \textup{hermitian forms on } \C^2\} / \R_{>0}$ & point
\end{tabular}
\end{center}

\bigskip
\end{example}

\begin{example} Let $G$ be a reductive group over $\R$ and $G_{\sp}$ its split form. Let $F_{\sp}, F_{\sp}'$ be flag varieties of $G_{\sp}$ with a $G_{\sp}$-equivariant surjection $\pi_{\sp} \colon F_{\sp} \to F_{\sp}'$. Let $T, T'$ be respectively the unique $G(\R)$-closed orbit contained in $F_{\sp}(\C)$ and $F_{\sp}'(\C)$. 

Clearly $\pi_{\sp}(T) = T'$. The $G_{\sp}$-equivariant projection $\pi_{\sp}$ induces a $G$-equivariant surjection
$$ \pi \colon W := \Res_{\C/\R}(F_{\sp, \C}) \too W' := \Res_{\C/\R}(F'_{\sp, \C}).$$
Denote by $F$ the Zariski closure of $T$ in $W$ and similarly $F'$ for that of $T'$. The map $\pi$ restricts to a projection $\pi \colon F \to F'$.

Let $L_{\sp}, L_{\sp}'$ be $G_{\sp}$-equivariant ample line bundles respectively on $F_{\sp}, F_{\sp}'$ and $L, L'$ the $G$-equivariant ample line bundles induced on $F, F'$. Let $n \ge 1$ be an integer and $X = F^n$, $X' = F'^n$. As in the general construction, consider semi-stable points with respect the polarizations induced by $L, L'$. This induces a continuous map between quotients
$$ T^{n, \ss} \cap \pi^{-1}(T'^{n, \ss}) / G(\R) \too T'^{n, \ss} / G(\R).$$

We will take profit of this construction in two situations. The first one is:
\begin{itemize}
\item $G_{\sp} = \SL_{n, \R}$;
\item $G(\R) = \SU(n-1, 1)$;
\item $F_{\sp} = \Flag(1, 2, \dots, n)$ the flag variety of complete flags;
\item $F_{\sp}' = \Flag(1, n-1)$ the variety of flags $(L, H)$ made of a line and hyperplane containing it.
\end{itemize}
The second one is:
\begin{itemize}
\item $G_{\sp} = \SL_{4, \R}$;
\item $G(\R) = \SU(2, 2)$ or $\SL_2(\H)$;
\item $F_{\sp} = \Flag(1, 2, 3)$ the flag variety of complete flags;
\item $F_{\sp}' = \Gr(2, 4)$ the grassmannian of planes in dimension $4$.
\end{itemize}
In both cases the stabilizer of a point $t \in T'$ is a parabolic subgroup of $G$.
\end{example}

\section{Sequences of flags line-hyperplane}\label{section:Sequences}

\subsection{Setup} Let $n, r \ge 3$ be integers and consider the variety $\Fl_{(1,n-1)}(\C^n)$ of couples $(L, H)$ of subspaces of $\C^n$ where $L$ is line and $H$ is a hyperplane containing it. Set:
\begin{align*}
V &= \C^n,
&G& = \SL_{n, \C}, 
&F& = \Fl_{(1, n-1)}(V),
&X& = F^r.
\end{align*}
Let $ \O(1, 1)$ be the line bundle $\pr_1^\ast \O(1) \otimes \pr_2^\ast \O(1)$ on the variety $\P(V) \times \P(V^\vee)$ (here $\pr_1, \pr_2$ are be respectively the projection onto the first and second factor).

The semi-simple group $G$ acts on $X$. Consider the linearization of this action given by the embedding
$$ \iota \colon X= F^r \too (\P(V) \times \P(V^\vee))^r.$$
This linearization corresponds to the $G$-linearized line bundle
$$ L = \iota^\ast (\pr_1^\ast \O(1,1) \otimes \cdots \otimes \pr_r^\ast \O(1, 1)),$$
where $\pr_i \colon X \to F$ denotes the projection onto the $i$-th factor.

\subsection{Invariants} We introduce some natural invariants for this action. For a permutation $\sigma \in \mathfrak{S}_r$ consider the linear form $ s_\sigma \colon \textstyle (V \otimes V^\vee)^{\otimes r} \to \C$ defined, for $v_1, \dots , v_r \in V$ and $\phi_1, \dots , \phi_r \in V^\vee$, by
$$
s_{\sigma}(v_1 \otimes \phi_1, \dots , v_r \otimes \phi_r) := \prod_{i = 1}^r \phi_{i}(v_{\sigma(i)}).
$$
The linear form $s_\sigma$ is $G$-invariant. The restriction to $X$ of the induced global section of $\bigotimes_{i = 1}^r \pr_i^\ast \O(1, 1)$ is a $G$-invariant global section of $L$:
$$ s_{\sigma} \in \HH^0(X, L)^{G}.$$
If the permutation $\sigma$ has a fixed point then $s_{\sigma}$ vanishes identically on $X$. Therefore it is interesting to consider the section only for \emph{derangements}, that is, fixed-points-free permutations. Denote by $\mathfrak{D}_r$ the set of derangements and by $!r$ its cardinality. One has the recurrence relation
$$ !r = (r-1)(!(r-1) + !(r-2)).$$ 
The global section $s_{\sigma}$ vanishes at a $r$-tuple $\{ (L_i, H_i) \}_{i = 1, \dots, r}$ if and only if there exists $i = 1, \dots, r$ such that $L_i$ is contained in $H_{\sigma(i)}$.

\subsection{The quotient} 

\begin{definition} Let $x = \{ (L_i, H_i) \}_{i = 1, \dots, r}$ be a point of $X$. 

For $n \ge r$, the point $x$ is said to be in \emph{general position} if the following conditions are satisfied:
\begin{enumerate}
\item $\dim L_1 + \cdots + L_r = r$;
\item  $\dim H_1 \cap \cdots \cap H_r = n- r$;
\item $L_i \cap H_j = 0$ for all $i \neq j$;
\item $ (L_1 + \cdots + L_r) \cap (H_1 \cap \cdots \cap H_r) = 0$.
\end{enumerate}

Suppose $n < r$. The point $x$ is said to be in \emph{general position} if, for every subset $I \subset \{ 1, \dots, n\}$ of $n$ elements, the $n$-tuple $\{ (L_i, H_i)\}_{i \in I}$ is in general position in the preceding sense.
\end{definition}

Let $U$ be the Zariski open subset of $X^\ss$ made of $r$-tuples in general position.

\begin{theorem} \label{Thm:Quotient(1,3)flags} With the notations introduced above:
\begin{enumerate}
\item The graded $\C$-algebra of $G$-invariant elements
$$ \bigoplus_{d \ge 0} \HH^0(X, L^{\otimes d})^G,$$
is generated by the sections $s_{\sigma}$ with $\sigma \in \mathfrak{D}_r$.
\item A $r$-tuple $\{ (L_i, H_i) \}_{i = 1, \dots, r}$ is semi-stable if and only if there exists a derangement $\sigma \in \mathfrak{S}_r$ such that $L_i$ is not contained in $H_{\sigma(i)}$ for all $i = 1, \dots, r$.
\item Let $\sigma_1, \dots, \sigma_{!r}$ be the derangements on $r$ elements and let $s_i = s_{\sigma_i}$ for $i =1, \dots, !r$. The map $\pi \colon X^\ss \to \P^{!r -1}_\C$,
$$ \pi(x) = [s_{1}(x) : \cdots : s_{!r}(x)]$$
is $G$-invariant and induces a closed embedding $X^\ss / G \to \P^{!r - 1}_\C$.
\item Let $x, x' \in U$ be such that $\pi(x) = \pi(x')$. Then, $x, x'$ lie in the same orbit. In particular, $U$ is a $G$-saturated open subset.

\item An element $ g \in G$ belongs to the stabilizer of a $r$-tuple $x = \{ (L_i, H_i)\}_{i = 1, \dots, r} $ in $U$ if and only if
\begin{enumerate}
\item $g_{\rvert L_1 + \cdots L_r} = t \cdot \id_{L_1 + \cdots + L_r}$ for some $t \in \C^\times$;
\item $H_1 \cap \cdots \cap H_r$ is stable under $g$.
\end{enumerate}
In particular the stabilizer of $x$ has dimension $\max \{ 0, n - r \}^2$.
\item The quotient $Y$ of $X^\ss$ by $G$ has dimension
$$ r^2 - 3r +1 - \min \{ 0, n - r\}^2.$$
\end{enumerate}
\end{theorem}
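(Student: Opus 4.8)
The plan is to establish the six assertions roughly in the order listed, since each one feeds into the next. I would begin with (2), the semi-stability criterion, as it is purely a Hilbert–Mumford computation: a point $x = \{(L_i,H_i)\}$ is semi-stable if and only if no one-parameter subgroup $\lambda$ of $G$ destabilizes it. Diagonalizing $\lambda$ with weights on a basis adapted to the lines $L_i$ and co-weights on the hyperplanes $H_i$, the total weight of the section $s_\sigma$ at $x$ is a sum $\sum_i (\text{weight at }L_{\sigma(i)} - \text{weight at }H_i^\perp)$, and one checks that $x$ is semi-stable precisely when some derangement $\sigma$ has all $L_i \not\subset H_{\sigma(i)}$, i.e. $s_\sigma(x)\neq 0$; the combinatorial content is a marriage/Hall-type argument showing that failure of semi-stability forces a weight vector with no admissible matching. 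This simultaneously shows that the $s_\sigma$, $\sigma\in\mathfrak{D}_r$, have no common zero on $X^{\ss}$, which is needed for (3).

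For (1), the statement that the invariants $s_\sigma$ generate $\bigoplus_d \HH^0(X,L^{\otimes d})^G$, I would identify $\HH^0(X,L^{\otimes d})$ with a space of multihomogeneous $\SL_n$-semi-invariants in the $v_i$ and $\phi_i$ (of degree $d$ in each pair $(v_i,\phi_i)$) and invoke the first fundamental theorem of invariant theory for $\GL_n$/$\SL_n$ acting on vectors and covectors: every invariant is a polynomial in the contractions $\phi_i(v_j)$ and the $n\times n$ "bracket" determinants $[\,v_{i_1},\dots,v_{i_n}]$ and $[\phi_{i_1},\dots,\phi_{i_n}]$. The multidegree constraint — degree exactly $d$ in each slot $i$, once the hyperplane datum is encoded as a single covector up to scale — forces each monomial to be, after using the syzygy expressing a product of two complementary brackets as a determinant of contractions, a product of exactly $d$ factors of the form $\phi_i(v_{\sigma(i)})$ with $\sigma$ a permutation; fixed points give the identically vanishing sections, so only derangements survive. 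This is the step I expect to be the main obstacle: making the bracket-to-contraction reduction precise in the presence of the multidegree bookkeeping, and checking that no "mixed" monomials (products of brackets that cannot be fully absorbed) can occur, requires care, though it is classical in spirit. Once (1) and (2) are in hand, (3) is formal: $\Proj$ of the subalgebra generated by the $s_i$ is the image of the closed embedding $X^{\ss}/G \hookrightarrow \P^{!r-1}_\C$ cut out by the relations among the $s_\sigma$.

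For (4), the injectivity of $\pi$ on the locus $U$ of configurations in general position: given $x,x'\in U$ with $\pi(x)=\pi(x')$, I would first normalize. On $L_1+\dots+L_r$ (dimension $r$, or on each $n$-subset if $r>n$) the lines $L_i$ are in general position, so after applying an element of $\GL$ one may assume the $L_i$ are spanned by fixed vectors $v_i$; the ratios $s_\sigma(x)/s_\tau(x)$ then pin down, up to the residual torus rescaling the $v_i$, the values $\phi_i(v_j)$ for $i\neq j$, hence the hyperplanes $H_i$ restricted to $L_1+\dots+L_r$. Condition (4) of general position ($(L_1+\dots+L_r)\cap\bigcap H_i=0$) forces each $H_i$ to be determined by its trace on $L_1+\dots+L_r$ once we also know $\bigcap H_i$, and the remaining freedom is exactly an element of $\SL_n$ acting trivially on $L_1+\dots+L_r$ and preserving $\bigcap H_j$ — which is the content of (5). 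So $x$ and $x'$ differ by such an element, proving they are in the same orbit; $G$-saturatedness of $U$ follows since orbits in $U$ are already closed (general position is $G$-stable and the fibers of $\pi$ over $\pi(U)$ are single orbits). Statement (5) itself is a direct linear-algebra verification: $g$ stabilizing $x$ must preserve each $L_i$ hence act as a scalar on each, and the general-position conditions (1) and (3)–(4) force a common scalar $t$ on $L_1+\dots+L_r$ and then force $g$ to preserve $\bigcap H_j$; conversely any such $g$ clearly fixes $x$. The stabilizer is thus $\{g : g|_{L_1+\dots+L_r} = t\,\id,\ g(\textstyle\bigcap H_j)=\bigcap H_j\}$, whose dimension, computed inside $\SL_n$ by writing $g$ in block form adapted to the decomposition when $n>r$, is $(n-r)^2$; when $n\le r$ the subspace $L_1+\dots+L_r$ is everything and the stabilizer is trivial, giving $\max\{0,n-r\}^2$. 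Finally (6) follows by dimension count: $\dim Y = \dim X^{\ss} - \dim G + \dim(\text{generic stabilizer})$, and since the generic point of $X^{\ss}$ lies in $U$ with stabilizer of dimension $\max\{0,n-r\}^2$, one gets
$$
\dim Y = r\cdot 2(n-1) - (n^2-1) + \max\{0,n-r\}^2 = r^2 - 3r + 1 - \min\{0,n-r\}^2,
$$
using $\dim \Fl_{(1,n-1)}(\C^n) = 2(n-1)$, $\dim\SL_n = n^2-1$, and the elementary identity $\max\{0,n-r\}^2 = (n-r)^2 - \bigl(\,(n-r)^2 - \max\{0,n-r\}^2\bigr)$ together with $2r(n-1)-(n^2-1) = -(n-r)^2 + r^2 - 3r + 1$.
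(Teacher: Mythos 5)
The proposal follows the paper's overall architecture (FFT for part (1), normalization for (4), direct linear algebra for (5), a dimension count for (6); only (2) is attacked by a different route, Hilbert--Mumford instead of deducing it from (1)), but it has a genuine gap precisely where you yourself locate the main obstacle, namely part (1). After reducing to multihomogeneous invariants in the $v_i$ and $\phi_i$, you simply assert that every invariant of multidegree $(d,\dots,d)$ is, ``after using the syzygy'', a product of the $s_\sigma$'s; this is the whole point and it is not carried out. Two things are needed: first, the elimination of bracket factors coming from the genuine first fundamental theorem for $\SL_n$ acting on vectors and covectors (you flag this as delicate and leave it open); second, and independently, once one has a pure product of contractions $\prod_{i,j}\phi_i(v_j)^{a_{ij}}$ with all row and column sums of $A=(a_{ij})$ equal to $d$ and $a_{ii}=0$, one must decompose $A$ as a sum of $d$ permutation matrices of derangements. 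The paper does exactly this: it quotes the FFT in the form that the $\SL(V)$-invariants of $\End(V^{\otimes rd})$ are spanned by permutation operators, and then extracts a permutation $\sigma$ with $a_{i\sigma(i)}\neq 0$ for all $i$ via Hall's marriage theorem (automatically a derangement since $a_{ii}=0$), subtracting and iterating; its footnote points out that Birkhoff--von Neumann is not enough because an integral decomposition is required. Your marriage-type argument is only invoked for the Hilbert--Mumford direction of (2), and there too it stops at ``one checks''; since (2) and (3) are derived from (1) in the paper, the absence of this combinatorial step leaves (1)--(3) unproved in your plan.

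There is also a concrete error in (6): you take $\dim\Fl_{(1,n-1)}(\C^n)=2(n-1)$, whereas the incidence variety has dimension $2n-3$, and consequently your identity $2r(n-1)-(n^2-1)=-(n-r)^2+r^2-3r+1$ is false (the two sides differ by $r$); with the correct $\dim X=r(2n-3)$ the count $\dim Y=\dim X-\dim G+\max\{0,n-r\}^2$ does yield the stated formula, exactly as in the paper. Two smaller points: for $n\le r$ the stabilizer of a configuration in general position is not trivial but the finite center $\mu_n$ of $\SL_n$ (dimension $0$, so your formula survives, but the group matters later for the real forms), and in (4) the step ``the ratios $s_\sigma/s_\tau$ pin down the $\phi_i(v_j)$ up to rescaling'' uses pairs of derangements agreeing outside two indices, which exist only for $r\ge 4$; the case $r=3$, with only two derangements, needs the separate explicit normalization that the paper gives. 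Apart from these, your treatment of (4)--(5) follows the same normalization strategy as the paper's Lemma on biratios.
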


\begin{proof} (1) This is a consequence of the First Main Theorem of Invariant Theory. It states that, given a positive integer $N \ge 1$, the subspace of $\SL(V)$-invariants of $\End(V^{\otimes N})$ is generated by endomorphisms of the form
$$ \phi_\sigma \colon v_1 \otimes \cdots \otimes v_N \longmapsto v_{\sigma(1)} \otimes \cdots \otimes v_{\sigma(N)},$$
where $\sigma \in \mathfrak{S}_N$ and $v_1, \dots, v_N \in V$.  Let $d \ge 1$ be a positive integer. Since the restriction map
$$ \Sym^d \End(V^{\otimes r}) = \HH^0((\P(V) \times \P(V^\vee))^r, \textstyle \bigotimes_{i = 1}^r \pr_i^\ast \O(1,1)) \too \HH^0(X, L^{\otimes d}),$$
is surjective, the induced map 
$$ (\Sym^d \End(V^{\otimes r}))^G \too \HH^0(X, L^{\otimes d})^G$$
is surjective\footnote{This follows from the fact that the representations of $G$ are completely reducible.}. Identifying the $\Sym^d \End(V^{\otimes r})$ with the subspace of symmetric tensors of
$$ (V \otimes V^\vee)^{\otimes rd} = \End(V^{\otimes rd}),$$
the First Main Theorem of Invariant Theory implies that $\HH^0(X, L^{\otimes d})^G$ is generated by the restriction to elements of the sections $\phi_\sigma$.  Let $\sigma \in \mathfrak{S}_{rd}$ be a permutation. By evaluating $\phi_\sigma$ at a tensor of the form
$$ x= (v_1 \otimes \phi_1 \otimes \cdots \otimes v_r \otimes \phi_r)^{\otimes d}$$
for vectors $v_1, \dots, v_r \in V$ and linear forms $\phi_1, \dots, \phi_r \in V^\vee$, one obtains
$$ \phi_\sigma(x) = \prod_{i = 1}^r \prod_{j = 1}^d \phi_i(v_{\sigma((i-1)d + j)}).$$
For $i, j = 1, \dots, r$ let $a_{ij}$ be the number of times that the term $\phi_i(v_j)$ occurs in the preceding product. If $a_{ii} \neq 0$ for some $i$ then the section $\phi_\sigma$ vanishes identically on $X$. Suppose $a_{ii} = 0$ for all $i = 1, \dots, r$. 

The vector $v_i$ as well as the linear form $\phi_i$ have to appear exactly $d$ times in the product. That is,
\begin{align*}
\sum_{k = 1}^r a_{ik} &= d, &\sum_{k = 1}^r a_{kj} &= d,
\end{align*}
for all $i, j = 1, \dots, r$. Consider the matrix $A = (a_{ij})_{i,j = 1, \dots, r}$. It remains to show that $A$ is sum of permutation matrices associated to derangements.\footnote{One could argue by remarking that $\frac{1}{d} A$ is a doubly stochastic matrix. The Birkhoff-von Neumann theorem therefore states that $\frac{1}{d} A$ is a convex combination of permutation matrices. It follows that $A$ can be written as a sum
$$A =  \sum_{\sigma \in \mathfrak{S}_r} \lambda_\sigma A_\sigma,$$
where, for a permutation $\sigma \in \mathfrak{S}_r$, $A_\sigma$ is the associated permutation matrix and $\lambda_\sigma$ is a non-negative real number. Nonetheless it is not clear that the coefficients $\lambda_\sigma$ can be taken as integers (the decomposition is not unique).}
 It suffices to show that there is a permutation $\sigma \in \mathfrak{S}_r$ such that $a_{i \sigma(i)} \neq 0$ for all $i = 1, \dots, r$. Indeed, if such a permutation $\sigma$ exists, then it is fixed-point free because of the condition $a_{ii} = 0$ for all $i = 1, \dots, r$.

The fact that such a permutation exists is a consequence of Hall's Marriage Theorem \cite{HalmosMarriage}. Let $I = J = \{ 1, \dots, r\} $ and consider the bipartite graph $\Gamma$ on $I \sqcup J$ defined by the relation: for $i \in I$ and $j \in J$,  let $i \sim j$ if $a_{ij} \neq 0$. For a subset $S \subset I$ the neighbourhood $N_\Gamma(S)$ of $S$ in $\Gamma$ is the set of vertices in $Y$ adjacent to some element of $S$:
$$ N_\Gamma(S) = \{ j \in J : i \sim j\}. $$
Hall's Theorem states that there is a perfect matching for the graph (that is, in this case, a bijection $\sigma \in \mathfrak{S}_n$ such that $a_{i \sigma(i)} \neq 0$ for all $i= 1, \dots, r$) if and only if
$$\# S \le  \# N_\Gamma(S),$$
for all subsets $S$ of $I$.  The preceding condition is satisfied in this case. Indeed, let $S$ be a subset of $I$. Then,
$$
\# S = \frac{1}{d} \sum_{i \in S} \sum_{j \in N_\Gamma(S)} a_{ij} \le \frac{1}{d} \sum_{i \in I} \sum_{j \in N_\Gamma(S)} a_{ij}  = \# N_\Gamma(S). 
$$

\smallskip

(2) and (3) follow from (1).

\smallskip

(4) First of all remark that it suffices to find $g \in \GL_n(\C)$ such that $g x = x'$. Write $x = \{ (L_i, H_i)\}_{i = 1, \dots, r}$ and $x ' = \{ (L'_i, H_i')\}_{i = 1, \dots, r}$. For $i = 1, \dots, r$ let $v_i, v'_i$ be respectively a basis of $L_i, L_i'$ and $\phi_i, \phi_i'$ linear forms defining respectively $H_i, H_i'$.

\begin{lemma} \label{lem:BiratioImpliesConjugation} Let $t_1, \dots, t_r, u_1, \dots, u_r\in \C$. With the notations introduced above, there exists $g \in \GL_n(\C)$ such that, for all $i = 1, \dots, r$,
\begin{align*}
gv_i = t_i v_i, && \phi_i \circ g^{-1} = u_i \phi'_i,
\end{align*}
if and only if, for all $\alpha, \beta, i, j = 1, \dots, r$ with $\alpha, \beta \not \in \{ i, j\}$,
\begin{equation} \label{eq:BiratioCondition}
\frac{\phi_i(v_\alpha) \phi_j(v_\beta)}{\phi_j(v_\alpha) \phi_i(v_\beta)} = \frac{\phi'_i(v'_\alpha) \phi'_j(v'_\beta)}{\phi'_j(v'_\alpha) \phi'_i(v'_\beta)},
\end{equation}
and 
\begin{align*}
t_\alpha^{-1} t_\beta  = \frac{\phi'_i(v'_\alpha)}{\phi_i(v_\alpha)} \frac{\phi_i(v_\beta)}{\phi'_i(v'_\beta)}, &&
 t_\alpha u_i = \frac{\phi_i(v_\alpha)}{\phi'_i(v_\alpha)}.
 \end{align*}

\end{lemma}

\begin{proof}[Proof of the Lemma] ($\Rightarrow$) Suppose that such an element $g \in \GL_n(\C)$ exists. Then, for $ \alpha,i = 1, \dots, r$,
$$ u_i \phi'_i(v_\alpha) = \phi_i(g^{-1} v_\alpha') = t_\alpha^{-1} \phi_i(v_\alpha).$$
In particular, for $\alpha \neq i$,
$$ u_i = t_\alpha^{-1} \frac{\phi_i(v_\alpha)}{\phi'_i(v'_\alpha)},$$
is independent of $\alpha$. That is, for all $\alpha, \beta = 1, \dots, r$ different from $i$,
$$  t_\alpha^{-1} \frac{\phi_i(v_\alpha)}{\phi'_i(v'_\alpha)} =  t_\beta^{-1} \frac{\phi_i(v_\beta)}{\phi'_i(v'_\beta)}.$$
In particular, the quantity $t_\alpha^{-1} t_\beta$ is independent of $i$, whence \eqref{eq:BiratioCondition}.

($\Leftarrow$) Set $t_1 = 1$ and, for $\alpha = 2, \dots, r$,
$$ t_{\alpha} = \frac{\phi_i(v_\alpha) \phi'_i(v_1')}{\phi_i'(v_\alpha') \phi_i(v_1)},$$
where $i = 1, \dots, r$ is different from $1$ and $\alpha$. Condition \eqref{eq:BiratioCondition} implies that this does not depend on the chosen $i$. 

Suppose first $n \ge r$. Pick $g \in \GL_n(\C)$ such that 
\begin{align*}
g v_i = t_i v'_i, \textup{ for all } i = 1, \dots, r, && g(H_1 \cap \cdots \cap H_r) = H'_1 \cap \cdots \cap H'_r.
\end{align*}
It remains to show $[\phi_i \circ g^{-1}] = [\phi_i']$ for all $i = 1, \dots, r$. Write $v \in V$ as 
$$ v = \lambda_1 v_1' + \cdots + \lambda_r v_r' + w,$$ with $\lambda_1, \dots, \lambda_r \in \C$ and $w \in H_1' \cap \cdots \cap H_r'$. Then,
$$ \phi_i(g^{-1}v) = \sum_{\alpha = 1}^r \lambda_\alpha \phi_i(g^{-1} v_\alpha') = \sum_{\alpha = 1}^r \lambda_\alpha t_\alpha^{-1} \phi_i(v_\alpha),$$
because $g^{-1}w$ belongs to $H_1 \cap \cdots \cap H_r$. By construction, for all $\alpha, \beta = 1, \dots, r$ different from $i$,
$$ t_\alpha^{-1} \frac{\phi_i(v_\alpha)}{\phi'_i(v'_\alpha)} = t_\beta^{-1} \frac{\phi_i(v_\beta)}{\phi'_i(v'_\beta)}.$$
Call $\mu$ such a complex number. Then
$$ \sum_{\alpha = 1}^r \lambda_\alpha t_\alpha^{-1} \phi_i(v_\alpha) = \mu \sum_{\alpha = 1}^r \lambda_\alpha \phi_i'(v'_\alpha) + \mu \phi_i'(w) = \mu \phi'_i(v),$$
since $\phi_i'(w) = 0$ by assumption.

Suppose $n < r$. Applying the preceding case to the first $n$ couples $(L_i, H_i)$, there exists $g \in \GL_n(\C)$ such that, for all $i = 1, \dots, n$,
\begin{align*}
gv_i = t_i v_i, && \phi_i \circ g^{-1} = u_i \phi'_i.
\end{align*}
It remains to show the preceding property for $i = n+1, \dots, r$.  In order to verify the equality $\phi_i \circ g^{-1} = u_i \phi'_i$ it suffices to show
$$\phi_i (g^{-1} v'_\alpha) = u_i \phi'_i(v'_\alpha),$$
for $\alpha = 1, \dots, n$ as $v'_1, \dots, v'_n$ is a basis of $V$. On the other hand,
$$\phi_i (g^{-1} v'_\alpha) = t_\alpha^{-1} \phi_i (v_\alpha),$$
so that the preceding condition is equivalent to
$$ t_\alpha^{-1} \phi_i (v_\alpha) = u_i \phi'_i(v'_\alpha),$$
which is satisfied by hypothesis. 

 The argument to show $g v_i = t_i v_i$ for $i > n$ is similar.
\end{proof}

Suppose $r \ge 4$ and remark the following:

\begin{lemma} \label{lemma:ExistenceOfDerangements} Suppose $r \ge 4$ and let $i, j, \alpha, \beta = 1, \dots, r$ be pairwise distinct. Then there exists derangements $\sigma, \tau \in \mathfrak{D}_r$ such that $\sigma(k) = \tau(k)$ for all $k \neq i, j$ and 
\begin{align*}
\sigma(i) = \alpha,  && \sigma(j) = \beta, &&\tau(i) = \beta,   && \tau(j) = \alpha.
\end{align*}
\end{lemma}

Let $i, j, \alpha, \beta = 1, \dots, n$ be pairwise distinct and $\sigma, \tau$ derangements as in Lemma \ref{lemma:ExistenceOfDerangements}. If $s_\sigma, s_\tau$ denote the corresponding $G$-invariant sections, then
$$ \frac{s_\sigma(x)}{s_\tau(x)} = \frac{\phi_i(v_\alpha) \phi_j(v_\beta)}{\phi_j(v_\alpha) \phi_i(v_\beta)},$$
and similarly for $x'$. The condition $\pi(x) = \pi(x')$ implies $s_\sigma(x) / s_\tau(x) = s_\sigma(x')/ s_\tau(x')$ and one concludes thanks to Lemma \ref{lem:BiratioImpliesConjugation}. 

\smallskip

Suppose $r = 3$. Set
\begin{align*}
t_1 = 1,
&&t_2=\frac{\phi'_{3}(v'_{1}) \phi_{3}(v_{2})}{\phi_{3}(v_{1}) \phi'_{3}(v'_{2})},
&&t_3=\frac{\phi'_{2}(v'_{1}) \phi_{2}(v_{3})}{\phi_{2}(v_{1}) \phi'_{2}(v'_{3})}.
\end{align*}
Let $g \in \GL_n(\C)$ be such that $g v_i = t_i v_i$ for all $i = 1, 2, 3$. If $n > 3$ suppose moreover
$$ g(H_1 \cap H_2 \cap H_3) = H'_1 \cap H'_2 \cap H'_3.$$
The rest of the proof goes similarly to Lemma \ref{lem:BiratioImpliesConjugation}. Indeed, by definition,
\begin{align*}
t_2 t_1^{-1} = t_2 =\frac{\phi'_{3}(v'_{1}) \phi_{3}(v_{2})}{\phi_{3}(v_{1}) \phi'_{3}(v'_{2})},
&& t_3 t_1^{-1} = t_3 =\frac{\phi'_{2}(v'_{1}) \phi_{2}(v_{3})}{\phi_{2}(v_{1}) \phi'_{2}(v'_{3})},
\end{align*}
and $\pi(x) = \pi(x')$ translates into
$$ t_3 t_2^{-1} = \frac{\phi_1'(v_2') \phi_1(v_3)}{\phi_1(v_2) \phi_1'(v_3')}.$$

(5) For all $i = 1, \dots, r$ let $v_i \in V$ be a generator of the line $L_i$ and let $\phi_i \in V^\vee$ be a linear form defining $H_i$.

Suppose $g \in G_x$. Then, for all $i = 1, \dots, r$, there exists $t_i, u_i \in \C^\times$ such that $g(v_i) = t_i v_i$ and $\phi_i \circ g^{-1} = u_i \phi_i$. For all $i, j = 1, \dots, n$,
$$ \phi_i (v_j) = \phi_i(g^{-1} g(v_j)) = u_i \phi_i (t_j v_j) = u_i t_j \phi_i(v_j).$$
Suppose $i \neq j$. By assumption $\phi_i(v_j)$ does not vanish, thus $u_i t_j = 1$. In particular $ t_1 = \cdots = t_r = u_1^{-1} = \cdots = u_r^{-1}$. 

Conversely, suppose that $g$ satisfies conditions (a) and (b) in the statement. Suppose $n \ge r$. Since the point $x$ is in general position, a vector $v \in V$ can be written in a unique way as $ v = \lambda_1 v_1 + \cdots + \lambda_r v_r + w$, with $\lambda_1, \dots, \lambda_r \in \C$ and $w \in H_1 \cap \cdots \cap H_r$. Thus, for $i = 1, \dots, r$,
\begin{align*}
\phi_i (g^{-1}v) &= \phi_i (t^{-1} (\lambda_1 v_1 + \cdots + \lambda_r v_r) + g^{-1}w) \\
&= t^{-1} \phi_i(\lambda_1 v_1 + \cdots + \lambda_r v_r) \\
&= t^{-1} [ \phi_i(\lambda_1 v_1 + \cdots + \lambda_r v_r) + \phi_i(w)] \\
&= t^{-1} \phi_i(v),
\end{align*}
since $w$ and $g^{-1}w$ belong to $H_1 \cap \cdots \cap H_r$. In particular $g$ stabilizes $H_i$. The proof for $n <r$ goes similarly.

\smallskip

(6) Apply the formula $ \dim Y = \dim X - (\dim G - \dim G_x)$
for a point $x \in U$.  \qedhere
\end{proof}

\subsection{Triplets of flags} \label{par:TripletsLineHyperplane} Keep the notations introduced previously. In this section suppose $r = 3$ and $n \ge 3$. The situation in this is quite simple since there are two derangements in $\mathfrak{S}_3$: the $2$-cycles $(123)$ and $(132)$. Write $s_{123}$ and $s_{132}$ the corresponding invariant sections. The quotient $X^\ss/G$ is isomorphic to $\P^1$ and the map $\pi \colon X^\ss \to \P^1_\C$, 
$$ \pi(x) = [s_{123}(x) : s_{132}(x)],$$
is the quotient map.

The map $\pi \colon X^\ss \to \P^1$ has $3$ fibers that contain more than one orbit: those at $0 = [1:0]$, $\infty = [0:1]$ and $-1 = [1:-1]$. 

The fibers at $0$ and $\infty$ of $\pi$ correspond to points in $X^\ss$ where exactly one invariant vanishes. Up to action of $\mathfrak{S}_3$ on $X^\ss$ by permuting factors, it suffices to study the fiber at $\infty$.

\begin{definition} A semi-stable point $x = \{ (L_i, H_i) \}_{i = 1, 2, 3}$ such that $\pi(x) = \infty$ is said \emph{maximally degenerate} if 
$L_1 \subset H_2$, $L_2 \subset H_3$, $L_3 \subset H_1$.
\end{definition}

For such a maximally degenerate semi-stable point $x = \{ (L_i, H_i) \}_{i = 1, 2, 3}$ point there exists a basis $v_1, \dots, v_n$ of $\C^n$ such that
\begin{align} \label{eq:FormOfMaximallyDegeneratePoint}
L_1 &= \langle v_1 \rangle, & L_2 &= \langle v_2 \rangle, &L_3 &= \langle v_3 \rangle, \\
H_1 &: v_2^\vee = 0, & H_2 &: v_3^\vee = 0,  & H_3 &: v_1^\vee = 0, \nonumber
\end{align}
where $v_1^\vee, \dots, v_n^\vee \in V^\vee$ is the dual basis. To prove this, it suffices to take $v_i$ to be a basis of $L_i$ for $i =1, 2,3$ and $v_4, \dots, v_n$ to be a basis of $H_1 \cap H_2 \cap H_3$.

\begin{proposition} \label{Prop:QuotientLineHyperplaneFiberInfinity} With the notations introduced above:
\begin{enumerate}
\item An element $g \in G$ stabilizes a maximally degenerate point $x = \{ (L_i, H_i)\}_{i = 1, 2, 3} $ if and only if
\begin{enumerate}
\item for $i = 1, 2, 3$, $g_{L_i} = t_i \cdot \id_{L_i}$ for some $t_i \in \C^\times$;
\item $H_1 \cap H_2 \cap H_3$ is stable under $g$.
\end{enumerate}
In particular the stabilizer of $x$ has dimension $(n-3)^2 +2$.
\item The unique closed orbit contained in $\pi^{-1}(\infty)$ is the one given by maximally degenerate points. 
\end{enumerate}
\end{proposition}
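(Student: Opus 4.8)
The plan is to treat (1) by a direct linear-algebra computation, then reduce (2) to showing that a single, explicit orbit is closed. For (1): an element $g\in G$ fixes $x=\{(L_i,H_i)\}_{i=1,2,3}\in X=F^3$ exactly when $g(L_i)=L_i$ and $g(H_i)=H_i$ for all $i$, and the first family of conditions is precisely (a). Now, reading off \eqref{eq:FormOfMaximallyDegeneratePoint}, one has $H_1\cap H_2\cap H_3=\langle v_4,\dots,v_n\rangle$ together with
$$H_1=L_1+L_3+(H_1\cap H_2\cap H_3),\qquad H_2=L_1+L_2+(H_1\cap H_2\cap H_3),\qquad H_3=L_2+L_3+(H_1\cap H_2\cap H_3),$$
so, granting (a), each condition $g(H_i)=H_i$ becomes equivalent to $g$ preserving $H_1\cap H_2\cap H_3$, i.e. to (b); this proves the equivalence. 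In the basis $v_1,\dots,v_n$ such a $g$ is block-diagonal of the form $\mathrm{diag}(t_1,t_2,t_3,A)$ with $t_i\in\C^\times$, $A\in\GL_{n-3}(\C)$, and these form a group of dimension $3+(n-3)^2$; the condition $\det g=t_1t_2t_3\det A=1$ removes exactly one dimension, giving $\dim G_x=(n-3)^2+2$.

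For (2), I first observe that the maximally degenerate points form a single $G$-orbit $O_\infty$. Given such a point, semistability (i.e. $s_{123}(x)\neq 0$) forces $v_2\notin H_1$, $v_3\notin H_2$, $v_1\notin H_3$; combined with $v_i\in H_i$ and the defining inclusions $L_1\subset H_2$, $L_2\subset H_3$, $L_3\subset H_1$, this makes $v_1,v_2,v_3$ linearly independent with $\langle v_1,v_2,v_3\rangle\cap H_1\cap H_2\cap H_3=0$, so that, completing $v_1,v_2,v_3$ by a basis of $H_1\cap H_2\cap H_3$, the point acquires the shape \eqref{eq:FormOfMaximallyDegeneratePoint} exactly as in the text. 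Two points of this shape differ by a $g\in\GL_n(\C)$, which may be replaced by an element of $\SL_n(\C)$ by rescaling one basis vector ($v_n$ if $n>3$, or $v_1$ if $n=3$), an operation leaving the configuration fixed. Thus $O_\infty$ is a single orbit, and it lies in $\pi^{-1}(\infty)$ by the very definition of maximal degeneracy. Since the fibre $\pi^{-1}(\infty)$ contains a unique closed $G$-orbit by the GIT results of Section \ref{section:GIT}, it now suffices to show that $O_\infty$ is closed in $X^\ss$.

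I would prove this with the Kempf--Ness criterion. Here $X\subset\P(W)$ with $W=(V\otimes V^\vee)^{\otimes 3}=\End(V)^{\otimes 3}$ and $L=\O(1)$; writing $E_{ij}=v_i\otimes v_j^\vee$, the point \eqref{eq:FormOfMaximallyDegeneratePoint} lifts to $\hat x=E_{12}\otimes E_{23}\otimes E_{31}\in W\smallsetminus\{0\}$. Equip $V$ with the Hermitian inner product making $v_1,\dots,v_n$ orthonormal, and pass to the induced $\SU(n)$-invariant inner products on $V^\vee$, $\End(V)$ and $W$. As $x$ is semistable, $0\notin\overline{G\cdot\hat x}$, and $G\cdot x$ is closed in $X^\ss$ if and only if $G\cdot\hat x$ is closed in $W$, if and only if $\hat x$ has minimal norm in its orbit, if and only if the moment map $\mu\colon W\to\mathfrak{su}(n)^\ast$ vanishes at $\hat x$. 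Using that each $E_{ij}$ is a unit vector and that $\langle[\xi,E_{ij}],E_{ij}\rangle=\xi_{jj}-\xi_{ii}$ for $\xi\in\mathfrak{su}(n)$, the three terms telescope:
$$\langle\mu(\hat x),\xi\rangle\ \propto\ \langle\xi\cdot\hat x,\hat x\rangle=(\xi_{22}-\xi_{11})+(\xi_{33}-\xi_{22})+(\xi_{11}-\xi_{33})=0.$$
Hence $\mu(\hat x)=0$, so $O_\infty$ is closed, and therefore it is the unique closed orbit contained in $\pi^{-1}(\infty)$. The only genuinely non-formal step is this closedness; everything else is bookkeeping in the explicit basis. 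A moment-map-free substitute would be to run the Hilbert--Mumford criterion for closed orbits directly on $\hat x$, but that forces a case analysis by the parabolic attached to the one-parameter subgroup, which is why the computation above is the route I would take.
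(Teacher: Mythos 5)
Your argument is correct, and it diverges from the paper's proof precisely at the interesting point. For part (1) the paper merely says the description of the stabilizer is ``clear'' from \eqref{eq:FormOfMaximallyDegeneratePoint}; your block-diagonal computation $g=\mathrm{diag}(t_1,t_2,t_3,A)$ with $t_1t_2t_3\det A=1$ is exactly the intended justification, so there is no real difference there. For part (2), however, the paper never proves closedness of the maximally degenerate orbit directly: it observes that the unique closed orbit of the fibre lies in the closure of every orbit of the fibre, hence must have minimal dimension, and then it classifies the non-maximally-degenerate semi-stable configurations in the fibre (after a permutation, two opposite flags together with one incidence $L_3\subset H_1$) and checks that their stabilizers have dimension at most $(n-3)^2+1<(n-3)^2+2$, which forces the closed orbit to be the maximally degenerate one. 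You instead prove closedness of that orbit outright by the Kempf--Ness criterion on the lift $E_{12}\otimes E_{23}\otimes E_{31}$, and then invoke uniqueness of the closed orbit in the fibre; your telescoping computation is right (each term contributes $\pm(\xi_{ii}-\xi_{jj})$ and the signs are immaterial), the $\SU(n)$-invariance of the chosen Hermitian structure is what the criterion needs, and the two standard facts you import --- that for a semi-stable point closedness of $G\cdot x$ in $X^{\ss}$ is equivalent to closedness of $G\cdot\hat x$ in $W$, and that a zero of the moment map is a minimal vector with closed orbit --- are indeed standard, though outside the toolbox the paper uses. (One small imprecision: ``$G\cdot\hat x$ closed iff $\hat x$ has minimal norm in its orbit'' is only correct in the direction you actually use, namely $\mu(\hat x)=0\Rightarrow$ minimal $\Rightarrow$ closed; a closed orbit need not have \emph{this} lift as its minimal vector.) The trade-off is clear: the paper's route stays purely algebraic and self-contained but requires a case analysis of the other orbits in the fibre and a bound on their stabilizer dimensions, whereas your route needs no information about any orbit other than the maximally degenerate one, at the cost of appealing to Kempf--Ness and the affine-cone criterion. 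Both yield complete proofs.
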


\begin{proof} (1) Clear according to \eqref{eq:FormOfMaximallyDegeneratePoint}.

(2) It suffices to show that the orbit of $x$ is of minimal dimension among the orbits contained in $\pi^{-1}(\infty)$.  Let $x = \{ (L_i, H_i) \}_{i = 1, 2, 3}$ be a semi-stable point of $X$ such that $\pi(x) = \infty$ and which is not maximally degenerate. One has to show that the stabilizer of $x$ has dimension $\le (n-3)^2 + 1$. For $i = 1, 2, 3$ let $P_i$ be the stabilizer of the flag $(L_i, H_i)$.

Up to permutations, one may suppose that the flags $(L_1, H_1)$ and $(L_2, H_2)$ are opposite and $L_3 \subset H_1$. For $i = 1, 2, 3$ let $v_i$ be a generator of $L_i$. Since $L_3 \not \subset H_2$ one can take $v_3 = v_1 + w$ with $w \in H_1 \cap H_2$ non-zero.

The intersection $P_1 \cap P_2$ is made of matrices $g \in \GL_n(\C)$ such that
\begin{itemize}
\item for $i =1,2$ there is $t_i \in \C^\times $ such that $g v_i = t_i v_i$;
\item $g(H_1 \cap H_2) = H_1 \cap H_2$.
\end{itemize}
Therefore for $g \in P_1 \cap P_2$ one has $ gv_3 = t_1 v_1 + gw$ and $gw \in H_1 \cap H_2$. Thus $g$ stabilizes the line $L_3$ if and only if $g w = t_1 w$. 

This implies $\dim (P_1 \cap P_2 \cap P_3) \le (n-3)^2 + 1$.
\end{proof}

\begin{definition} A semi-stable point $x = \{ (L_i, H_i)\}_{i = 1, 2, 3}$ such that $\pi(x) = -1$ is said \emph{maximally degenerate} if $\dim L_1 + L_2 + L_3 = 2$ and $\dim H_1 \cap H_2 \cap H_3 = n - 2$.
\end{definition}

For $i = 1, 2, 3$ let $v_i$ be a basis of $L_i$ and $\phi_i$ a linear form defining $H_i$. If $x$ is maximally degenerate then one can take
\begin{align*} 
v_3 &= v_1 + v_2, & \phi_3 &= \phi_2(v_1) \phi_1 - \phi_1(v_2) \phi_2.
\end{align*}

\begin{proposition} \label{Prop:QuotientLineHyperplaneSingularFiber} With the notations introduced above:
\begin{enumerate}
\item An element $g \in G$ stabilizes a maximally degenerate point $x = \{ (L_i, H_i)\}_{i = 1, 2, 3} $ if and only if
\begin{enumerate}
\item $g_{\rvert L_1 + L_2} = t \cdot \id_{L_1 + L_2}$ for some $t \in \C^\times$;
\item $H_1 \cap H_2$ is stable under $g$.
\end{enumerate}
In particular the stabilizer of $x$ has dimension $(n-2)^2$.
\item The unique closed orbit contained in $\pi^{-1}(-1)$ is the one given by maximally degenerate points. 
\end{enumerate}
\end{proposition}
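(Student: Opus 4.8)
I would prove part (1) by reducing a maximally degenerate point over $-1$ to the normal form displayed before the statement and reading off its stabilizer, and part (2) by a dimension count on the orbits contained in the fibre $\pi^{-1}(-1)$. For (2) the structural input is that $\pi^{-1}(-1)$ is closed in $X^\ss$ and contains a unique closed $G$-orbit, which lies in the closure of every orbit of the fibre; hence that unique closed orbit is the unique orbit of \emph{minimal} dimension in $\pi^{-1}(-1)$, and it suffices to single it out by dimension.

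\emph{Part (1).} Since $\pi(x) = -1$ one has $s_{123}(x) \neq 0$ and $s_{132}(x) \neq 0$, so $\phi_i(v_j) \neq 0$ for all $i \neq j$ in $\{1,2,3\}$; in particular $L_1, L_2, L_3$ are pairwise distinct, and so are $H_1, H_2, H_3$. First I would record the structural facts behind the normal form. As $H_1 \cap H_2 \cap H_3$ has dimension $n-2$ and lies in $H_1 \cap H_2$, which has the same dimension because $H_1 \neq H_2$, one gets $M := H_1 \cap H_2 = H_1 \cap H_2 \cap H_3$; a direct check using $\phi_1(v_2), \phi_2(v_1) \neq 0$ gives $(L_1 + L_2) \cap M = 0$, hence $V = (L_1 + L_2) \oplus M$; and since $L_3 \subset L_1 + L_2$, after rescaling $v_1, v_2$ one has $v_3 = v_1 + v_2$ and $H_i = L_i \oplus M$ for $i = 1, 2, 3$ (from which the displayed formula $\phi_3 = \phi_2(v_1)\phi_1 - \phi_1(v_2)\phi_2$ follows up to scalar). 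Granting this, the equivalence is immediate: a $g \in \GL(V)$ with $g L_1 = L_1$ and $g L_2 = L_2$ acts by scalars $a, b$ on $v_1, v_2$, and $g L_3 = L_3$ forces $a = b =: t$, so $g|_{L_1 + L_2} = t \cdot \id$; then $g H_1 = H_1$ and $g H_2 = H_2$ give $g(M) = M$; conversely (a) and (b) recover $g H_i = H_i$ because $H_i = L_i \oplus M$ with $L_i \subset L_1 + L_2$. Hence $\Stab_{\GL(V)}(x) = \{\, t \cdot \id_{L_1 + L_2} \oplus D : t \in \C^\times,\ D \in \GL(M)\,\} \cong \Gm \times \GL_{n-2}$, of dimension $1 + (n-2)^2$; intersecting with $\SL(V)$ imposes the single equation $t^2 \det D = 1$, so $\dim \Stab_G(x) = (n-2)^2$.

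\emph{Part (2).} The maximally degenerate points over $-1$ form one orbit, since by the normal form any two differ by a change of basis, hence lie in a common $\GL(V)$-orbit and, scalars acting trivially, in a common $G$-orbit $G \cdot x_0$, of dimension $n^2 - 1 - (n-2)^2$ by part (1). It therefore suffices to show $\dim \Stab_{\GL(V)}(x) < 1 + (n-2)^2$ for every semi-stable $x = \{(L_i, H_i)\}_{i=1,2,3}$ with $\pi(x) = -1$ which is not maximally degenerate. Using the duality $(L, H) \mapsto (H^\perp, L^\perp)$ of $\Fl_{(1, n-1)}$, which preserves semi-stability, exchanges $s_{123}$ and $s_{132}$ (hence fixes the fibre over $-1$), preserves maximal degeneracy and stabilizer dimensions, and carries $\dim(H_1 \cap H_2 \cap H_3) = n-3$ to $\dim(L_1 + L_2 + L_3) = 3$, together with the fact that over $-1$ a non-maximally-degenerate point satisfies $\dim(L_1 + L_2 + L_3) = 3$ or $\dim(H_1 \cap H_2 \cap H_3) = n-3$, I may assume $\dim(L_1 + L_2 + L_3) = 3$. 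Then one still has $V = (L_1 + L_2) \oplus M$ with $M = H_1 \cap H_2$; writing $v_3 = v_1 + v_2 + w$ with $0 \neq w \in M$, any $g$ fixing $L_1, L_2, L_3, H_1, H_2$ acts by a single scalar $\gamma$ on the $3$-dimensional subspace $U := \langle v_1, v_2, w\rangle$, stabilises $M$, and satisfies $g(w) = \gamma w$; as $U + M = V$, such a $g$ is determined by $g|_M$, which lies in the subvariety of $\GL(M)$ of automorphisms having $w$ as an eigenvector, of dimension $1 + (n-2)(n-3)$. Hence $\dim \Stab_{\GL(V)}(x) \le 1 + (n-2)(n-3) < 1 + (n-2)^2$ for $n \ge 3$, so $\dim G \cdot x > \dim G \cdot x_0$. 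Thus $G \cdot x_0$ is the unique orbit of minimal dimension in $\pi^{-1}(-1)$, hence the unique closed orbit there.

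\emph{Main difficulty.} As in the analysis of the fibre over $\infty$, the delicate step is the uniform stabilizer bound for the non-maximally-degenerate points of the fibre, which split a priori into those for which $L_1, L_2, L_3$ span a plane and those for which $\phi_1, \phi_2, \phi_3$ are linearly dependent; the self-duality of $\Fl_{(1, n-1)}$ is what reduces this to one case. (Alternatively, one could observe that $\Stab_G(x_0) \cong \{(t, D) : t^2 \det D = 1\}$ is reductive and conclude directly via a Matsushima-type criterion that $G \cdot x_0$ is closed, but the dimension count stays within the results already recalled.)
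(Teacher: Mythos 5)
Your proposal is correct and takes essentially the same route as the paper: part (1) is an explicit version of the stabilizer computation that the paper borrows from the proof of Theorem \ref{Thm:Quotient(1,3)flags} (5), and part (2) identifies the closed orbit as the orbit of strictly minimal dimension in $\pi^{-1}(-1)$ via a dimension bound on the stabilizer of any non-maximally-degenerate semi-stable point of the fibre. The only difference is in execution: you reduce to the case $\dim(L_1+L_2+L_3)=3$ by the duality $(L,H)\mapsto(H^\bot,L^\bot)$ and prove the (sufficient) bound $\dim \Stab_{\GL(V)}(x)\le 1+(n-2)(n-3)$ by the eigenvector argument, whereas the paper treats the two degeneracy cases together, deriving that $g$ is scalar on $L_1+L_2+L_3$ and preserves $H_1\cap H_2\cap H_3$, and asserts the bound $(n-3)^2$.
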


\begin{proof} (1) Similar to the proof of Theorem \ref{Thm:Quotient(1,3)flags} (5). 

(2) It suffices to show that the orbit of $x$ is of minimal dimension among the orbits contained in $\pi^{-1}(-1)$.  Let $x = \{ (L_i, H_i) \}_{i = 1, 2, 3}$ be a semi-stable point of $X$ such that $\pi(x) = -1$ and which is not maximally degenerate. One has to show that the stabilizer of $x$ has dimension $\le (n-2)^2 - 1$. 

For $i = 1, 2, 3$ let $v_i$  be a generator of $L_i$, $\phi_i$ a linear form defining $H_i$. If $g \in G$ belongs to the stabilizer of $x$ the there exists $t_i, u_i \in \C^\times$ ($i = 1, 2, 3$) such that $gv_i = t_i v_i$ and $\phi_i \circ g^{-1} = u_i \phi_i$. For $i \neq j$ one has
$$ \phi_i (v_j) = \phi_i(g^{-1} g(v_j)) = u_i \phi_i (t_j v_j) = u_i t_j \phi_i(v_j),$$
thus $u_i t_j = 1$. This implies $t_1 = t_2 = t_3 = u_1^{-1} = u_2^{-1} = u_3^{-1}$. In particular, 
\begin{enumerate}
\item $g_{\rvert L_1 + L_2 + L_3} = t \id_{L_1 + L_2 + L_3}$ for some $t \in \C^\times$;
\item $H_1 \cap H_2 \cap H_3$ is stable under $g$.
\end{enumerate}
If the vectors $v_1, v_2, v_3$ or the linear forms $\phi_1, \phi_2, \phi_3$ are linearly independent, then $\dim G_x \le (n - 3)^2$. This concludes the proof.
\end{proof}

\subsection{Quadruples of flags} \label{par:QuadruplesLineHyperplane} A permutation $\sigma \in \mathfrak{S}_4$ is written $(\sigma(1) \sigma(2) \sigma(3) \sigma(4))$. Put the derangements of $4$ elements in lexicographic order:
\begin{align*}
\sigma_1 &= (2143) & \sigma_2 &= (2341) &\sigma_3 &= (2413) \\ 
\sigma_4 &= (3142) &\sigma_5 &= (3412) &\sigma_6 &= (3421) \\
\sigma_7 &= (4123) &\sigma_8 &=(4312) &\sigma_9&= (4321).
\end{align*}
For $i =1, \dots, 9$ denote $s_i$ the $G$-invariant global section associated to the derangement $\sigma_i$.

\begin{theorem}\label{Theorem:linehyperplanengeq4} If $n \ge 4$ the map $\pi \colon X^\ss \to \P^8_\C$ identifies the GIT quotient $X^\ss / G$ with the subvariety of $\P^8_\C$ given by the equations
\begin{align*}
x_1 x_5 &= x_3 x_4, &
x_1 x_9 &= x_2 x_7, &
x_5 x_9 &= x_6 x_8,
\end{align*}
whereas for $n = 3$ the equation 
$$ x_1 + x_5 + x_9  = x_2 + x_3 + x_4 + x_6 + x_7 + x_8, $$
has to be added. 
\end{theorem}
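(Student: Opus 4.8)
The plan is to start from Theorem \ref{Thm:Quotient(1,3)flags}(3), which already identifies $\pi$ with a closed embedding of $Y := X^\ss / G$ onto a closed subvariety $Z \subset \P^8_\C$; what remains is to exhibit the equations of $Z$. Fix a representative $x = \{(L_i, H_i)\}_{i=1,\dots,4}$, choose generators $v_i$ of $L_i$ and linear forms $\phi_i$ cutting out $H_i$, and put $a_{ij} = \phi_i(v_j)$, so that $a_{ii} = 0$ and $s_k(x) = \prod_{i=1}^4 a_{i\sigma_k(i)}$. First I would verify the three quadratic relations by a direct comparison of monomials: one gets $s_1 s_5 = s_3 s_4$, $s_1 s_9 = s_2 s_7$ and $s_5 s_9 = s_6 s_8$ as identities on $X$, so $Z$ lies on the three quadrics. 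When $n = 3$ the $4\times 4$ matrix $(a_{ij})$ factors as $\Phi V$ with $\Phi$ of size $4\times 3$ and $V$ of size $3 \times 4$, hence has rank $\le 3$ and vanishing determinant; since $a_{ii} = 0$, the Leibniz expansion of $\det(a_{ij})$ retains only the nine derangement terms, the three double transpositions $\sigma_1, \sigma_5, \sigma_9$ entering with sign $+1$ and the six $4$-cycles with sign $-1$, which is precisely the relation $s_1 + s_5 + s_9 = s_2 + s_3 + s_4 + s_6 + s_7 + s_8$. Hence $Z \subseteq Z_0$, where $Z_0 \subset \P^8_\C$ is cut out by the three quadrics if $n \ge 4$, and by the three quadrics together with the hyperplane $H$ if $n = 3$.

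The reverse inclusion will come from a dimension-and-irreducibility comparison. Since $X^\ss$ is a nonempty Zariski open of the irreducible variety $F^4$, its GIT quotient $Y$, hence $Z$, is irreducible, and Theorem \ref{Thm:Quotient(1,3)flags}(6) (for $r = 4$) gives $\dim Z = 5$ if $n \ge 4$ and $\dim Z = 4$ if $n = 3$. It then suffices to prove that $Z_0$ is irreducible of the same dimension. For this I would examine $Z_0$ in the chart $\{x_1 \neq 0\}$: normalizing $x_1 = 1$, the first two quadrics read $x_5 = x_3 x_4$ and $x_9 = x_2 x_7$, so $Z_0 \cap \{x_1 \neq 0\}$ is isomorphic, for $n \ge 4$, to the hypersurface $\{x_6 x_8 = x_2 x_3 x_4 x_7\} \subset \A^6$, whose defining polynomial is linear in $x_6$ with coprime coefficients and hence irreducible; for $n = 3$, eliminating also $x_8$ through $H$ turns it into a monic quadratic polynomial in $x_6$ over $\C[x_2, x_3, x_4, x_7]$ whose discriminant restricts to a non-square on a suitable line (e.g.\ $x_2 = 1$, $x_3 = x_4 = x_7 = t$), so it is irreducible as well. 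Thus $Z_0 \cap \{x_1 \neq 0\}$ is irreducible of the right dimension, and the same holds for the charts $\{x_5 \neq 0\}$ and $\{x_9 \neq 0\}$ by the symmetry of the configuration that cyclically permutes the three distinguished flags and hence $(x_1, x_5, x_9)$. On the complementary locus $\{x_1 = x_5 = x_9 = 0\}$ the quadrics degenerate to $x_3 x_4 = x_2 x_7 = x_6 x_8 = 0$ (plus the linear relation when $n = 3$), which carves out a subset of dimension at most $2$. Comparing dimensions, $Z_0$ has pure dimension $5$ (resp.\ $4$): each of its components has dimension at least $8 - 3 = 5$ (resp.\ $8 - 4 = 4$) by Krull's principal ideal theorem, while the four pieces above show $\dim Z_0 \le 5$ (resp.\ $\le 4$); in particular the defining polynomials form a regular sequence, so $Z_0$ is Cohen--Macaulay and equidimensional. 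Every component of $Z_0$ then meets one of $\{x_1 \neq 0\}$, $\{x_5 \neq 0\}$, $\{x_9 \neq 0\}$ (otherwise it would sit inside the above locus of dimension $\le 2$), and hence equals the common closure $W := \ol{Z_0 \cap \{x_1 \neq 0\}} = \ol{Z_0 \cap \{x_5 \neq 0\}} = \ol{Z_0 \cap \{x_9 \neq 0\}}$, the three open sets overlapping on the nonempty dense open $\{x_1 x_5 x_9 \neq 0\}$ of $Z_0$. Hence $Z_0 = W$ is irreducible, and since $Z \subseteq Z_0$ with $\dim Z = \dim Z_0$ and both irreducible, $Z = Z_0$, as asserted.

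The main obstacle I anticipate is precisely this last irreducibility statement for $Z_0$: a single affine chart does not suffice, since one must rule out spurious top-dimensional components concentrated on $\{x_1 = x_5 = x_9 = 0\}$, which forces the simultaneous use of the three charts together with the equidimensionality coming from the complete-intersection structure. The verification that the relevant affine hypersurface is irreducible is elementary --- a coprimality check for $n \ge 4$, a non-square-discriminant computation for $n = 3$ --- but this is exactly where the case distinction of the statement enters.
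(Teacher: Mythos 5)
Your proposal is correct and follows essentially the same route as the paper: verify the three quadratic relations among the $s_i$, then identify the image with the complete intersection by comparing irreducibility and dimension (via Theorem \ref{Thm:Quotient(1,3)flags}) of the quotient with that of the variety cut out by the equations. You merely supply details the paper leaves as an assertion or elementary computation --- notably the determinant/rank argument producing the extra linear relation for $n=3$ and the chart-by-chart proof, combined with Krull's principal ideal theorem, that the (complete intersection) locus is irreducible of dimension $5$ (resp.\ $4$) --- and these details check out.
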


\begin{remark} In each of the previous equations, the left-hand side is made of variables corresponding to derangements of order $2$. On the right-hand side, if the variable corresponding to a derangement $\sigma$ appears, then the variable corresponding to $\sigma^{-1}$ occurs too.
\end{remark}

\begin{proof} It is an elementary computation showing that the relations
\begin{align*}
s_1 s_5 &= s_3 s_4, & s_1 s_9 &= s_2 s_7, & s_5 s_9 &= s_6 s_8,
\end{align*}
hold. The equations in the statement give rise to a $5$-dimensional, irreducible, complete intersection in $\P^8$. On the other hand, the GIT quotient $X^\ss / G$ is an integral variety of dimension $5$ by Theorem \ref{Thm:Quotient(1,3)flags}.
\end{proof}

\subsection{Real forms}\label{subsection:realformsLH} Let $G$ be a real form of $\SL_{n, \C}$ admitting a parabolic subgroup of type $(1, n-1)$ defined over $\R$. 

Let $F$ be the corresponding flag variety and $X = F^r$ and $L$ the line bundle induced by the Pl\"ucker embedding of $X_\C$. Consider the Zariski open subset of semi-stable points $X^\ss$ with respect to $G$ and $L$. Because of the compatibility of construction of invariants with extension of scalars (cf. Theorem \ref{Thm:AffineRealGIT} (5)) the real vector space $\HH^0(X, L)^{G}$ is of dimension $!r$ and one has a well-defined map
$$ \pi \colon X^\ss \too \P(\HH^0(X, L)^{G}),$$
which induces a closed embedding $X^\ss / G \to \P(\HH^0(X, L)^{G})$.

\subsubsection{$\SL_n(\R)$} Let $G = \SL_{n, \R}$. Then $F$ is the algebraic variety $\Fl_{(1, n-1)}(\R^n)$ of flags of type $(1, n-1)$ of $\R^n$. For a derangement $\sigma$ the global section $s_\sigma$ is real. Let $\sigma_1, \dots, \sigma_{!r}$ be the derangement on $r$ elements and, for $i =1, \dots, !r$, $s_i$ be the associated $G$-invariant global section. The map $\pi$ is given by
$$ \pi(x) = [s_{1}(x) : \cdots : s_{!r}(x)].$$

The open subset $U_\C \subset X_\C$ of points in general position is stable under complex conjugation and therefore comes from an open subset $U \subset X$.

\begin{proposition} \label{prop:GeneralRealSplitFormLineHyperplane} The map $\theta \colon U(\R) / \SL_n(\R) \to \P^{!r - 1}(\R) $ is injective as soon as $n$ is odd or $n > r$. If $n \le r$ and $n$ is even then $\theta$ is two-to-one.
\end{proposition}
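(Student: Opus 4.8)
The plan is to reduce the statement to a computation in Galois cohomology, following Proposition~\ref{Prop:RealPointsOrbitGaloisCohomology} and the description of the fibres of $\theta$ contained in Proposition~\ref{Prop:ProjectionOfMaxSeparatedQuotientToGITQuotientProj}. First I would record that for $x\in U$ the orbit $\SL_n(\C)\cdot x$ is closed in $X^\ss_\C$: since $U$ is $G$-saturated (Theorem~\ref{Thm:Quotient(1,3)flags}(4)) the unique closed orbit in $\overline{\SL_n(\C)\cdot x}$ still lies in $U$, and it must coincide with $\SL_n(\C)\cdot x$ because $\pi$ is injective on the orbits of $U$. By Theorem~\ref{Thm:ProjectiveRealGIT}(2) the orbit $\SL_n(\R)\cdot x$ is then closed in $X^\ss(\R)$, so every $\SL_n(\R)$-orbit in $U(\R)$ is closed and, writing $T=\SL_n(\C)\cdot x$, two closed orbits with the same $\pi$-image coincide, whence $\theta^{-1}(\pi(x))=T(\R)/\SL_n(\R)$. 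Proposition~\ref{Prop:RealPointsOrbitGaloisCohomology} identifies this set with $\ker\!\big(\HH^1(\R,G_x)\to\HH^1(\R,\SL_n)\big)$, where $G_x$ is the (real) stabilizer of $x$. Finally $\HH^1(\R,\SL_n)=0$: in the exact sequence of pointed sets attached to $1\to\SL_n\to\GL_n\xrightarrow{\det}\Gm\to1$ the map $\det\colon\GL_n(\R)\to\R^\times$ is surjective and $\HH^1(\R,\GL_n)=0$ by Theorem~\ref{thm:Satz90}, which forces $\HH^1(\R,\SL_n)=0$. Hence $\#\theta^{-1}(\pi(x))=\#\HH^1(\R,G_x)$, and everything comes down to this group.

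Next I would read off $G_x$ from Theorem~\ref{Thm:Quotient(1,3)flags}(5) in the two regimes. If $n>r$, set $W=L_1+\cdots+L_r$ and $W'=H_1\cap\cdots\cap H_r$; condition~(4) of general position gives $V=W\oplus W'$ over $\R$ (as $x$ is real), and the stabilizer becomes $G_x\cong\{(t,h)\in\Gm\times\GL(W'):t^r\det h=1\}$, i.e.\ the kernel of the homomorphism $\Gm\times\GL_{n-r}\to\Gm$, $(t,h)\mapsto t^r\det h$. Since $n-r\ge1$, already $\det\colon\GL_{n-r}(\R)\to\R^\times$ is surjective, so the map on real points is surjective, and as $\HH^1(\R,\Gm)=\HH^1(\R,\GL_{n-r})=0$ the long exact sequence yields $\HH^1(\R,G_x)=0$; therefore $\theta$ is injective when $n>r$. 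If instead $n\le r$, then $L_1+\cdots+L_r=V$ — it contains the span of any $n$ of the lines, which is all of $V$ by general position — so condition~(a) of Theorem~\ref{Thm:Quotient(1,3)flags}(5) forces $g=t\cdot\id_V$, and $\det g=t^n=1$; thus $G_x$ is exactly the centre $\mu_n$ of $\SL_{n,\R}$.

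It remains to compute $\HH^1(\R,\mu_n)$. The Kummer sequence $1\to\mu_n\to\Gm\xrightarrow{x\mapsto x^n}\Gm\to1$ together with $\HH^1(\R,\Gm)=0$ gives $\HH^1(\R,\mu_n)=\R^\times/(\R^\times)^n$, which is trivial when $n$ is odd and isomorphic to $\Z/2\Z$ when $n$ is even. Combining with the previous paragraph: $\theta$ is injective when $n$ is odd or $n>r$, and when $n$ is even and $n\le r$ every nonempty fibre $\theta^{-1}(\pi(x))$ has exactly two elements. To conclude that $\theta$ is genuinely two-to-one on $U(\R)/\SL_n(\R)$, note that both $\SL_n(\R)$-orbits in $T(\R)$ actually lie in $U(\R)$: the set $U$ is $G_\C$-saturated and defined over $\R$, so $T\subset U_\C$ and $T(\R)\subset U(\R)$. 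The only genuinely delicate inputs are the two structural ones — that orbits in $U$ are closed, so the cohomological description of the fibres applies, and the precise identification of the real form of $G_x$ (in particular that it is literally $\mu_{n,\R}$, not just an abstract group of order $n$) — both of which follow cleanly from Theorem~\ref{Thm:Quotient(1,3)flags}; the remaining cohomology computations are routine.
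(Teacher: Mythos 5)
Your proposal is correct and follows essentially the same route as the paper, namely its Galois-cohomology argument: identify the stabilizer of a point of $U(\R)$ via Theorem \ref{Thm:Quotient(1,3)flags}(5) as $\{(t,h)\in\Gm\times\GL_{n-r}:t^r\det h=1\}$ for $n>r$ and as $\mu_n$ for $n\le r$, and count real orbits through $\HH^1(\R,G_x)$ using Proposition \ref{Prop:RealPointsOrbitGaloisCohomology}. You merely supply details the paper leaves implicit (closedness of the orbits of points of $U$, the vanishing $\HH^1(\R,\SL_n)=0$, and the Kummer computation of $\HH^1(\R,\mu_n)$), all of which are correct.
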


\begin{proof} This can be seen by reproducing the argument of Theorem \ref{Thm:Quotient(1,3)flags} (4) over $\R$. The difference is that, when $n \le r$, it is not always possible to have an element of the transporter of determinant $1$. 

An alternative way to prove the statement is computing Galois cohomology groups of the stabilizer of a point $x \in U(\R)$. The argument in the proof of Theorem \ref{Thm:Quotient(1,3)flags} (5) works over $\R$ and shows that an element $g \in \SL_n(\R)$ stabilizes $x$ if and only if
\begin{enumerate}
\item $g_{\rvert L_1 + \cdots L_r} = t \cdot \id_{L_1 + \cdots + L_r}$ for some $t \in \C^\times$;
\item $H_1 \cap \cdots \cap H_r$ is stable under $g$.
\end{enumerate}
Therefore for $n > r$ the stabilizer of $x$ is isomorphic to the algebraic group
$$ H = \{ (t, g) \in \mathbb{G}_{m, \R} \times_\R \GL_{n - r, \R} : t^r \det g = 1 \}.$$
Taking the Galois cohomology sequence associated to the short exact sequence
$$ 1 \too \mathbb{G}_{m, \R} \too H \too \GL_{n - r, \R} \too 1,$$
one obtains $\HH^1(\R, H) = 0$ thanks to Theorem \ref{thm:Satz90}. For $n \le r$ the stabilizer of $x$ is the center $\mu_n$ of $\SL_n$, thus
$$ \HH^1(\R, \mu_n) = 
\begin{cases}
\Z / 2\Z & \textup{if $n$ is even},\\
0 & \textup{otherwise}. 
\end{cases}$$
This concludes the proof.
\end{proof}

\begin{remark} The lack of injectivity disappears as soon as one quotients by $\GL_n(\R)$ instead of $\SL_n(\R)$.
\end{remark}

\begin{example} Suppose $r = 3$ and go back to the notations introduced in paragraph \ref{par:TripletsLineHyperplane}. The global sections $s_{123}$ and $s_{132}$ are real, thus the quotient map $\pi \colon X^\ss \to \P^1_\R$ is given by $\pi(x) = [s_{123}(x) : s_{132}(x)]$. It induces a natural map
$$\theta \colon X^\ss(\R) / \SL_n(\R) \to \P^1(\R). $$

\begin{proposition} The map $\theta$ is a homeomorphism.
\end{proposition}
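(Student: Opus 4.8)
The plan is to prove that $\theta$ is a continuous bijection which is in addition proper; since $\P^1(\R)$ is compact Hausdorff and $X^\ss(\R)/\SL_n(\R)$ is Hausdorff (Proposition \ref{Prop:ProjectionOfMaxSeparatedQuotientToGITQuotientProj}), a proper continuous bijection is automatically a closed map, hence a homeomorphism. Continuity, properness and finiteness of the fibres are already contained in Proposition \ref{Prop:ProjectionOfMaxSeparatedQuotientToGITQuotientProj}, so the whole argument reduces to showing that $\theta$ is bijective.

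\emph{Surjectivity.} The image of $\theta$ equals $\pi(X^\ss(\R))$, and it is closed in $\P^1(\R)$ since $\theta$ is proper and the target is locally compact Hausdorff. On the other hand $\pi(X^\ss(\R))$ contains all the triple ratios $s_{123}(x)/s_{132}(x)$ of real configurations $x$ in general position; writing out this ratio for the family $L_1 = \langle v_1\rangle$, $L_2 = \langle v_2\rangle$, $L_3 = \langle v_1 + v_2 + v_3\rangle$ in $\R^n$ (with $v_1,\dots,v_n$ a basis and the hyperplanes varying) shows by a direct computation that these values already exhaust $\R^\times = \P^1(\R)\smallsetminus\{0,\infty\}$. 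As $\R^\times$ is dense, $\pi(X^\ss(\R)) = \P^1(\R)$.

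\emph{Injectivity.} Fix $\lambda\in\P^1(\R)$ and let $T_\lambda\subset\pi^{-1}(\lambda)$ be the unique closed $\SL_n(\C)$-orbit; by Theorem \ref{thm:LunaRealGIT}(1) it is defined over $\R$ and has a real point, and $\theta^{-1}(\lambda) = T_\lambda(\R)/\SL_n(\R)$. Since $\det\colon\GL_n(\R)\to\R^\times$ is onto, the exact sequence $1\to\SL_n\to\GL_n\xrightarrow{\det}\Gm\to1$ together with Theorem \ref{thm:Satz90} gives $\HH^1(\R,\SL_{n,\R}) = 0$, so Proposition \ref{Prop:RealPointsOrbitGaloisCohomology} identifies $\theta^{-1}(\lambda)$ with $\HH^1(\R,G_x)$, where $G_x$ is the stabilizer in $\SL_n$ of a real point $x\in T_\lambda$. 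It remains to check $\HH^1(\R,G_x) = 0$ in three cases. If $\lambda\in\R^\times\smallsetminus\{-1\}$, elementary identities among the invariants show that, whenever one of the four general-position conditions fails, $\pi(x)$ must lie in $\{0,\infty,-1\}$; hence $\pi^{-1}(\lambda)\subseteq U$ and, by Theorem \ref{Thm:Quotient(1,3)flags}(4), $\pi^{-1}(\lambda)$ is a single orbit, necessarily $T_\lambda$, whose stabilizer is $\mu_3$ if $n = 3$ and $\{(t,g)\in\Gm\times\GL_{n-3} : t^3\det g = 1\}$ if $n > 3$ (Theorem \ref{Thm:Quotient(1,3)flags}(5)); in either case $\HH^1(\R,G_x) = 0$ by the computation in Proposition \ref{prop:GeneralRealSplitFormLineHyperplane}. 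If $\lambda = -1$, then $T_{-1}$ is the orbit of maximally degenerate points (Proposition \ref{Prop:QuotientLineHyperplaneSingularFiber}(2)); semistability forces $L_i\not\subseteq H_j$ for $i\neq j$, so $L_1 + L_2$ and $H_1\cap H_2$ are complementary and, by Proposition \ref{Prop:QuotientLineHyperplaneSingularFiber}(1), $G_x\cong\{(t,h)\in\Gm\times\GL_{n-2} : t^2\det h = 1\}$; from $1\to\mu_2\to G_x\to\GL_{n-2}\to1$, Theorem \ref{thm:Satz90}, and the fact that the connecting map $\GL_{n-2}(\R)\to\HH^1(\R,\mu_2)\cong\Z/2\Z$ is $\operatorname{sign}\circ\det$ and hence surjective, one obtains $\HH^1(\R,G_x) = 0$. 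Finally, the points $0$ and $\infty$ are exchanged by the transposition $(12)\in\mathfrak{S}_3$ acting on the three factors of $X$, which commutes with $\theta$, so it suffices to treat one of them: there $T_\lambda$ is again an orbit of maximally degenerate points, the normal form \eqref{eq:FormOfMaximallyDegeneratePoint} and Proposition \ref{Prop:QuotientLineHyperplaneFiberInfinity}(1) give a decomposition $V = L_1\oplus L_2\oplus L_3\oplus(H_1\cap H_2\cap H_3)$ with $G_x\cong\{(t_1,t_2,t_3,h)\in\Gm^3\times\GL_{n-3} : t_1 t_2 t_3\det h = 1\}$, and $\HH^1(\R,G_x) = 0$ follows from $1\to\Gm^2\to G_x\to\GL_{n-3}\to1$ and Theorem \ref{thm:Satz90}. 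Thus every fibre of $\theta$ is a single point, so $\theta$ is injective, completing the proof.

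The main obstacle is the injectivity step over the three exceptional fibres. One must first identify the stabilizer of a point in each degenerate closed orbit as a concrete extension of a general linear group — which relies on the direct-sum decompositions recorded above — and then run the Galois cohomology computation; the delicate point is the fibre over $-1$, where $\HH^1(\R,\mu_2)$ is nonzero and one has to verify that the relevant connecting homomorphism is onto.
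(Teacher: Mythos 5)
Your proof is correct, and its skeleton matches the paper's: reduce to bijectivity via Proposition \ref{Prop:ProjectionOfMaxSeparatedQuotientToGITQuotientProj}, use Proposition \ref{prop:GeneralRealSplitFormLineHyperplane} away from the three special fibers, and treat $0,\infty,-1$ separately. Where you genuinely diverge is in the details, and your route is in fact more explicit than the paper's. For surjectivity the paper simply writes down one explicit real family whose image is all of $\P^1(\R)$ (including $0,\infty,-1$), whereas you combine properness (closed image) with density of values attained in general position; this works, but note a small overclaim: a configuration in general position can never have triple ratio $-1$ (failure of exactly condition (4), or of (1)/(2), is what produces $-1$; compare Example \ref{ex:GenericVsGeneralPosition} and the fact that $U\cap\pi^{-1}(-1)=\emptyset$ since the unique closed orbit over $-1$ is not in $U$), so your family exhausts only $\R^{\times}\smallsetminus\{-1\}$ --- still dense, so the argument is unharmed. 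For injectivity at the exceptional fibers the paper just observes that real maximally degenerate points visibly form a single $\SL_n(\R)$-orbit (via the normal forms of Propositions \ref{Prop:QuotientLineHyperplaneFiberInfinity} and \ref{Prop:QuotientLineHyperplaneSingularFiber}), while you identify the stabilizers as the concrete extensions $\{(t_1,t_2,t_3,h):t_1t_2t_3\det h=1\}$ and $\{(t,h):t^2\det h=1\}$ and compute $\HH^1(\R,G_x)=0$, including the genuinely delicate point at $-1$ where the connecting map $\GL_{n-2}(\R)\to\HH^1(\R,\mu_2)$ must be shown onto (it is the sign of the determinant). Your cohomological treatment buys a uniform mechanism that quantifies the fibers of $\theta$ everywhere and would generalize to other real forms, at the cost of more machinery than the paper's two-line normal-form observation; the two small blemishes (the $-1$ overclaim above, and citing the affine Theorem \ref{thm:LunaRealGIT} where the projective Theorem \ref{Thm:ProjectiveRealGIT} is the one needed) are cosmetic and do not affect correctness.
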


\begin{proof} According to Proposition \ref{Prop:ProjectionOfMaxSeparatedQuotientToGITQuotientProj} it suffices to prove that $\theta$ is bijective. Surjectivity is easy: let $e_1^\vee, \dots, e_n^\vee$ be the dual basis and set
\begin{align*}
L_1 &= \langle e_1 \rangle, & L_2 &= \langle e_n \rangle, &L_3 &= \langle v \rangle, \\
H_1 &:  e_n^\vee= 0, & H_2 &: e_1^\vee = 0 , & H_2 &: \phi = 0,
 \end{align*}
where $v \in V$ and $\phi \in V^\vee$ are such that $e_n^\vee(v) = \phi(e_1) = 1$. Then,
$$ \pi(x) = [e_1^\vee(v) : \phi(e_n)], $$ 
whence the surjectivity of $\theta$.

Concerning the injectivity, according to Proposition \ref{prop:GeneralRealSplitFormLineHyperplane}, it suffices to prove it only for points lying on the closed orbits contained in the fibers at $0$, $\infty$ and $-1$. In each of these cases maximally degenerate points lie clearly in the same orbit under $\SL_n(\R)$.
\end{proof}
\end{example}

\subsubsection{$\SU(n-1, 1)$} \label{par:GeneralQuotientLineHyperplaneHermitian} Let $h$ be a hermitian form of signature $(n-1, 1)$ on $\C^n$ and $\SU(n-1, 1)$ the subgroup of $\SL_n(\C)$ of elements respecting the hermitian form $h$.

Let $G$ be the real form of $\SL_{n, \C}$ such that $G(\R) = \SU(n-1, 1)$. For a linear subspace $W \subset V$ denote by $W^\bot$ its orthogonal with respect to the hermitian form $h$. The flag variety $F$ is defined by the anti-holomorphic involution $\tau$ on $F$ associating to a complex flag $(L, H)$ the flag $(H^\bot, L^\bot)$.

The anti-holomorphic involution $\tau$ can be extended to $\P(V) \times_\C \P(V^\ast)$ in a way such that the Pl\"ucker embedding is compatible to these involutions.  For $v \in V$ denote by $\alpha_v$ the linear form on $V$ defined for every $w \in V$ by $ \alpha_v(w) \df h( v, w )$.
Conversely for a linear form $\alpha$ on $V$ write $v_\alpha \in V$ for the unique vector such that $\alpha_{v_\alpha} = \alpha$. Consider the anti-holomorphic involution $\tau$ on $\P(V) \times_\C \P(V^\ast)$ given by 
$$ \tau \colon (v, \alpha) \mapsto (v_\alpha, \alpha_v).$$

For $i = 1, \dots, r$ let $v_i \in V$ be an isotropic vector and $\alpha_i \df \alpha_{v_i}$. For a derangement $\sigma$, 
$$
s_\sigma((v_1 \otimes \alpha_1) \otimes \cdots \otimes (v_r \otimes \alpha_r)) = \prod_{i = 1}^r h(v_i, v_{\sigma(i)}).
$$
In particular $\tau^\ast s_{\sigma} = \bar{s}_{\sigma^{-1}}$. If a derangement $\sigma$ is order $2$, then
$$ s_\sigma((v_1 \otimes \alpha_1) \otimes \cdots \otimes (v_r \otimes \alpha_r)) = \prod_{i = 1}^r |h(v_i, v_{\sigma(i)})|, $$
which is a non-negative real number. If $\sigma$ is not of order $2$ the global sections
\begin{align*} \tfrac{1}{2}(s_{\sigma} + s_{\sigma^{-1}}), && \tfrac{1}{2 \sqrt{-1}} (s_{\sigma} - s_{\sigma^{-1}}), \end{align*}
are defined over $\R$ and take as values the real and imaginary part of $s_\sigma$.

\begin{proposition} \label{Prop:SemistableIsotropicLines} Let $x = (L_1, \dots, L_r)$ be a  $r$-tuple of isotropic lines.
\begin{enumerate}
\item $x$ is semi-stable if and only if, for every isotropic line $L$,
$$ \# \{ i : L = L_i \} \le \frac{r}{2};$$
\item if $r$ is even, then $x$ is semi-stable if and only if there is a derangement of order $2$ such that $s_\sigma(x) \neq 0$.
\end{enumerate}
\end{proposition}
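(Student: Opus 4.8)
The plan is to reduce both assertions to a combinatorial statement about a graph attached to the $r$-tuple, the one non-formal ingredient being a geometric lemma on orthogonality of isotropic lines.

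\emph{Step 1: reduction to a graph.} By the description of $F$ in \S\ref{par:GeneralQuotientLineHyperplaneHermitian}, a point of $F_0^r$ is a tuple $\big((L_1, L_1^\bot), \dots, (L_r, L_r^\bot)\big)$ with each $L_i$ an isotropic line. By Theorem \ref{Thm:Quotient(1,3)flags}(2) such a point is semi-stable (as a point of $\Fl_{(1,n-1)}(V)^r$ under $\SL_n(\C)$) if and only if there is a derangement $\sigma \in \mathfrak{D}_r$ with $L_i \not\subset L_{\sigma(i)}^\bot$ for every $i$; choosing generators $v_i$ of $L_i$, this says $h(v_i, v_{\sigma(i)}) \neq 0$ for every $i$, i.e. $s_\sigma(x) \neq 0$. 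I introduce the graph $\Gamma$ on the vertex set $\{1,\dots,r\}$ in which $i$ and $j$ are adjacent when $h(v_i, v_j) \neq 0$: this relation is symmetric because $h$ is hermitian, and has no loops because the $v_i$ are isotropic. Thus $x$ is semi-stable precisely when $\Gamma$ admits a fixed-point-free permutation $\sigma$ with $i$ adjacent to $\sigma(i)$ for all $i$.

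\emph{Step 2: the graph is complete multipartite.} Here the hypothesis on the signature enters. Given an isotropic vector $v$, choose $w$ with $h(v,w)=1$ and $h(w,w)=0$ and split $V = \langle v, w\rangle \oplus \langle v, w\rangle^\bot$; the plane $\langle v,w\rangle$ has signature $(1,1)$, hence $\langle v,w\rangle^\bot$ has signature $(n-2,0)$, that is, it is positive definite. One checks that $v^\bot = \langle v\rangle \oplus \langle v,w\rangle^\bot$ and that $h$ restricted to $v^\bot$ is positive semi-definite with radical exactly $\langle v\rangle$; in particular the only isotropic line contained in $v^\bot$ is $\langle v\rangle$ itself. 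Consequently, for $i \neq j$ one has $h(v_i,v_j)=0$ if and only if $L_i = L_j$, so $\Gamma$ is the complete multipartite graph whose parts are the fibres of $i \mapsto L_i$: writing $m_L = \#\{i : L_i = L\}$ for an isotropic line $L$, the parts have sizes $(m_L)_L$ and $\sum_L m_L = r$.

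\emph{Step 3: the matching arguments.} If $m_L > r/2$ for some $L$, the corresponding part is an independent set of $\Gamma$ that cannot be mapped injectively into its (smaller) complement, so $\Gamma$ has no admissible permutation, and a fortiori no admissible fixed-point-free involution; this gives the ``only if'' part of both (1) and (2). Conversely suppose $m_L \le r/2$ for every $L$. For (1), Hall's marriage theorem applied to the bipartite graph with both sides equal to $\{1,\dots,r\}$ and edges $\{(i,j): i \sim j\}$ — whose condition $\#N(S)\ge\#S$ is immediate from $m_L \le r/2$, in the same spirit as the Hall argument in the proof of Theorem \ref{Thm:Quotient(1,3)flags}(1) — yields an admissible permutation, so $x$ is semi-stable; this proves (1). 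For (2) we use that $r$ is even: a derangement $\sigma$ of order $2$ is a fixed-point-free involution, equivalently a perfect matching of $\{1,\dots,r\}$, and $s_\sigma(x) = \prod_{\{a,b\}} |h(v_a,v_b)|^2$ is non-zero exactly when all its pairs are edges of $\Gamma$; moreover $K_{(m_L)_L}$ does admit a perfect matching once all $m_L \le r/2$ (induct on $r$, matching one vertex from each of the two largest parts and observing that the inequality persists for the remaining parts). Hence $x$ is semi-stable if and only if $m_L \le r/2$ for all $L$, if and only if some derangement of order $2$ does not vanish at $x$, which is (2).

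The crux is the geometric lemma of Step 2: once $\Gamma$ is known to be complete multipartite, everything reduces to elementary matching theory. The point to be careful about is that statement (2) requires a matching in $\Gamma$ itself, which is strictly stronger than the bipartite matching furnished by Hall in Step 3 — this is why the refined description of $\Gamma$, and not merely the Hall inequality, is needed there.
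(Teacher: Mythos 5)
Your proof is correct, but it follows a different route from the paper's. You isolate explicitly the geometric fact that the paper only uses implicitly — namely that a hermitian form of signature $(n-1,1)$ has Witt index $1$, so two orthogonal isotropic vectors are proportional and hence $h(v_i,v_j)=0$ (for $i\neq j$) forces $L_i=L_j$ — and you then recast semi-stability as the existence of a fixed-point-free permutation compatible with the complete multipartite graph whose parts are the fibres of $i\mapsto L_i$. From there, part (1) follows from Hall's marriage theorem (in the same spirit as the Hall argument the paper already uses in the proof of Theorem \ref{Thm:Quotient(1,3)flags}(1)), and part (2) from the elementary fact that a complete multipartite graph on an even number of vertices with no part exceeding half the vertices has a perfect matching; I checked your greedy induction (pair the two largest parts) and the Hall-condition verification, and both are sound. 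The paper instead argues by a bespoke double induction on $r$: it proves (2) first for even $r$ by induction, exhibiting an explicit order-$2$ derangement whose vanishing would force a line to be repeated too often, and then handles odd $r$ by a further induction that reduces to the even case via derangements with prescribed restrictions. Your version is more structural and makes the single geometric input transparent; the paper's version is more hands-on but produces, along the way, derangements of controlled cycle type, a flavour of argument it reuses later (e.g.\ in Lemma \ref{Lemma:ExistenceOfNonVanishing3Cycles}). The only cosmetic point is the signature convention: with $h$ of signature $(n-1,1)$ as in \S\ref{par:GeneralQuotientLineHyperplaneHermitian}, the complement $\langle v,w\rangle^{\bot}$ is positive definite as you say (with the convention $(1,n-1)$ it would be negative definite), but either way it is definite and your argument goes through unchanged.
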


\begin{proof}If an isotropic line is repeated more than $r/2$ times, then it is clear that all invariants $s_\sigma$ vanish at $x$.

The converse implication is proved separately for $r$ odd and even.

Suppose $r = 2r'$ with $r'$ integer. We prove statement (2) by induction on $r' \ge 2$. Suppose $s_\sigma(x) = 0$ for all derangements of order $2$. For $r' = 2$ one sees that there is a line repeated at least $3$ times. Assume $r' \ge 3$ and that the statement is true for $r'  - 1$. One may suppose $L_{r-1} \neq L_{r}$. Then, by the inductive hypothesis, among the lines $L_1, \dots, L_{r-2}$ there is a line $L$ repeated at least $\frac{r}{2}$ times. Up to permutation one may suppose $ L= L_1 = \cdots = L_{r/2}$. Consider the derangement of order $2$ given by $\sigma(i) = \frac{r}{2} + i$ and $\sigma(\frac{r}{2} + i) =  i$ for $i = 1, \dots, \frac{r}{2}$. The hypothesis $s_\sigma(x) = 0$ implies that there is $j = \frac{r}{2}, \dots, r$ such that $L_j = L$. This contradicts the semi-stability.

Suppose $r$ odd and write $r = 2r' + 1$. The statement is proved by induction on $r' \ge 1$. For $r' = 1$ the statement is clear. Suppose $r' \ge 2$. Up to permutation one may assume $L_1 \neq L_2$. Suppose $s_\sigma(x) = 0$ for all derangements $\sigma$ such that $\sigma_{\{1, 2\}} = (12)$. Then, by inductive hypothesis, there is a line $L$ such that 
$$ \# \{ i \in \{ 3, \dots, r\}: L = L_i \} \ge \frac{r-1}{2}.$$
By hypothesis $L_1, L_2 \neq L$. Let $k$ be such that $L_k = L$. Suppose $s_{\sigma}(x) = 0$ for all derangements $\sigma$ such that $\sigma_{\{ 1, 2, k\}} = (12k)$. By the case with even $r$, among the lines $L_i$ for $i \neq 1, 2, k$ there is a line $L'$ repeated at least $ \frac{r-1}{2}$ times. One has necessarily $L = L'$ thus the line $L$ is repeated at least $\frac{r+1}{2}$ times, contradicting the hypothesis.
\end{proof}

Let $\ell$ be the number of derangements of order $2$: it is $0$ if $r$ is odd and 
$$(r-1)!! := \prod_{i = 0}^{ \frac{r}{2}  - 1} (r - 2i),$$ if $r$ is even. Let $m$ the cardinality of the quotient of $\mathfrak{D}_r$ by the relation $\sigma \sim \tau$ if $\tau = \sigma^{-1}$: it is $!r$ if $r$ is odd and $\tfrac{!r - (r- 1)!!}{2}$ if $r$ is even.

 Order the derangements on $r$ elements in a way such that
\begin{itemize}
\item $\sigma_i$ is of order $2$ for $i = 1, \dots, \ell$;
\item if $i = \ell + 1, \dots, m$ then $\sigma_i^{-1} = \sigma_j$ for $j > m$.
\end{itemize}
Identify $\P^{!r -1}(\R)$ with the projective space associated to $\R^\ell \times \C^{m - \ell}$ (seen as a real vector space). For $x \in X^\ss(\R)$ the point $\pi(x)$ is the line generated by
$$ (s_1(x) , \dots, s_\ell(x),  s_{\ell + 1}(x), \dots, s_m(x)).$$

On the other hand, for isotropic vectors $v_1, \dots, v_r$, for non-zero complex numbers $\lambda_1, \dots, \lambda_r$ and for a derangement $\sigma$,
$$ \prod_{i = 1}^r h(\lambda_i v_i, \lambda_{\sigma(i)}v_{\sigma(i)}) = |\lambda_1|^2 \cdots |\lambda_r|^2 \prod_{i = 1}^r h(v_i, v_{\sigma(i)}).$$
This shows that the class $p(x)$ of the vector $(s_1(x) , \dots , s_m(x))$ in the sphere
$$ \SS(\R^\ell \times \C^m) \df (\R^\ell \times \C^m \smallsetminus \{ 0\})/\R_{>0},$$ only depends on the line generated by the vectors $v_1, \dots, v_r$. This induces well-defined maps  $p \colon X^\ss(\R) \to \SS(\R^\ell \times \C^m)$ and
$$ \epsilon \colon X^\ss(\R) / G(\R) \too \SS(\R^\ell \times \C^m).$$
Consider the  $G(\R)$-saturated open subset 
$$ W = \{ (L_1, \dots, L_r) \in X^\ss(\R) : \dim (L_1 + \cdots + L_r) \ge 3 \}.$$

\begin{theorem} \label{Thm:QuotientIsotropicLines} With the notations introduced above:
\begin{enumerate}
\item the map $W / \SU(1, n-1) \to \P(\R^\ell \times \C^m)$ is injective;
\item for $r$ odd the map $\epsilon \colon X^\ss(\R) / G(\R) \to \SS(\C^{!r/2})$ is injective;
\item for $r$ even and $x, x' \in X^\ss(\R)$, 
$$p(x) = p(x') \textrm{ if and only if } \pi(x) = \pi(x'). $$
\end{enumerate}
\end{theorem}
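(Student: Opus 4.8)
The plan is to handle statement (3) by a direct sign computation, and to reduce statements (1) and (2) to a single conjugation statement, which I would prove by recovering the Hermitian Gram matrix of a configuration (up to rescaling of the chosen generators) from the invariants $s_\sigma$ and then invoking Witt's extension theorem.

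For statement (3): the implication $p(x)=p(x')\Rightarrow\pi(x)=\pi(x')$ is immediate, since $\pi$ is obtained from $p$ by projectivising. Conversely, if $\pi(x)=\pi(x')$ then $(s_\sigma(x'))_\sigma=\lambda\,(s_\sigma(x))_\sigma$ for some $\lambda\in\C^\times$, and applying the relation $\tau^\ast s_\sigma=\bar s_{\sigma^{-1}}$ at the real points $x,x'$ forces $\lambda\in\R^\times$. When $r$ is even, Proposition \ref{Prop:SemistableIsotropicLines}(2) provides a derangement $\sigma_0$ of order $2$ with $s_{\sigma_0}(x)\neq0$; since $s_{\sigma_0}$ takes nonnegative real values at real points (it is a product of terms $|h(v_i,v_j)|^2$), one gets $\lambda=s_{\sigma_0}(x')/s_{\sigma_0}(x)>0$, hence $p(x)=p(x')$.

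For statements (1) and (2): by Proposition \ref{Prop:ProjectionOfMaxSeparatedQuotientToGITQuotientProj}, and since two semi-stable points with the same $\pi$-image whose $\SL_n(\C)$-orbits are closed lie in one $\SL_n(\C)$-orbit (in particular their spans have equal dimension), it suffices to show: if $x=(L_1,\dots,L_r)$ and $x'=(L_1',\dots,L_r')$ are semi-stable $r$-tuples of isotropic lines with $p(x)=p(x')$, and if $\dim(L_1+\cdots+L_r)\ge 3$ (case (1)) or $r$ is odd (case (2)), then $gL_i=L_i'$ for all $i$ for some $g\in\SU(n-1,1)$. First I would pick isotropic generators $v_i,v_i'$ with Gram matrices $m_{ij}=h(v_i,v_j)$, $m_{ij}'=h(v_i',v_j')$, and use $p(x)=p(x')$ to rescale $v_i'\mapsto t_iv_i'$ so that $h(t_iv_i',t_jv_j')=m_{ij}$ for all $i,j$: arguing as in Lemma \ref{lem:BiratioImpliesConjugation} and Theorem \ref{Thm:Quotient(1,3)flags}(4), pairs of derangements differing by a transposition (Lemma \ref{lemma:ExistenceOfDerangements}) let one read off from ratios of the $s_\sigma$ the ``magnitude cross ratios'' $|m_{ij}m_{kl}|/|m_{il}m_{kj}|$ and the ``phase holonomies'' $\arg(m_{ij}m_{jk}m_{ki})$, which together with one global positive scalar determine the Gram matrix up to the action $(m_{ij})\mapsto(t_i\bar t_jm_{ij})$ of $(\C^\times)^r$, and a coboundary argument on the graph with an edge $i\sim j$ whenever $m_{ij}\neq0$ then produces the $t_i$. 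Once this rescaling is done, the assignment $v_i\mapsto v_i'$ preserves the Hermitian form and, after one also matches the linear dependencies among the generators (automatic when the common span is nondegenerate; the degenerate case, where by the Witt index the radical is at most one-dimensional, needs a small separate argument since a priori $\sum_ic_iv_i=0$ only gives $\sum_ic_iv_i'\in(L_1'+\cdots+L_r')^\perp\cap(L_1'+\cdots+L_r')$), is a bijective isometry $L_1+\cdots+L_r\too L_1'+\cdots+L_r'$; by Witt's extension theorem it extends to some $g_0\in\U(n-1,1)$ with $g_0L_i=L_i'$, and composing with a scalar $\zeta$ with $|\zeta|=1$ and $\zeta^n=\det(g_0)^{-1}$ yields $g=\zeta g_0\in\SU(n-1,1)$ with $gL_i=L_i'$.

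The hard part will be the rescaling step: carrying out the combinatorial recovery of the Gram matrix, up to $(\C^\times)^r$-scaling, from the invariants $s_\sigma$ when some entries $m_{ij}$ vanish, and isolating exactly why $\dim(L_1+\cdots+L_r)\ge3$ (respectively the parity of $r$) is precisely what makes the graph connected enough and kills the residual phase obstruction — the case $r=3$, $\dim(L_1+L_2+L_3)=2$ already shows the hypothesis in (1) is sharp. An alternative for (1) and (2), avoiding the explicit conjugation, would be to use Proposition \ref{Prop:RealPointsOrbitGaloisCohomology}: the fibre of $\theta$ over $\pi(x)$, for $x$ with closed orbit, is $\ker(\HH^1(\R,\Stab_G(x))\too\HH^1(\R,G))$, so one would compute $\Stab_G(x)$ as in Theorem \ref{Thm:Quotient(1,3)flags}(5) and show this kernel is trivial under the hypotheses; but this still forces the same case analysis on the span $L_1+\cdots+L_r$ and on its radical.
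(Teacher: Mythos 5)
Your argument for statement (3) is correct and is essentially the paper's: nonnegativity of the order-$2$ invariants plus Proposition \ref{Prop:SemistableIsotropicLines}(2) pins the proportionality constant to be positive. The skeleton you propose for (1) and (2) — rescale the generators $v_i'$ so that the two Gram matrices coincide, then extend by Witt's theorem and fix the determinant, with a separate check on linear dependencies when the span is degenerate — is also the same skeleton as the paper's Lemma \ref{lem:ExistenceOfIsometry}. The problem is that everything you defer as ``the hard part'' is precisely the mathematical content of (1) and (2). Ratios of the invariants $s_\sigma$ only give you the quantities $\lambda_{ijk}$ of the paper up to knowing they are \emph{real}: a ratio such as $s_\sigma/s_\tau$ for derangements differing by a transposition yields cross-ratios $m_{i\alpha}m_{j\beta}/m_{i\beta}m_{j\alpha}$, and combining them you can only recover $m_{ij}m_{jk}m_{ki}/\overline{m_{ij}m_{jk}m_{ki}}$, i.e.\ the phase holonomy \emph{modulo $\pi$}, not the phase itself. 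Resolving that residual sign is exactly where the hypotheses enter, and your proposal contains no argument for it. The paper supplies it in two different ways: when some triple of the lines spans a $3$-dimensional space, via the classification of such triples by the Cartan invariant (Proposition \ref{Prop:Quotient3IsotropicLines}, used in Lemma \ref{lemma:IndependencyTriratioHermitian}(2)); and when $r$ is odd, via the combinatorial Lemma \ref{Lemma:ExistenceOfNonVanishing3Cycles}, which produces a derangement that is a $3$-cycle times $2$-cycles with nonvanishing invariant, so that $p(x)=p(x')$ transfers positivity to the triple product (Lemma \ref{lemma:IndependencyTriratioHermitian}(3)). Without one of these inputs your ``coboundary'' construction of the scalars $t_i$ cannot get off the ground, because the data you have determined is invariant under conjugating every $m_{ij}$, which corresponds to replacing $x'$ by a configuration that is in general \emph{not} $\SU(n-1,1)$-equivalent to $x$ (the case $r=3$, $\dim(L_1+L_2+L_3)=2$ you mention is exactly this phenomenon).

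There is a second, related gap in your reduction for (1): the hypothesis there is equality in $\P(\R^\ell\times\C^m)$, i.e.\ $\pi(x)=\pi(x')$, which only gives $(s_\sigma(x'))=\lambda (s_\sigma(x))$ with $\lambda$ a real scalar of \emph{either} sign when $r$ is odd ($\ell=0$), whereas you pass immediately to the hypothesis $p(x)=p(x')$. For $r$ even this is licensed by your statement (3), but for $r$ odd ruling out $\lambda<0$ is not formal — in the paper it again comes out of the Cartan-invariant positivity (Lemma \ref{lemma:IndependencyTriratioHermitian}(2) applied to a triple of independent lines, which exists since $\dim(L_1+\cdots+L_r)\ge 3$), so assuming $p(x)=p(x')$ at the outset is circular. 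Two smaller remarks: in signature $(1,n-1)$ one has $h(v_i,v_j)=0$ if and only if $L_i=L_j$, so the vanishing pattern of the Gram matrix is not the source of difficulty (connectivity of your graph is automatic on distinct lines); and the degenerate-span issue you flag is handled in Lemma \ref{lem:ExistenceOfIsometry} by checking $[gv_j]=[v_j']$ directly for the dependent generators, which you would still need to write out.
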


\begin{remark} When $r$ is odd, statement (1) implies that the image of $\epsilon$ in $\SS(\C^{!r/2})$ is contained in a ``hemisphere''.
\end{remark}

Before proving  Theorem \ref{Thm:QuotientIsotropicLines} let us illustrate it through two examples.

\begin{example} Let $r = 3$ and go back to the notation introduced in paragraph \ref{par:TripletsLineHyperplane}. Identify $\SS(\C)$ with $\U(1)$. For a semi-stable triple
$ x= ([v_1], [v_2], [v_3])$ of isotropic lines,
$$ p([v_1], [v_2], [v_3]) = \arg (h( v_1, v_3 ) h( v_3, v_2 ) h( v_2, v_1 )),$$
where $\arg(z) = z / |z|$ is the argument of a non-zero complex number $z$. 

Identify the real projective line $\P^1(\R)$ with $\U(1) / \{ \pm 1\}$ and in turn with $\U(1)$ through the map $u \mapsto u^2$. The map $\pi$ can be expressed as
$$ \pi([v_1], [v_2], [v_3]) =  \arg (h( v_1, v_3 ) h( v_3, v_2 ) h( v_2, v_1 ))^2.$$

\begin{proposition} \label{Prop:Quotient3IsotropicLines} With the notations introduced above, the map $p$ induces a homeomorphism of $X^\ss(\R) / G(\R)$ with
$$ H \df \{ u \in \U(1) : \Re(u) \le 0\}.$$
\end{proposition}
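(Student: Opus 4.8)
The plan is to show that $p$ induces a continuous bijection of the separated quotient $X^\ss(\R)/G(\R)$ onto $H$ and then to conclude by compactness. By Proposition~\ref{Prop:ProjectionOfMaxSeparatedQuotientToGITQuotientProj} the space $X^\ss(\R)/G(\R)$ is Hausdorff, and it is in fact compact because the map $\theta$ is proper with target the compact space $Y(\R)=\P^1(\R)$; since $H\subseteq\U(1)$ is Hausdorff, any continuous bijection $X^\ss(\R)/G(\R)\to H$ is automatically a homeomorphism. The map $\epsilon$ is continuous and well defined on the quotient, being induced by the continuous $G(\R)$-invariant map $p$, so the task reduces to computing the image of $\epsilon$ and checking injectivity. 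Throughout I use that, for $r=3$, Proposition~\ref{Prop:SemistableIsotropicLines} says a triple of isotropic lines is semi-stable if and only if the three lines are pairwise distinct; in signature $(n-1,1)$ an isotropic vector orthogonal to a given isotropic vector spans the same line, so semi-stability is equivalent to $z\df h(v_1,v_3)\,h(v_3,v_2)\,h(v_2,v_1)\neq 0$, and then $p(x)=\arg(z)$.

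\emph{Image.} Given a semi-stable $x$, choose generators with $h(v_1,v_2)=1$, so that $P\df\langle v_1,v_2\rangle$ is a hyperbolic plane; since $h$ has signature $(n-1,1)$, $P^\bot$ is positive definite. Writing $v_3=a\,v_1+b\,v_2+w$ with $w\in P^\bot$, a direct computation gives $z=\bar a\,b$, while $h(v_3,v_3)=0$ yields $2\,\Re(\bar a\,b)=-h(w,w)\le 0$; hence $\Re(z)\le 0$, that is $p(x)\in H$, with equality precisely when $w=0$, i.e. when $v_1,v_2,v_3$ are coplanar. Conversely, fixing an orthogonal decomposition of $\C^n$ with hyperbolic plane $\langle e_1,e_2\rangle$ and positive definite complement, for any $u\in H$ put $v_1=e_1$, $v_2=e_2$ and $v_3=e_1+u\,e_2+w$ with $w$ in the complement such that $h(w,w)=-2\,\Re(u)\ge 0$ (possible because $n\ge 3$); this triple is semi-stable with $p(x)=u$. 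Thus $\epsilon$ has image exactly $H$.

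\emph{Injectivity and conclusion.} Let $x,x'$ be semi-stable with $p(x)=p(x')=u$. By Witt's theorem $\U(n-1,1)$ acts transitively on ordered pairs of isotropic vectors with Hermitian product $1$, and adjusting the determinant on the positive definite complement gives the same for $\SU(n-1,1)$; so, after rescaling the third generator, one may assume $L_1=\langle e_1\rangle$, $L_2=\langle e_2\rangle$ for the standard hyperbolic plane and $v_3=e_1+b\,e_2+w$ with $w$ in the complement, $b\neq 0$, $h(w,w)=-2\,\Re(b)$ and $\arg(b)=u$. The stabilizer of the ordered pair $(\langle e_1\rangle,\langle e_2\rangle)$ in $\SU(n-1,1)$ is generated by the torus $e_1\mapsto\alpha e_1$, $e_2\mapsto\bar\alpha^{-1}e_2$ (with a determinant-compensating action on the complement) and the unitary group of $\langle e_1,e_2\rangle^\bot$. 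If $\Re(u)<0$, combining these two one checks that the orbit of $(e_1,e_2,v_3)$ depends only on $\arg(b)$, so there is a single $G(\R)$-orbit with $p$-value $u$. If $\Re(u)=0$, then $w=0$ and $v_1,v_2,v_3$ lie on the $\R$-circle of isotropic lines of $P=\langle e_1,e_2\rangle$, on which $\PU(1,1)\cong\mathrm{PSL}_2(\R)$ acts simply transitively on ordered triples of a fixed cyclic order; since any element of $\U(n-1,1)$ stabilising $P$ acts on it through the connected group $\U(1,1)$, no orientation reversal is available, so $u=i$ and $u=-i$ give two distinct orbits, one per cyclic order. (Equivalently, this is the classification of triples of distinct isotropic lines by the Cartan angular invariant.) In all cases exactly one $G(\R)$-orbit has a given $p$-value in $H$; this orbit is moreover closed, since the unique closed orbit in its closure shares its $p$-value by continuity of $p$ and hence coincides with it — concretely it is in general position in the sense of Theorem~\ref{Thm:Quotient(1,3)flags} when $\Re(u)<0$ (whence closedness by Theorem~\ref{Thm:Quotient(1,3)flags}(4) and Theorem~\ref{thm:LunaRealGIT}(2)), and maximally degenerate in the sense of Proposition~\ref{Prop:QuotientLineHyperplaneSingularFiber} when $u=\pm i$. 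Consequently $\epsilon$ is a continuous bijection from $X^\ss(\R)/G(\R)$ onto $H$, hence a homeomorphism.

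The step I expect to be the main obstacle is the injectivity argument, and within it the determinant bookkeeping needed to remain inside $\SU(n-1,1)$ rather than $\U(n-1,1)$ — delicate in the smallest case $n=3$, where $\langle e_1,e_2\rangle^\bot$ is only one-dimensional — together with the correct treatment of the coplanar locus: one must recognise that the two endpoints $u=\pm i$ of the arc $H$ are genuinely distinct closed orbits, distinguished exactly by the orientation of the triple on an $\R$-circle.
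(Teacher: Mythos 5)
Your argument is correct and follows essentially the same route as the paper's proof: the containment of the image in $H$ and the surjectivity are exactly the paper's computation (isotropy of $v_3$ forces $2\Re(z)=-h(w,w)\le 0$, with equality iff the three lines are coplanar, and the explicit $v_3$ realizes every $u\in H$), while your Witt-theorem/stabilizer normal-form argument is the expanded version of what the paper compresses into ``the injectivity follows immediately''. The extra details you supply — the single-orbit statement over each endpoint $u=\pm i$ via the orientation of a triple on a chain (the standard term, rather than ``$\R$-circle''), and the compactness argument upgrading the continuous bijection to a homeomorphism — are exactly what the paper leaves implicit, and the only cosmetic slip is that closedness of the real orbits should be quoted from Theorem \ref{Thm:ProjectiveRealGIT}(2) (the projective statement) rather than its affine counterpart Theorem \ref{thm:LunaRealGIT}(2).
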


\begin{figure}[h]
\begin{tikzpicture}
\draw[dotted] (-2,0) arc (-90:90:1);
\draw (-2,2) arc (90:270:1);
\draw[->] (-0.35, 1) -- (0.35, 1) node[midway, above] {$\theta$};
\draw (2,1) circle (1cm);
\node[anchor=west] at (1, 1) {$-1$};
\node[anchor=south] at (-2, 2) {$i$};
\node[anchor=north] at (-2, 0) {$-i$};
\fill (-2, 2) circle (1pt);
\fill (-2, 0) circle (1pt);
\fill (1, 1) circle (1pt);
\draw (1, 1) circle (2pt);
\end{tikzpicture}
\caption{The map $\theta$ maps the half-circle onto the entire circle. It is injective outside $\pm i$. The Cartan invariant $\pm i$ corresponds to configurations of $3$ isotropic lines in $\C^n$ which, seen as points of $\P^{n-1}(\C)$, lie on a line.}
\end{figure}
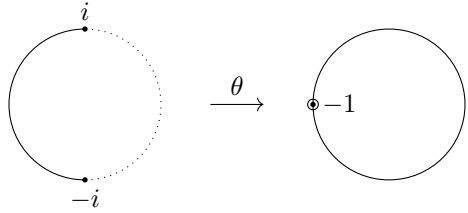

\begin{proof} One may suppose that the hermitian form $h$ is given by the matrix
$$ \begin{pmatrix}
0 &  0 & 1 \\
0 & \id  & 0 \\
1 & 0 & 0
\end{pmatrix}.
$$
In this case the vectors $e_1$, $e_n$ are isotropic. It is easy to prove that $p$ is surjective onto $H$: consider the isotropic vectors $v_1 =  e_1$, $v_2 = e_n$ and 
$$
v_3 = u e_1 + \sqrt{2 |\Re(u)|} e_2 + e_n,
$$
where $u \in \U(1)$ has non-positive real part. Then $h( v_1, v_3 ) h( v_3, v_2 ) h( v_2, v_1 ) = u$ whence the surjectivity. 

Let us prove that the image is contained in $H$. Let $v_1, v_2, v_3$ be isotropic vectors of $V$ such that  $ \lambda \df h( v_1, v_2 ) h( v_2, v_3 ) h( v_3, v_1 ) \neq 0$. One may assume $v_1 = e_1$ and $v_2 = e_n$. Write $ v_3 = x_1 e_1 + \cdots + x_n e_n$. Then $\lambda = \bar{x}_n x_1$ and 
$$ 0 = h( v_3, v_3 ) = 2 \Re(\lambda) + |x_2|^2 + \cdots + |x_{n-1}|^2 .$$
This implies $\Re(\lambda) \le 0$, that is, the image of $f$ is contained in $H$.  The injectivity outside $\{ \pm i \}$ follows immediately.
\end{proof}

\end{example}

\begin{example} Let $r = 4$ and suppose that the hermitian form $h$ is given by the matrix
$$
\begin{pmatrix} 
0 & 0  & 1 \\
0 & \id & 0 \\
1 & 0 & 0
\end{pmatrix}.
$$
Consider the following isotropic vectors:
\begin{align*}
v_1 &= e_1, & v_2&= e_n, & v_3 = v_4 &=  i e_1 + e_n, \\
v'_1 &= e_1, & v'_2&= e_n, & v'_3 = v'_4 &=  -i e_1 + e_n.
\end{align*}
One has
\begin{align*}
h(v_1, v_2) h(v_2, v_3) h(v_3, v_1) &= i, &
h(v'_1, v'_2) h(v'_2, v'_3) h(v'_3, v'_1) &= - i,
\end{align*}
thus the semi-stable quadruplets 
\begin{align*}
x &= ([v_1], [v_2], [v_3], [v_4]), &x' &= ([v_1'], [v'_2], [v'_3], [v'_4]), 
\end{align*} cannot be conjugated by $\SU(n-1,1)$. On the other hand one has $p(x) = p(x')$. 
\end{example}

The rest of this section is devoted to the proof of the Theorem \ref{Thm:QuotientIsotropicLines}: it is made of three lemmas. Before dipping into the proof, let us fix some notation. 

Let $ v = (v_1, \dots, v_n)$, $v' = (v'_1, \dots, v_n')$ be $n$-tuples of non-zero isotropic vectors. For $i = 1, \dots, r$ let $L_i$, $L'_i$ be the line generated respectively by $v_i$, $v'_i$ and denote by $x_i, x_i' \in F(\R)$ the associated points. The $n$-tuples $ x = (x_1, \dots, x_n)$, $x' = (x'_1, \dots, x'_n)$ are supposed to be semi-stable points of $X$.

\begin{definition} Let $i, j , k = 1, \dots, r$ be such that the lines generated by the vector $v_i$, $v_j$, $v_k$ are pairwise distinct. Set
$$ \lambda_{ijk}(v, v') := \frac{h(v_i, v_j)}{h(v_i', v_j')} \cdot \frac{h(v'_k, v'_j)}{h(v_k, v_j)} \cdot \frac{h(v'_i, v'_k)}{h(v_i, v_k)}. $$
\end{definition}

\begin{lemma} \label{lem:ExistenceOfIsometry} Suppose that there exists $k = 1, \dots, r$ with the following property: for $i, j, \alpha, \beta = 1, \dots, r$ such that the lines $L_i, L_j, L_k$ as well as the lines $L_\alpha, L_\beta, L_k$ are pairwise distinct,
$$ \lambda_{ijk}(v, v')= \lambda_{\alpha \beta k}(v, v'),$$
and such a complex number is real and positive. Then, there exists $g \in \SU(n-1, 1)$ such that $gx = x'$.
\end{lemma}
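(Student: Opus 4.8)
The plan is to produce the required $g$ by matching Gram data and then invoking Witt's extension theorem. First a reduction: it suffices to find $g_0\in\U(n-1,1)$ with $g_0L_i=L_i'$ for every $i$. Indeed $\lvert\det g_0\rvert=1$, so if $\zeta\in\C^\times$ satisfies $\lvert\zeta\rvert=1$ and $\zeta^n=(\det g_0)^{-1}$, then $g:=\zeta g_0$ lies in $\SU(n-1,1)$ and still sends $L_i$ onto $L_i'$, scaling a vector not changing its line.

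Set $W=L_1+\cdots+L_r$ and $W'=L_1'+\cdots+L_r'$; in the situation where the lemma is applied (configurations with $\dim W,\dim W'\ge 3$, as in Theorem \ref{Thm:QuotientIsotropicLines}) I would first record that $h$ is nondegenerate on $W$, and likewise on $W'$. This is elementary: two distinct isotropic lines of $W$ span a plane $P$ which is hyperbolic — it cannot be totally isotropic in signature $(n-1,1)$ — so $P^\perp$ is positive definite; any vector of the radical $W\cap W^\perp$ is isotropic and orthogonal to $P$, hence lies in $P^\perp$, hence vanishes.

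Now fix $k$ as in the statement and let $\lambda>0$ be the common value of $\lambda_{ijk}(v,v')$. Replace each $v_i'$ by a scalar multiple $\tilde v_i'$, with $\tilde v_k':=\lambda^{-1/2}v_k'$ and, for $i$ with $L_i\neq L_k$,
$$\tilde v_i':=\frac{\sqrt{\lambda}\,h(v_k,v_i)}{h(v_k',v_i')}\,v_i',$$
the definition being extended by proportionality to indices lying on $L_k$ or on a repeated line (see the last paragraph). A direct computation — in which the assumption $\lambda_{ijk}=\lambda$ enters exactly once, and positivity of $\lambda$ is used to take the square root — gives $h(\tilde v_i',\tilde v_j')=h(v_i,v_j)$ for all $i,j$. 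Hence the assignment $v_i\mapsto\tilde v_i'$ extends to a well-defined linear map $W\to W'$: a relation $\sum_i c_iv_i=0$ forces $h\bigl(\sum_i c_i\tilde v_i',\tilde v_m'\bigr)=h\bigl(\sum_i c_iv_i,v_m\bigr)=0$ for every $m$, so the vector $\sum_i c_i\tilde v_i'\in W'$ is orthogonal to all of $W'$ and therefore zero by the nondegeneracy just proved. This map is surjective onto $W'$, preserves $h$, and (again by nondegeneracy) is injective; thus it is an isometry $W\to W'$ carrying $L_i$ to $L_i'$, and by Witt's extension theorem it extends to some $g_0\in\U(n-1,1)$. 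The reduction above then finishes the argument.

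The step I expect to be the main obstacle is the bookkeeping when some of the lines $L_i$ coincide — especially when they coincide with $L_k$ — since the construction of $\tilde v_i'$ genuinely requires the equivalence $L_i=L_j\iff L_i'=L_j'$, and the implication $L_i=L_j\Rightarrow L_i'=L_j'$ must be extracted from the hypothesis. My approach here would be: if $v_j=\mu v_i$, then expanding $\lambda_{i\alpha k}=\lambda_{j\alpha k}$ for $\alpha$ ranging over the lines different from $L_i$ and $L_k$, and adding $\alpha=k$, shows that $h(v_i',v_m')/h(v_j',v_m')$ is independent of such $m$; combined with the nondegeneracy of $h|_{W'}$ this should pin down $v_i'$ up to a scalar from $v_j'$. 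Making precise that these relations involve \emph{enough} of the $v_m'$ — i.e. that the span in play is all of $W'$, which is where semi-stability of $x'$ must be used — is the one genuinely delicate point; everything else is either the routine computation indicated above or the citation of Witt's theorem.
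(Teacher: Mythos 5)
Your main construction is essentially the paper's proof of Lemma \ref{lem:ExistenceOfIsometry}: there too one rescales the $v_i'$ (by $t_i/\sqrt{\lambda}$, with $t_i$ defined through pairings against $v_k$ and an auxiliary index) so that Gram entries match, obtains an isometry between the spans, extends it by Witt's theorem, and corrects the determinant by a unit scalar. Your explicit remark that $h$ restricted to $W$ and $W'$ is nondegenerate is a useful addition (it is what makes the Witt step and the well-definedness argument clean, and it needs only two distinct isotropic lines, hence semi-stability, not $\dim W\ge 3$). The one structural difference is that the paper matches only a linearly independent subfamily $v_1,\dots,v_s$ spanning $W$, with $v_k$ among them: linearly independent isotropic vectors lie on pairwise distinct lines, so every Gram entry needed there is governed by the hypothesis, and the remaining indices are handled afterwards through the pairings $h(v_i',gv_j)$. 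Your version matches all $r$ vectors at once, which is precisely what drives you into the repeated-line bookkeeping you flag at the end.

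That bookkeeping is a genuine gap, and the route you propose for it cannot work: the implication $L_i=L_j\Rightarrow L_i'=L_j'$ is not a consequence of the hypothesis of the lemma. The hypothesis constrains only triples of pairwise distinct unprimed lines (and, via positivity, forces the corresponding primed lines to be pairwise distinct --- the opposite implication). Concretely, take $n=3$, $h$ given by the matrix $\begin{pmatrix}0&0&1\\0&1&0\\1&0&0\end{pmatrix}$, and set $w=(-1,\sqrt{2},1)$, $w'=(-1,-\sqrt{2},1)$, $v=(e_1,e_3,w,w)$, $v'=(e_1,e_3,w,w')$. Both quadruples are semi-stable (no line occurs more than twice), both spans are all of $\C^3$, and for $k=1$ the admissible $(i,j)$ are the orderings of $\{2,3\}$ and $\{2,4\}$, for which one computes $\lambda_{ij1}(v,v')=1$ in every case; yet no $g$ with $gx=x'$ can exist, since $L_3=L_4$ while $[w]\neq[w']$. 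The same example shows where your pinning-down argument breaks: with $c=h(v_3',v_1')/h(v_4',v_1')=1$, the vector $u=v_3'-c\,v_4'=(0,2\sqrt{2},0)$ is orthogonal to the span of the $v_\alpha'$ with $L_\alpha\neq L_3$, but that span is a proper (hyperbolic) subspace of $W'$ and $u$ is a nonzero positive vector in its orthogonal complement, so nondegeneracy of $h|_{W'}$ gives nothing. In other words, the matching of coincidences among the lines must be taken as an extra standing assumption, not derived: it is harmless for the lemma's intended use, since the lemma is invoked (through Lemma \ref{lemma:IndependencyTriratioHermitian}) for configurations already conjugate under $\GL_n(\C)$, and the paper's treatment of the dependent indices tacitly works in that situation. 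To repair your write-up, either state this assumption explicitly (then your extension by proportionality is legitimate and the rest of your argument goes through), or follow the paper and define the isometry only on a basis of $W$ extracted from the $v_i$, handling the leftover indices a posteriori under that same assumption.
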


\begin{proof} First of all remark that it suffices to look for $g \in \U(n-1, 1)$ such that $gx = x'$: indeed, if $\det g \neq 1$ it suffices to divide $g$ by a $n$-th root of its determinant.

Let $s$ be the dimension of the vector space generated by $v_1, \dots, v_r$. The semi-stability hypothesis implies $s \ge 2$. Up to permutation, one may assume that $v_1, \dots, v_s$ are linearly independent.

Suppose $n \ge s$. Set $t_k = 1$ and
$$ \bar{t}_i = \frac{h(v_i, v_j)}{h(v_i', v_j')} \cdot \frac{h(v'_k, v'_j)}{h(v_k, v_j)},$$
for $j \neq i,k$ (it is independent of $j$ by hypothesis). Then, for $i \neq j,$
\begin{align*}
\frac{h(t_i v'_i, t_j v'_j)}{h(v_i, v_j)} &= \bar{t}_i t_j \frac{h(v'_i, v'_j)}{h(v_i, v_j)} = \lambda_{ijk}(v, v'),
\end{align*}
which is a positive real number independent of $i$ and $j$ by hypothesis. Call $\lambda$ such a number. Then there exists an isometry of the sub-vector space $\langle v_1, \dots, v_s \rangle$ onto $\langle v_1', \dots, v_s' \rangle$ sending the vector $v_i$ to $t_i / \sqrt{\lambda} v_i'$. By Witt's Theorem, such an isometry can be extended to $g \in \U(n-1, 1)$.

If $r = s$ there is nothing left to prove. Suppose $s < r$ and let $j > s$. Since $v_j$ belongs to the vector space generated by $v_1, \dots, v_s$, in order to prove $[g v_j] = [v_j']$ it suffices to prove that the complex number
$$ \frac{h(v_i', g v_j)}{h(v'_i, v'_j)}$$
does not depend on $i$. Since $g$ is an isometry,
$$ h(v_i', g v_j) = h(g^{-1}v_i',  v_j) = \sqrt{\lambda} \bar{t}_i^{-1} h(v_i, v_j).$$
In particular,
$$ \frac{h(v_i', g v_j)}{h(v'_i, v'_j)} =  \sqrt{\lambda} \frac{h(v_i', v_j')}{h(v_i, v_j)} \cdot \frac{h(v_k, v_j)}{h(v'_k, v'_j)} \cdot \frac{h(v_i, v_j)}{h(v'_i, v'_j)} = \sqrt{\lambda} \frac{h(v_k, v_j)}{h(v'_k, v'_j)},$$
which does not depend on $i$. This concludes the proof.
\end{proof}

\begin{lemma} \label{lemma:IndependencyTriratioHermitian} Suppose there exists $g \in \GL_n(\C)$ such that $gx = x'$. Then, 
\begin{enumerate} 
\item  For $i, j, k, \alpha, \beta = 1, \dots, r$ such that the lines $L_i, L_j, L_k$ as well as the lines $L_\alpha, L_\beta, L_k$ are pairwise distinct,
$$ \lambda_{ijk}(v, v')= \lambda_{\alpha \beta k}(v, v').$$
In particular both sides of the previous identity are real numbers.
\item Let $i, j, k = 1, \dots, r$ be such that $\dim(L_i + L_j + L_k) = 3$. Then,
$$ \lambda_{ijk}(v, v') > 0.$$
\item Suppose $r$ odd and $p(x) = p(x')$. Then, there exists $k$ with the following property: for all $i, j = 1, \dots, r$ such that $L_i, L_j, L_k$ are pairwise distinct
$$ \lambda_{ijk}(v, v') > 0.$$
\end{enumerate}
\end{lemma}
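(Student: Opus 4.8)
The plan is to convert the hypothesis $gx=x'$ into scalar data and then exploit the Hermitian form. Fix generators $v_a$ of $L_a$ and $v'_a$ of $L'_a$ and put $\phi_a=\alpha_{v_a}$, $\phi'_a=\alpha_{v'_a}$, so that $H_a=L_a^\perp=\ker\phi_a$ and $H'_a=\ker\phi'_a$. Since $g$ carries the flag $(L_a,H_a)$ to $(L'_a,H'_a)$ there are scalars $\mu_a,\nu_a\in\C^\times$ with $g v_a=\mu_a v'_a$ and $\phi_a\circ g^{-1}=\nu_a\phi'_a$. Evaluating $\phi_a=h(v_a,\cdot)$ on $v_b=\mu_b g^{-1}v'_b$ yields the \emph{master relation}
$$ h(v_a,v_b)=\mu_b\nu_a\,h(v'_a,v'_b)\qquad(a,b=1,\dots,r). $$
Substituting this into the definition of $\lambda_{ijk}$, all the $v'$--terms telescope away and one finds $\lambda_{ijk}(v,v')=(\mu_k\nu_k)^{-1}$; this is exactly the independence asserted in (1). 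Conjugating the master relation and comparing with the relation for the pair $(b,a)$, using $h(v_b,v_a)=\overline{h(v_a,v_b)}$, gives $\mu_a\nu_b=\overline{\mu_b\nu_a}$ whenever $L_a\not\perp L_b$; consequently $\overline{\lambda_{ijk}(v,v')}=\lambda_{jik}(v,v')$ (conjugate and reorder the three factors), and $\lambda_{jik}(v,v')=\lambda_{ijk}(v,v')$ by the independence just proved, so $\lambda_{ijk}(v,v')\in\R$. This finishes (1).

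For (2) set $W=L_i+L_j+L_k$, which is three--dimensional, and $W'=g(W)=L'_i+L'_j+L'_k$. I would first show that $g|_W$ rescales $h|_W$ by a nonzero \emph{real} number. The Hermitian form $b(u,w):=h(gu,gw)$ on $W$ satisfies $b(v_a,v_b)=(\overline{\mu_a}/\nu_a)\,h(v_a,v_b)$ for $a,b\in\{i,j,k\}$, as one sees by inserting $v'_a=\mu_a^{-1}gv_a$ into the master relation. Since $\lambda_{ijk}$ is defined the three lines are pairwise non--orthogonal, so $h(v_a,v_b)\neq 0$; comparing the displayed identity with its conjugate forces $\overline{\mu_a}/\nu_a$ to be independent of $a$ and equal to a real number $\theta$. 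As $v_i,v_j,v_k$ span $W$ we get $b=\theta\cdot h|_W$, i.e. $h(gu,gw)=\theta\,h(u,w)$ on $W$, and $\mu_k\nu_k=\theta|\nu_k|^2$, so $\lambda_{ijk}(v,v')=(\theta|\nu_k|^2)^{-1}$. It remains to prove $\theta>0$. The form $h|_W$ is non--degenerate: a nonzero vector $w_0$ in its radical is isotropic and orthogonal to $W$, so $\langle v_i,w_0\rangle$ would be a totally isotropic plane, impossible in signature $(n-1,1)$ (Witt index $1$). Being indefinite inside a space with a single negative direction, $h|_W$ has signature $(2,1)$; choosing a two--dimensional positive--definite $P\subset W$, the form on $g(P)\subset V$ is $\theta\cdot(h|_P)$, which for $\theta<0$ would be a two--dimensional negative--definite subspace of $V$, again impossible. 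Hence $\theta>0$ and $\lambda_{ijk}(v,v')>0$.

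For (3), write $x_a:=\mu_a\nu_a$. The symmetry $\mu_a\nu_b=\overline{\mu_b\nu_a}$ gives $x_ax_b=|\mu_b|^2|\nu_a|^2>0$ whenever $L_a\not\perp L_b$. From the master relation, $s_\sigma(x)=\big(\prod_a x_a\big)\,s_\sigma(x')$ for every derangement $\sigma$; since $x$ is semi--stable some $s_{\sigma_0}(x)\neq 0$, and $p(x)=p(x')$ forces $(s_\sigma(x))_\sigma$ and $(s_\sigma(x'))_\sigma$ to be positive multiples of one another, whence $\Delta:=\prod_a x_a>0$. Let $\Gamma$ be the graph on $\{1,\dots,r\}$ with an edge $a\!-\!b$ iff $L_a\not\perp L_b$: semi--stability forbids isolated vertices (an isolated vertex kills every $s_\sigma(x)$), and a short additional argument, using that $g^{\ast}g$ is self--adjoint for $h$ and that its eigenspaces for non--real eigenvalues are totally isotropic, hence at most one--dimensional, shows that the $x_a$ are real on every component of $\Gamma$ of size $\ge 3$ (and on every $2$--vertex component except the ``degenerate'' ones with $\mu_a/\overline{\nu_a}\notin\R$, which anyway contribute $x_ax_b>0$ to $\Delta$). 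On a component with all $x_a$ real, $x_ax_b>0$ along edges fixes a sign $\epsilon_C\in\{\pm1\}$; since $r$ is odd the vertices lying in such components total an odd number, so they cannot all have $\epsilon_C=-1$ (that would force $\Delta<0$). Hence some $x_k>0$, and then $\lambda_{ijk}(v,v')=1/x_k>0$ for all admissible $i,j$. The step I expect to be the genuine obstacle is precisely this sign bookkeeping in (3): one must rule out that the $x_a$ acquire non--real phases along bipartite components of $\Gamma$, and this is exactly where the structure of $g^{\ast}g$ as a self--adjoint operator for an indefinite Hermitian form, the absence of totally isotropic planes in signature $(n-1,1)$, and the parity of $r$ all have to be combined carefully; apart from this and the non--degeneracy of $h|_W$ in (2), everything is formal manipulation of the master relation together with Witt--type positivity.
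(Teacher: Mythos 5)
Your parts (1) and (2) are correct, and they take a more direct route than the paper: from $gv_a=\mu_a v'_a$ and $\phi_a\circ g^{-1}=\nu_a\phi'_a$ you derive the master relation $h(v_a,v_b)=\mu_b\nu_a\,h(v'_a,v'_b)$, hence $\lambda_{ijk}=(\mu_k\nu_k)^{-1}$, reality via $\overline{\lambda_{ijk}}=\lambda_{jik}$, and for (2) the rescaling $h(gu,gw)=\theta\,h(u,w)$ on $W=L_i+L_j+L_k$ together with the signature $(2,1)$ of $h|_W$ forces $\theta>0$. (The paper instead deduces (1) from the identities $\pi_3(y)=\pi_3(y')$, $\pi_4(y)=\pi_4(y')$ and (2) from Proposition \ref{Prop:Quotient3IsotropicLines}.) These parts are fine.

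Part (3), however, has a genuine gap exactly where you flagged it. The relations $x_ax_b>0$ along edges of $\Gamma$ and $\Delta=\prod_a x_a>0$ are correct, but the assertion that ``the $x_a$ are real on every component of $\Gamma$ of size $\ge 3$'' is unproved, and the hinted argument via $g^{\dagger}g$ does not give it: the $h$-self-adjoint operator $A=g^{\dagger}g$ satisfies $Av_a=(\mu_a/\overline{\nu_a})v_a$, and self-adjointness only yields $\lambda_b=\overline{\lambda_a}$ when $h(v_a,v_b)\neq 0$; a configuration supported on exactly two distinct isotropic lines (so $\Gamma$ complete bipartite, of any size) can carry a conjugate pair of non-real eigenvalues, each with a one-dimensional isotropic eigenspace. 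Concretely, $x=x'=([e_1],[e_1],[e_n],[e_n])$ with $g=\mathrm{diag}(\lambda,1,\dots,1)$, $\lambda\notin\R$, is semi-stable with $p(x)=p(x')$, yet $x_1=\lambda$ is not real; so the reality claim is false in general and any correct proof must invoke the oddness of $r$ at precisely this step, which your write-up does not do. The missing observation, which closes your argument at once, is that for isotropic lines in signature $(n-1,1)$ non-orthogonality equals distinctness, so $\Gamma$ is the complete multipartite graph on the classes of equal lines; semi-stability bounds each class by $r/2$, hence for odd $r$ there are at least three pairwise distinct lines, producing a triangle in $\Gamma$; the relations $\lambda_b=\overline{\lambda_a}$ around a triangle make all the $x_a$ real of one common sign, and then $\Delta>0$ with $r$ odd forces all $x_a>0$, so in fact every $k$ works. (The paper proves (3) differently, via Lemma \ref{Lemma:ExistenceOfNonVanishing3Cycles}, which produces a derangement equal to a $3$-cycle on $\{i,j,k\}$ times an involution with $s_\sigma(x)\neq 0$.) As written, your component bookkeeping rests on an unestablished, and in general false, reality claim, so (3) is incomplete.
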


\begin{proof}Let us fix some notation. For simplicity write $ \lambda_{ij} = \lambda_{ijk}(v, v').$ For $\rho = 3, 4$ let $X_\rho = F^\rho$ and $\pi_\rho \colon X_\rho^\ss \to \P^{!\rho - 1}_\R$ be the quotient map.

(1) In order to prove $\lambda_{ij} = \lambda_{\alpha \beta}$ for  all $\alpha, \beta = 1, \dots, r$ such that the lines $L_\alpha, L_\beta, L_k$ are pairwise distinct, it suffices to show $\lambda_{ij} = \lambda_{ji}$ and $\lambda_{ij} = \lambda_{i\beta}$.  The semi-stable points 
\begin{align*}
y &= ([L_i], [L_j], [L_k]), &y' &= ([L'_i], [L'_j], [L'_k]),
\end{align*} 
of $X_3$ verify $gy = y'$ because of the hypothesis $gx = x'$. Thus $\pi_3(y) = \pi_3(y)$ which implies,
$$
\frac{h(v_i, v_j) h(v_j, v_k) h(v_k, v_i)}{h(v_i, v_k) h(v_k, v_j) h(v_j, v_i)} = \frac{h(v'_i, v'_j) h(v'_j, v'_k) h(v'_k, v'_i)}{h(v'_i, v'_k) h(v'_k, v'_j) h(v'_j, v'_i)},
$$
thus $\lambda_{ij} = \lambda_{ji}$.

Suppose $r \ge 5$. In order to show $\lambda_{ij} = \lambda_{\alpha \beta}$ for $\alpha, \beta = 1, \dots, r$ distinct, it suffices to prove the equalities $\lambda_{ij} = \lambda_{i\beta}$ for $i \neq \beta$. The latter is equivalent to
$$ \frac{h(v_i, v_j) h(v_k, v_\beta)}{h(v_i, v_\beta) h(v_k, v_j)} = 
\frac{h(v'_i, v'_j) h(v'_k, v'_\beta)}{h(v'_i, v'_\beta) h(v'_k, v'_j)}.
$$
Again the semi-stable points 
\begin{align*}
y &= ([L_i], [L_j], [L_k], [L_\beta]), &y' &= ([L'_i], [L'_j], [L'_k], [L'_\beta]),
\end{align*} of $X_4$ satisfy $\pi_4(y) = \pi_4(y')$ because of the assumption $gx = x'$. Let $\sigma, \tau$ be derangements such that $\sigma(\alpha) = \tau(\alpha)$ for all $\alpha \neq i, k$ and $\sigma(i) = j$,  $\tau(i) = \beta$,  $\sigma(k) = \beta$,  $\tau(k) = j$ (they exists thanks to Lemma \ref{lemma:ExistenceOfDerangements}). Let $s_\sigma, s_\tau$ the corresponding $G$-invariant sections. Then,
$$ \frac{s_\sigma(y)}{s_\tau(y)} = \frac{h(v_i, v_j) h(v_k, v_\beta)}{h(v_i, v_\beta) h(v_k, v_j)},$$
and similarly for $y'$. The condition $\pi_4(y) = \pi_4(y')$ implies 
$$s_\sigma(y) / s_\tau(y) = s_\sigma(y')/ s_\tau(y').$$
Since $\bar{\lambda}_{ij} = \lambda_{ji} $, then complex number $\lambda_{ij}$ is real because $\lambda_{ij} =  \lambda_{ji}$.

(2) Multiplying and dividing by $h(v'_j, v'_i)/h(v_j, v_i)$ one sees that the positivity of $\lambda_{ij}$ is equivalent to the positivity of the real number
$$ \mu = \frac{h(v'_i, v'_k)}{h(v_i, v_k)} \cdot   \frac{h(v'_k, v'_j)}{h(v_k, v_j)} \cdot \frac{h(v'_j, v'_i)}{h(v_j, v_i)}.$$

Suppose $\dim(L_i + L_j + L_k) = 3$. The semi-stable points 
\begin{align*}
y &= ([L_i], [L_j], [L_k]), & y' &= ([L'_i], [L'_j], [L'_k]),
\end{align*}
of $X_3$ are conjugated under $\SL_n(\C)$ thus $\pi_3(y) = \pi_3(y')$. According to Proposition \ref{Prop:Quotient3IsotropicLines} the hypothesis $\dim(L_i + L_j + L_k) = 3$ implies that the points $y, y'$ are actually conjugated under $\SU(1, n-1)$, thus $\mu >0$.

(3) This is based on the following:

\begin{lemma} \label{Lemma:ExistenceOfNonVanishing3Cycles} Suppose $r \ge 5$ odd. Let $x = (L_1, \dots, L_r)$ be a semi-stable $r$-tuple of isotropic lines.  There exists $k = 1, \dots, r$ with the following property: for all $i, j = 1, \dots, r$ such that the lines $L_i, L_j, L_k$ are pairwise distinct, there exists a derangement 
$\sigma \in \mathfrak{D}_r$ such that
\begin{enumerate}
\item $\sigma_{\rvert \{i, j, k\}} = (ijk)$, 
\item the induced derangement $\sigma_{\rvert \{ 1, \dots, r\} \smallsetminus \{ i, j, k\}}$ is of order $2$,
\item  $s_{\sigma}(x) \neq 0$. 
\end{enumerate}
\end{lemma}

The proof of the preceding Lemma is postponed to the end of the proof. Let $k$ as in the statement. Let $i, j = 1, \dots, r$ be such that the lines $L_i, L_j, L_k$ are pairwise distinct. Let $\sigma$ be a derangement such that $\sigma_{\{ i, j, k \}} = (ijk)$, $\sigma$ induces a derangement of order $2$ on  $ \{ 1, \dots, r\} \smallsetminus \{ i, j, k\}$ such that $s_{\sigma}(x) \neq 0$. Then,
$$ \prod_{\alpha \neq i, j, k} h(v_\alpha, v_{\sigma(\alpha)}) = \prod_{\alpha \neq i, j, k} |h(v_\alpha, v_{\sigma(\alpha)})| > 0,$$
and similarly for $v'_1, \dots, v'_r$. It follows that $\mu$ is positive if and only if $s_\sigma(x') / s_\sigma(x)$ is positive, which is implied by the hypothesis $p(x) = p(x')$.
\end{proof}

\begin{proof}[{Proof of Lemma \ref{Lemma:ExistenceOfNonVanishing3Cycles}}] If none of the lines $L_1, \dots, L_r$ is repeated exactly $\tfrac{r-1}{2}$ times, then every $k$ does the job. Indeed, let $i, j , k = 1, \dots, r$ be such that the lines $L_i, L_j, L_k$ are pairwise distinct. By contradiction suppose $s_{\sigma}(x) = 0$ for every derangement $\sigma$ such that $\sigma_{\rvert \{i, j, k\}} = (ijk)$ and $\sigma_{\rvert \{ 1, \dots, r\} \smallsetminus \{ i, j, k\}}$ is of order $2$. Then Proposition \ref{Prop:SemistableIsotropicLines} implies that there is a line repeated at least $\frac{r-1}{2}$ times contradicting the previous assumption.

Suppose there is a line $L$ repeated exactly $\tfrac{r-1}{2}$ times. Pick $k$ such that $L_k = L$ and let $i, j = 1, \dots, r$ be such that the lines $L_i, L_j, L_k$ are pairwise distinct. By contradiction suppose $s_{\sigma}(x) = 0$ for every derangement $\sigma$ such that $\sigma_{\rvert \{i, j, k\}} = (ijk)$ and $\sigma_{\rvert \{ 1, \dots, r\} \smallsetminus \{ i, j, k\}}$ is of order $2$. Then Proposition \ref{Prop:SemistableIsotropicLines} implies that there is a line $L'$ repeated at least $\tfrac{r-1}{2}$ times. Necessarily $L' = L$, thus the line $L$ is repeated at least $\frac{r+1}{2}$ times contradicting the semi-stability of $x$.
\end{proof}

\section{Quadruples of planes in $\C^{4}$} \label{sec:QuadruplesOfPlanes}

\subsection{Setup} Consider the Grassmannian variety $\Gr(2, 4)$ of $2$-dimensional subvector spaces of $\C^4$. Two planes $W_1$, $W_2$ are opposite if and only if their intersection is $0$. For $i = 1,2$ let $P_i$ denote the stabilizer of $W_i$. The subset $\Opp(W_i)$ of planes opposite to $W_i$ is Zariski open in $\Gr(2, 4)$. Set:
\begin{align*}
V &= \C^4, &
G &= \SL_{4, \C}, &
F &= \Gr(2, 4), &
X &= F^4.
\end{align*}
The semi-simple group $G$ acts on $X$. Consider the linearization of this action given by the Pl\"ucker embedding,
$$ j \colon X= F^4 \too \P(\textstyle \bigwedge^2 V)^4.$$
The considered linearization corresponds to the line bundle
$$ L = j^\ast( \pr_1^\ast \O(1) \otimes \cdots \otimes \pr_4^\ast \O(1)),$$
where $\pr_i \colon \P(\bigwedge^2 V)^4 \to F$ denotes the projection onto the $i$-th factor.

\subsection{Invariants} Let us introduce some natural invariants for this action. Identify the $1$-dimensional vector space $\bigwedge^4 V$ with $\C$ by choosing the basis $ e_1 \wedge e_2 \wedge e_3 \wedge e_4$. 
With this in mind, let $t = w_1 \wedge w_2$, $t' = w_1' \wedge w_2'$ for some vectors $w_1, w_2, w_1', w_2'$ of $V$. Then $t \wedge t'$ is the determinant of the matrix having $w_1, w_2, w_1', w_2'$ as columns,
$$ t \wedge t' = \det (w_1, w_2, w_1', w_2').$$
Consider the linear forms,
$$ s_{1234}, s_{1324}, s_{1423} \colon \textstyle (\bigwedge^2 V)^{\otimes 4} \too \C,$$
defined, for $t_1, t_2, t_3, t_4 \in \bigwedge^2 V$, by
\begin{align*}
s_{1234}(t_1 \otimes t_2 \otimes t_3 \otimes t_4) &:= t_1 \wedge t_2 \cdot t_3 \wedge t_4, \\
s_{1324}(t_1 \otimes t_2 \otimes t_3 \otimes t_4) &:= t_1 \wedge t_3 \cdot t_2 \wedge t_4, \\
s_{1423}(t_1 \otimes t_2 \otimes t_3 \otimes t_4) &:= t_1 \wedge t_4 \cdot t_2 \wedge t_3.
\end{align*}
They are $G$-invariant linear forms on $(\bigwedge^2 V)^{\otimes 4}$ hence they induce $G$-invariant global sections of $L$:
$$ s_{1234}, s_{1324}, s_{1423} \in \HH^0(X, L)^{G}.$$
For $i = 1, \dots, 4$ let $w_{i1}, w_{i2}$ be a basis of $W_i$.  The section $s_{1234}$ evaluated at the vector
$ w_{11} \wedge w_{12} \otimes \cdots \otimes w_{41} \wedge w_{42}$ takes the value
$$ \det(w_{11}, w_{12}, w_{21}, w_{22})  \cdot \det(w_{31}, w_{32}, w_{41}, w_{42}). $$
In particular $s_{1234}$ vanishes at a quadruple $(W_1, W_2, W_3, W_4)$ if and only if
$$ W_1 \cap W_2 \neq 0 \quad \textup{or} \quad W_3 \cap W_4 \neq 0.$$
Similar statements hold for $s_{1324}$ and $s_{1423}$.

\subsection{The quotient} 

\begin{theorem} \label{Thm:Quotient4planesC4} With notations introduced above:
\begin{enumerate}
\item The graded $\C$-algebra of $G$-invariant elements
$$ \bigoplus_{d \ge 0} \HH^0(X, L^{\otimes d})^G,$$
is generated by $s_{1234}$, $s_{1324}$  and $s_{1423}$.
\item A quadruple $(W_1, W_2, W_3, W_4)$ is semi-stable as a point of $X$ (with respect to the action of $G$ and to the linearization $L$) if and only if one of the following conditions holds:
\begin{enumerate}
\item $W_1 \cap W_2 = 0$ and $W_3 \cap W_4 = 0$;
\item $W_1 \cap W_3 = 0$ and $W_2 \cap W_4 = 0$;
\item $W_1 \cap W_4 = 0$ and $W_2 \cap W_3 = 0$.
\end{enumerate}
\item The map $ \pi \colon X^\ss \to \P^2_\C $ defined by $ \pi(x) := [s_{1234}(x) : s_{1324}(x) : s_{1423}(x)]$ is $G$-invariant and induces an isomorphism
$$ X^\ss / G \stackrel{\sim}{\too} \P^2_\C.$$
\end{enumerate}

\end{theorem}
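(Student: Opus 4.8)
The plan is to extract all three statements from the determination of the invariant ring in (1), after which (2) and (3) become essentially formal, exactly as in Theorem~\ref{Thm:Quotient(1,3)flags}. For (1) I would run the argument of Theorem~\ref{Thm:Quotient(1,3)flags}(1) in this setting. The wedge product makes $\bigwedge^2 V$ into a $6$-dimensional space with the non-degenerate symmetric Klein pairing $B(t,t') = t \wedge t' \in \bigwedge^4 V \iso \C$, and $G = \SL_{4,\C}$ acts on $\bigwedge^2 V$ through its image $\SO_6 = \SL_4/\{\pm 1\}$; hence the $G$-invariants of $(\bigwedge^2 V)^{\otimes N}$ agree with the $\SO_6$-invariants. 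Using the surjection $\Sym^d\big((\bigwedge^2 V^\vee)^{\otimes 4}\big) \too \HH^0(X, L^{\otimes d})$ together with complete reducibility, the problem reduces to describing the $\SO_6$-invariants of $(\bigwedge^2 V)^{\otimes 4d}$ that are homogeneous of degree $d$ in each of the four arguments. By the first fundamental theorem for the orthogonal group these are spanned by products of pairings $B(\,\cdot\,,\,\cdot\,)$ and of $6$-fold determinants on $\bigwedge^2 V \iso \C^6$; but a determinant needs six \emph{distinct} arguments while only four of the $t_i$ are available, so every such determinant vanishes on $X$. Thus $\HH^0(X,L^{\otimes d})^G$ is spanned by the monomials $\prod_{1 \le i < j \le 4} B(t_i,t_j)^{a_{ij}}$, where $a = (a_{ij})$ runs over symmetric non-negative integer matrices with zero diagonal (as $B(t_i,t_i) = Q(t_i) = 0$ on $\Gr(2,4)$) and all row sums equal to $d$ --- equivalently, $d$-regular multigraphs on $\{1,2,3,4\}$. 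The combinatorial step is the elementary fact that every such multigraph with $d \ge 1$ contains one of the three perfect matchings of $K_4$ as a subgraph; peeling it off and inducting on $d$ writes $a$ as a sum of $d$ perfect matchings, and these three matchings are exactly $s_{1234}, s_{1324}, s_{1423}$, which proves (1).

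Statement (2) then follows at once: a point of $X$ is semi-stable iff one of the generators $s_{1234}, s_{1324}, s_{1423}$ is non-zero at it, and $s_{1234}(W_1, \dots, W_4) \neq 0$ iff $W_1 \cap W_2 = 0$ and $W_3 \cap W_4 = 0$, with the analogous statements for the other two. For (3), put $A^G = \bigoplus_d \HH^0(X, L^{\otimes d})^G$, so that $X^\ss/G = \Proj A^G$. By (1) the domain $A^G$ is generated in degree $1$ by sections with no common zero on $X^\ss$, so $\pi$ is a morphism and the induced map $X^\ss/G \to \P\big((\HH^0(X,L)^G)^\vee\big)$ is a closed immersion into a projective space of dimension $\le 2$. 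Now $X^\ss/G$ is integral, being $\Proj$ of the graded domain $A^G$, and I would check that it has dimension $2$: either via the orbit-dimension formula $\dim X - (\dim G - \dim G_x) = 16 - (15-1) = 2$ for a quadruple $x$ in general position --- writing $V = W_1 \oplus W_2$ and taking $\phi, \psi \colon W_1 \to W_2$ the isomorphisms with graphs $W_3$, $W_4$ and $A = \psi^{-1}\phi$, its stabilizer is $\{(g, \phi g \phi^{-1}) : g \in \GL(W_1),\ gA = Ag,\ (\det g)^2 = 1\}$, a $1$-dimensional group --- or, more concretely, by the normal form $W_1 = \langle e_1, e_2\rangle$, $W_2 = \langle e_3,e_4\rangle$, $W_3 = \langle e_1 + e_3, e_2 + e_4\rangle$, $W_4 = \langle e_1 + ae_3, e_2 + be_4\rangle$, for which $\pi(x) = [(a-1)(b-1) : 1 : ab]$, a family with dense image. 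Since an integral closed subvariety of dimension $2$ of a projective space of dimension $\le 2$ is necessarily all of $\P^2$, it follows that $\HH^0(X,L)^G$ is exactly $3$-dimensional (so the three sections are linearly independent and there is no relation among them) and that $X^\ss/G \stackrel{\sim}{\too} \P^2$.

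The main obstacle is concentrated in (1): extracting from the first fundamental theorem exactly the Klein pairings $B(t_i,t_j)$ --- where the crucial observation is that no $6$-fold determinant survives on $X$ --- and then the combinatorial reduction to the three perfect matchings of $K_4$. Everything else is bookkeeping: (2) is immediate, and in (3) the only genuine content is the dimension computation (or the normal-form calculation) showing that the closed subvariety $X^\ss/G \subseteq \P^2$ is not a proper subvariety.
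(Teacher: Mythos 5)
Your proposal is correct and takes essentially the same route as the paper: the isogeny $\SL_4 \to \SO(6)$, the first fundamental theorem for the orthogonal group, and the degree count on the exponents $a_{ij}$ forcing $a_{12}=a_{34}$, $a_{13}=a_{24}$, $a_{14}=a_{23}$ (your perfect-matching induction is an equivalent packaging of this), after which (2) and (3) follow. If anything, you are more explicit than the paper on two points it leaves implicit: that the $6$-fold determinant invariants of $\SO(6)$ vanish when evaluated at $(v_1\otimes v_2\otimes v_3\otimes v_4)^{\otimes d}$ since only four distinct decomposable bivectors occur, and the dimension-$2$ count (via the generic stabilizer or the normal form giving $[(a-1)(b-1):1:ab]$) that identifies the closed subvariety $X^\ss/G$ with all of $\P^2_\C$ in (3).
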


\begin{proof} (2) and (3) follows from (1). 

(1) This is a consequence of the exceptional isogeny $\SL_{4, \C} \to \SO(6)$ and the invariant theory for $\SO(6)$. The wedge product induces a non-degenerate quadratic form on $\bigwedge^2 V$ which is invariant under the action of $\SL_{4, \C}$. This gives a map $ \SL_{4, \C} \to \SO(6)$ which is seen to be surjective.

The First Theorem of Invariant Theory for $\SO(6)$ states that, a  positive integer $d$ being fixed, the subspace of $\SO(6)$-invariant linear forms
$ \phi \colon (\textstyle \bigwedge^2 V)^{\otimes 2d} \to \C$
is generated by products of ``contractions'', \textit{i.e.} by products of the form
$$ \prod_{\alpha = 1}^d v_{\sigma(\alpha)} \wedge v_{\sigma(2d + \alpha)},$$
for some permutation $\sigma \in \mathfrak{S}_{2d}$ and $v_1, \dots, v_{2d} \in \bigwedge^2 V$. Refer to \cite[Appendix I]{AtiyahBottPatodi} for an elegant proof of this fact avoiding the Capelli Identity.

In order to compute the $G$-invariants of $\HH^0(X, L^{\otimes d})$ one has to evaluate the afore-mentioned contractions on vectors of the form $ (v_1 \otimes \cdots \otimes v_4)^{\otimes d}$,  where, for $i = 1, \dots, 4$, $W_i$ is a plane of $V$, $w_{i1}, w_{i2}$ a basis of $W_i$ and $v_i = w_{i1} \wedge w_{i2}$. Evaluated at this point a contraction $s$ is of the form
$$ \prod_{\alpha = 1}^d v_{i_\alpha} \wedge v_{j_\alpha}$$
with $i_\alpha, j_\alpha \in \{ 1, \dots, 4\}$. One may assume $i_\alpha < j_\alpha$ for every $\alpha$: indeed,  if $i_\alpha = j_\alpha$ the contraction $s$ vanishes. For $i, j =1, \dots, 4$ with $i < j$ let $a_{ij}$ be the number of times $v_i \wedge v_j$ appears in the product. Since $v_i$ occurs exactly $d$ times in $ (v_1 \otimes \cdots \otimes v_4)^{\otimes d}$ one has
\begin{align*}
a_{12} + a_{13} + a_{14} &= d, \\
a_{12} + a_{23} + a_{24} &= d,\\
a_{13} + a_{23} + a_{34} &= d, \\
a_{14} + a_{24} + a_{34} &=d.
\end{align*}
This readily implies
$$ a_{12} = a_{34}, \qquad a_{13} = a_{24}, \qquad a_{14} = a_{23}.$$
Thus $s$ is a product of $s_{1234}$, $s_{1324}$ and $s_{1423}$. 
\end{proof}

\subsection{Affine charts}
Consider the Zariski open subset $U$ of $X$ of quadruples of planes $(W_1, W_2, W_3, W_4)$ such that $W_1$ is opposite to $W_2$ and $W_4$, and $W_2$ is also opposite to $W_3$. Write a point in $\P^2_\C$ as $[t_0 : t_1: t_2]$ so that for a semi-stable point $x \in X^\ss$ one has
$$ \pi(x) = [s_{1234}(x) : s_{1324}(x) : s_{1423}(x)] = [t_0 : t_1 : t_2].$$
Set $ V \df \P^2_\C \smallsetminus \{ t_2 = 0\}$, $U := \pi^{-1}(V)$ and
\begin{align*}
W_1 &= \langle e_1, e_2 \rangle, 
&W_2 &= \langle e_3, e_4 \rangle.
\end{align*}
For $i = 1, 2$ let $P_i$ be the stabilizer of $W_i$. Since $W_1$ is opposite to $W_2$, for $i = 1, 2$ the action of $\rad^u(P_i)$ on $\Opp(W_i)$ yields an isomorphism
\begin{eqnarray*}
\phi_i \colon \rad^u(P_i) &\stackrel{\sim}{\too}& \Opp(W_i), \\
u &\longmapsto& u \cdot W_{3 - i}.
\end{eqnarray*}
The map $\rad^u(P_1) \times \rad^u P_2 \to U$,
$$ (u_1, u_2)  \longmapsto (W_1, W_2, u_2 \cdot W_1, u_1 \cdot W_2), $$
is $P_{12}$-equivariant and induces an isomorphism
$$( \rad^u P_1 \times \rad^u P_2 ) / P_{12} \stackrel{\sim}{\too} V. $$

Let us make more explicit the quotient $( \rad^u P_1 \times \rad^u P_2 ) / P_{12}$. An element of $P_{12} \df P_1 \cap P_2$ can be written as
$$
\begin{pmatrix}
M_1 & 0 \\
0 & M_2
\end{pmatrix},
$$
where $M_1, M_2$ are $2 \times 2$ invertible matrices such that $\det M_1 M_2 = 1$. Such an element will be denoted by the couple $(M_1, M_2)$. The action of $P_{12}$ on $\Opp(W_i)$ induces through $\phi_i$ the action of $P_{12}$ on $\rad^u(P_i)$ by conjugation.\footnote{Given $g \in P_1 \cap P_2$ and $h \in \rad^u(P_1)$ one has to find the unique element $h' \in \rad^u(P_1)$ such that $ gh \cdot W_2 = h' \cdot W_2$. 
Since $g$ belongs to stabilizer of $W_2$ by hypothesis, $ gh \cdot W_2  = ghg^{-1} \cdot W_2$. On the other hand $ghg^{-1}$ belongs to $\rad^u(P_1)$ because the unipotent radical is a normal subgroup of $P_1$. Thus $h ' = ghg^{-1}$.}

The Lie algebra of $\rad^u P_1$ and $\rad^u P_2$ are respectively made of block matrices of the form
\begin{align*}
& \begin{pmatrix} 
0 & A_1 \\ 0 & 0
\end{pmatrix},
&& \begin{pmatrix} 
0 & 0 \\ A_2 & 0
\end{pmatrix},
\end{align*}
where $A_1, A_2$ are $2 \times 2$ matrices. Therefore in what follows $\Lie \rad^u P_i$ is identified with $M_{2}(\C)$. For $i = 1, 2$ the exponential map $\exp \colon \Lie \rad^u P_i \to \rad^u P_i$,
\begin{align*}
\exp(A_1) &= 
\begin{pmatrix} 
\id & A_1 \\ 0 & \id
\end{pmatrix},
&\exp(A_2) &= 
\begin{pmatrix} 
\id & 0 \\ A_2 & \id
\end{pmatrix},
\end{align*}
is an isomorphism. The adjoint action of $P_{12}$ on $\Lie \rad^u P_1 \times \Lie \rad^u P_2$ is given by
$$ (M_1, M_2) \cdot (A_1, A_2) = (M_1 A_1 M_2^{-1}, M_2 A_2 M_1^{-1}).$$

The exponential map is $P_{12}$-equivariant and induces isomorphisms between the quotients
$$ (\Lie \rad^u P_1 \times \Lie \rad^u P_2) / P_{12} \stackrel{\sim}{\too} (\rad^u P_1 \times \rad^u P_2) / P_{12} \stackrel{\sim}{\too} V = \A^2_\C.$$

\begin{proposition} \label{Prop:4PlanesAffineChart}With the notations introduced above, let $A_i \in \Lie \rad^u P_i$ for $i = 1, 2$ and $x= (W_1, W_2, \exp(A_2)W_1, \exp(A_1)W_2)$. Then:
\begin{enumerate}
\item  the quotient map  $
\pi \colon U  \to  V$ is given by
$$ \pi(x) = [\det(A_1 A_2 - \id) :  \det(A_1A_2) : 1]. $$
\item the orbit of $x$ is closed if and only if the matrices $A_1, A_2$ are semi-simple and $\rk A_1 = \rk A_2$.
\end{enumerate}
\end{proposition}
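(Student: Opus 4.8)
The statement (1) is obtained by a direct computation. With respect to the decomposition $\C^4 = W_1 \oplus W_2$ the plane $W_3 = \exp(A_2)W_1$ is the graph of $A_2 \colon W_1 \to W_2$ and $W_4 = \exp(A_1)W_2$ the graph of $A_1 \colon W_2 \to W_1$; hence in the standard basis $W_1, W_2, W_3, W_4$ are spanned by the columns of $\begin{pmatrix} \id \\ 0 \end{pmatrix}$, $\begin{pmatrix} 0 \\ \id \end{pmatrix}$, $\begin{pmatrix} \id \\ A_2 \end{pmatrix}$, $\begin{pmatrix} A_1 \\ \id \end{pmatrix}$ respectively. Substituting into the three determinantal formulas for $s_{1234}, s_{1324}, s_{1423}$ and evaluating the resulting $4 \times 4$ block determinants yields
$$ s_{1234}(x) = \det \begin{pmatrix} \id & A_1 \\ A_2 & \id \end{pmatrix} = \det(\id - A_2 A_1), \qquad s_{1324}(x) = \det(A_1)\det(A_2) = \det(A_1 A_2), \qquad s_{1423}(x) = 1. $$
Since $\det(\id - A_2 A_1) = \det(\id - A_1 A_2) = \det(A_1 A_2 - \id)$ --- the first equality because $A_1A_2$ and $A_2A_1$ have the same characteristic polynomial, the second because the matrices have even size --- this is the formula asserted in (1).

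For (2) the plan is: reduce to an affine GIT problem on the chart, pass to a Levi slice, and solve the resulting linear-algebra question. First, $U = \{ s_{1423} \neq 0 \}$ is an affine $G$-stable open subset of $X^\ss$ (the complement of an effective member of the ample system $|L|$), and by Theorem \ref{Thm:Quotient4planesC4} the map $\pi|_U \colon U \to V$ is the affine GIT quotient of $U$ by $G$, with $\C[U]^G = \C[s_{1234}/s_{1423}, s_{1324}/s_{1423}]$; moreover $\ol{G \cdot x}^{X^\ss} \subseteq \pi^{-1}(\pi(x)) \subseteq U$, so $G\cdot x$ is closed in $X^\ss$ if and only if it is closed in the affine variety $U$. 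Next, since the centre of $\GL_4$ acts trivially on $\Gr(2,4)^4$ one has $G\cdot x = \GL_4 \cdot x$, the stabiliser of $(W_1, W_2)$ in $\GL_4$ is the reductive group $\GL(W_1) \times \GL(W_2)$, the locus $\{ W_1' \cap W_2' = 0 \}$ is the associated bundle $\GL_4 \times^{\GL(W_1)\times\GL(W_2)} \Gr(2,4)^2$ over the affine homogeneous space $\GL_4/(\GL(W_1)\times\GL(W_2))$, and the slice $\Sigma = \Opp(W_2) \times \Opp(W_1) \cong \Lie \rad^u P_1 \times \Lie \rad^u P_2 \cong \M_2(\C)^2$ is a closed $P_{12}$-stable subvariety of $U$. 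Using that closedness of orbits in an associated bundle with reductive structure group is detected on a fibre, together with the isomorphism $(\rad^u P_1 \times \rad^u P_2)/P_{12} \iso V$ proved above (and Luna's étale slice theorem for the converse direction), one reduces (2) to: \emph{the orbit of $(A_1, A_2)$ under the action $(M_1, M_2)\cdot(A_1, A_2) = (M_1 A_1 M_2^{-1}, M_2 A_2 M_1^{-1})$ of $\GL_2(\C) \times \GL_2(\C)$ on $\M_2(\C)^2$ is closed.}

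To settle this I would view $\M_2(\C)^2$ as the representation space of dimension vector $(2,2)$ of the quiver with two vertices $1,2$ and arrows $a \colon 2 \to 1$, $b \colon 1 \to 2$ ($a \mapsto A_1$, $b \mapsto A_2$), with group $\GL_2(\C)\times\GL_2(\C)$. By the Hilbert--Mumford criterion --- over $\R$ one invokes here the existence of rational destabilising one-parameter subgroups, cf. \cite{KempfInstability, Birkes} --- such an orbit is closed precisely when the representation is semisimple. The simple modules are $S_1$ (dimension $(1,0)$), $S_2$ (dimension $(0,1)$) and, for $\lambda \in \C^\times$, the module $T_\lambda$ of dimension $(1,1)$ on which $ab$ acts by $\lambda$; indeed, any representation with $V_1, V_2$ both nonzero contains the proper subrepresentation generated by an eigenvector of $A_1A_2$. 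Hence the semisimple modules of dimension $(2,2)$ are exactly $S_1^{\oplus 2} \oplus S_2^{\oplus 2}$, $S_1 \oplus S_2 \oplus T_\lambda$ and $T_{\lambda_1} \oplus T_{\lambda_2}$, and one then checks, writing each in matrix form, that the corresponding pairs $(A_1, A_2)$ are those with $A_1, A_2$ semisimple and $\rk A_1 = \rk A_2$ (with the proviso noted below).

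The two main difficulties I anticipate are: (a) the reduction to the slice --- in particular, going back from closedness of the $\GL_2(\C)^2$-orbit of $(A_1, A_2)$ to closedness of $G\cdot x$, which is the place where one must argue carefully with Luna's theory; and (b) the final translation, where the rank-one case is delicate: a pair of rank-one diagonalisable matrices need not define a semisimple quiver representation (one also needs $A_1A_2 \neq 0$, i.e. the common rank of $A_1, A_2, A_1A_2$), so this step should be carried out keeping track of $\rk(A_1A_2)$ as well. An alternative route avoiding quivers is to prove the two implications of (2) directly by Hilbert--Mumford: for ``$\Leftarrow$'' put $(A_1, A_2)$ in a normal form adapted to the common rank and verify that no one-parameter subgroup of $\GL(W_1) \times \GL(W_2)$ admits a limit outside the orbit while remaining in $U$; for ``$\Rightarrow$'' exhibit, when the condition fails, an explicit diagonal one-parameter subgroup whose limit lies in $U$ and in a strictly smaller orbit (e.g. scaling away the nilpotent part of the Jordan decomposition of some $A_i$, or --- in the rank-one case --- a one-parameter subgroup degenerating to $T_{\tr(A_1A_2)} \oplus S_1 \oplus S_2$).
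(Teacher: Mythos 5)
Part (1) of your proposal is correct and is the same computation as the paper's: evaluate $s_{1234},s_{1324},s_{1423}$ on the graphs of $A_2$ and $A_1$ and use $\det(\id-A_2A_1)=\det(A_1A_2-\id)$.

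For part (2) there is a genuine gap at the reduction step, and it matters. Your key claim is that closedness of $G\cdot x$ in $X^{\ss}$ is detected on the slice, i.e.\ is equivalent to closedness of the $\GL_2(\C)\times\GL_2(\C)$-orbit of $(A_1,A_2)$ in $\M_2(\C)^2$. This is false: the locus $\{W_1'\cap W_2'=0\}$, where the associated-bundle description lives, is open but \emph{not saturated} inside the saturated chart $\{s_{1423}\neq 0\}$, so an orbit can be closed in the bundle and still degenerate inside $X^{\ss}$ to a configuration with $W_1'\cap W_2'\neq 0$. Concretely take $A_1=A_2=\id$, so $W_3=W_4=\langle e_1+e_3,\,e_2+e_4\rangle$. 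The one-parameter subgroup of $\SL_4(\C)$ acting by $s$ on $e_1+e_3,\ e_2+e_4$ and by $s^{-1}$ on $e_1-e_3,\ e_2-e_4$ fixes $W_3=W_4$ and sends both $W_1$ and $W_2$ to $P=\langle e_1-e_3,\,e_2-e_4\rangle$ as $s\to 0$; the limit $(P,P,W_3,W_3)$ is semistable (criterion (b) of Theorem \ref{Thm:Quotient4planesC4}) and lies in $\{s_{1423}\neq0\}$ but not in $G\cdot x$. So $G\cdot x$ is not closed in $X^{\ss}$, although $(\id,\id)$ is a semisimple quiver module with closed slice orbit and although $A_1,A_2$ are semisimple of equal rank. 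This degeneration "out of the chart" happens exactly when $1$ is an eigenvalue of $A_1A_2$, i.e.\ when $W_3\cap W_4\neq 0$; your plan (and, read literally as a statement about orbits in $X^{\ss}$, statement (2) itself) cannot go through without either that extra hypothesis or reinterpreting (2) as a statement about the $P_{12}$-orbit of $(A_1,A_2)$ in $\M_2(\C)^2$ — which is in fact how the paper argues and how (2) is invoked later, the paper's proof never addressing degenerations with $W_1'\cap W_2'\neq0$ either.

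Even at the slice level, the proviso you flag in the translation step is not a minor repair: semisimplicity of the quiver module is equivalent to ``$A_1A_2$ semisimple and $\rk A_1=\rk A_2=\rk(A_1A_2)$'', which differs from the stated condition in both directions. The pair $(E_{11},E_{22})$ has both matrices semisimple of rank one, but the module is a nonsplit extension and the $P_{12}$-orbit is not closed: $(\mathrm{diag}(t,t^{-2}),\mathrm{diag}(1,t))$ sends it to $(tE_{11},t^3E_{22})\to(0,0)$ (note that the paper's own first case, ``$A_1A_2=0$ implies closed iff $A_1=A_2=0$'', already contradicts the literal statement here). Conversely $(E_{12},E_{21})$ lies in the same $P_{12}$-orbit as $(E_{11},E_{11})$ (conjugate by $M_1=\mathrm{diag}(1,-1)$ and $M_2$ the transposition matrix), hence has a closed slice orbit, while $A_1,A_2$ are nilpotent: ``$A_1,A_2$ semisimple'' is not even invariant under the action. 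So your quiver argument, carried out carefully, proves a corrected criterion rather than (2) as written; the paper's proof contains the same slip in the rank-one case (it asserts that rank-one matrices are automatically semisimple). Apart from these two points, replacing the paper's stabilizer-dimension count on the slice by the semisimple-module criterion is a legitimate and arguably cleaner way to decide which slice orbits are closed.
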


\begin{proof}
(1) By definition the invariant section $s_{1423}$ does not vanish on $U$. The rational functions $s_{1234}/s_{1423}$, $s_{1324}/s_{1423}$ are regular on $U$ and generate the algebra of $\SL_{4, \C}$-invariant elements. Evaluating at $x$ one obtains:
\begin{align*}
\frac{s_{1234}}{s_{1423}}(x) &= \det \begin{pmatrix} \id & A_1 \\ A_2 & \id\end{pmatrix} = \det(A_1 A_2 - \id),\\
\frac{s_{1324}}{s_{1423}}(x) &= \det(A_1 A_2).
\end{align*}

(2) If $A_1 A_2$ or $A_2 A_1$ is zero, then the orbit of $x$ is closed if and only if $A_1$ and $A_2$ are zero. 

Suppose $A_1A_2, A_2 A_1$ non-zero and set $A = A_1 A_2$.

If $A_1$ is invertible, then $(\id, A)$ belongs to the orbit of $(A_1, A_2)$: it suffices to take $M_1 = A^{-1}$ and $M_2 = \id$. The stabilizer of $(\id, A)$ in $P_{12}$ is given by couples $(M_1, M_2)$ with $M_1 = M_2$ and $M_1 A = AM_1$. If the eigenvalues of $A$ are distinct, the stabilizer has dimension $\le 1$; on the contrary if they are equal the dimension is $\le 2$. In both cases the equality is attained if and only if $A$ is semi-simple. This concludes the proof when $A$ is invertible.

Suppose $A_1, A_2$ are of rank $1$. Then $A_1$, $A_2$ are automatically semi-simple (their eigenvalues are distinct). It suffices to show that the stabilizer of the couple $(A_1, A_2)$ has dimension $2$. 

Up to conjugating $A_1$ by an invertible matrix $N$ (\emph{i.e.} taking $M_1 = M_2 = N$) and up to scalar factors, one may suppose
$$ A_1 = \begin{pmatrix} 1 & 0 \\ 0 & 0 \end{pmatrix}.$$
Since $\rk(A_2) = 1$ and $A_1 A_2\neq 0$ one may write
$$ A_2 = 
\begin{pmatrix}
a_1 & a_2 \\ \lambda a_1 & \lambda a_2
\end{pmatrix},
$$
with $(a_1, a_2) \in \C^2$ non-zero and $\lambda \in \C$. The hypothesis $A_2 A_1 \neq 0$ implies $a_1 \neq 0 $ therefore up to dividing by $a_1$ one may suppose
$$ A_2 = 
\begin{pmatrix}
1 & a \\ \lambda & \lambda a
\end{pmatrix},
$$
for some $a \in \C$. For $\alpha = 1, 2$ write
$$ 
M_\alpha = \begin{pmatrix}
m^{(\alpha)}_{11} & m^{(\alpha)}_{12} \\ m^{(\alpha)}_{21} & m^{(\alpha)}_{22}
\end{pmatrix}.
$$
The condition $M_1 A_1 = A_1 M_2$ implies
\begin{align*}
m^{(1)}_{11} &= m^{(2)}_{11}, & m^{(1)}_{21} & = 0, & m^{(2)}_{12} & = 0,
\end{align*}
whereas $M_2 A_2 = A_2 M_1$ infers
\begin{align*}
m^{(1)}_{12} &= a (m^{(1)}_{11} - m^{(1)}_{22}) & m^{(2)}_{21} &= \lambda (m^{(2)}_{11} - m^{(2)}_{22}).
\end{align*}
Imposing the condition $\det M_1 M_2 = 1$ one obtains that $M_1, M_2$ are of the form
\begin{align*}
M_1 & = \begin{pmatrix} e & a (e - f) \\ 0 & f\end{pmatrix}, &
M_2 &=  \begin{pmatrix} e & 0 \\ \lambda (e - g) & g\end{pmatrix},
\end{align*}
with $e,f, g \in \C^\times$ such that $e^2fg = 1$. In particular the stabilizer of $(A_1, A_2)$ has dimension $2$.
\end{proof}

\subsection{Real forms} Let $G$ be a real form of $\SL_{4, \C}$ admitting a parabolic subgroup of type $(2, 2)$ defined over $\R$. Let $F$ the associated flag variety and $X = F^4$ and $L$ the line bundle induced by the Pl\"ucker embedding of $X_\C$. Let $X^\ss$ be the set of semi-stable points of $X$ with respect to $G$ and $L$. The GIT quotient of $X^\ss$ by $G$ is the projective space $ \P(\HH^0(X, L)^{G})$ together with the natural map
$$ \pi \colon X^\ss \too \P(\HH^0(X, L)^{G}). $$

\subsubsection{$\SL_4(\R)$} Let $G(\R) = \SL_4(\R)$ and $F$ be the grassmannian $\Gr(2, 4)$ over the real numbers. Consider the Pl\"ucker embedding
$$ i \colon \Gr(2, 4) \too \P(\textstyle \bigwedge^2 \R^4).$$
The global sections $s_{1234}, s_{1324}$, $s_{1423}$ are defined over $\R$, that is, they are $G$-invariant global sections of $L$. The quotient map $\pi$ is thus given by
\begin{eqnarray*}
\pi \colon X^\ss &\too& \P^2_\R \\
x &\longmapsto &[s_{1234}(x) : s_{1324}(x) : s_{1423}(x)].
\end{eqnarray*}

\begin{proposition} \label{Prop:Quotient4PlanesSplitRealForm} The map $\theta\colon X^\ss(\R) / \SL_4(\R) \to \P^2(\R)$ is a homeomorphism.
\end{proposition}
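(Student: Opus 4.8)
The plan is to prove that $\theta$ is a bijection: by Proposition \ref{Prop:ProjectionOfMaxSeparatedQuotientToGITQuotientProj} the map is already continuous, proper, and open onto its image, so once it is bijective it is automatically a homeomorphism. Note first that, since the invariants $s_{1234},s_{1324},s_{1423}$ are defined over $\R$, the GIT quotient is $Y=\P^2_\R$ and hence $Y(\R)=\P^2(\R)$. For surjectivity I would fix $y\in\P^2(\R)$ and apply Theorem \ref{Thm:ProjectiveRealGIT}(1): the unique closed $\SL_4(\C)$-orbit contained in $\pi^{-1}(y)\cap X^\ss(\C)$ has a real point $x\in X^\ss(\R)$, whose $\SL_4(\R)$-orbit is closed by Theorem \ref{Thm:ProjectiveRealGIT}(2); then $\theta([x])=\pi(x)=y$.

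For injectivity it suffices, by definition of the separated quotient, to take $x,x'\in X^\ss(\R)$ with closed $\SL_4(\R)$-orbits and $\pi(x)=\pi(x')$ and to show they are $\SL_4(\R)$-conjugate. By Theorem \ref{Thm:Quotient4planesC4}(3) the two quadruples lie in one and the same $\SL_4(\C)$-orbit $T$, which is closed by Theorem \ref{Thm:ProjectiveRealGIT}(2). Proposition \ref{Prop:RealPointsOrbitGaloisCohomology} then identifies $T(\R)/\SL_4(\R)$ with $\ker\bigl(\HH^1(\R,G_x)\to\HH^1(\R,\SL_4)\bigr)$, where $G_x=\Stab_{\SL_{4,\R}}(x)$; and $\HH^1(\R,\SL_4)=0$, which follows from Theorem \ref{thm:Satz90} applied to $1\to\SL_4\to\GL_4\xrightarrow{\det}\Gm\to1$ together with the surjectivity of $\det\colon\GL_4(\R)\to\R^\times$. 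So everything reduces to showing $\HH^1(\R,G_x)=0$ for every $x$ with closed orbit. Here I would exploit the $\mathfrak{S}_4$-action, which permutes the four planes and the homogeneous coordinates of $\P^2$, together with the semi-stability criterion of Theorem \ref{Thm:Quotient4planesC4}(2), to place $x$ (up to a permutation) in the affine chart of Proposition \ref{Prop:4PlanesAffineChart}, say $x=(W_1,W_2,\exp(A_2)W_1,\exp(A_1)W_2)$ with $A_1,A_2$ real, or in one of the finitely many remaining degenerate types. By Proposition \ref{Prop:4PlanesAffineChart}(2), closedness forces $A_1,A_2$ to be semisimple of equal rank, and the explicit computation there exhibits $G_x$ as an extension built out of copies of $\Gm$, of $\GL_{2,\R}$, and of centralizers of real semisimple matrices (tori of the form $\Res_{\C/\R}\Gm$), for all of which $\HH^1(\R,-)$ vanishes by Theorem \ref{thm:Satz90} and Shapiro's lemma.

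The step I expect to be the real obstacle is this last one: running the stabilizer computation uniformly over all closed orbits — the generic ones sitting in the charts and the degenerate ones on their boundaries — and, above all, pinning down the component group of $G_x$ produced by the determinant-$\pm1$ constraints, so as to be certain that it contributes no nonzero class to $\HH^1(\R,G_x)$. A self-contained alternative would be to reprove the injectivity part of Theorem \ref{Thm:Quotient4planesC4}(3) directly over $\R$: normalize $A_1$ (to $\id$, or to the appropriate rank-normal form), deduce from $\pi(x)=\pi(x')$ that $A_1A_2$ and $A_1'A_2'$ have the same characteristic polynomial, use that two semisimple $2\times2$ real matrices with equal characteristic polynomial are $\GL_2(\R)$-conjugate to conclude that $A_2$ and $A_2'$ are conjugate over $\R$, and then keep track of the determinant of the conjugating matrix.
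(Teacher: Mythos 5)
Your reduction to bijectivity via Proposition \ref{Prop:ProjectionOfMaxSeparatedQuotientToGITQuotientProj} is fine, but both halves of the bijectivity argument have problems. The surjectivity step is a non sequitur: Theorem \ref{Thm:ProjectiveRealGIT}(1) starts from a real point $x \in X^\ss(\R)$ and says that the closed orbit inside $\ol{G(\C)\cdot x}$ has a real point; it does \emph{not} say that, for an arbitrary $y \in Y(\R)$, the unique closed orbit in $\pi^{-1}(y)$ (which is indeed defined over $\R$) has a real point. That is precisely the statement that can fail, and does fail in this very section: for $\SL_2(\H)$ the image of $\theta$ is the proper subset $S \subsetneq \P^2(\R)$ of Proposition \ref{Prop:Quotient4planesSL(2,H)}. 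Surjectivity must be proved by exhibiting real preimages, e.g. $(A_1,A_2)=(\id,A)$ with $A$ real of prescribed trace and determinant in the chart of Proposition \ref{Prop:4PlanesAffineChart}, together with the $\mathfrak{S}_4$-symmetry to reach the coordinate lines; this is what the paper's proof does.

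For injectivity, the step you flag as ``the real obstacle'' is exactly where the argument breaks, and it cannot be closed in the way you hope: the determinant-$\pm1$ component group \emph{does} contribute. For a generic closed orbit, say $x=(W_1,W_2,\exp(A)W_1,\exp(\id)W_2)$ with $A=\mathrm{diag}(2,3)$, the stabilizer is $G_x=\{\mathrm{diag}(a,b,a,b) : (ab)^2=1\}\cong \Gm\times\mu_2$, so $\HH^1(\R,G_x)\cong\Z/2\Z$, and since $\HH^1(\R,\SL_4)=0$ the kernel in Proposition \ref{Prop:RealPointsOrbitGaloisCohomology} has two elements: the closed complex orbit carries \emph{two} $\SL_4(\R)$-orbits of real points. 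Concretely, $x'=\mathrm{diag}(1,-1,1,1)\cdot x$ is real, lies in the same closed $\SL_4(\C)$-orbit, and satisfies $\pi(x')=\pi(x)=[2:6:1]$; but any $g$ with $g\cdot x=x'$ must preserve $W_1$ and $W_2$, hence is block diagonal $(M_1,M_2)$ with $M_1=\mathrm{diag}(1,-1)M_2$, so $\det g=-(\det M_2)^2<0$, and $x,x'$ are $\GL_4(\R)$- but not $\SL_4(\R)$-conjugate. Your fallback ``elementary'' route hits the same wall: a real pair $(A_1,A_2)$ can be normalized to $(\id,A_1A_2)$ inside the block subgroup with $\det M_1M_2=1$ only when $\det A_1>0$ (the signs of $\det A_1,\det A_2$ are invariants of that real action which $\pi$ does not see). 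Be aware that this is also the step the paper's own proof glosses over (``$(A_1,A_2)$ belongs to the orbit of $(\id,A_1A_2)$'' is only valid over $\R$ under this sign condition): as in Proposition \ref{prop:GeneralRealSplitFormLineHyperplane} for $n$ even, $\theta$ is in fact two-to-one over a dense open subset of $\P^2(\R)$, and injectivity is restored only after replacing $\SL_4(\R)$ by $\GL_4(\R)$ (equivalently $\PGL_4(\R)$). So carried out faithfully, your Galois-cohomology computation does not prove the statement as written; it detects this failure.
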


\begin{proof} It suffices to prove that $\theta$ is bijective. This can be checked on affine charts, so that, thanks to isomorphism given by Proposition \ref{Prop:4PlanesAffineChart}, this boils down to proving that the map
\begin{eqnarray*} 
\M_2(\R) \times \M_2(\R) &\too& \R^2, \\
(A_1,A_2) &\longmapsto & (\det(A_1 A_2 -\id), \det(A_1 A_2)),
\end{eqnarray*}
induces a bijection $ (\M_2(\R) \times \M_2(\R)) / H(\R) \to \R^2$ where
$$ H = \{ (M_1, M_2) \in \GL_{2, \R}^2 : \det M_1M_2 = 1\}$$
is acting on $M_{2, \R} \times M_{2, \R}$ by  $ (M_1, M_2) \cdot (A_1, A_2) = (M_1 A_1 M_2^{-1}, M_2 A_2 M_1^{-1})$. 

The map is clearly surjective, thus only the injectivity is left to show. By Proposition \ref{Prop:4PlanesAffineChart} (2) the orbit of $(A_1, A_2)$ is closed if and only if $A_1, A_2$ are semi-simple and $\rk(A_1) = \rk(A_2)$. 

If $A_1 = A_2 = 0$ there is nothing to prove.

If $\rk(A_1) = \rk(A_2) = 1$ then it follows from the proof of Proposition \ref{Prop:4PlanesAffineChart} that the stabilizer in $H$ of the couple $(A_1, A_2)$ isomorphic to $\mathbb{G}_{m, \R}^2$. Denote by $T$ the orbit of $(A_1, A_2)$ under $H$ (as real algebraic variety). Then, Theorem \ref{thm:Satz90} implies $\HH^1(\R, \mathbb{G}_{m, \R}^2) = 0$ thus $T(\R) / H(\R) = \{ \ast \}$ by Proposition \ref{Prop:RealPointsOrbitGaloisCohomology}.

If $A_1, A_2$ are invertible, then the couple $(A_1, A_2)$ belongs to the orbit of $(\id, A_1 A_2)$. Setting $A = A_1 A_2$ and $M_1 = M_2$, one is led back to understand the separated quotient of $\M_2(\R)$ by conjugation by the subgroup of matrices in $\GL_2(\R)$ of determinant $\pm 1$. The orbit of a matrix $A \in \M_2(\R)$ under $\GL_2(\R)$ is closed if and only if $A$ is semi-simple. Moreover, semi-simple matrices are classified up to $\GL_2(\R)$-conjugacy by their characteristic polynomial. Therefore, if $A, B \in \M_2(\R)$ verify  $\det A = \det B$, $\Tr A = \Tr B$ then there exists $g \in \GL_2(\R)$ such that $B = g A g^{-1}$. 

Set $g' \df |\det g|^{- 1/2} g$. Then $B = g' A g'^{-1}$ and $|\det g'| = 1$. This concludes the proof.
\end{proof}

\subsubsection{$\SU(2, 2)$} Let $h$ be a hermitian form of signature $(2, 2)$. Up to changing coordinates we may assume that it is given by the matrix
$$
\begin{pmatrix}
0 & 0 & 0 & 1 \\
0 & 0 & 1 & 0 \\
0 & 1 & 0 & 0 \\
1 & 0 & 0 & 0 
\end{pmatrix}.
$$

Let $\SU(2, 2)$ be the subgroup of $\SL_4(\C)$ respecting this hermitian form and let $G$ be the real form of $\SL_{4, \C}$ such that $G(\R) = \SU(2, 2)$. Let $F$ be the flag variety of $G$ corresponding to the grassmannian $\Gr(2, 4)$. The involution $\sigma$ defining $F$ associates to a complex plane $W$ in $\C^4$ its orthogonal $W^\bot$ with respect the hermitian form $h$. The fixed points are the isotropic planes of $\C^4$.  Consider the isotropic planes
$$ W_1 = \langle e_1, e_2 \rangle, \qquad W_2 = \langle e_3, e_4\rangle. $$
For $\alpha = 1, 2$ let $P_{\alpha}$ the parabolic subgroup of $G$ stabilizing $x_\alpha$. The Lie algebra of $\rad^u P_\alpha$ can be identified with the real vector space of $2 \times 2$ matrices of the form
$$ \begin{pmatrix}
a & ib \\ ic & - \bar{a}
\end{pmatrix}, $$
with $a \in \C$, $b, c \in \R$ and $i^2 = -1$. 

\begin{lemma} The global sections $s_{1234}$, $s_{1324}$ and $s_{1423}$ are defined over $\R$.
\end{lemma}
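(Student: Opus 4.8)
The plan is to make explicit the anti-holomorphic involution on $\P(\bigwedge^2 V)$ which, through the Pl\"ucker embedding $\Gr(2,4)\hookrightarrow\P(\bigwedge^2 V)$, realises the involution $\sigma\colon W\mapsto W^\bot$ defining $F$, and then to verify that the three multilinear forms $s_{1234},s_{1324},s_{1423}$ are fixed by the conjugate-linear involution it induces on $\bigl((\bigwedge^2 V)^{\otimes 4}\bigr)^\vee$; this is exactly the assertion that they are defined over $\R$.

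First I would set up the real structure on $\bigwedge^2 V$. Write $Q(t,t')=t\wedge t'\in\bigwedge^4 V=\C$ for the (nondegenerate, symmetric) Klein form, and let $\widehat h$ be the hermitian form on $\bigwedge^2 V$ induced by $h$, so $\widehat h(w_1\wedge w_2,\,w_1'\wedge w_2')=\det(h(w_i,w_j'))$. Since $Q$ is nondegenerate there is a unique conjugate-linear $\tilde c\colon\bigwedge^2 V\to\bigwedge^2 V$ with $Q(\tilde c t,t')=\widehat h(t,t')$ for all $t,t'$; concretely $\tilde c$ is, up to sign, the composite of $\bigwedge^2$ of the conjugate-linear duality $\psi\colon v\mapsto h(v,\cdot)$, $V\to V^\vee$, with contraction against $e_1\wedge e_2\wedge e_3\wedge e_4$. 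From this presentation two facts follow by a short computation on the basis $\{e_i\wedge e_j\}$: (i) $\tilde c^{\,2}=\id$, so $[t]\mapsto[\tilde c t]$ is an anti-holomorphic involution of $\P(\bigwedge^2 V)$; and (ii) $\tilde c(\bigwedge^2 W)\in\C^\times\cdot\bigwedge^2 W^\bot$ for every plane $W$, because $\bigwedge^2\psi(\bigwedge^2 W)=\bigwedge^2(W^\bot)^\circ$ (as $\psi(W)=(W^\bot)^\circ$) while contraction carries $\bigwedge^2 U^\circ$ onto $\bigwedge^2 U$ for every $2$-plane $U$. Hence $[t]\mapsto[\tilde c t]$ restricts on the Klein quadric to $\sigma$, and the real structure defining $X=F^4$ and the line bundle $L$ is the restriction of the one on $\P((\bigwedge^2 V)^{\otimes 4})$ induced by $\tilde c^{\otimes 4}$ and complex conjugation on $\C$. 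Consequently a multilinear form $s$ on $(\bigwedge^2 V)^{\otimes 4}$, viewed as a section of $L$, is defined over $\R$ if and only if $s(\tilde c t_1\otimes\cdots\otimes\tilde c t_4)=\overline{s(t_1\otimes\cdots\otimes t_4)}$ for all $t_i\in\bigwedge^2 V$.

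The heart of the matter is then the identity $Q(\tilde c t,\tilde c t')=\overline{Q(t,t')}$, which I would deduce purely formally: $Q(\tilde c t,\tilde c t')=\widehat h(t,\tilde c t')=\overline{\widehat h(\tilde c t',t)}=\overline{Q(\tilde c^{\,2}t',t)}=\overline{Q(t',t)}=\overline{Q(t,t')}$, using the defining property of $\tilde c$, that $\widehat h$ is hermitian, the defining property again, $\tilde c^{\,2}=\id$, and the symmetry of $Q$ (any global sign arising between $Q(\tilde c\cdot,\cdot)$ and $\widehat h$ appears twice and cancels). Since $s_{1234}(t_1\otimes t_2\otimes t_3\otimes t_4)=Q(t_1,t_2)\,Q(t_3,t_4)$, this gives $s_{1234}(\tilde c t_1\otimes\cdots\otimes\tilde c t_4)=Q(\tilde c t_1,\tilde c t_2)\,Q(\tilde c t_3,\tilde c t_4)=\overline{Q(t_1,t_2)}\;\overline{Q(t_3,t_4)}=\overline{s_{1234}(t_1\otimes\cdots\otimes t_4)}$, so $s_{1234}$ is defined over $\R$; the same computation verbatim, with the indices permuted, handles $s_{1324}$ and $s_{1423}$.

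The only real difficulty I anticipate is bookkeeping: fixing the conventions for the real structure on $L$ and checking on the standard basis that $\tilde c$ is genuinely an involution (rather than an anti-linear map whose square is merely a nonzero scalar) and that it induces $W\mapsto W^\bot$ and not some other involution. An equivalent, perhaps more transparent, way to organise the verification — which I would include — is to exhibit a $\tilde c$-fixed basis of $\bigwedge^2 V$, for instance $i\,e_1\wedge e_2$, $e_1\wedge e_3$, $e_1\wedge e_4-e_2\wedge e_3$, $i(e_1\wedge e_4+e_2\wedge e_3)$, $e_2\wedge e_4$, $i\,e_3\wedge e_4$; in this real basis the Gram matrix of $Q$ has entries in $\{0,-1,-2\}$ (signature $(2,4)$, reflecting the isogeny $\SU(2,2)\to\SO(2,4)$), so $Q$ has real coefficients, and therefore so does each of $s_{1234},s_{1324},s_{1423}$, being a product of two entries of that matrix, with respect to the induced real basis of $(\bigwedge^2 V)^{\otimes 4}$ — which is precisely the statement that these sections are defined over $\R$.
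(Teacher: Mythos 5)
Your proof is correct, but it takes a genuinely different route from the paper's. The paper argues on the affine chart of quadruples $(W_1, W_2, \exp(A_2)W_1, \exp(A_1)W_2)$: using the identification of the real points of $\Lie\rad^u P_\alpha$ with matrices $\begin{pmatrix} a & ib \\ ic & -\bar a\end{pmatrix}$ ($a\in\C$, $b,c\in\R$), it computes $\tfrac{s_{1234}}{s_{1423}}=\det(A_1A_2-\id)$ and $\tfrac{s_{1324}}{s_{1423}}=\det(A_1A_2)$ explicitly and checks that $\Tr(A_1A_2)$ and $\det(A_1A_2)$ are real; so it verifies reality of the two invariant \emph{ratios} at the real points of one chart (implicitly invoking Zariski density of those points). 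You instead lift the involution $W\mapsto W^\bot$ to a conjugate-linear involution $\tilde c$ of $\bigwedge^2 V$ via $Q(\tilde c\,t,t')=\widehat h(t,t')$ and prove the single identity $Q(\tilde c\,t,\tilde c\,t')=\overline{Q(t,t')}$, from which all three sections, being products of two values of $Q$, are fixed by the induced real structure on $\HH^0(X,L)$. I checked your flagged bookkeeping against the paper's antidiagonal form $h$: with that choice one finds $\tilde c(e_1\wedge e_2)=-e_1\wedge e_2$, $\tilde c(e_1\wedge e_4)=-e_2\wedge e_3$, etc., so indeed $\tilde c^2=\id$, your fixed basis is genuinely $\tilde c$-fixed, and the Gram matrix of $Q$ in it is real of signature $(2,4)$. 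What each approach buys: the paper's computation is short and reuses the chart machinery of Proposition 5.10, and it is exactly what is needed later for the surjectivity argument in Proposition 5.13; yours is coordinate-free, proves the slightly stronger statement that the sections themselves (not just their ratios on a chart) are defined over $\R$, runs parallel to the paper's own treatment of the $\SU(n-1,1)$ case where $\tau$ is extended to $\P(V)\times\P(V^\vee)$ by means of $h$, and makes visible the real form $\SO(2,4)$ behind the isogeny $\SL_{4,\C}\to\SO(6)$ that the paper already exploits over $\C$ in Theorem 5.3 — in particular your real basis exhibits at once why all $\SU(2,2)$-invariants built from the Klein form are real.
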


\begin{proof} It suffices to verify the following statement: for $\alpha = 1, 2$, $A_\alpha \in \Lie \rad^u P_\alpha$ and $x = (W_1, W_2, \exp(A_2)W_1, \exp(A_1)W_2)$ then the complex numbers 
\begin{align*}
\frac{s_{1234}}{s_{1423}}(x) &= \det (A_1 A_2 - \id) = \det(A_1A_2) - \Tr(A) + 1, \\
\frac{s_{1324}}{s_{1423}}(x)&= \det(A_1A_2),
\end{align*}
are real. For $\alpha =1, 2$ write
$$ A_\alpha = \begin{pmatrix}
a_\alpha & ib_\alpha \\ ic_\alpha & - \bar{a}_\alpha
\end{pmatrix}, $$
with $a_\alpha \in \C$, $b_\alpha, c_\alpha \in \R$. Then,
$$
A_1 A_2 = \begin{pmatrix}
a_1 a_2 - b_1 c_2 & i(a_1 b_2 - b_1 \bar{a}_2) \\
i(c_1 a_2 - \bar{a}_1 c_2) & \bar{a}_1 \bar{a}_2 - b_2 c_1
\end{pmatrix},
$$
and
\begin{align*}
\Tr(A_1A_2) &= 2 \Re(a_1 a_2) - b_1 c_2 - b_2 c_1, \\
 \det(A_1 A_2) &= (|a_1|^2 - b_1 c_1)(|a_2|^2 - b_2c_2), 
\end{align*}
which are real.
\end{proof}

Let $H$ be the real form of $\GL_{2, \C}$ such that $H(\R)$ is the subgroup of elements respecting the hermitian form on $\C^2$ given by the matrix
$$ 
\begin{pmatrix}
0 & 1 \\ 1 & 0
\end{pmatrix}.
$$

\begin{proposition} \label{Prop:Quotient4PlanesSU(2,2)} The map $\theta\colon X^\ss(\R) / \SU(2, 2) \to \P^2(\R)$ is a homeomorphism.
\end{proposition}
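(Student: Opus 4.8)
The plan is to argue exactly as for Proposition~\ref{Prop:Quotient4PlanesSplitRealForm}. By Proposition~\ref{Prop:ProjectionOfMaxSeparatedQuotientToGITQuotientProj} the map $\theta$ is automatically proper, has finite fibres, and is open onto its image; since $Y = \P^2_\R$ one has $Y(\R) = \P^2(\R)$, so it is enough to prove that $\theta$ is bijective, and this may be checked on the three standard affine charts of $\P^2$, which are interchanged, up to permuting the four factors of $X$, by the symmetry of $s_{1234},s_{1324},s_{1423}$. On the chart $V = \P^2_\C \smallsetminus \{t_2 = 0\}$, with $W_1 = \langle e_1,e_2\rangle$, $W_2 = \langle e_3,e_4\rangle$, Proposition~\ref{Prop:4PlanesAffineChart} together with the preceding lemma identifies $U(\R) = (\pi^{-1}V)(\R)$, modulo $G(\R)$, with $\Lie\rad^u P_1(\R) \times \Lie\rad^u P_2(\R)$ modulo the real Levi $L(\R) = (P_1\cap P_2)(\R)$, acting by $(M_1,M_2)\cdot(A_1,A_2) = (M_1A_1M_2^{-1}, M_2A_2M_1^{-1})$ where $M_2$ is determined by $M_1$ through the hermitian pairing between $W_1$ and $W_2$; on this chart $\theta$ is the map
$$ (A_1,A_2) \longmapsto \bigl[\det(A_1A_2) - \Tr(A_1A_2) + 1 : \det(A_1A_2) : 1\bigr], $$
whose two nontrivial homogeneous coordinates are real by the preceding lemma.

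Surjectivity is elementary: writing $A_\alpha = \begin{pmatrix} a_\alpha & i b_\alpha \\ i c_\alpha & -\bar a_\alpha\end{pmatrix}$ and using the formulae of the preceding lemma for $\delta := \det(A_1A_2)$ and $\tau := \Tr(A_1A_2)$, the choice $a_1 = 1$, $b_1 = c_1 = 0$, $a_2 = \tfrac{\tau}{2}$, $b_2 = 1$, $c_2 = \tfrac{\tau^2}{4} - \delta$ realises every prescribed $(\tau,\delta) \in \R^2$ by a semi-stable quadruple in $U(\R)$. Thus $\theta$ is onto the dense subset $V(\R) = \R^2$ of $\P^2(\R)$; since $\theta$ is proper its image is closed, hence equal to $\P^2(\R)$.

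For injectivity, Theorem~\ref{thm:LunaRealGIT} reduces us to pairs $(A_1,A_2)$ lying on a closed $G(\C)$-orbit, that is, by Proposition~\ref{Prop:4PlanesAffineChart}(2), to $A_1,A_2$ semi-simple with $\rk A_1 = \rk A_2 =: \rho$; by Proposition~\ref{Prop:RealPointsOrbitGaloisCohomology} it then suffices, for each such pair, to show that the kernel of $\HH^1(\R,\Stab_L(A_1,A_2)) \to \HH^1(\R,L)$ is a single point. For $\rho = 0$ this is trivial. For $\rho = 1$ one has $\delta = 0$, the only remaining invariant being $\tau$; bringing the rank-one pair to a standard form by the $L(\R)$-action one is left with a one-parameter family whose stabilizer is a torus, and Theorem~\ref{thm:Satz90} gives the vanishing of the relevant cohomology just as in the split case. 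For $\rho = 2$ the matrices $A_1,A_2$ are invertible; passing, over $\R$ and using that $A_1A_2$ is conjugate to $A_2A_1$, to the model in which the two blocks of an element of $L$ are made equal, the compatibility constraint defining $L$ forces an element of $L(\R)$ fixing the pair to lie in $H(\R) = \mathrm{U}(1,1)$ — this is exactly why the group $H$ is introduced — so that the question becomes that of showing that a semi-simple matrix $A = A_1A_2$ of this shape is determined, up to $\mathrm{U}(1,1)$-conjugacy, by $(\Tr A,\det A)$, equivalently that $\HH^1(\R, Z_{H(\R)}(A))$ has trivial image in $\HH^1(\R, L)$.

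The main obstacle is precisely this last, invertible case: a $\GL_2(\C)$-conjugacy class of a semi-simple matrix may split into several $\mathrm{U}(1,1)$-classes — exactly the phenomenon that in case (c) produces the conic boundary — and one must rule this out here. The natural way is to write $A_1 J$ and $J A_2$ as anti-hermitian matrices $i\gamma_1$, $i\gamma_2$, so that $A$ is essentially the product $\gamma_1\gamma_2$, and to reinterpret the $L(\R)$-action as simultaneous congruence; the classification of the hermitian pencil $\gamma_1 + t\gamma_2$ in rank $2$ (Witt's theorem), treated separately according to whether $A$ has two distinct real eigenvalues, a pair of non-real conjugate eigenvalues, or a single (real) eigenvalue, together with $\HH^1(\R,\GL_n) = 0$, is what must be pushed through to conclude that no closed $G(\C)$-orbit splits and that $\theta$ is injective; the same normal-form analysis simultaneously re-proves that every point of $\P^2(\R)$ is hit by a closed orbit.
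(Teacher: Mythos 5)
Your setup and surjectivity are fine: the chart description via Proposition~\ref{Prop:4PlanesAffineChart}, the explicit choice $A_1=\mathrm{diag}(1,-1)$ and $(a_2,b_2,c_2)=(\tfrac{\tau}{2},1,\tfrac{\tau^2}{4}-\delta)$ realizing any $(\Tr,\det)=(\tau,\delta)\in\R^2$, and the properness/density step to pass from the chart to all of $\P^2(\R)$ are all correct, and this part coincides (up to the last step, where the paper simply invokes the chart) with the paper's argument. The gap is in injectivity, which is the heart of the proposition. In the invertible case you correctly isolate the issue --- one must show that the matrices $A=A_1A_2$ occurring on closed orbits are determined up to conjugation by $\{g\in H(\R):\det g=\pm1\}$ (essentially $\U(1,1)$) by their characteristic polynomial, and you yourself point out that for case (c) the analogous $\GL_2(\C)$-class does split --- but you then only sketch a programme (anti-hermitian factorization, Witt's theorem for the pencil $\gamma_1+t\gamma_2$, a three-way case analysis) and explicitly defer it: ``is what must be pushed through''. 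Nothing in the proposal actually rules out the splitting, so the proof is not complete. Similarly, in the rank-one case you assert that the stabilizer ``is a torus'' and invoke Theorem~\ref{thm:Satz90}; but Hilbert~90 gives $\HH^1=0$ only for $\GL_n$ and hence for \emph{split} tori, while a stabilizer inside the $\SU(2,2)$-Levi could a priori be a non-split form (e.g.\ contain $\U(1)$ factors, with $\HH^1(\R,\U(1))=\Z/2\Z$); you would need to compute the stabilizer explicitly, or at least the kernel of $\HH^1(\R,\Stab)\to\HH^1(\R,L)$ as in Proposition~\ref{Prop:RealPointsOrbitGaloisCohomology}, rather than quote Theorem~\ref{thm:Satz90}.

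For comparison, the paper closes exactly this gap with a short reduction rather than a pencil classification: the subgroup $\{g\in H(\R):\det g=\pm1\}$ is conjugate \emph{inside} $\GL_2(\C)$ to $\{g\in\GL_2(\R):\det g=\pm1\}$ (a Cayley-type conjugation), which transports the whole simultaneous-conjugation problem for $(A_1,A_2)$, including the rank-one and invertible cases, to the split situation already settled in Proposition~\ref{Prop:Quotient4PlanesSplitRealForm}, where semi-simple real $2\times2$ matrices are classified up to $\GL_2(\R)$-conjugacy by trace and determinant and the rank-one stabilizer is $\Gm^2$. If you do not want to use that trick, you must actually carry out the normal-form/Witt analysis you outline and verify that no closed orbit splits; as written, the proposal stops exactly where the proof has to be done.
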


\begin{proof} It suffices to verify the statement on the chart $\{ t_2 \neq 0\}$ in $\P^2(\R)$. Surjectivity is easy. Indeed, take
\begin{align*}
A_1 &= \begin{pmatrix} 1 & 0 \\ 0 & -1\end{pmatrix},
&A_2 &= \begin{pmatrix} a & ib \\ ic & - \bar{a}\end{pmatrix},
\end{align*}
with $a \in \C$ , $b, c \in \R$ and $i^2 = -1$. Then by Proposition \ref{Prop:4PlanesAffineChart} the projection of the semi-stable point
$x = (W_1, W_2, \exp(A_2) W_1, \exp(A_1)W_2)$ is
$$ \pi(x) = [|a|^2 - bc - 2 \Re(a) +1 : |a|^2 - bc : 1]. $$
Since there are no constraints on $a, b, c$, the map is seen to be surjective.

For the injectivity one is  led back to the situation discussed in the proof of Proposition \ref{Prop:Quotient4PlanesSplitRealForm} because the subgroup 
$$ \{ g \in H(\R) : \det g = \pm 1\}, $$
is conjugated in $\GL_2(\C)$ to the subgroup
$$ \{ g \in \GL_2(\R) : \det g = \pm 1\}. $$
The remaining details are left to the reader.
\end{proof}

\subsubsection{$\SL_2(\H)$} Let $\H$ be the quaternions and let $i, j, k$ be generators satisfying the usual relations. Let $\P^1(\H)$ be the set of rank $1$ right $\H$-modules of $\H^2$. For a non-zero $(x, y) \in \H^2$ the right $\H$-module that it generates is denoted $[x : y]$. The group $\GL_2(\H)$ acts on $\P^1(\H)$ on the left: for
$$g = \begin{pmatrix} a & b \\ c & d\end{pmatrix} \in \GL_2(\H),$$
and $(x, y) \in \H^2$ non-zero,
$$ g \cdot [x : y] = [ax + by : cx + dy].$$

Let $G$ the real form of $\SL_4(\C)$ such that $G(\R) = \SL_2(\H)$. Let $F$ the flag variety of $G$ corresponding to the Grassmannian of planes in $\C^4$. The anti-holomorphic involution $\sigma$ defining $F$ associates to a complex plane $ \langle x,  y \rangle$ the plane $\langle \tau(x), \tau(y)  \rangle,$
where
$$ \tau(x_1, x_2, x_3, x_4) = (- \bar{x}_2, \bar{x}_1, - \bar{x}_4, \bar{x}_3).$$
Fixed points under the anti-holomorphic involution $\sigma$ are planes of $\C^4$ of the form $ \langle x, \tau(x) \rangle$ for a non-zero $x \in \C^4$. The correspondence between real points of $F$ and points of $\P^1(\H)$ is given by
\begin{eqnarray*}
F(\R) & \too & \P^1(\H) \\
(x_1, x_2, x_3, x_4) & \longmapsto & [ x_1 +  j x_2 : x_3 + j x_4].
\end{eqnarray*}

\begin{proposition} The sections $s_{1234}, s_{1324}, s_{1423}$ are defined over $\R$.
\end{proposition}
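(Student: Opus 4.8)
The plan is to reproduce, over $\R$, the argument used for the preceding $\SU(2,2)$ lemma. Since the sections $s_{1234},s_{1324},s_{1423}$ span the space $\HH^0(X_\C,L_\C)^{G_\C}$ of $G_\C$-invariants and $U(\R)$ is Zariski-dense in $U_\C$, it suffices to verify that the two regular invariants $s_{1234}/s_{1423}$ and $s_{1324}/s_{1423}$ take real values on $U(\R)$. By Proposition~\ref{Prop:4PlanesAffineChart}, for $A_\alpha\in\Lie\rad^u P_\alpha$ ($\alpha=1,2$) and $x=(W_1,W_2,\exp(A_2)W_1,\exp(A_1)W_2)$ these invariants equal $\det(A_1A_2-\id)=\det(A_1A_2)-\Tr(A_1A_2)+1$ and $\det(A_1A_2)$, so everything reduces to describing the \emph{real} locus of $\Lie\rad^u P_\alpha$.

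First I would note that $W_1=\langle e_1,e_2\rangle$ and $W_2=\langle e_3,e_4\rangle$ are fixed by $\sigma$ (one has $\tau(e_1)=e_2$, $\tau(e_2)=-e_1$, $\tau(e_3)=e_4$, $\tau(e_4)=-e_3$), hence are real points of $F$, and the whole affine-chart construction of Section~\ref{sec:QuadruplesOfPlanes} (the chart $U$, the planes $W_1,W_2$, the isomorphism of Proposition~\ref{Prop:4PlanesAffineChart}) descends to $\R$. Writing the quaternionic structure on $\C^4$ as $x\mapsto J\bar x$ with $J=\begin{pmatrix}J_0&0\\0&J_0\end{pmatrix}$ and $J_0=\begin{pmatrix}0&-1\\1&0\end{pmatrix}$, the condition $\bar A=J^{-1}AJ$ cutting out $\mathfrak{sl}_2(\H)\subset\mathfrak{sl}_4(\C)$ restricts, on the $2\times2$ off-diagonal block $B$ parametrising $\Lie\rad^u P_\alpha$, to $\bar B=J_0^{-1}BJ_0$; an elementary computation shows that this says exactly $B=\begin{pmatrix}a&b\\-\bar b&\bar a\end{pmatrix}$, i.e. $B=m(q)$ for some quaternion $q$, with $m$ the embedding of the footnote. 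Thus the real $\Lie\rad^u P_\alpha$ is identified with the image of $m\colon\H\to\M_2(\C)$.

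With this in hand the conclusion is immediate: since $m$ is multiplicative, $A_1A_2=m(q_1)m(q_2)=m(q_1q_2)$, so $\det(A_1A_2)=N(q_1)N(q_2)$ is a product of quaternion norms, hence real and non-negative, while $\Tr(A_1A_2)=\Tr(m(q_1q_2))=q_1q_2+\overline{q_1q_2}=2\Re(q_1q_2)$ is real, whence $\det(A_1A_2-\id)$ is real too. I expect the only genuine step to be the second one, i.e. correctly fixing the quaternionic structure matrix $J$ and recognising the real form of $\rad^u P_\alpha$ as $\H$; since the analogous bookkeeping was already carried out in the $\SU(2,2)$ case, no new difficulty should arise. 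As a shortcut one may also argue conceptually: here $\sigma$ on $F$ is induced by the genuine anti-linear involution $\bigwedge^2\tau$ of $\bigwedge^2 V$ (no duality intervenes, unlike for $\SU(2,2)$), and $\bigwedge^4\tau$ fixes $e_1\wedge e_2\wedge e_3\wedge e_4$, so the Klein quadratic form $Q(\omega)=\omega\wedge\omega$ satisfies $Q(\bigwedge^2\tau\,\omega)=\overline{Q(\omega)}$; as $s_{1234}$, $s_{1324}$, $s_{1423}$ are built from $Q$, they are defined over $\R$.
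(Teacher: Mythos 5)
Your concluding ``shortcut'' is in substance the paper's own proof: the paper establishes the single determinant identity $\det(\tau x,\tau y,\tau x',\tau y')=\ol{\det(x,y,x',y')}$ for $x,y,x',y'\in\C^4$, which is exactly your observation that $\textstyle\bigwedge^{2}\tau$ intertwines the wedge pairing with its complex conjugate because $\textstyle\bigwedge^{4}\tau$ fixes $e_1\wedge e_2\wedge e_3\wedge e_4$; so that part is correct and coincides with the paper. Your main route --- descending the affine chart to $\R$, identifying the real points of $\Lie\rad^u P_\alpha$ with $\H$ via $m$, and computing $\det(A_1A_2)$ and $\det(A_1A_2-\id)$ --- is instead the strategy the paper uses for the analogous $\SU(2,2)$ lemma, and your quaternionic bookkeeping ($\tau(x)=J\bar{x}$, the condition $\bar B=J_0^{-1}BJ_0$ being equivalent to $B=m(q)$, $\det m(q)=q\bar q$, $\Tr m(q_1q_2)=2\Re(q_1q_2)$) is right. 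The one caveat concerns your opening reduction: reality of the two ratios $s_{1234}/s_{1423}$ and $s_{1324}/s_{1423}$ on a Zariski-dense set of real points shows these ratios are defined over $\R$ as rational functions, hence that the induced anti-linear conjugation $c$ on the $3$-dimensional space of invariants satisfies $c(s_{1234})=\lambda s_{1234}$, $c(s_{1324})=\lambda s_{1324}$, $c(s_{1423})=\lambda s_{1423}$ for a single $\lambda$ with $|\lambda|=1$ --- but it does not by itself force $\lambda=1$, i.e.\ it proves the sections are real only up to a common unimodular scalar. Pinning down that scalar requires evaluating something absolute (a section at a real point in a real trivialization, or equivalently the action of $\textstyle\bigwedge^{4}\tau$ on the volume form), and this is precisely what your shortcut --- and the paper's determinant identity --- supplies; so the proposal is complete, but only because the shortcut is there, and as written the chart computation alone would leave this normalization gap.
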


\begin{proof} This follows from the identity
$$
\left|
\begin{matrix}
- \bar{x}_2 & - \bar{y}_2 & - \bar{x}_2' & - \bar{y}_2' \\
 \bar{x}_1 & \bar{y}_1 & \bar{x}_1' & \bar{y}_1' \\
- \bar{x}_4 & - \bar{y}_4 & - \bar{x}_4' & - \bar{y}_4' \\
 \bar{x}_3 & \bar{y}_3 & \bar{x}_3' & \bar{y}_3'  
\end{matrix}
\right| =
\ol{
\left|
\begin{matrix}
x_1 & y_1 & x_1' & y_1' \\
x_2 & y_2 & x_2' & y_2' \\
x_3 & y_3 & x_3' & y_3' \\
x_4 & y_4 & x_4' & y_4' 
\end{matrix}
\right|
},
$$
for $x, y, x', y' \in \C^4$.
\end{proof}

The map $\pi \colon \P^1(\H)^{4, \ss} \to \P^2(\R)$ is therefore given by
$$ \pi(x) = [s_{1234}(x) : s_{1324}(x) : s_{1423}(x)]. $$

\begin{proposition} A quadruple of points in $\P^1(\H)$ is semi-stable if and only if no point is repeated more than twice.
\end{proposition}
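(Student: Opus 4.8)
The plan is to reduce the statement to the semi-stability criterion for four complex planes in $\C^4$ given in Theorem \ref{Thm:Quotient4planesC4} (2): a quadruple $(W_1, W_2, W_3, W_4)$ is semi-stable if and only if there is a partition of $\{1,2,3,4\}$ into two pairs $\{a,b\},\{c,d\}$ with $W_a \cap W_b = 0$ and $W_c \cap W_d = 0$. In our situation the planes are the quaternionic lines $W_i = \langle x_i, \tau(x_i)\rangle$ attached to the points $p_i \in \P^1(\H)$ forming the quadruple, so the whole question reduces to deciding when $W_i \cap W_j = 0$.

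The key step is to prove that, for $i \neq j$, one has $W_i \cap W_j = 0$ if and only if $p_i \neq p_j$. First I would observe that the formula for $\tau$ gives $\tau^2 = -\id$, whence no complex line of $\C^4$ is $\tau$-stable (if $\tau(v) = \lambda v$ with $\lambda \in \C$, then $-v = \tau^2(v) = |\lambda|^2 v$, which is absurd); in particular $x_i$ and $\tau(x_i)$ are linearly independent over $\C$, so each $W_i$ is indeed a plane, and it is $\tau$-stable. Now if $p_i = p_j$ then $W_i = W_j$, while if $p_i \neq p_j$ the intersection $W_i \cap W_j$ is a $\tau$-stable subspace of complex dimension $0$, $1$ or $2$: dimension $1$ is impossible since no complex line is $\tau$-stable, and dimension $2$ would force $W_i = W_j$, hence $p_i = p_j$. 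Therefore $W_i \cap W_j = 0$. (This is simply the fact that two distinct rank-one right $\H$-submodules of $\H^2$ meet trivially.)

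It then remains to establish the elementary combinatorial fact that a partition of $\{1,2,3,4\}$ into two pairs of \emph{distinct} points exists if and only if no point of the quadruple occurs with multiplicity at least $3$. For one direction: if, say, $p_1 = p_2 = p_3$, then in any partition of $\{1,2,3,4\}$ into two pairs the pair not containing the index $4$ is a $2$-subset of $\{1,2,3\}$ and so has equal entries, so no admissible partition exists and the quadruple is not semi-stable. For the converse, if every point has multiplicity at most $2$ then the partition of $\{1,2,3,4\}$ into the fibres of $i \mapsto p_i$ consists of blocks of size at most $2$; a short case analysis --- all four singletons, or one pair and two singletons, or two pairs --- exhibits an admissible partition (up to renumbering, $\{1,3\},\{2,4\}$ works whenever the fibres are $\{1,2\},\{3,4\}$ or $\{1,2\},\{3\},\{4\}$, and any partition works when the $p_i$ are pairwise distinct). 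I do not anticipate any real obstacle here: the only substantive point is the intersection criterion, which falls out immediately from $\tau^2 = -\id$.
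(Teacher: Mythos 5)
Your proof is correct and follows essentially the same route as the paper: both reduce to the semi-stability criterion of Theorem \ref{Thm:Quotient4planesC4} (2) via the key fact that two distinct points of $\P^1(\H)$ correspond to complex planes meeting trivially, plus the easy combinatorics on pairings. The only cosmetic difference is that you verify the intersection lemma through $\tau^2 = -\id$ and $\tau$-stability of $W_i \cap W_j$, whereas the paper argues directly that a nonzero intersection would be (generate) a common rank-one right $\H$-submodule forcing $L_i = L_j$ --- as you yourself note, this is the same fact.
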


\begin{proof} Let $L_1, L_2 \subset \H^2$ be rank $1$ right $\H$-modules and let $W_1, W_2 \subset \C^4$ be the corresponding complex planes. If $W_1, W_2$ meet then $L_1 \cap L_2$ is a right $\H$-module of rank $1$. Therefore $L_1$ and $L_2$ coincide.

The previous observation implies that the section $s_{1234}$ vanishes at a quadruple $x = (L_1, \dots, L_4)$ of points in $\P^1(\H)$ if and only if $L_1 = L_2$ or $L_3 = L_4$. The statement follows immediately.
\end{proof}
Consider the points $x_1 = [1: 0]$ and $x_2 = [0: 1]$.  For $\alpha = 1, 2$ let $P_{\alpha}$ the parabolic subgroup of $G$ stabilizing $x_\alpha$. Identify $\Lie \rad^u P_\alpha$ with $\H$. Then the quotient map given by Proposition \ref{Prop:4PlanesAffineChart} induces the following map on real points:
\begin{eqnarray} \label{eq:ExplicitExpressionQuotient4PlanesHyperbolic} \H \times \H &\too& \P^2(\R), \\
(q_1, q_2) &\longmapsto& [|q_1 q_2 - 1|^2 : |q_1 q_2|^2 :1]. \nonumber
\end{eqnarray}

\begin{proposition} \label{Prop:Quotient4planesSL(2,H)} With the notations introduced above:
\begin{enumerate}
\item the image of $\pi$ is the subset
$$ S = \{ [t_0 : t_1 : t_2] \in \P^2(\R) : t_0^2 + t_1^2 +t_2^2 \le 2 (t_0 t_1 + t_0 t_2 +  t_1 t_2) \};$$
\item the induced map $\theta \colon \P^1(\H)^{4, \ss} / \SL_2(\H) \to S$ is a homeomorphism;
\item a quadruple $(x_1, \dots, x_4) \in X^\ss(\R)$ is made of pairwise distinct points if and only if 
$$ \pi(x_1, \dots, x_4) \in \{ [t_0 : t_1 : t_2] \in \P^2(\R) : t_0 t_1 t_2 \neq 0\}.$$
\end{enumerate}
\end{proposition}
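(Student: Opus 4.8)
The plan is to work entirely on the affine chart $\{t_2 \neq 0\}$ in $\P^2(\R)$, using the explicit parametrization \eqref{eq:ExplicitExpressionQuotient4PlanesHyperbolic} of the quotient map given by Proposition \ref{Prop:4PlanesAffineChart}, namely $(q_1, q_2) \mapsto [\,|q_1 q_2 - 1|^2 : |q_1 q_2|^2 : 1\,]$ for $q_1, q_2 \in \H$, and then checking the boundary of the chart separately. First I would compute the image of the map $q \mapsto |q - 1|^2$ as $q$ ranges over the set of quaternions of a \emph{fixed} norm: since $|q-1|^2 = |q|^2 - 2\Re(q) + 1$ and $\Re(q)$ may take any value in $[-|q|, |q|]$ independently of the norm of $q$, as $q$ runs over $\{|q| = \rho\}$ (a $3$-sphere of radius $\rho$ for $\rho > 0$) the pair $(|q-1|^2, |q|^2)$ fills out exactly the set $\{(t_0, t_1) : t_1 = \rho^2,\ (\sqrt{t_1} - 1)^2 \le t_0 \le (\sqrt{t_1}+1)^2\}$. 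Letting $\rho$ vary and writing $t_1 = |q_1 q_2|^2 = |q_1|^2 |q_2|^2$ — which takes every nonnegative real value since $q_1, q_2$ are free — the image in the chart is $\{(t_0, t_1) : t_0 \ge 0,\ t_1 \ge 0,\ (\sqrt{t_1} - 1)^2 \le t_0 \le (\sqrt{t_1}+1)^2\}$. Squaring out $(\sqrt{t_0} - \sqrt{t_1})^2 \le 1 \le (\sqrt{t_0} + \sqrt{t_1})^2$, or rather $|\sqrt{t_0} - \sqrt{t_1}| \le 1 \le \sqrt{t_0} + \sqrt{t_1}$, one recognizes after eliminating square roots the region cut out by $t_0^2 + t_1^2 + 1 \le 2(t_0 t_1 + t_0 + t_1)$; homogenizing with $t_2$ gives precisely the set $S$ in the statement (on this chart). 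The points with $t_2 = 0$ must then be treated by hand: a short check on the remaining affine charts (or a limiting argument using the $\mathfrak{S}_4$-symmetry of the three invariants) shows they contribute nothing outside $S$, and that the three ``vertices'' of $S$ on the coordinate lines are attained, so the image of $\pi$ is exactly $S$, proving (1).

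For (2), the map $\theta \colon \P^1(\H)^{4,\ss}/\SL_2(\H) \to S$ is a continuous bijection onto $S$ by Proposition \ref{Prop:ProjectionOfMaxSeparatedQuotientToGITQuotientProj}, so it suffices to prove injectivity, which — as in the proofs of Propositions \ref{Prop:Quotient4PlanesSplitRealForm} and \ref{Prop:Quotient4PlanesSU(2,2)} — reduces on the chart $\{t_2 \neq 0\}$ to showing that the map $(q_1, q_2) \mapsto (|q_1 q_2 - 1|^2, |q_1 q_2|^2)$ induces a bijection from $(\H \times \H)/H(\R)$ (with $H(\R)$ the relevant real form acting as in the split case) onto its image. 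By Proposition \ref{Prop:4PlanesAffineChart}(2) only closed orbits matter, i.e. one may assume $A_1 = m(q_1), A_2 = m(q_2)$ semisimple of equal rank; when both are invertible one conjugates to $(\id, q_1 q_2)$ and is reduced to classifying the quaternion $q := q_1 q_2$ up to the conjugation action, where $|q - 1|^2$ and $|q|^2$ determine $\Re(q)$ and $|q|$, hence the conjugacy class of $q$ in $\H$ (two quaternions are conjugate iff they have the same real part and the same norm); the rank-one case and the zero case are immediate. This yields the injectivity, and hence (2).

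For (3), one must identify when the quadruple $(x_1, \dots, x_4)$ has all four points distinct. Since (as shown in the proof that semi-stability means ``no point repeated more than twice'') two of the quaternionic lines coincide exactly when the corresponding planes in $\C^4$ meet, which happens exactly when one of the pairwise determinant products $s_{1234}, s_{1324}, s_{1423}$ has one of its two factors vanish, the vanishing of a single $s$ among these three corresponds to one of the three ``ways'' two lines among the four can coincide. Working again via the chart and Proposition \ref{Prop:4PlanesAffineChart}(1): $s_{1423}(x) = 0$ is excluded on the chart; $s_{1324}(x) = 0$ means $\det(A_1 A_2) = 0$, i.e. $t_1 = 0$; and $s_{1234}(x) = 0$ means $\det(A_1 A_2 - \id) = 0$, i.e. $t_0 = 0$; translating back, a coincidence among the four lines forces at least one of $t_0, t_1, t_2$ to vanish, and conversely if $t_0 t_1 t_2 = 0$ the corresponding invariant vanishes so two lines coincide. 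Hence the four points are pairwise distinct iff $t_0 t_1 t_2 \neq 0$, which is (3).

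The main obstacle is the elementary but slightly fiddly algebra in step (1): converting the description $|\sqrt{t_0} - \sqrt{t_1}| \le 1 \le \sqrt{t_0} + \sqrt{t_1}$ of the image into the clean symmetric polynomial inequality $t_0^2 + t_1^2 + t_2^2 \le 2(t_0 t_1 + t_1 t_2 + t_0 t_2)$ requires squaring twice and checking that no spurious region is introduced (one should note that the polynomial $t_0^2 + t_1^2 + t_2^2 - 2(t_0 t_1 + t_1 t_2 + t_0 t_2)$ equals $-(\sqrt{t_0}+\sqrt{t_1}+\sqrt{t_2})(-\sqrt{t_0}+\sqrt{t_1}+\sqrt{t_2})(\sqrt{t_0}-\sqrt{t_1}+\sqrt{t_2})(\sqrt{t_0}+\sqrt{t_1}-\sqrt{t_2})$ on the positive octant, which makes the sign analysis transparent). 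The injectivity in (2) is a routine adaptation of the two preceding propositions, and (3) is then essentially a bookkeeping consequence of (1) and the affine-chart formula.
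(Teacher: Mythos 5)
Your proposal is correct and follows essentially the same route as the paper: work in the affine chart $\{t_2\neq 0\}$ via Proposition \ref{Prop:4PlanesAffineChart}, derive the defining inequality of $S$ from $|\Re(q)|\le |q|$ (your triangle-type analysis in $\sqrt{t_0},\sqrt{t_1},\sqrt{t_2}$ is just a longer way of writing the paper's $(t_0-t_1-t_2)^2\le 4t_1t_2$), get surjectivity from the pairs $(1,q)$, prove injectivity by reducing to the conjugacy class of $q=q_1q_2$, which is determined by $|q|$ and $\Re(q)$, and read off (3) from the vanishing of the invariants. The only cosmetic slip is your mention of a ``rank-one case'' in the injectivity step: since $m(q)$ has rank $0$ or $2$ for a quaternion $q$, that case is vacuous over $\H$, exactly as the paper's dichotomy $q_1=0$ versus $q_1\neq 0$ reflects.
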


\begin{figure}[h]

\begin{tikzpicture}
\def\r{4.5};
    \node[anchor=south west,inner sep=0] at (-4.5,-4.5) {\includegraphics[height=9cm]{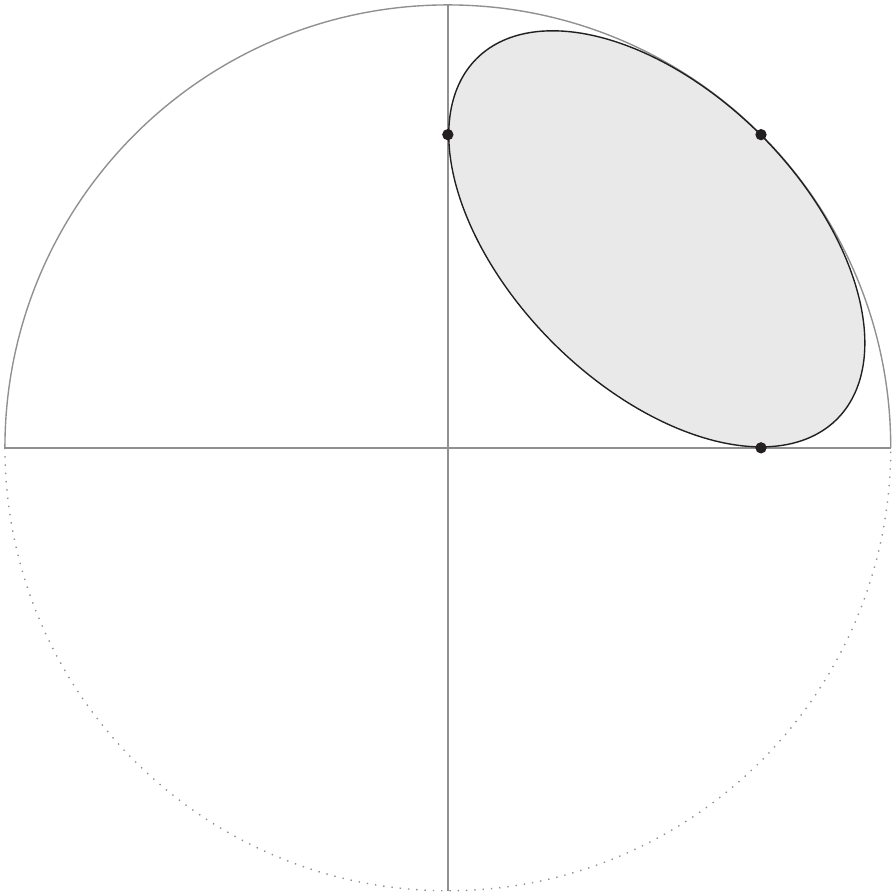}};
    
    \node[anchor=north] at (\r/1.4142,0) {$[1:0:1]$};
    \node[anchor=east] at (0,\r/1.4142) {$[1:1:0]$};
    \node[anchor=west] at (\r/1.35,\r/1.35) {$[0:1:1]$};
    
    \node at (\r/2,\r/2) {$ \P^1(\H)^{4, \ss} / \SL_2(\H)$};
\node[anchor=south] at (0,\r) {$\P^2(\R)$};
    \end{tikzpicture}
    
\caption{Through the map $\theta$ one can see the separated quotient $\P^1(\H)^{4, \ss} / \SL_2(\H)$ as the filled conic in the figure. It is a parabola in the complement of each coordinate axis. The three points of tangency correspond to the degenerate quadruples. Points on the boundary of the conic correspond to configurations whose quaternionic cross-ratio is real.}
  
  \end{figure}

\begin{proof} It suffices to verify the assertions (1) and (2) on the affine chart $\{ t_2 \neq 0\}$. 

(1) For $q_1, q_2 \in \H$ set $q = q_1 q_2$ and  $ [t_0 : t_1 : t_2] = [|q-1|^2:|q|^2:1].$ Then,
$$
\left( \frac{t_1 + t_2 - t_0}{t_0} \right)^2 = 4 \Re(q)^2 \le 4 |q|^2 = \frac{t_1}{t_2}.
$$
In order to see that $\pi_0$ is onto the subset $S$ it suffices to consider $(q_1, q_2) =(1, q)$.

(2) It suffices to prove that $\pi$ is injective.  If $q_1 = 0$ the unique closed orbit contained in the closure of the orbit $(q_1, q_2)$ in $\H^2$ is $(0, 0)$. If $q_1 \neq 0$, then $(q_1, q_2)$ lies in the same orbit of $(1, q_1q_2)$. Set $q = q_1 q_2$. There exists $h \in \H^\times$ and $\lambda \in \C$ (unique up to complex conjugation) such that $h \lambda h^{-1} = q$.\footnote{This can be seen as follows. Write $q$ as a matrix 
$$ A= \begin{pmatrix} a & - \bar{b} \\ b & \bar{a}\end{pmatrix}, $$
with $a, b \in \C$. The matrix $A$ commutes with its adjoint, thus there is an orthonormal basis of $\C^2$ made of eigenvectors. In other words there exists $U \in \U(2)$ and $\lambda \in \C$ such that
$$ U A U^{-1} = \begin{pmatrix} \lambda & 0 \\ 0 & \bar{\lambda}\end{pmatrix}.$$} On the other hand,
\begin{align*} 
|q|^2 &= |\lambda|^2, &|q-1|^2 &= |\lambda|^2 - 2 \Re(\lambda) + 1,
\end{align*}
thus $\pi(q_1, q_2)$ determines $\lambda$ up to complex conjugation.

(3) is clear.
\end{proof}

The map $\pi$ is related to the notion of quaternionic cross-ratio. Let $x_1, \dots, x_4$ be four pairwise distinct points of $\P^1(\H)$. Then there exists $g \in \GL_2(\H)$ such that 
\begin{align*}
 g \cdot x_1 &= [0:1], & g \cdot x_2 &= [1:1], & g \cdot x_3 &= [1:0].
 \end{align*}
Let $q \in \H^\times$ be such that $ g \cdot x_4 = [q:1]$. Over the complex numbers one defines the cross-ratio of $x_1, \dots, x_4$ as $q$. However in this situation $q$ is not well-defined. Indeed, the subgroup  $\{ g \in \GL_2(\H) : g x_i = x_i \textup{ for } i = 1, 2, 3\}$ is the subgroup of matrices $h \cdot \id$ with $h \in \H^\times$ and
$$ [hq : h] = [hqh^{-1}:1]$$
as right $\H$-modules of rank $1$. The quaternion $q$ is thus well-defined only up to conjugation.

\begin{definition} \label{definition:quaternionicCross}
The \emph{quaternionic cross-ratio} of $x_1, \dots, x_4$ is the conjugacy class $[q]$ of the quaternion $q$.
\end{definition}

In the literature the quaternionic cross-ratio is defined in various manners, depending on the way the quotient $\H^\times / \H^\times$ is presented ($\H^\times$ acting on itself by conjugation). For instance, the map
\begin{eqnarray*}
\H^\times / \H^\times &\too & \R^2,\\
\left[q\right] &\longmapsto & (|q|, \Re q),
\end{eqnarray*}
is well-defined (the modulus and the real part of a quaternion are invariant under conjugation) and injective\footnote{This is essentially the proof of (2) in the preceding Proposition.}. In \cite{GwynneLibine} the quaternionic cross-ratio is defined as the image of the quaternionic cross-ratio in the present sense through the preceding map.

\begin{proposition} Let $x_1, \dots, x_4$ be four distinct points of $\P^1(\H)$ and let $[q]$ their quaternionic cross-ratio. Then,
$$ \pi(x_1, \dots, x_4) = [|q-1|^2: |q|^2:1]. $$
The quaternionic cross-ratio is real if and only if
$$ \pi(x_1, \dots, x_4) \in \{ [t_0 : t_1 : t_2] \in \P^2(\R) : t_0^2 + t_1^2 +t_2^2 = 2 (t_0 t_1 + t_0 t_2 +  t_1 t_2) \}.
$$
\end{proposition}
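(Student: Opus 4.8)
The plan is to reduce, via the $\GL_2(\H)$-invariance of $\pi$, to the single normalized quadruple that defines the cross-ratio, and then to evaluate the three invariants on it. For the first step, note that for $g\in\GL_4(\C)$ and bases $w_{i1},w_{i2}$ of planes $W_i\subset\C^4$ one has $s_{1234}(gW_1,\dots,gW_4)=(\det g)^2\,s_{1234}(W_1,\dots,W_4)$, and likewise for $s_{1324}$ and $s_{1423}$; the three invariants are thus all multiplied by the \emph{same} nonzero scalar, so the point $\pi(gW_1,\dots,gW_4)=[s_{1234}:s_{1324}:s_{1423}]\in\P^2_\C$ is unchanged, and specializing to $g=m(\tilde g)$ with $\tilde g\in\GL_2(\H)$ shows $\pi\colon\P^1(\H)^{4,\ss}\to\P^2(\R)$ is $\GL_2(\H)$-invariant. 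By Definition~\ref{definition:quaternionicCross} we may therefore choose $\tilde g\in\GL_2(\H)$ with $\tilde g\cdot x_1=[0:1]$, $\tilde g\cdot x_2=[1:1]$, $\tilde g\cdot x_3=[1:0]$, $\tilde g\cdot x_4=[q:1]$, and then
$$ \pi(x_1,\dots,x_4)=\pi\bigl([0:1],[1:1],[1:0],[q:1]\bigr). $$

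To compute the right-hand side, write $q=\alpha+\beta j$ with $\alpha,\beta\in\C$, so that $|q|^2=|\alpha|^2+|\beta|^2$ and $|q-1|^2=|\alpha-1|^2+|\beta|^2$. Under the correspondence $F(\R)=\P^1(\H)$ described above (using $jz=\bar z\,j$ for $z\in\C$ and the anti-holomorphic involution $\tau$), the four points correspond to the complex planes $\langle e_3,e_4\rangle$, $\langle e_1+e_3,\,e_2+e_4\rangle$, $\langle e_1,e_2\rangle$ and $\langle(\alpha,\bar\beta,1,0),(-\beta,\bar\alpha,0,1)\rangle$. Evaluating $s_{1234}$, $s_{1324}$ and $s_{1423}$ on this quadruple, each invariant is a product of two $4\times 4$ determinants, which elementary row and column operations reduce to the values $1$, $|q|^2$ and $|q-1|^2$; assembling them yields $\pi(x_1,\dots,x_4)=[|q-1|^2:|q|^2:1]$. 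A variant of this step avoids the explicit planes: one applies a further element of $\GL_2(\H)$ to place the normalized quadruple inside the affine chart of Proposition~\ref{Prop:4PlanesAffineChart}, writing it in the form $(x_1,x_2,\exp(A_2)x_1,\exp(A_1)x_2)$ with $x_1=[1:0]$, $x_2=[0:1]$ and $A_\alpha\leftrightarrow q_\alpha\in\H$; a short matrix computation relates $q_1q_2$ to $q$ (up to conjugacy, which is harmless as $|q|$ and $\Re(q)$ are conjugation-invariant), and \eqref{eq:ExplicitExpressionQuotient4PlanesHyperbolic} gives the same conclusion. I expect this middle step to be the only delicate one: the determinant evaluations are mechanical, but one must keep the $\P^1(\H)\hookrightarrow\Gr(2,4)(\C)$ dictionary, the quaternion conventions, and the labelling of $s_{1234},s_{1324},s_{1423}$ versus $t_0,t_1,t_2$ straight.

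For the last assertion, first observe $|q-1|^2=|q|^2-2\Re(q)+1$, so $\pi(x_1,\dots,x_4)$ depends only on $|q|$ and $\Re(q)$, consistently with $[q]$ being only a conjugacy class. Then use the identity
$$ t_0^2+t_1^2+t_2^2-2(t_0t_1+t_1t_2+t_0t_2)=(t_2-t_0-t_1)^2-4t_0t_1, $$
which shows that on the region $t_0,t_1,t_2\ge 0$ the quadric $t_0^2+t_1^2+t_2^2=2(t_0t_1+t_1t_2+t_0t_2)$ is exactly the locus where one of $\sqrt{t_0},\sqrt{t_1},\sqrt{t_2}$ equals the sum of the other two. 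Substituting $(\sqrt{t_0},\sqrt{t_1},\sqrt{t_2})=(|q-1|,|q|,1)$, this becomes the condition that one of
$$ |q|=|q-1|+1,\qquad |q-1|=|q|+1,\qquad 1=|q|+|q-1| $$
holds, i.e.\ (equality case of the triangle inequality in $\H\cong\R^4$) that one of the three points $0,1,q$ lies on the segment joining the other two; equivalently, $0,1,q$ are collinear. Since $0$ and $1$ span the real line $\R\subset\H$, this happens precisely when $q\in\R$, that is, when the quaternionic cross-ratio $[q]$ is real. Note that the conic is symmetric in $t_0,t_1,t_2$, so this last part is insensitive to the precise ordering of the three invariants in the first part.
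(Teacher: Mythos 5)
Your overall strategy is the same as the paper's: reduce by the $\GL_2(\H)$-invariance of $\pi$ to the normalized quadruple of Definition \ref{definition:quaternionicCross}, evaluate the three invariants there (directly on explicit planes, or via the chart of Proposition \ref{Prop:4PlanesAffineChart} and formula \eqref{eq:ExplicitExpressionQuotient4PlanesHyperbolic}), and deduce the reality criterion from $|q-1|^2=|q|^2-2\Re(q)+1$. Your triangle-inequality reading of the conic is a nice variant of the paper's one-line remark ($q$ real iff $|q|=|\Re(q)|$), and your observation that the conic is symmetric in $t_0,t_1,t_2$ is exactly why the second assertion survives the bookkeeping issue discussed next.

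The gap is in the step you yourself flag as delicate, and it is not just a matter of care: you never say which of the values $1$, $|q|^2$, $|q-1|^2$ is which invariant, and the assembly you assert is not what the computation gives. With the paper's normalization $(x_1,x_2,x_3,x_4)=([0:1],[1:1],[1:0],[q:1])$ and the dictionary you correctly set up, the determinants give $s_{1234}=1$, $s_{1324}=|q-1|^2$, $s_{1423}=|q|^2$, hence $\pi(x_1,\dots,x_4)=[1:|q-1|^2:|q|^2]$, which differs from the asserted $[|q-1|^2:|q|^2:1]$ by a nontrivial permutation of homogeneous coordinates. A quick sanity check exposes this: $s_{1234}$ vanishes only when $x_1=x_2$ or $x_3=x_4$, which never happens for this quadruple, so the first coordinate cannot be $|q-1|^2$, which vanishes at $q=1$, i.e.\ when $x_2=x_4$. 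The same issue affects your chart variant: transporting $([0:1],[1:1],[1:0],[q:1])$ into the form $([1:0],[0:1],[1:q_2],[q_1:1])$ produces, for instance, $q_2=-1$ and $q_1=(1-q)q^{-1}$, so $q_1q_2=(q-1)q^{-1}$ rather than $q$; the conjugacy ambiguity is indeed harmless, but the nontrivial M\"obius change of variable is not, and \eqref{eq:ExplicitExpressionQuotient4PlanesHyperbolic} then returns $[1:|q-1|^2:|q|^2]$ again. The displayed formula of the proposition is the one obtained with the other standard normalization $g\cdot x_1=[1:0]$, $g\cdot x_2=[0:1]$, $g\cdot x_3=[1:1]$ (then $q_2=1$, $q_1=q$ and the chart formula gives $[|q-1|^2:|q|^2:1]$ at once); the paper's own one-line reduction glosses over this same mismatch with Definition \ref{definition:quaternionicCross}. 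So a complete proof must either carry out the labelling explicitly and state the result as $[1:|q-1|^2:|q|^2]$ for the paper's cross-ratio convention, or switch to the $(\infty,0,1)$ normalization and say so; as written, the asserted determinant ``assembly'' is the one point where your argument fails, even though, as you note, the second assertion is unaffected because the conic is symmetric.
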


This shows in particular that the notion of quaternionic cross-ratio can be extended as a function taking values in $\P^2(\R)$ and defined on the set of quadruples of points in $\P^1(\H)$ whose entries are not repeated more than twice.

\begin{proof} Since the quaternionic cross-ratio is invariant under the action of $\GL_2(\H)$ it suffices to verify the statement for the quadruple
\begin{align*}
 x_1 &= [0:1], & x_2 &= [1:1], & x_3 &= [1:0], & x_4 &= [q:1].
 \end{align*}
In this case the formula follows from the expression given in \eqref{eq:ExplicitExpressionQuotient4PlanesHyperbolic}.

For the second part remark that $q$ is real if and only if $|q| = |\Re(q)|$. Then the statement follows from the equality $|q - 1|^2 = |q|^2 - 2 \Re(q) + 1$.
\end{proof}

The injectivity of $\theta \colon X^\ss(\R) / \SL_2(\H) \to \P^2(\R)$ and the previous expression for $\pi$ prove:

\begin{corollary}[{\cite[Theorem 8]{GwynneLibine}}] Let $(x_1, \dots, x_4)$, $(y_1, \dots, y_4)$ be quadruplets of pairwise distinct points in $\P^1(\H)$. 

Then there exists $g \in \SL_2(\H)$ such that $g \cdot x_\alpha = y_\alpha$ for $\alpha = 1, 2, 3, 4$ if and only if their quaternionic cross-ratio coincide.
\end{corollary}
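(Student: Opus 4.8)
The plan is to deduce the statement from the injectivity of $\theta$ (Proposition~\ref{Prop:Quotient4planesSL(2,H)}) together with the formula $\pi(x_1,\dots,x_4)=[\,|q-1|^2:|q|^2:1\,]$ of the preceding proposition, which expresses the quotient map through the quaternionic cross-ratio $[q]$. Writing $x=(x_1,\dots,x_4)$ and $y=(y_1,\dots,y_4)$, both implications will be reduced to the chain of equivalences: there is $g\in\SL_2(\H)$ with $g\cdot x=y$ (componentwise, the action being diagonal) $\iff$ $\SL_2(\H)\cdot x=\SL_2(\H)\cdot y$ $\iff$ $\pi(x)=\pi(y)$, the last step coming from injectivity of $\theta$. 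The formula above then converts $\pi(x)=\pi(y)$ into $|q|=|q'|$ and $\Re q=\Re q'$, i.e.\ into $[q]=[q']$, since a quaternion is determined up to conjugacy by its modulus and real part.

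First I would verify that a quadruple of pairwise distinct points of $\P^1(\H)$ is semi-stable (no point being repeated more than twice, by the criterion recalled above) and, more importantly, that its $\SL_2(\H)$-orbit is closed. By Theorem~\ref{thm:LunaRealGIT}(2) closedness can be tested over $\C$, and through the affine chart of Proposition~\ref{Prop:4PlanesAffineChart} it comes down to the observation that the $2\times2$ complex matrices $m(q_1),m(q_2)$ attached to the two quaternionic parameters of such a configuration are normal, hence semi-simple, and invertible (the points being distinct), so of equal rank; Proposition~\ref{Prop:4PlanesAffineChart}(2) then gives closedness. Consequently $\SL_2(\H)\cdot x$ and $\SL_2(\H)\cdot y$ coincide with their classes $p(x),p(y)$ in the separated quotient $X^\ss(\R)/\SL_2(\H)$, so that $\theta(p(x))=\pi(x)$ and $\theta(p(y))=\pi(y)$. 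If some $g\in\SL_2(\H)$ realizes $g\cdot x_\alpha=y_\alpha$ for all $\alpha$, then $g\cdot x=y$, hence $\pi(x)=\pi(y)$, hence $[\,|q-1|^2:|q|^2:1\,]=[\,|q'-1|^2:|q'|^2:1\,]$, and comparing entries yields $[q]=[q']$. Conversely, $[q]=[q']$ forces $\pi(x)=\pi(y)$ directly from the formula, whence $\theta(p(x))=\theta(p(y))$; injectivity of $\theta$ gives $p(x)=p(y)$, which by closedness is exactly $\SL_2(\H)\cdot x=\SL_2(\H)\cdot y$, furnishing the required $g$.

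The step demanding genuine care is the closedness just discussed: without it, the injectivity of $\theta$ — a statement about closed orbits only — would merely match the unique closed orbits inside $\overline{\SL_2(\H)\cdot x}$ and $\overline{\SL_2(\H)\cdot y}$, not the orbits themselves. If one prefers to bypass the GIT machinery, an elementary argument is available: since $\GL_2(\H)$ acts $3$-transitively on $\P^1(\H)$ and every element of $\GL_2(\H)$ can be rescaled by a positive real to have complex determinant $1$ without changing its action on $\P^1(\H)$, one normalizes $(x_1,x_2,x_3)$ to $([0:1],[1:1],[1:0])$ by an element of $\SL_2(\H)$, so that $x_4=[q:1]$ with $q\in\H^\times$; the stabilizer of the three standard points in $\GL_2(\H)$ is $\{h\cdot\id:h\in\H^\times\}$, which acts on $[q:1]$ by $q\mapsto hqh^{-1}$, and reading off this action gives both implications at once.
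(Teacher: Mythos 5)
Your proposal is correct and takes essentially the same route as the paper: the corollary is deduced from the injectivity of $\theta$ in Proposition \ref{Prop:Quotient4planesSL(2,H)} together with the expression $\pi(x_1,\dots,x_4)=[\,|q-1|^2:|q|^2:1\,]$, since the pair $(|q|,\Re q)$ determines the conjugacy class $[q]$. Your explicit verification that the orbit of a quadruple of pairwise distinct points is closed (the matrices $m(q_1),m(q_2)$ are normal, hence semi-simple, and invertible, so Proposition \ref{Prop:4PlanesAffineChart}(2) applies) makes precise a step the paper leaves implicit, and the elementary $3$-transitivity argument you sketch as an alternative is also sound.
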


\section{Coordinates for generic configurations}\label{section:coordiantesgeneric}

Cross ratios and triple ratios for complete flags are classical invariants of flags and have been used by Fock and Goncharov (\cite{FG}) to describe higher Teichm\"uller spaces for representations of surface groups. They have also been used to study representations of fundamental groups of three manifolds in (see \cite{BFG}, \cite{GGZ}, \cite{DGG}).

In this section cross ratios for complete flags are related to the invariants introduced in the preceding sections of the associated partial flags. Explicit computations with these coordinates allow us to obtain  representations of fundamental groups of three manifolds with values in real forms of $\SL(n,\C)$ or $\PGL(n,\C)$. 

\subsection{Generic flags}
Let $n, r \ge 1$ be positive integers, $\Fl(V)$  the variety of complete flags of $V =\C^n$ and $X = \Fl(V)^r$. 

Let $ \O(1, \dots, 1)$ be the line bundle $\pr_1^\ast \O(1) \otimes \cdots \otimes \pr_{n-1}^\ast \O(1)$ on the variety 
$$\prod_{i = 1}^{n-1}\P(\textstyle \bigwedge^i V),$$
where $\pr_i$ is the projection onto the $i$-th factor. The semi-simple group $G = \SL(V)$ acts on $X$. Consider the linearization of this action given by the embedding
$$ \iota \colon X \too \left(\prod_{i = 1}^{n-1}\P(\textstyle \bigwedge^i V) \right)^r.$$
This linearization corresponds to the $G$-linearized line bundle
$$ L = \iota^\ast (\pr_1^\ast \O(1, \dots, 1) \otimes \cdots \otimes \pr_r^\ast \O(1, \dots, 1)),$$
where $\pr_i \colon X \to F$ denotes the projection onto the $i$-th factor.

\begin{definition} Let $r \ge 1$ be a positive integer and for $i = 1, \dots, r$ let 
$$F^{(i)} : F^{(i)}_0 = 0 \subset F^{(i)}_1 \subset F^{(i)}_2 \subset \cdots \subset F^{(i)}_{n - 1} \subset F^{(i)}_n = V,$$
be a complete flag of $V$. The configuration $(F^{(1)}, \dots, F^{(r)})$ is said to be \emph{generic} if, for all $\alpha_1, \dots, \alpha_r = 0, \dots, n$ such that $\alpha_1 + \cdots + \alpha_r = n$,
$$ \dim_\C ( F^{(1)}_{\alpha_1} + \cdots + F^{(r)}_{\alpha_r}) = n.$$
The subset of $X$ made of generic $r$-tuples is denoted $X^{\gen}$.
\end{definition}

\subsection{Generic invariants} Let $A_{n, r}$ be the set of $r$-tuples $\alpha = (\alpha_1, \dots, \alpha_r)$ made of integers $\alpha_i = 0, \dots, n$ such that $\alpha_1 + \cdots + \alpha_r = n$. Consider the product of Grassmannians
$$ X_\alpha = \Gr_{\alpha_1}(V) \times \cdots \times \Gr_{\alpha_r}(V),$$
and the Pl\"ucker embedding $ \iota_\alpha \colon X_\alpha \too \P(\textstyle \bigwedge^{\alpha_1} V) \times \cdots \times \P(\bigwedge^{\alpha_r} V)$. Denote by $L_\alpha$ the line bundle
$$ \iota_\alpha^\ast( \pr_1^\ast \O(1) \otimes \cdots \otimes \pr_r^\ast \O(1))$$
on $X_\alpha$. The semi-simple group $G = \SL(V)$ acts naturally on $X_\alpha$, the Pl\"ucker embedding $\iota_\alpha$  is $G$-equivariant and $L_\alpha$ is an ample $G$-linearized line bundle. The wedge product induces a linear map
\begin{eqnarray*}
s_\alpha \colon \textstyle \bigwedge^{\alpha_1} V \otimes \cdots \otimes \bigwedge^{\alpha_r} V &\too & \textstyle \bigwedge^n V \\
t_{\alpha_1} \otimes \cdots \otimes t_{\alpha_r} &\longmapsto & t_{\alpha_1} \wedge \cdots \wedge t_{\alpha_r} .
\end{eqnarray*}
Identify $\bigwedge^n V$ with $\C$ by fixing $e_1 \wedge \cdots \wedge e_n$ as basis. Then $s_\alpha$ can be seen as $G$-invariant linear form on $\textstyle \bigwedge^{\alpha_1} V \otimes \cdots \otimes \bigwedge^{\alpha_r} V$, thus a $G$-invariant global section of the line bundle $L_\alpha$. For $\alpha \in A_{n, r}$ let $\pr_\alpha \colon X \to X_\alpha$ be the projection
$$ \pr(F) = (F^{(1)}_{\alpha_i}, \dots, F^{(r)}_{\alpha_r}).$$

Let $X^\ss$ be the open subset of semi-stable points of $X$ with respect the action of $G$ and the linearization $L$ and $\pi \colon X^\ss \to Y$ be the GIT quotient of $X^\ss$ by $G$.

\begin{proposition} A generic $r$-tuple of complete flags $F$ is a semi-stable point of $X$ with respect the polarization $L$.
\end{proposition}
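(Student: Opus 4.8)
The plan is to verify semi-stability of the generic configuration $F=(F^{(1)},\dots,F^{(r)})$ directly from the definition, by exhibiting a $G$-invariant section of a positive power of $L$ that does not vanish at $F$. The natural building blocks are the invariants $s_\alpha$, $\alpha\in A_{n,r}$, introduced above, and the section will be a product of several of them. The one thing to be careful about is that the pullback $\pr_\alpha^\ast s_\alpha$ is a section not of $L$ but of a smaller bundle: writing the Picard group of $X$ with its basis given by the pullbacks of $\O(1)$ along the Plücker projections, indexed by pairs $(i,j)$ with $1\le i\le r$ and $1\le j\le n-1$, the bundle $L$ is the one with all multidegrees equal to $1$, whereas $\pr_\alpha^\ast L_\alpha$ is the one which is $\O(1)$ on the factors $\{(i,\alpha_i):0<\alpha_i<n\}$ and trivial on the others (the factors with $\alpha_i\in\{0,n\}$ contribute nothing, $\bigwedge^0 V$ and $\bigwedge^n V$ being lines). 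So I must choose a multiset of indices $\alpha$ whose associated bundles multiply to an honest power of $L$.

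Assume $n,r\ge 2$ (for smaller values the statement is empty or void) and take $M:=\{\alpha\in A_{n,r}:\alpha\text{ has exactly two nonzero entries}\}$. For each flag index $i$ and each step $j\in\{1,\dots,n-1\}$, an $\alpha\in M$ with $\alpha_i=j$ is completely determined by the choice of the position $i'\neq i$ of its second nonzero entry, whose value is forced to be $n-j\ge 1$; hence there are exactly $r-1$ of them, independently of $i$ and $j$. Therefore $\bigotimes_{\alpha\in M}\pr_\alpha^\ast L_\alpha=L^{\otimes(r-1)}$ and $s_M:=\prod_{\alpha\in M}\pr_\alpha^\ast s_\alpha$ is a $G$-invariant section of $L^{\otimes(r-1)}$, with $r-1\ge 1$. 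To finish I would evaluate $s_M$ at $F$. Given $\alpha\in M$ with its two nonzero entries at positions $p\neq q$ and values $a=\alpha_p$, $b=\alpha_q$, $a+b=n$, the point $\pr_\alpha(F)$ carries $F^{(p)}_a$ in slot $p$, $F^{(q)}_b$ in slot $q$, and the zero subspace in the remaining slots, so $s_\alpha$ evaluates there to $\big(\bigwedge^a F^{(p)}_a\big)\wedge\big(\bigwedge^b F^{(q)}_b\big)\in\bigwedge^n V=\C$. Specializing the genericity hypothesis to the tuple $\beta\in A_{n,r}$ with $\beta_p=a$, $\beta_q=b$ and all other entries zero yields $\dim_\C(F^{(p)}_a+F^{(q)}_b)=n$; since the dimensions add up to $n$ the sum is direct, so this wedge is nonzero. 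Thus $s_\alpha(\pr_\alpha(F))\neq 0$ for every $\alpha\in M$, whence $s_M(F)\neq 0$ and $F$ is semi-stable.

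I expect the only genuine obstacle to be the combinatorial matching in the second paragraph — arranging that the family $M$ of two-support tuples balances out so that $\bigotimes_{\alpha\in M}\pr_\alpha^\ast L_\alpha$ is precisely a power of $L$, rather than merely another ample polarization (for which the semi-stable locus would in general be different). Everything after that — the vanishing criterion for each $s_\alpha$ at $F$ and hence for $s_M$ — uses only the genericity conditions indexed by the two-support tuples and is routine.
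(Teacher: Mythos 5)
Your proof is correct, and it follows the same basic strategy as the paper's: exhibit an explicit $G$-invariant section, built as a product of the two-support invariants $s_\alpha$, whose non-vanishing at $F$ is exactly the genericity condition for the corresponding tuples $\alpha$ with two nonzero entries. The difference is purely in the combinatorial selection. The paper splits the flags into pairs $(i, r'+i)$ (so it writes the argument only for $r=2r'$ even, leaving odd $r$ to the reader) and, for each pair and each $\ell$, tensors the invariants for $(\ell,n-\ell)$ and $(n-\ell,\ell)$; summing over $\ell$ this covers every Pl\"ucker factor twice, so their section actually lives in $L^{\otimes 2}$ (the paper's statement that it is a section of $L$ is a harmless slip). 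You instead take \emph{all} two-support tuples at once, check that each factor $(i,j)$ is hit exactly $r-1$ times, and obtain a section of $L^{\otimes(r-1)}$; this buys a uniform treatment of all $r\ge 2$ with no parity case distinction, at the cost of the (easy) multiplicity count you rightly flag as the only point needing care -- getting a genuine power of $L$ rather than some other ample linearization, since semi-stability depends on the chosen polarization. Your evaluation step is also right: for $\alpha$ supported at $p,q$ with values $a+b=n$, the slots with $\alpha_i=0$ contribute trivially and $s_\alpha(\pr_\alpha(F))\neq 0$ amounts to $F^{(p)}_a\cap F^{(q)}_b=0$, i.e.\ the genericity condition $\dim(F^{(p)}_a+F^{(q)}_b)=n$. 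One small caveat, which applies equally to the paper's argument: the construction needs $r\ge 2$ (for $r=1$ there are no nonconstant invariants and the statement fails), so your standing assumption $r\ge 2$ is the right reading of the proposition rather than a loss of generality.
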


\begin{proof} We prove the statement when $r$ is even, leaving the odd case to the reader. Write $r = 2r'$ and, for $i = 1, \dots, r'$ and $\ell = 1, \dots, n-1$ let
$$
\alpha_{i \ell} = (0, \dots, \ell, 0, \dots, n-\ell, 0, \dots, 0),
$$
where the $\ell$ is the $i$-th position and $n- \ell$ in the $(r' + i)$-th position. Set also
$$ 
\beta_{i \ell} = (0, \dots, n- \ell, 0, \dots, \ell, 0, \dots, 0),
$$
with the same conventions. Consider the global section
$$ s_{i \ell} = \pr_{\alpha_{i \ell}}^\ast s_{\alpha_{i \ell}} \otimes \pr_{\beta_{i \ell}}^\ast s_{\beta_{i \ell}},$$
of the line bundle $\pr_{\alpha_{i \ell}}^\ast L_{\alpha_{i \ell}} \otimes \pr_{\beta_{i \ell}}^\ast L_{\beta_{i \ell}}$ on $X$. The global section $s_{i \ell}$ does not vanish at $F$ as
\begin{align*}
F^{(i)}_\ell \cap F^{(r' +i)}_{n - \ell} = 0, && F^{(i)}_{n -\ell} \cap F^{(r' +i)}_{\ell} = 0.
\end{align*}
Therefore
$$ s = \bigotimes_{\ell = 1}^{n-1} \bigotimes_{i = 1}^{r'} s_{i \ell},$$
is a $G$-invariant global section of $L$ which does not vanish at $F$. In particular, $F$ is semi-stable.
\end{proof}

Let $\Fl_{1, n-1}(V)$ be the variety of couples $(L, H)$ made of a line $L$ and a hyperplane $H$ containing it and $X_{1, n-1} = \Fl_{1, n-1}(V)^r$. Let $X_{1, n-1}^\ss$ be the open subset of semi-stable points of $X_{1, n-1}$ under the action of $G$ and with respect to the Pl\"ucker embedding (\emph{cf.} Section \ref{par:QuadruplesLineHyperplane}) and $\pi_{1, n-1} \colon X_{1, n-1}^\ss \to Y_{1, n-1}$ the GIT quotient of $X_{1, n-1}^\ss$ by $G$.

\subsection{Triplets of flags} Suppose $r = 3$. Let $F = (F^{(1)},F^{(2)}, F^{(3)}$) be a generic triple of complete flags. For $\alpha \in A_{n, 3}$ let $\Delta_{\alpha} := \pr_\alpha^\ast s_\alpha$.

\begin{definition} (see \cite{FG} pg. 133) For a triple $\alpha = (\alpha_1, \alpha_2, \alpha_3)$ of integers $\alpha_i = 1, \dots, n-1$ such that $\alpha_1 + \alpha_2 + \alpha_3 = n$ define
$$
 t_{\alpha}:=\frac{\Delta_{\alpha + (0, -1, 1)} \otimes \Delta_{\alpha +(1, 0, -1)} \otimes \Delta_{\alpha + (-1, 1, 0)}}{\Delta_{\alpha + (1, -1, 0)} \otimes  \Delta_{\alpha +(0, 1, -1)} \otimes \Delta_{\alpha + (-1, 0, 1)}}.
$$
Remark that both the numerator and the denominator of $t_\alpha$ are global sections of the line bundle $L_{\alpha}^{\otimes3}$,  so that $t_\alpha$ is a well-defined rational function $X \dashrightarrow \P^1$ called the \emph{triple ratio} relative to the index $\alpha$.
\end{definition}

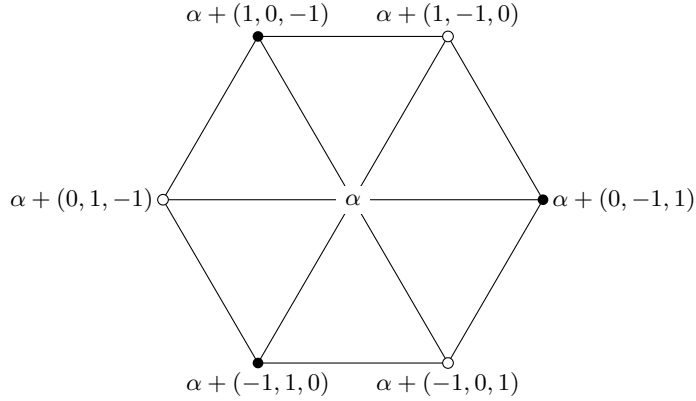
\begin{figure}[h]
\begin{tikzpicture}

\def\sqrtthree{1.7320508075688773};
\def\r{2.5};

\draw (\r, 0) -- (\r/2, \sqrtthree/2*\r) 
                   -- (-\r/2, \sqrtthree/2*\r) 
                   -- (-\r, 0) 
                   -- (-\r/2, -\sqrtthree/2*\r) 
                   -- (\r/2, -\sqrtthree/2*\r)
                   -- cycle;
\draw (-\r, 0) -- (\r, 0);
\draw (\r/2, \sqrtthree/2*\r) -- (-\r/2, - \sqrtthree/2*\r);
\draw (-\r/2, \sqrtthree/2*\r) -- (\r/2, -\sqrtthree/2*\r);

\fill (\r, 0) circle (2pt);

\fill[white] (\r/2, \sqrtthree/2*\r) circle (2pt);
\draw (\r/2, \sqrtthree/2*\r) circle (2pt);

\fill (-\r/2, \sqrtthree/2*\r) circle (2pt);

\fill[white] (-\r, 0) circle (2pt);
\draw (-\r, 0) circle (2pt);

\fill (-\r/2, -\sqrtthree/2*\r) circle (2pt);

\fill[white] (\r/2, -\sqrtthree/2*\r) circle (2pt);
\draw (\r/2, -\sqrtthree/2*\r) circle (2pt);

\node[fill=white] at (0,0) {\small $\alpha$};
\node[anchor=west] at (\r, 0) {\small $\alpha + (0, -1, 1)$};
\node[anchor=south] at (\r/2, \sqrtthree/2*\r) {\small $\alpha + (1, -1, 0)$};
\node[anchor=south] at (-\r/2, \sqrtthree/2*\r) {\small $\alpha + (1, 0, -1)$};
\node[anchor=east] at (-\r, 0) {\small $\alpha + (0, 1, -1)$};
\node[anchor=north] at (-\r/2, -\sqrtthree/2*\r) {\small $\alpha + (-1, 1, 0)$};
\node[anchor=north] at (\r/2, -\sqrtthree/2*\r) {\small $\alpha + (-1, 0, 1)$};

\end{tikzpicture}
\caption{The triple ratio $T_\alpha$.} \label{tripleratio}
\end{figure}

For a triple $\alpha = (\alpha_1, \alpha_2, \alpha_3)$ of integers $\alpha_i = 1, \dots, n-1$ such that $\alpha_1 + \alpha_2 + \alpha_3 = n$ the map $t_\alpha \colon X \dashrightarrow \P^1$ is $G$-invariant, thus factors in a unique way through a rational map on the quotient $t_\alpha \colon Y \dashrightarrow \P^1$. Since there are $\frac{(n-1)(n-2)}{2}$ triple ratios, they induce a rational map
\begin{eqnarray*}
t \colon Y & \dashrightarrow & (\P^1)^{(n -1)(n-2)/2} \\
\left[F \right] & \longmapsto & (t_\alpha(F))_{\alpha}.
\end{eqnarray*}

One can state the fact that the triple ratio parametrizes configurations of three complete flags (see \cite{FG}) as follows:

\begin{proposition}With the notations introduced above, 
\begin{enumerate}
\item $\pi(X^{\gen})$ is an open subset of $Y$;
\item the rational map $t$ is regular on $\pi(X^{\gen})$ and induces an isomorphism 
$$ t \colon \pi(X^{\gen}) \stackrel{\sim}{\too} (\P^1 \smallsetminus \{ 0, \infty\})^{(n-1)(n-2)/2}.$$
\end{enumerate}
\end{proposition}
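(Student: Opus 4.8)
The plan is to reduce the statement to a concrete linear-algebra normalization. First I would address (1): genericity is a Zariski-open condition on $X$, and since $\pi$ is a quotient map whose restriction to the $G$-saturated open set $X^{\gen}$ is again a quotient, $\pi(X^{\gen})$ is open in $Y$; one should check that $X^{\gen}$ is $G$-saturated by verifying that the unique closed orbit in the closure of the orbit of a generic configuration is generic—but in fact for $r = 3$ one expects generic configurations to have closed orbits (trivial stabilizer up to the center), so $X^{\gen}$ is a union of closed orbits and saturation is immediate. This yields (1).

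For (2), the key step is the classical normal form for a generic triple of complete flags. Given $(F^{(1)}, F^{(2)}, F^{(3)})$ generic, one chooses a basis of $V = \C^n$ adapted simultaneously to $F^{(1)}$ and $F^{(3)}$ (so $F^{(1)}_i = \langle e_1, \dots, e_i\rangle$ and $F^{(3)}_i = \langle e_n, e_{n-1}, \dots, e_{n-i+1}\rangle$, which is possible because these two flags are in general position by genericity), and then the third flag $F^{(2)}$ is encoded by a full-rank configuration whose minors are exactly the $\Delta_\alpha$'s. The standard computation (as in \cite{FG}) shows that the triple ratios $t_\alpha$ are precisely the coordinates that remain after quotienting by the residual torus action (rescaling the $e_i$'s) and by the freedom in choosing $F^{(2)}$ within its $\PGL$-class; in particular $t_\alpha \ne 0, \infty$ on $X^{\gen}$ exactly because all the relevant $\Delta_\alpha$ are nonzero there, and conversely every point of $(\P^1 \smallsetminus \{0,\infty\})^{(n-1)(n-2)/2}$ is realized. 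So the map $t \colon \pi(X^{\gen}) \to (\P^1 \smallsetminus \{0,\infty\})^{(n-1)(n-2)/2}$ is well-defined and surjective.

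The remaining point is injectivity, i.e. that the triple ratios determine the configuration up to $\SL_n(\C)$. Here I would argue by induction on $n$, or alternatively by an explicit reconstruction: after the normalization above, write down the vectors spanning $F^{(2)}$ in terms of the triple ratios and check that two configurations with the same triple ratios differ by a diagonal change of basis composed with the stabilizer data, hence lie in the same $G$-orbit. One checks that the number of triple ratios, $\tfrac{(n-1)(n-2)}{2}$, matches the dimension count $\dim Y = \dim X - \dim G + \dim G_x = 3\cdot(n-1)(n-2)/2 \cdot \text{(something)}$—more precisely one verifies $\dim \pi(X^{\gen}) = (n-1)(n-2)/2$ directly—so that a dominant injective morphism between smooth varieties of the same dimension, both isomorphic to tori, must be an isomorphism. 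The inverse is regular because the reconstruction is given by rational functions (ratios of the $\Delta_\alpha$) that are regular wherever the triple ratios are finite and nonzero.

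The main obstacle I anticipate is injectivity: carefully setting up the normal form for $F^{(2)}$ and checking that the triple ratios, and nothing less, suffice to pin down the $\PGL_n$-class of the third flag. This is the content of the Fock–Goncharov parametrization and the cleanest route is probably to cite \cite[\S 9]{FG} for the bijection on the level of $\PGL_n$-orbits and then account for the discrepancy between $\SL_n$ and $\PGL_n$ separately (which is harmless here since the triple ratios are already $\PGL_n$-invariant and generic configurations have connected—indeed trivial modulo center—stabilizer, so $\SL_n$-orbits and $\PGL_n$-orbits coincide on $X^{\gen}$).
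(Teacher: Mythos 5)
Your step (1) contains a genuine gap: you deduce openness of $\pi(X^{\gen})$ from the claim that for $r=3$ generic configurations have closed orbits, so that $X^{\gen}$ is a union of closed orbits and hence $G$-saturated. This claim is false, and it is precisely one of the points the paper insists on (see Example \ref{ex:GenericVsGeneralPosition} and Proposition \ref{Prop:QuotientLineHyperplaneSingularFiber}). Already for $n=3$, take $L_i=\langle e_i\rangle$ and $H_1=\{x_2=x_3\}$, $H_2=\{x_1=x_3\}$, $H_3=\{x_1=x_2\}$: this triple is generic in the sense of the Definition (the lines span and $L_i\cap H_j=0$ for $i\neq j$), its stabilizer is the center of $\SL_3(\C)$, yet its triple ratio is $-1$ and by Proposition \ref{Prop:QuotientLineHyperplaneSingularFiber} the unique closed orbit in $\pi^{-1}(-1)$ consists of the maximally degenerate configurations, whose lines span only a plane and which are therefore \emph{not} generic. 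So the closure of this generic orbit leaves $X^{\gen}$: the set $X^{\gen}$ is not $G$-saturated, openness of its image cannot be obtained from the ``saturated open sets have open image'' property, and the inference ``trivial stabilizer modulo the center, hence closed orbit'' is invalid (this example has trivial stabilizer modulo the center and a non-closed orbit). The conclusion (1) is true, but it needs a different argument.

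For (2) you ultimately defer to the Fock--Goncharov parametrization, which is in effect what the paper does: the proposition is stated as a reformulation of \cite{FG} and no proof is given there, so that part of your plan is aligned with the paper. However, the failure of saturation also undermines the way you use it: you implicitly identify points of $\pi(X^{\gen})$ with generic $G$-orbits, whereas a point of $\pi(X^{\gen})$ corresponds to the unique closed orbit in its fibre, which (as above, over $-1$ for $n=3$) need not be generic. Consequently, injectivity of $t$ on $\pi(X^{\gen})$ and regularity there of the rational functions $t_\alpha$ (ratios of invariant sections whose numerator and denominator could a priori both vanish on such a non-generic closed orbit) do not follow formally from the classification of generic orbits by their triple ratios; one must additionally control the non-generic closed orbits lying below generic points, or realize $\pi(X^{\gen})$ directly as a non-vanishing locus of invariants, as is done for the partial flags in Section \ref{section:Sequences}.
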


The GIT quotient of $X_{1, n-1}^\ss$ by $G$ is $\P^1$ and the projection on the quotient  $\pi_{1, n-1}$ is given, for a semi-stable triplet $x = \{ (L_i, H_i)\}_{i = 1, 2, 3}$, by
$$ \pi_{1, n-1}(x) = [\phi_1(v_2)\phi_2(v_3)\phi_3(v_1) : \phi_1(v_3)\phi_3(v_2)\phi_2(v_1)].$$

The natural projection $p_{1, n-1}\colon X \to X_{1, n-1}$ is $G$-equivariant, therefore it induces a rational map between quotients $\tilde{p}_{1, n-1} \colon Y \dashrightarrow \P^1$.

\begin{proposition} With these notations, the rational map $\tilde{p}_{1, n-1} \colon  Y \dashrightarrow \P^1$  is the product of all triple ratios.
\end{proposition}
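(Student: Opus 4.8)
The plan is to verify the asserted equality of rational maps $Y \dashrightarrow \P^1$ on the dense open subset $\pi(X^{\gen}) \subseteq Y$ of classes of generic triples of complete flags; this is enough since both $\tilde p_{1,n-1}$ and the product of the triple ratios are rational maps to $\P^1$, and $X^{\gen}$ is a nonempty Zariski-open (hence dense) $G$-stable subset of $X^{\ss}$. So fix a generic triple $F = (F^{(1)}, F^{(2)}, F^{(3)})$; for $i = 1,2,3$ put $L_i = F^{(i)}_1$ and $H_i = F^{(i)}_{n-1}$, choose a generator $v_i$ of $L_i$ and a linear form $\phi_i \in V^\vee$ with $H_i = \ker \phi_i$. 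Genericity forces $L_i \not\subseteq H_j$ whenever $i \neq j$, so $p_{1,n-1}(F) = ((L_1,H_1),(L_2,H_2),(L_3,H_3))$ is a semi-stable triple of line–hyperplane flags and
\[ \tilde p_{1,n-1}([F]) = \pi_{1,n-1}\big(p_{1,n-1}(F)\big) = [\phi_1(v_2)\phi_2(v_3)\phi_3(v_1) : \phi_1(v_3)\phi_3(v_2)\phi_2(v_1)]. \]
It then remains to compute $\prod_\alpha t_\alpha(F)$, the product running over the interior indices $\alpha = (\alpha_1,\alpha_2,\alpha_3)$ with $\alpha_i \ge 1$ and $\alpha_1 + \alpha_2 + \alpha_3 = n$, and to match it with this point.

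The core of the argument is a combinatorial exponent count. Denote by $N = \{(0,-1,1),(1,0,-1),(-1,1,0)\}$ and $D = \{(1,-1,0),(0,1,-1),(-1,0,1)\}$ the numerator, resp.\ denominator, shift vectors occurring in $t_\alpha$, and observe $D = -N$. Expanding the product over all interior $\alpha$ and using that $\Delta_\gamma$ occurs in $t_\alpha$ exactly when $\alpha = \gamma - \epsilon$ is interior for some $\epsilon \in N \cup D$, one obtains
\[ \prod_\alpha t_\alpha = \prod_{\gamma \in A_{n,3}} \Delta_\gamma^{\,e(\gamma)}, \qquad e(\gamma) = \sum_{\epsilon \in N}\big(\chi(\gamma - \epsilon) - \chi(\gamma + \epsilon)\big), \]
where $\chi(\delta) = 1$ if all three coordinates of $\delta$ are $\ge 1$ and $\chi(\delta) = 0$ otherwise. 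The function $e$ is invariant under cyclic permutations of the coordinates and depends only on which of the $\gamma_i$ are $0$, $1$, or $\ge 2$; running through the short list of these patterns (using $n \ge 3$) one finds $e(\gamma) = 0$ unless exactly one coordinate of $\gamma$ equals $0$, one equals $1$, and the remaining one equals $n-1$, in which case $e(\gamma) = +1$ on the cyclic orbit of $(1,0,n-1)$ and $e(\gamma) = -1$ on that of $(0,1,n-1)$. Hence, as rational functions on $\pi(X^{\gen})$,
\[ \prod_\alpha t_\alpha = \frac{\Delta_{(n-1,1,0)}\,\Delta_{(0,n-1,1)}\,\Delta_{(1,0,n-1)}}{\Delta_{(n-1,0,1)}\,\Delta_{(1,n-1,0)}\,\Delta_{(0,1,n-1)}}. \]
I expect this exponent count — elementary but requiring the full case analysis, and a careful check that no $|e(\gamma)|$ exceeds $1$ — to be the main obstacle.

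It remains to evaluate the six boundary Plücker coordinates. If $H = \ker\phi$ is a hyperplane with ordered basis $w_1, \dots, w_{n-1}$, then $w_1 \wedge \cdots \wedge w_{n-1} \wedge u = \lambda_H\,\phi(u)$ for every $u \in V$, with $\lambda_H \in \C^\times$ depending only on the basis and on $\phi$ (split $u$ modulo $H$). Applying this with $H = H_1, H_2, H_3$ and discarding the $\bigwedge^0$-factors, each of the six determinants above becomes $\lambda_{H_i}\,\phi_i(v_j)$ up to a sign $(-1)^{n-1}$, which is present exactly when the line-vector sits in front of the $(n-1)$-form inside the wedge defining $s_\gamma$; this sign bookkeeping is the delicate point. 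Substituting into the displayed identity, the scalars $\lambda_{H_i}$ cancel and the reordering signs collect into a single global factor, yielding
\[ \prod_\alpha t_\alpha(F) = \frac{\phi_1(v_2)\phi_2(v_3)\phi_3(v_1)}{\phi_1(v_3)\phi_3(v_2)\phi_2(v_1)} = \tilde p_{1,n-1}([F]), \]
the global sign being trivial for odd $n$ and, for even $n$, absorbed by the chosen identification $Y_{1,n-1} \simeq \P^1$ normalising $\pi_{1,n-1}$ as above. Since $F$ was an arbitrary generic triple, this proves $\tilde p_{1,n-1} = \prod_\alpha t_\alpha$.
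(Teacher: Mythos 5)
Your proof follows the same two-step route as the paper's: first show that the product of all triple ratios telescopes to
\[
\frac{\Delta_{(n-1,1,0)}\,\Delta_{(0,n-1,1)}\,\Delta_{(1,0,n-1)}}{\Delta_{(n-1,0,1)}\,\Delta_{(1,n-1,0)}\,\Delta_{(0,1,n-1)}},
\]
then identify the six surviving Pl\"ucker coordinates with the quantities $\phi_i(v_j)$ entering $\pi_{1,n-1}$. Your exponent count $e(\gamma)$ is a more explicit version of the paper's observation that each $\Delta_\gamma$ occurs equally often in numerator and denominator except for the six indices nearest the vertices of the triangle (your list of surviving indices and signs is correct), and working on the generic locus is legitimate since one compares rational maps. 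So structurally the argument coincides with the paper's.

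The one step that does not hold up as written is your disposal of the global sign. Having fixed at the outset the identification of the target with $\P^1$ by $[s_{123}:s_{132}]$, i.e. $\tilde p_{1,n-1}([F]) = [\phi_1(v_2)\phi_2(v_3)\phi_3(v_1):\phi_1(v_3)\phi_3(v_2)\phi_2(v_1)]$, you cannot later declare the leftover factor ``absorbed by the chosen identification normalising $\pi_{1,n-1}$ as above'': that is circular, since that identification is already pinned down and was used in your first display. And the sign is genuinely there: among the numerator terms only $\Delta_{(1,0,n-1)}$ has the line vector in front of the $(n-1)$-vector, while in the denominator both $\Delta_{(1,n-1,0)}$ and $\Delta_{(0,1,n-1)}$ do, so with the paper's ordering convention for $s_\gamma$ the displayed ratio equals $(-1)^{n-1}\,\phi_1(v_2)\phi_2(v_3)\phi_3(v_1)\big/\bigl(\phi_1(v_3)\phi_3(v_2)\phi_2(v_1)\bigr)$; a direct check for $n=4$ with explicit flags confirms the discrepancy. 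Thus for odd $n$ your conclusion is complete, while for even $n$ the final displayed equality needs the sign tracked (or the statement read up to this sign). To be fair, the paper's own proof elides exactly the same point by asserting $\Delta_{(n-1,1,0)}=\phi_1(v_2)$ ``and the analogous formulas'', so apart from this shared looseness your argument is the paper's.
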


\begin{proof} It is easy to see that  the product of all triple ratios simplifies to
$$
\frac{\Delta_{(n-1,1,0)} \otimes \Delta_{(0,n-1,1)} \otimes \Delta_{(1,0,n-1)}}{\Delta_{(n-1,0,1)} \otimes \Delta_{(1,n-1,0)} \otimes \Delta_{(0,1,n-1)}}.
$$
Indeed each factor $\Delta_{(i,j,k)}$ appears in the numerator and the denominator the same number of times except for 
the terms above 
which correspond to the nearest points at the three vertices of the triangle as in Figure.

\begin{figure}[h]
\begin{tikzpicture}
\def\sqrtthree{1.73205080757};
\def\r{3};

\draw[dashed] (0,\r) -- (-\sqrtthree/2*\r, -\r/2);
\draw[dashed] (-\sqrtthree/2*\r, -\r/2) -- (\sqrtthree/2*\r, -\r/2);
\draw[dashed]  (\sqrtthree/2*\r, -\r/2) -- (0, \r);

\draw[fill=magenta, fill opacity=0.2]
(\sqrtthree/2*\r - 3*\sqrtthree/8*\r, -\r/2 +3/8*\r)--
(0, -\r/2 +6/8*\r) --
(-\sqrtthree/2*\r + 2*\sqrtthree/8*\r, -\r/2 +6/8*\r) --
(-\sqrtthree/2*\r + \sqrtthree/8*\r, -\r/2 +3/8*\r)--
(-\sqrtthree/2*\r + \sqrtthree/4*\r, -\r/2) --
(-\sqrtthree/2*\r + 2*\sqrtthree/4*\r, -\r/2);

\draw[fill=cyan, fill opacity=0.2]
(-\sqrtthree/2*\r + 2*\sqrtthree/4*\r, -\r/2) --
(-\sqrtthree/2*\r + 3*\sqrtthree/4*\r, -\r/2) --
(\sqrtthree/2*\r - \sqrtthree/8*\r, -\r/2 +3/8*\r) --
(\sqrtthree/2*\r - 2*\sqrtthree/8*\r, -\r/2 +6/8*\r) --
(0, -\r/2 +6/8*\r)--
(-\sqrtthree/2*\r + 3*\sqrtthree/8*\r, -\r/2 +3/8*\r);

\draw[fill=yellow, fill opacity=0.2]
(-\sqrtthree/2*\r + 3*\sqrtthree/8*\r, -\r/2 +9/8*\r) -- 
(\sqrtthree/2*\r - 3*\sqrtthree/8*\r, -\r/2 +9/8*\r) --
(\sqrtthree/2*\r - 2*\sqrtthree/8*\r, -\r/2 +6/8*\r) --
(\sqrtthree/2*\r - 3*\sqrtthree/8*\r, -\r/2 +3/8*\r) --
(-\sqrtthree/2*\r + 3*\sqrtthree/8*\r, -\r/2 +3/8*\r) --
(-\sqrtthree/2*\r + 2*\sqrtthree/8*\r, -\r/2 +6/8*\r) -- 
(-\sqrtthree/2*\r + 3*\sqrtthree/8*\r, -\r/2 +9/8*\r);

\draw (-\sqrtthree/2*\r + \sqrtthree/4*\r, -\r/2) -- (\sqrtthree/2*\r - 3*\sqrtthree/8*\r, -\r/2 +9/8*\r);

\draw (\sqrtthree/2*\r - \sqrtthree/4*\r, -\r/2) -- (-\sqrtthree/2*\r + 3*\sqrtthree/8*\r, -\r/2 +9/8*\r);

\draw (-\sqrtthree/2*\r + 2*\sqrtthree/4*\r, -\r/2) -- (\sqrtthree/2*\r - 2*\sqrtthree/8*\r, -\r/2 +6/8*\r);

\draw (\sqrtthree/2*\r - 2*\sqrtthree/4*\r, -\r/2) -- (-\sqrtthree/2*\r + 2*\sqrtthree/8*\r, -\r/2 +6/8*\r);

\draw (-\sqrtthree/2*\r + 3*\sqrtthree/4*\r, -\r/2) -- (\sqrtthree/2*\r - \sqrtthree/8*\r, -\r/2 +3/8*\r);

\draw (\sqrtthree/2*\r - 3*\sqrtthree/4*\r, -\r/2) -- (-\sqrtthree/2*\r + \sqrtthree/8*\r, -\r/2 +3/8*\r);

\draw (-\sqrtthree/2*\r + 3*\sqrtthree/8*\r, -\r/2 +9/8*\r) -- (\sqrtthree/2*\r - 3*\sqrtthree/8*\r, -\r/2 +9/8*\r);

\draw (-\sqrtthree/2*\r + 2*\sqrtthree/8*\r, -\r/2 +6/8*\r) -- (\sqrtthree/2*\r - 2*\sqrtthree/8*\r, -\r/2 +6/8*\r);

\draw (-\sqrtthree/2*\r + \sqrtthree/8*\r, -\r/2 +3/8*\r) -- (\sqrtthree/2*\r - \sqrtthree/8*\r, -\r/2 +3/8*\r);

\draw[fill=black] (-\sqrtthree/2*\r + 3*\sqrtthree/8*\r, -\r/2 +3/8*\r) circle (2pt);
\draw[fill=black] (\sqrtthree/2*\r - 3*\sqrtthree/8*\r, -\r/2 +3/8*\r) circle (2pt);
\draw[fill=black] (0, -\r/2 +6/8*\r)  circle (2pt);

\end{tikzpicture}
\caption{}
\end{figure}

Observe than that $\Delta_{(n-1,1,0)} = \phi_1(v_2)$ where $\phi_1$ is a linear form defining the hyperplane $F^{(1)}_{n-1}$ and $v_2$ is a generator of the line $F^{(2)}_{1}$. The analogous formulas for the other terms yield the desired result.
\end{proof}

\begin{example}[{$n = 3$, \cite{FG,BFG}}] \label{ex:GenericVsGeneralPosition} In this case there is just one triple ratio (corresponding to the index $(1,1,1)$, see Figure \ref{pic:OneTripleRatio}) and the map $\tilde{p}$ induces an isomorphism $Y \iso \P^1$. 
\begin{figure}[h]
\begin{tikzpicture}
\def\sqrtthree{1.73205080757};
\def\r{3};

\draw[dashed] (0,\r) -- (-\sqrtthree/2*\r, -\r/2);
\draw[dashed] (-\sqrtthree/2*\r, -\r/2) -- (\sqrtthree/2*\r, -\r/2);
\draw[dashed]  (\sqrtthree/2*\r, -\r/2) -- (0, \r);

\draw[pattern=north west lines, pattern color=gray] 
(-\sqrtthree/2*\r + \sqrtthree/3*\r, -\r/2) --
(\sqrtthree/2*\r - \sqrtthree/3*\r, -\r/2) --
(\sqrtthree/3*\r, 0) --
(\sqrtthree/2*\r - \sqrtthree/3*\r, \r/2) --
(-\sqrtthree/2*\r + \sqrtthree/3*\r, \r/2) --
(-\sqrtthree/3*\r, 0) --
(-\sqrtthree/2*\r + \sqrtthree/3*\r, -\r/2);

\draw (-\sqrtthree/2*\r + \sqrtthree/3*\r, -\r/2) -- (\sqrtthree/2*\r - \sqrtthree/3*\r, \r/2);

\draw (-\sqrtthree/2*\r + 2*\sqrtthree/3*\r, -\r/2) -- (\sqrtthree/3*\r, 0);

\draw (\sqrtthree/2*\r - \sqrtthree/3*\r, -\r/2) -- (-\sqrtthree/2*\r + \sqrtthree/3*\r, \r/2);

\draw (\sqrtthree/2*\r - 2*\sqrtthree/3*\r, -\r/2) -- (-\sqrtthree/3*\r, 0);

\draw (-\sqrtthree/3*\r, 0) -- (\sqrtthree/3*\r, 0);

\draw (\sqrtthree/2*\r - \sqrtthree/3*\r, \r/2) -- (-\sqrtthree/2*\r + \sqrtthree/3*\r, \r/2);

\node[anchor=south] at (0,\r) {\footnotesize (3,0,0)};
\node[anchor=east] at (-\sqrtthree/2*\r, -\r/2) {\footnotesize (0,3,0)};
\node[anchor=west] at (\sqrtthree/2*\r, -\r/2) {\footnotesize (0,0,3)};
\node[anchor=north] at (-\sqrtthree/2*\r + \sqrtthree/3*\r, -\r/2) {\footnotesize (0,2,1)};
\node[anchor=west] at (\sqrtthree/2*\r - \sqrtthree/3*\r, \r/2) {\footnotesize (2,0,1)};
\node[anchor=north] at (-\sqrtthree/2*\r + 2*\sqrtthree/3*\r, -\r/2) {\footnotesize (0,1,2)};
\node[anchor=west] at (\sqrtthree/3*\r, 0) {\footnotesize (1,0,2)};
\node[anchor=east] at (-\sqrtthree/2*\r + \sqrtthree/3*\r, \r/2) {\footnotesize (2,1,0)};
\node[anchor=east] at (-\sqrtthree/3*\r, 0) {\footnotesize (1,2,0)};
\node[fill=white] at (0,0) {\footnotesize $(1,1,1)$};

\draw[fill=black] (\sqrtthree/3*\r, 0) circle (2pt);
\draw[fill=black] (-\sqrtthree/2*\r + \sqrtthree/3*\r, -\r/2) circle (2pt);
\draw[fill=black] (-\sqrtthree/2*\r + \sqrtthree/3*\r, \r/2) circle (2pt);
\draw[fill=white] (-\sqrtthree/3*\r, 0) circle (2pt);
\draw[fill=white] (\sqrtthree/2*\r - \sqrtthree/3*\r, \r/2) circle (2pt);
\draw[fill=white] (-\sqrtthree/2*\r + 2*\sqrtthree/3*\r, -\r/2) circle (2pt);
\end{tikzpicture}

\caption{} \label{pic:OneTripleRatio}
\end{figure}

However, for a complete flag being generic is a weaker condition than being in general position. More precisely, for $i = 1, 2, 3$, let  $F^{(i)} = (L_i, H_i)$ be a complete flag of $V = \C^3$. Then the triplet $F = (F^{(1)}, F^{(2)}, F^{(3)})$ is
\begin{itemize}
\item \emph{generic} if and only if $L_1 + L_2 + L_3 = V$ and $L_i \cap H_j = 0$ for all $i \neq j$;
\item in \emph{general position} if and only if it is generic and $H_1 \cap H_2 \cap H_3 = 0$.
\end{itemize}
For $i = 1, 2, 3$ let $v_i$ be a generator of $L_i$ and $\phi_i$ a linear form defining $H_i$. If $F$ is generic, then the triple ratio
$$ t_{(1, 1, 1)}(F) =\frac{\phi_1(v_2)\phi_2(v_3)\phi_3(v_1)}{\phi_1(v_3)\phi_3(v_2)\phi_2(v_1)},$$ 
is a non-zero complex number. If $\dim(L_1 + L_2 + L_3) = 2$ then $t_{(1, 1, 1)}(F) = -1$. For instance consider:
\begin{align*}
L_1 &= \langle e_1\rangle, & L_2 &= \langle e_2\rangle, & L_3& = \langle e_1 + e_2\rangle, \\
H_1 &= \{ x_2 + x_3 = 0\}, & H_2 &= \{ x_1 + x_3 = 0\}, & H_3 &= \{ x_1 - x_2 = 0 \}.
\end{align*}
Then $F$ is generic but not in general position. Moreover, the orbit of $F$ is not closed (\emph{cf.} Proposition \ref{Prop:QuotientLineHyperplaneSingularFiber}).

\end{example}

\subsection{Quadruples of flags} Suppose $r = 4$. For a quadruple $F = (F^{(1)}, \dots, F^{(4)})$ of complete flags of $V$ and $\alpha \in A_{n-2, 4}$ consider the following subspace of $V$:
$$
E_\alpha(F) = F^{(1)}_{\alpha_1}+ F^{(2)}_{\alpha_2} + F^{(3)}_{\alpha_3} + F^{(4)}_{\alpha_4}.
$$
If the quadruple $F$ is generic, then
$$ \dim_\C E_\alpha(F) = \sum_{i = 1}^4 \dim_\C F^{(i)}_{\alpha_i} = \alpha_1 + \cdots + \alpha_4 = n-2.$$
For $i = 1, \dots, 4$ the vector subspace of $V / E_\alpha(F)$,
$$ L_i = (F^{(i)}_{\alpha_i + 1} + E_\alpha(F)) / E_\alpha(F),$$
is  of dimension $1$. Let $\chi_\alpha(F) \in \P^1(\C)$ be the cross-ratio of the points 
$$ [L_1], \dots, [L_4] \in \P(V / E_\alpha(F)).$$
This defines a rational map $\chi_\alpha \colon X \dashrightarrow \P^1$. Since $\chi_\alpha$ is $G$-invariant it induces a rational map on the quotient $\chi_\alpha \colon Y \dashrightarrow \P^1$. Putting together all these rational maps, one obtains a rational map
$$ \chi \colon Y \dashrightarrow (\P^1)^{\# A_{n-2, 4}}.$$
In the literature the functions $\chi_\alpha$ are often denote by $z_\alpha$. Here we adopted the letter $\chi$ in order to avoid confusion in the case of complete flags in a space of dimension $3$ (Example \ref{ConfigurationsTripletsGeneric}). We refer to \cite{FG} and its references for the following: 

\begin{proposition} With the notations introduced above, 
\begin{enumerate}
\item $\pi(X^{\gen})$ is an open subset of $Y$;
\item the rational map $\chi$ is regular on $\pi(X^{\gen})$ and induces an embedding
$$ \chi \colon \pi(X^{\gen}) \too (\P^1 \smallsetminus \{ 0, \infty\})^{\# A_{n-2, 4}}.$$
\end{enumerate}
\end{proposition}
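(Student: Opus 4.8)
The statement to prove is the Proposition on quadruples of flags: that $\pi(X^{\gen})$ is open in $Y$, that the rational map $\chi = (\chi_\alpha)_{\alpha \in A_{n-2,4}}$ is regular there, and that it induces an embedding into $(\P^1 \smallsetminus \{0,\infty\})^{\# A_{n-2,4}}$. The whole argument parallels the triplet case and in fact reduces to it fibrewise.

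First I would establish openness of $\pi(X^{\gen})$. The set $X^{\gen} \subset X$ is Zariski open (it is cut out by the non-vanishing of finitely many determinants $s_\alpha$, $\alpha \in A_{n,4}$), and it is $G$-invariant. To apply statement (4) of the complex projective GIT theorem one needs $X^{\gen}$ to be $G$-saturated, i.e.\ every point of $X^{\gen}$ has closed $G$-orbit inside $X^{\ss}$; equivalently, the closed orbit in the closure of $G\cdot F$ already lies in $X^{\gen}$. This follows from the fact, established in the analysis of generic configurations, that a generic quadruple has trivial stabilizer modulo the center and its orbit is closed — more concretely, one checks directly that a degeneration of a generic configuration fails genericity, so $X^{\gen}$ meets no boundary orbit. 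With saturation in hand, $\pi(X^{\gen})$ is Zariski open in $Y$.

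Next, regularity of each $\chi_\alpha$ on $\pi(X^{\gen})$. The point is that for $F \in X^{\gen}$ the subspaces $E_\alpha(F)$ have the expected dimension $n-2$ and the four lines $L_i \subset V/E_\alpha(F)$ are pairwise distinct — the latter because $L_i = L_j$ would force $\dim_\C(F^{(i)}_{\alpha_i+1} + F^{(j)}_{\alpha_j+1} + E_\alpha(F)) = n-2$, contradicting genericity applied to the multi-index with $\alpha_i, \alpha_j$ incremented. Hence the cross-ratio $\chi_\alpha(F) \in \P^1(\C) \smallsetminus \{0, \infty\}$ (the values $0, \infty$ being exactly those where two of the four points collide), so $\chi_\alpha$ is a morphism $X^{\gen} \to \P^1 \smallsetminus \{0,\infty\}$; since it is $G$-invariant and the quotient morphism $\pi$ is a categorical quotient, it descends to a morphism on $\pi(X^{\gen})$. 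This shows $\chi$ maps $\pi(X^{\gen})$ into $(\P^1 \smallsetminus \{0,\infty\})^{\#A_{n-2,4}}$.

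Finally, the embedding statement: $\chi$ is injective and immersive on $\pi(X^{\gen})$. Injectivity is the heart and is the part I expect to be the main obstacle — it is the content of Fock–Goncharov's parametrization, and the clean way to argue is to produce an explicit inverse (a normal-form construction reconstructing a generic quadruple of flags, up to $\SL(V)$, from the collection of cross-ratios), which I would only sketch: fix two of the flags in standard opposite position, use a third to pin down a torus, and let the cross-ratios $\chi_\alpha$ determine the remaining flag coordinates recursively on the weight $\alpha$. That the differential is injective (so $\chi$ is an embedding, not merely a bijection onto a locally closed subset) follows by comparing dimensions: $\pi(X^{\gen})$ has dimension $\dim X - \dim G = 4\cdot\binom{n}{2} - (n^2-1)$, one checks this equals $\#A_{n-2,4} = \binom{n+1}{3}$, and an injective morphism between smooth varieties of the same dimension onto a smooth target is étale onto its image, hence an open immersion onto the image; combined with the explicit inverse this gives that $\chi$ is a closed embedding onto the locally closed image. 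I would cite \cite{FG} for the precise form of this argument and content myself with indicating the normal form, since the referenced literature carries it out in full.
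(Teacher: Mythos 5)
There are two genuine gaps, both in steps you treat as routine. Your route to openness of $\pi(X^{\gen})$ rests on the claim that the orbit of a generic quadruple is closed in $X^{\ss}$ because ``a degeneration of a generic configuration fails genericity''; this inference is backwards (non-genericity of boundary orbits would only give closedness inside $X^{\gen}$), and the claim itself is false. The paper stresses this failure already for triples (Example \ref{ex:GenericVsGeneralPosition}, Proposition \ref{Prop:QuotientLineHyperplaneSingularFiber}), and the same happens for quadruples: for $n=3$ take a generic quadruple whose four hyperplanes all contain $e_3$; the one-parameter subgroup $\mathrm{diag}(t^{-1},t^{-1},t^{2})$ fixes the four hyperplanes and pushes the four lines into the plane $x_3=0$, so the limit is a semi-stable (all incidences $L_i\not\subset H_j$ persist), non-generic point of $\overline{G\cdot x}$ outside the orbit. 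Hence $X^{\gen}$ is not $G$-saturated (a fibre of $\pi$ can contain both generic and non-generic points), statement (4) of the projective GIT theorem does not apply, and for the same reason the descent of the invariant maps $\chi_\alpha$ to $\pi(X^{\gen})$ is not a formal consequence of the categorical quotient property: two generic configurations with the same image in $Y$ need not lie in a common orbit closure inside $X^{\gen}$, so one must actually prove that they have equal cross-ratios (equivalently, that each fibre of $\pi$ meets $X^{\gen}$ in at most one orbit). That is precisely the nontrivial content which the paper does not reprove but attributes to \cite{FG}.

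Second, your dimension count is wrong: $\dim\pi(X^{\gen})=4\dim_\C\Fl(V)-\dim G=(n-1)^2$, whereas $\#A_{n-2,4}=\binom{n+1}{3}$; these agree only for $n\le 3$ (for $n=4$ one gets $9$ against $10$). So the argument ``injective between smooth varieties of the same dimension, hence \'etale, hence an open immersion onto the image'' collapses -- and that implication is false in any case, as $t\mapsto(t^2,t^3)$ shows. For $n\ge 4$ the image of $\chi$ is a proper locally closed subvariety of the torus, cut out by relations among the $\chi_\alpha$, which is exactly why the Proposition claims only an embedding rather than an isomorphism as in the triple case. What would salvage your plan is the part you defer to \cite{FG}: the explicit reconstruction of a generic quadruple from its cross-ratios, carried out carefully enough to give simultaneously the well-definedness on fibres of $\pi$, the openness of $\pi(X^{\gen})$, injectivity and immersivity; note that the paper itself offers no argument and simply cites \cite{FG}.
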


The quotient $Y'$ is embedded in $\P^8$ through derangements. Going back to the notations introduced in Section \ref{par:QuadruplesLineHyperplane}, set
\begin{align*}
w_{12}=\frac{x_5}{x_8},
&& w_{13}=\frac{x_9}{x_2},
&& w_{14}=\frac{x_3}{x_5},
&& w_{23}=\frac{x_1}{x_3},
&& w_{24}=\frac{x_2}{x_1},&&
 w_{34}=\frac{x_5}{x_6}.
\end{align*}
Let  $x' = \{ (L_i, H_i)\}_{i = 1, \dots, 4}$ be a quadruple of flags line-hyperplane such that $L_i$ is not contained in $H_j$ for $i \neq j$. Then, for all $i < j$,
$$ w_{ij}(F) = \frac{\phi_i(v_\alpha) \phi_j(v_\beta)}{\phi_i(v_\alpha) \phi_j(v_\beta)},$$
where $\{ i, j , \alpha, \beta\} = (1, 2, 3, 4)$.

\begin{proposition} \label{propositionz>w} Let $F$ be a generic quadruple of complete flags. Write
\begin{align*}
\chi_\alpha := \chi_\alpha(F) \textup{ for } \alpha \in A_{n-2, 4}, && w_{ij} := w_{ij}(p(F)) \textup{ for } i<j.
\end{align*}
Then,
\begin{align*}
w_{12}&=\prod_{i+j=n-2} \chi_{ij00}, & w_{34}&=\prod_{i+j=n-2} \chi_{00ij}, \\
 w_{14}&=\prod_{i+j=n-2} 1 - \frac{1}{\chi_{i00j}} , &
w_{23}&=\prod_{i+j=n-2} 1 - \frac{1}{\chi_{0ij0}}  , \\
w_{24}&= \prod_{i+j=n-2} \frac{1}{1 - \chi_{0i0j}}, &w_{13}&=\prod_{i+j=n-2} \frac{1}{1 - \chi_{i0j0}}.
\end{align*}
\end{proposition}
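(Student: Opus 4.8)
The plan is to compute each of the six $w_{ij}$'s by unraveling, on the one hand, the definition of the triple ratios and cross-ratios $\chi_\alpha$ in terms of the determinants $\Delta_\alpha = \pr_\alpha^\ast s_\alpha$ attached to a generic quadruple $F=(F^{(1)},\dots,F^{(4)})$, and on the other hand, by using the explicit formula already given for $w_{ij}(p(F))$ in terms of a linear form $\phi_i$ defining $F^{(i)}_{n-1}$ and a generator $v_j$ of $F^{(j)}_1$, namely $w_{ij}=\phi_i(v_\alpha)\phi_j(v_\beta)/\bigl(\phi_i(v_\beta)\phi_j(v_\alpha)\bigr)$ with $\{i,j,\alpha,\beta\}=\{1,2,3,4\}$. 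Both sides are $G$-invariant rational functions on $X^{\gen}$, so it suffices to check the identities after evaluating on a generic point, and by genericity one may choose convenient bases adapted to the flags.

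**Key steps.** First I would fix the data: bases $v_i$ of $F^{(i)}_1$ and linear forms $\phi_i$ cutting out $F^{(i)}_{n-1}$, so that $w_{12},\dots,w_{34}$ are expressed as the quoted monomials in the $\phi_i(v_j)$. Second, I would recall from \cite{FG} (or re-derive, using the same quotient construction as in the statement $\chi_\alpha=\chi_\alpha(F)$) the formula for each cross-ratio $\chi_{i j 0 0}$ as a ratio of two Plücker-type determinants $\Delta$: working in the quotient $V/E_\alpha(F)$ where $E_\alpha(F)=F^{(1)}_{\alpha_1}+\cdots+F^{(4)}_{\alpha_4}$, the four lines $L_1,\dots,L_4$ of the statement give a cross-ratio which, after clearing the $E_\alpha$, becomes $\Delta_{\alpha+(1,0,1,0)}\Delta_{\alpha+(0,1,0,1)}/\bigl(\Delta_{\alpha+(1,0,0,1)}\Delta_{\alpha+(0,1,1,0)}\bigr)$ up to the appropriate bookkeeping of which two coordinates vary. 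Third — this is the crux of the computation — I would form the product $\prod_{i+j=n-2}\chi_{i j 0 0}$ and observe that it telescopes: every interior $\Delta$ appears once in the numerator and once in the denominator of consecutive factors, leaving only the extreme terms $\Delta_{(n-1,1,0,0)}$ in the numerator and $\Delta_{(n-1,0,1,0)}$ (or the corresponding boundary term) in the denominator. This is the same telescoping phenomenon used in the proof that the product of all triple ratios equals $\tilde p_{1,n-1}$, and I would present it with a picture analogous to the ones already in the section. Fourth, I would identify these surviving boundary determinants: $\Delta_{(n-1,1,0,0)}=\phi_1(v_2)$, $\Delta_{(n-1,0,1,0)}=\phi_1(v_3)$, etc. (a linear form defining a hyperplane evaluated at a line generator), so that $\prod\chi_{ij00}$ collapses to exactly $\phi_1(v_2)\phi_2(v_1)/(\phi_1(v_?)\cdots)$ — i.e. to $w_{12}$. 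Finally, for the four entries $w_{13},w_{14},w_{23},w_{24}$ I would run the same argument but first apply the cross-ratio identities $\chi\mapsto 1-1/\chi$ and $\chi\mapsto 1/(1-\chi)$ (the standard $\mathfrak S_3$-action on the cross-ratio, corresponding to permuting which two of the four lines play which role): reordering $(L_1,\dots,L_4)$ changes $\chi_\alpha$ by one of these Möbius transformations, and the appropriate reordering turns the telescoping product into the monomial $w_{ij}$ with the correct pair $\{i,j\}$.

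**Main obstacle.** The genuinely delicate part is the combinatorial bookkeeping in the telescoping step: one must be careful about exactly which two entries of the multi-index $\alpha\in A_{n-2,4}$ are being incremented in the cross-ratio $\chi_{i j 0 0}$, and track that the chain of incremented $\Delta$'s for $i+j=n-2$ really does close up leaving only the two vertex terms, with the correct exponents (all equal to $1$) and no leftover sign or normalization. The cleanest way to control this is to set it up as a path in the weight lattice exactly as in Figure for the triple ratios, and to reduce the general $n$ to the visual pattern; once the telescoping is established for $w_{12}$ and $w_{34}$, the remaining four identities follow formally from the $\mathfrak S_3$-symmetry of the cross-ratio together with relabeling of the four flags, so no further computation is needed.
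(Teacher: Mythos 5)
Your proposal is essentially the paper's own proof. The paper likewise proves only the identity for $w_{12}$ (declaring the other five analogous): it writes each $\chi_{ij00}$ as $\frac{\phi_{i+1,j}(v_3)\,\phi_{i,j+1}(v_4)}{\phi_{i+1,j}(v_4)\,\phi_{i,j+1}(v_3)}$, where $\phi_{\alpha\beta}$ is a linear form defining the hyperplane $F^{(1)}_{\alpha}+F^{(2)}_{\beta}$ — these are exactly your four determinants $\Delta_{\alpha+(1,0,1,0)}$, $\Delta_{\alpha+(0,1,0,1)}$, $\Delta_{\alpha+(1,0,0,1)}$, $\Delta_{\alpha+(0,1,1,0)}$ up to normalizations that cancel — and then telescopes the product over $i+j=n-2$; your appeal to the reordering (M\"obius) action of the cross-ratio for the remaining four identities is precisely what the paper's ``analogously'' hides. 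One concrete correction to your step four, which as written is wrong: in $\prod_{i+j=n-2}\chi_{ij00}$ every determinant that occurs has its third or fourth index equal to $1$, so $\Delta_{(n-1,1,0,0)}=\phi_1(v_2)$ never appears, and the product does not collapse to anything involving $\phi_1(v_2)\phi_2(v_1)$. The surviving boundary terms are $\Delta_{(n-1,0,1,0)}$ and $\Delta_{(0,n-1,0,1)}$ in the numerator and $\Delta_{(n-1,0,0,1)}$, $\Delta_{(0,n-1,1,0)}$ in the denominator, so the product equals $\phi_1(v_3)\phi_2(v_4)/\bigl(\phi_1(v_4)\phi_2(v_3)\bigr)$; this is indeed $w_{12}=x_5/x_8=s_{(3412)}/s_{(4312)}$, consistent with the formula $w_{ij}=\phi_i(v_\alpha)\phi_j(v_\beta)/\bigl(\phi_i(v_\beta)\phi_j(v_\alpha)\bigr)$ with $\{\alpha,\beta\}=\{3,4\}$ that you yourself quoted at the outset. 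With that bookkeeping repaired, your telescoping argument coincides with the paper's proof.
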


\begin{proof} We prove the first equality, the others being obtained analogously.  For non-negative integers $\alpha, \beta$ such that $\alpha + \beta = n-1$ let $\phi_{\alpha \beta}$ be a linear form on $V$ defining the hyperplane $F^{(1)}_\alpha + F^{(2)}_\beta$. Let $v_3, v_4$ be respectively generators of the lines $F^{(3)}_1$ and $F^{(4)}_1$. With these notations and setting $E = F^{(1)}_{i} + F^{(2)}_{j}$, the cross-ratio of the lines
\begin{align*}
(F^{(1)}_{i+1} + E) / E, &&
(F^{(2)}_{j+1} + E)/ E, &&
(F^{(3)}_{1} + E) / E, &&
(F^{(4)}_{1} + E) / E,
\end{align*}
is
$$ \frac{\phi_{i+1,j}(v_3) \phi_{i,j+1}(v_4)}{\phi_{i+1,j}(v_4) \phi_{i,j+1}(v_3)}.$$
Therefore,
\begin{align*}
\prod_{i+j=n-2} \chi_{ij00} &= \prod_{i = 0}^{n-2} \frac{  \phi_{i,n-1-i}(v_4)\phi_{i+1,n-2-i}(v_3)}{ \phi_{i,n-1-i}(v_3) \phi_{i+1, n-2 - i}(v_4)} \\
&= \frac{  \phi_{0,n-1}(v_4) \phi_{n-1,0}(v_3)}{ \phi_{0,n-1}(v_3) \phi_{n-1,0}(v_4)}=w_{ij},
\end{align*}
as $\phi_{n-1, 0}, \phi_{0, n-1}$ are respectively linear forms defining $F^{(1)}_{n-1}$, $F^{(2)}_{n-1}$. \end{proof}

\begin{example}[$n = 3$] \label{ConfigurationsTripletsGeneric} In this case $N = 4$ and the elements of $A_{1, 4}$ are
\begin{align*}
(1, 0, 0, 0), && ( 0,1, 0, 0), && (0, 0, 1, 0), && ( 0, 0, 0, 1),
\end{align*}
and $\binom{4}{2} =6$. In particular,
\begin{align*}
w_{12}&= \chi_{1000} \chi_{0100}, & w_{34}&= \chi_{0010} \chi_{0001}, \\
 w_{14}&= \left( 1 - \frac{1}{\chi_{1000}} \right) \left( 1 - \frac{1}{\chi_{0001}} \right) , &
w_{23}&= \left( 1 - \frac{1}{\chi_{0100}} \right) \left( 1 - \frac{1}{\chi_{0010}} \right), \\
w_{24}&=  \frac{1}{1 - \chi_{0100}} \cdot \frac{1}{1 - \chi_{0001}}, &w_{13}&=\frac{1}{1 - \chi_{1000}} \cdot \frac{1}{1 - \chi_{0010}}.
\end{align*}
With the notations of \cite{BFG},
\begin{align*}
z_{12} &= \chi_{1000} & z_{21} &= \chi_{0100}, \\
z_{13} &= \frac{1}{1 - \chi_{1000}} & z_{31} &= \frac{1}{1 - \chi_{0010}}, \\
z_{14} &= 1 - \frac{1}{\chi_{1000}}, &z_{41} &= 1 - \frac{1}{\chi_{0001}} ,\\
z_{23} &= 1 - \frac{1}{\chi_{0100}}, &z_{32}&=  1 - \frac{1}{\chi_{0010}}, \\
z_{24} &= \frac{1}{1 - \chi_{0100}}, & z_{42}&=  \frac{1}{1 - \chi_{0001}}, \\
z_{34} &= \chi_{0010}, & z_{43}&=  \chi_{0001}.
\end{align*}
so that,
\begin{align*}
w_{12}&=z_{12}z_{21},
&w_{13}&=z_{13}z_{31},
&w_{14}&=z_{14}z_{41},\\
w_{23}&=z_{23}z_{32},
&w_{24}&=z_{24}z_{42},
&w_{34}&=z_{34}z_{43},
\end{align*}
A geometric way to remember this equations is shown in Figure \ref{pic:zCoordinatesTetrahedra}.

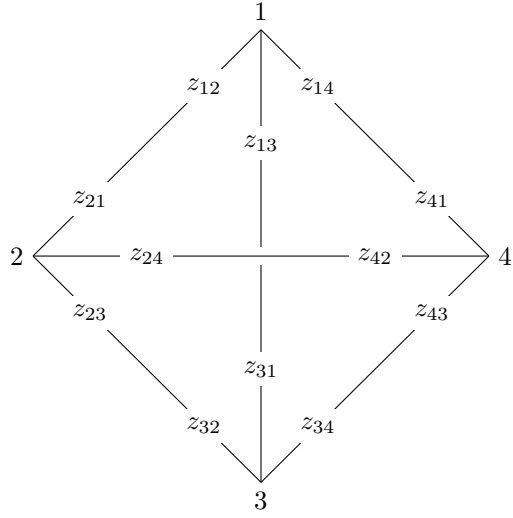
\begin{figure} 

\begin{tikzpicture}

\def\r{3};

\draw (0, \r) -- (\r, 0) node[near start, fill=white] {$z_{14}$} node[near end, fill=white] {$z_{41}$};

\draw (0, \r) -- (-\r, 0) node[near start, fill=white] {$z_{12}$} node[near end, fill=white] {$z_{21}$};

\draw (0, -\r) -- (\r, 0) node[near start, fill=white] {$z_{34}$} node[near end, fill=white] {$z_{43}$};

\draw (0, -\r) -- (-\r, 0) node[near start, fill=white] {$z_{32}$} node[near end, fill=white] {$z_{23}$};

\draw (0,-\r) -- (0, \r) node[midway, fill=white] {} node[near start, fill=white] {$z_{31}$} node[near end, fill=white] {$z_{13}$};

\draw (-\r, 0) -- (\r, 0) node[near start, fill=white] {$z_{24}$} node[near end, fill=white] {$z_{42}$};

\node[anchor=south] at (0, \r) {$1$};
\node[anchor=east] at (-\r, 0) {$2$};
\node[anchor=north] at (0, -\r) {$3$};
\node[anchor=west] at (\r, 0) {$4$};

\end{tikzpicture}

\caption{The $z$-coordinates.} \label{pic:zCoordinatesTetrahedra}

\end{figure}

\end{example}

\begin{example}[$n=4$] Let $\Gr_2(V)$ be the Grassmannian of planes in $V = \C^4$ and $X_2 = \Gr_2(V)^4$. Let $X_{2}^\ss$ be the open subset of semi-stable points of $X_2$ under the action of $G$ and with respect to the Pl\"ucker embedding (\emph{cf.} Section \ref{sec:QuadruplesOfPlanes}). The GIT quotient of $X_2^\ss$ by $G$ is isomorphic to $\P^2$ and the projection $\pi_2 \colon X_2^\ss \to \P^2$ is given by non trivial elements of the Klein group (see Theorem \ref{Thm:Quotient4planesC4}). Write $p_2 \colon X \to X_2$ the canonical projection.

\begin{proposition}\label{proposition:generic4toinvariants} Let $F$ be a generic quadruple of complete flags of $V = \C^4$. For $\alpha \in A_{2, 4}$ write $ \chi_\alpha := \chi_\alpha(F)$. Then,
\begin{align*}
\frac{s_{1234}}{s_{1423}}(p(F)) &= \chi_{0110} \chi_{0101} \chi_{1010} \chi_{1001},\\
\frac{s_{1324}}{s_{1423}}(p(F)) &=\left( 1- \frac{1}{\chi_{0011}} \right) \left( 1- \frac{1}{\chi_{1100}} \right) \left( 1- \frac{1}{\chi_{0101}} \right) \left( 1- \frac{1}{\chi_{1010}} \right).
\end{align*}
\end{proposition}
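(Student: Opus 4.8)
The strategy is to deduce Proposition \ref{proposition:generic4toinvariants} from the general formulas of Proposition \ref{propositionz>w} applied with $n=4$, combined with the explicit description of the quotient map $\pi_2 \colon X_2^\ss \to \P^2$ from Theorem \ref{Thm:Quotient4planesC4}. First I would unwind the projections involved. The canonical projection $p \colon X \to X_{1,n-1}$ to flags line–hyperplane records, for each complete flag $F^{(i)}$, the line $F^{(i)}_1$ and the hyperplane $F^{(i)}_{n-1}$; the projection $p_2 \colon X \to X_2$ records the plane $F^{(i)}_2$. For $n=4$ these two projections are related through the incidence geometry of $\Gr_2(\C^4) = \Flag_2$ sitting in the diagram of flag correspondences. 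The point is that $s_{1234}$, $s_{1324}$, $s_{1423}$ are $\SL_4(\C)$-invariant sections whose ratios are regular on the locus of quadruples of pairwise opposite planes, and these ratios can be expressed in terms of determinants $\det(w_{i1},w_{i2},w_{j1},w_{j2})$, where $w_{i1},w_{i2}$ is a basis of $F^{(i)}_2$.

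\textbf{Key steps.} The first step is to choose, for the generic quadruple $F$, adapted bases: pick generators $v_i$ of the lines $F^{(i)}_1$ and linear forms $\phi_i$ defining the hyperplanes $F^{(i)}_3$, and choose a basis $w_{i1}, w_{i2}$ of $F^{(i)}_2$ with $w_{i1}$ proportional to $v_i$. Then $\det(w_{i1}, w_{i2}, w_{j1}, w_{j2})$ can be expanded; the genericity hypothesis guarantees none of the relevant determinants vanish. The second step is a direct computation expressing $s_{1234}/s_{1423}(p_2(F))$ and $s_{1324}/s_{1423}(p_2(F))$ as products of $2\times2$ ``sub-determinants'' in the quotient spaces $V/E_\alpha(F)$ for the various $\alpha \in A_{2,4}$. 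Concretely, for each pair splitting $\{1,2,3,4\}$ and each way of reaching dimension $n-2 = 2$ by adding up flag pieces, the planes $F^{(i)}_2$ project to lines in a $2$-dimensional quotient, and the ratio of $s$-invariants telescopes into a product of cross-ratios $\chi_\alpha(F)$ indexed by $A_{2,4}$. The third step is to match the combinatorics: verify that the $\alpha$'s appearing are exactly $\{0110, 0101, 1010, 1001\}$ for the first identity (these are the $\alpha$ with $\alpha_1+\alpha_4 = \alpha_2+\alpha_3 = 1$ realizing the ``$12|34$ versus $14|23$'' comparison) and $\{0011, 1100, 0101, 1010\}$ for the second. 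I would carry this out by a telescoping argument identical in spirit to the proof of the first equality in Proposition \ref{propositionz>w}: write each $s$-invariant as a product over a chain of flag steps, note that intermediate terms $\Delta_\beta$ cancel between numerator and denominator, and identify the surviving boundary terms with $\phi_i(v_j)$, hence with the $w_{ij}$-coordinates, hence — via Proposition \ref{propositionz>w} — with the stated products of $\chi_\alpha$.

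\textbf{An alternative, cleaner route.} Rather than recomputing from scratch, I would use the factorization $\pi_2 = \pi_{1,n-1}\text{-type data} \circ (\text{relation between } w_{ij} \text{ and } s\text{-invariants})$ already packaged in the discussion preceding the proposition, where one reads off
$$
\frac{s_{1234}}{s_{1423}} = \text{(an explicit Laurent monomial in the } x_k, \text{ hence in the } w_{ij}), \qquad \frac{s_{1324}}{s_{1423}} = \text{(another such monomial)}.
$$
Indeed, from the substitutions $w_{12} = x_5/x_8$, $w_{14} = x_3/x_5$, \emph{etc.}, and the defining equations of $Y_{1,n-1} \hookrightarrow \P^8$, one expresses $s_{1234}/s_{1423}$ and $s_{1324}/s_{1423}$ in terms of $w_{12}, w_{13}, w_{14}, w_{23}, w_{24}, w_{34}$; then Proposition \ref{propositionz>w} (with $n = 4$, so each product $\prod_{i+j=2}$ has two factors, indices $i+j=2$ meaning $(i,j) \in \{(1,1)\}$, hence actually a \emph{single} factor after accounting for the shift, or a short product) substitutes the $w_{ij}$ by products of $\chi_\alpha$, $\alpha \in A_{2,4}$, and one simplifies. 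The main obstacle will be purely bookkeeping: keeping the $24$ indices $\alpha \in A_{2,4}$ straight and making sure the telescoping cancellations leave exactly the four factors claimed in each identity, with the right ``$1 - 1/\chi$'' versus ``$\chi$'' shapes dictated by which slot of the cross-ratio each line $L_i$ occupies. Once the indexing is pinned down, the verification is a finite, mechanical simplification of rational functions, and by $G$-invariance it descends to the quotient $Y$, giving the stated equalities on $\pi(X^{\gen})$.
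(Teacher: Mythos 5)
Both of your routes funnel through the same untenable reduction: you try to express $s_{1234}/s_{1423}$ and $s_{1324}/s_{1423}$ \emph{via} the line--hyperplane coordinates $w_{ij}$, i.e.\ via quantities built only from $\phi_i(v_j)$, where $v_i$ spans $F^{(i)}_1$ and $\phi_i$ cuts out $F^{(i)}_3$. This cannot work, because the Grassmannian invariants of Section \ref{sec:QuadruplesOfPlanes} are functions of the quadruple of \emph{middle planes} $F^{(i)}_2$, which are not determined by the $(1,3)$-flag data: for fixed $(F^{(i)}_1,F^{(i)}_3)$ the plane $F^{(i)}_2$ still moves in a $\P^1$, and moving it changes $s_{1234},s_{1324},s_{1423}$ while leaving every $w_{ij}$ untouched. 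In the ``cleaner route'' you moreover conflate two different families of invariants: the $x_k$ appearing in the discussion before the proposition are the nine derangement invariants of the $(1,n-1)$-flags from Section \ref{par:QuadruplesLineHyperplane}, not the plane invariants $s_{1234},s_{1324},s_{1423}$, so there is no Laurent monomial in the $x_k$ (or in the $w_{ij}$) computing those ratios. Note also that Proposition \ref{propositionz>w} with $n=4$ gives each $w_{ij}$ as a product of \emph{three} cross-ratios (the pairs $(i,j)$ with $i+j=2$ are $(2,0),(1,1),(0,2)$), including the corner indices $\chi_{2000},\chi_{0200},\dots$, which do not occur in the statement to be proved; no monomial manipulation of the $w_{ij}$ isolates $\chi_{0110}\chi_{0101}\chi_{1010}\chi_{1001}$.

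What is genuinely needed is input that sees the planes. The paper's proof expresses the six cross-ratios $\chi_\alpha$ with two unit entries in terms of linear forms $\phi_{ij}$ cutting out the hyperplanes $F^{(i)}_2+F^{(j)}_1$ (data involving $F^{(i)}_2$), then normalizes the generic quadruple by the $\SL_4(\C)$-action to an explicit parametrized family and uses Proposition \ref{Prop:4PlanesAffineChart} to get $s_{1234}/s_{1423}=1/(ab)$ and $s_{1324}/s_{1423}=(a-1)(b-1)$, after which the two displayed identities are a direct check in the parameters. The first half of your ``main route'' (adapted bases of the $F^{(i)}_2$, determinants, lines in the quotients $V/E_\alpha(F)$) points in this direction and could be developed into such a direct computation, but as written it is only an outline whose conclusion is rerouted through the $w_{ij}$ and Proposition \ref{propositionz>w}; in that form the argument does not establish the statement.
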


\begin{proof} For $i = 1, \dots, 4$ let $v_i$ be  a generator of the line $F^{(i)}_1$ and $\phi_i$ a linear form defining the hyperplane $F^{(i)}_3$. For $i, j = 1, \dots, 4$ distinct let $\phi_{ij}$ be a linear form for the hyperplane  $ F^{(i)}_2 + F^{(j)}_1$. Writing $E = F^{(1)}_1 + F^{(2)}_1$, the cross-ratio $\chi_{1100}$ of the lines
\begin{align*}
(F^{(1)}_2+E)/E, && (F^{(2)}_{2}+E)/E, && (F^{(3)}_1+E)/E, &&(F^{(4)}_1+E)/E,
\end{align*}
is
$$
\frac{\phi_{12}(v_3)\phi_{21}(v_4)}{\phi_{12}(v_4)\phi_{21}(v_3)}.
$$

Analogously,

\begin{align*}
 \frac{1}{1-\chi_{1010}}&=\frac{\phi_{13}(v_4)\phi_{31}(v_2)}{\phi_{13}(v_2)\phi_{31}(v_4)}, &
1 - \frac{1}{\chi_{1001}}&=\frac{\phi_{14}(v_2)\phi_{41}(v_3)}{\phi_{14}(v_3)\phi_{41}(v_2)}, \\
1 - \frac{1}{\chi_{0110}}&=\frac{\phi_{23}(v_1)\phi_{32}(v_4)}{\phi_{23}(v_4)\phi_{32}(v_1)}, &
\frac{1}{1-\chi_{0101}}&=\frac{\phi_{42}(v_1)\phi_{24}(v_3)}{\phi_{42}(v_3)\phi_{24}(v_1)}, \\
\chi_{0011}&=\frac{\phi_{34}(v_1)\phi_{43}(v_2)}{\phi_{34}(v_2)\phi_{43}(v_1)}.
\end{align*}

Up to action of $\SL_4(\C)$ the complete flags $F^{(1)}, \dots, F^{(4)}$ can be taken as:
\begin{align*}
F^{(1)}_1 = \langle \begin{pmatrix} 0 \\ 0 \\ 1 \\ \lambda_1 \end{pmatrix} \rangle, && 
F^{(1)}_2 = \langle \begin{pmatrix} 0 \\ 0 \\ 1 \\ 0 \end{pmatrix}, \begin{pmatrix} 0\\ 0 \\ 0 \\ 1\end{pmatrix} \rangle, && 
F^{(1)}_3 = \langle \begin{pmatrix} 0 \\ 0 \\ 1 \\ 0 \end{pmatrix}, \begin{pmatrix} 0 \\ 0 \\ 0 \\ 1\end{pmatrix},
                               \begin{pmatrix} 1 \\ \mu_1 \\ 0 \\ 0\end{pmatrix} \rangle,\\
F^{(2)}_1 = \langle \begin{pmatrix} 1 \\ \lambda_2 \\ 0 \\ 0 \end{pmatrix} \rangle, && 
F^{(2)}_2 = \langle \begin{pmatrix} 1 \\ 0 \\ 0 \\ 0 \end{pmatrix}, \begin{pmatrix} 0\\ 1 \\ 0 \\  0\end{pmatrix} \rangle, && 
F^{(2)}_3 = \langle \begin{pmatrix} 1 \\ 0 \\ 0 \\ 0 \end{pmatrix}, \begin{pmatrix} 0\\ 1 \\ 0 \\  0\end{pmatrix},
                               \begin{pmatrix} 0 \\ 0 \\ \mu_2 \\ 1\end{pmatrix} \rangle,\\
F^{(3)}_1 = \langle \begin{pmatrix} 1 \\ \lambda_3 \\ 1 \\ \lambda_3 \end{pmatrix} \rangle, && 
F^{(3)}_2 = \langle \begin{pmatrix} 1 \\ 0 \\ 1 \\ 0 \end{pmatrix}, \begin{pmatrix} 0\\ 1 \\ 0 \\ 1\end{pmatrix} \rangle, && 
F^{(3)}_3 = \langle \begin{pmatrix} 1 \\ 0 \\ 1 \\ 0 \end{pmatrix}, \begin{pmatrix} 0 \\ 1 \\ 0 \\ 1\end{pmatrix},
                               \begin{pmatrix} 1 \\ \mu_3 \\ 0 \\ 0\end{pmatrix} \rangle,\\
F^{(4)}_1 = \langle \begin{pmatrix} a \\ \lambda_4 b \\ 1 \\ \lambda_4 \end{pmatrix} \rangle, && 
F^{(4)}_2 = \langle \begin{pmatrix} a \\ 0 \\ 1 \\ 0 \end{pmatrix}, \begin{pmatrix} 0\\ b \\ 0 \\ 1\end{pmatrix} \rangle, && 
F^{(4)}_3 = \langle \begin{pmatrix} a \\ 0 \\ 1 \\ 0 \end{pmatrix}, \begin{pmatrix} 0 \\ b \\ 0 \\ 1\end{pmatrix},
                               \begin{pmatrix} 1 \\ \mu_4 \\ 0 \\ 0\end{pmatrix} \rangle,
\end{align*}
where $\lambda_1, \dots, \lambda_4, \mu_1, \dots, \mu_4 \in \C$ and $a, b \in \C^\times$. According to Proposition \ref{Prop:4PlanesAffineChart},
\begin{align*}
\frac{s_{1234}}{s_{1423}} = \frac{1}{ab}, &&  \frac{s_{1324}}{s_{1423}} = (a -1)(b -1).
\end{align*}
We leave the remaining computations to the reader.
\end{proof}  

\begin{figure}[h]
\begin{tikzpicture}
\path[pattern=north west lines, pattern color=gray] (1,4) -- (2.75,0.75) -- (4.75,11) -- (1,4);
\draw (2.75,0.75) -- (11,3);
\draw  (11,3) -- (4.75,11);
\draw (1,4) -- (2.75,0.75);

\foreach \i in {1,2,3} {
            
            \pgfmathtruncatemacro{\a}{3-\i)};
            \pgfmathtruncatemacro{\b}{\i-1)};
            \node[anchor=east, fill=white] at (2.75 + \i*2/4,0.75 + \i*10.25/4) {\footnotesize \b\a00};
            \draw[fill=black] (2.75 + \i*2/4,0.75 + \i*10.25/4) circle (1pt);
        }

\foreach \i in {1,2,3} {
            \draw[fill=black] (11 - \i*6.25/4,3 + \i*8/4) circle (1pt);
            \pgfmathtruncatemacro{\a}{3-\i)};
            \pgfmathtruncatemacro{\b}{\i-1)};
            \node[anchor=west] at (11 - \i*6.25/4,3 + \i*8/4) {\footnotesize \b0\a0'};
        }
        
\foreach \i in {1,2,3} {
            \draw[fill=black] (1 + \i*3.75/4,4 + \i*7/4) circle (1pt);
            \pgfmathtruncatemacro{\a}{3-\i)};
            \pgfmathtruncatemacro{\b}{\i-1)};
            \node[anchor=east] at (1 + \i*3.75/4,4 + \i*7/4) {\footnotesize \b00\a''};
}

\foreach \i in {1,2,3} {
            \draw[fill=black] (1 + \i*1.75/4,4 - \i*3.25/4) circle (1pt);
            \pgfmathtruncatemacro{\a}{3-\i)};
            \pgfmathtruncatemacro{\b}{\i-1)};
            \node[anchor=east] at (1 + \i*1.75/4,4 - \i*3.25/4) {\footnotesize 0\a0\b'};
}

\foreach \i in {1,2,3} {
            
            \pgfmathtruncatemacro{\a}{3-\i)};
            \pgfmathtruncatemacro{\b}{\i-1)};
            \node[anchor=south, fill=white] at (1 + \i*10/4,4  -\i*1/4) {\footnotesize 00\a\b};
            \draw[fill=black] (1 + \i*10/4,4  -\i*1/4) circle (1pt);
}

\foreach \i in {1,2,3} {
            \draw[fill=black] (2.75 + \i*8.25/4,0.75  +\i*2.25/4) circle (1pt);
            \pgfmathtruncatemacro{\a}{3-\i)};
            \pgfmathtruncatemacro{\b}{\i-1)};
            \node[anchor=north] at (2.75 + \i*8.25/4,0.75  +\i*2.25/4) {\footnotesize 0\a\b0''};
}

\foreach \i in {1,2,3} {
            \draw (2.75 + \i*2/4,0.75 + \i*10.25/4) -- (11 - \i*6.25/4,3 + \i*8/4);
            \draw (2.75 + \i*2/4,0.75 + \i*10.25/4) -- (2.75 + \i*8.25/4,0.75  +\i*2.25/4);
            \draw (11 - \i*6.25/4,3 + \i*8/4) -- (11 - \i*8.25/4,3  -\i*2.25/4);
        }        

\draw (4.75,11) --  (2.75,0.75) ;
\draw (4.75,11) -- (1,4);
\draw[dashed] (1,4) -- (11,3);

\end{tikzpicture}

\caption{The $n=4$ case of the cross-ratio coordinates.  Each coordinate is associated to a quadruple $\alpha = (\alpha_1,\alpha_2,\alpha_3,\alpha_4)$ of non-negative integers $\alpha_i$ whose sum is $n-2$,  attached to a point in one edge.}
\end{figure}
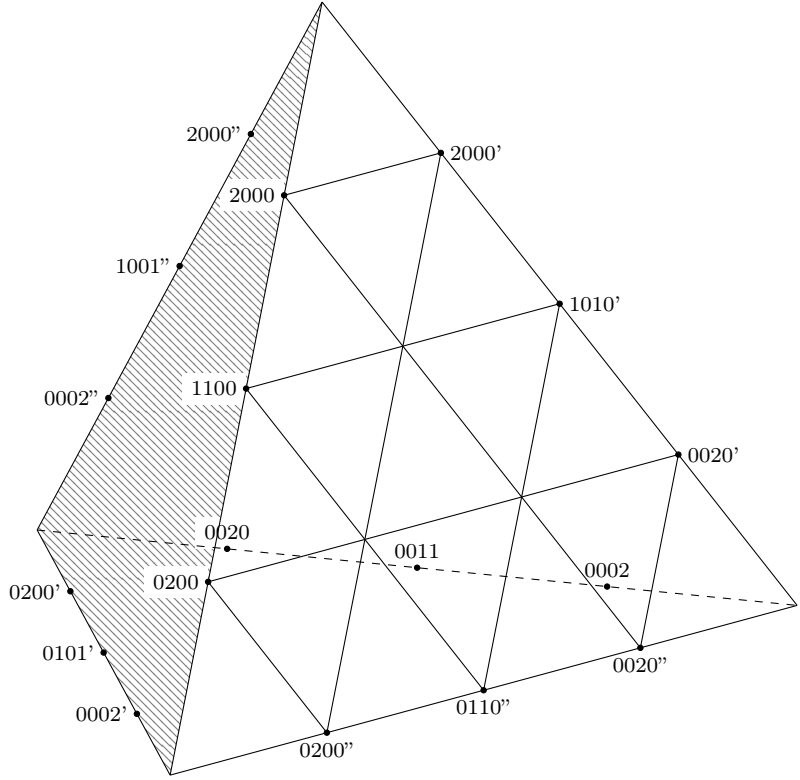

\end{example}

\section{Representations}\label{section:Representations}

In this section we recall the definition of decorated triangulations of three manifolds following \cite{BFG,GGZ,DGG}.  The definition is related to the case of decorated triangulated surfaces (see \cite{FG}).
The decoration corresponds to an assignment of a configuration of four complete flags in $\C^n$ to each  3-simplex in the 3-manifold satisfying certain compatibility conditions.  The flags in the configuration can be thought as associated to each of the four vertices
of a simplex.
We will be interested mainly in the case $n=4$.  See \cite{BFG,FKR} for the case $n=3$.  The decorated triangulations will give rise to representations of the fundamental group into $\PGL(4,\C)$ or $\SL(4,\C)$ and  the goal is to find, among these representations, those with values in one of the real forms.  Recall that the non-compact real forms of $\SL(4,\C)$ are $\SL(4,\R)$, $\SU(3,1)$, $\SU(2,2)$ and $\SL(2,\H)$.   

We use coordinates on generic configurations of the configuration space (modulo $\PGL(n,\C)$) introduced in the previous section and  recall that
these coordinates can be expressed using the invariant functions described using GIT for $\SL(n,\C)$.  This allows one to use  the description in Proposition \ref{propositionz>w} and Example \ref{ConfigurationsTripletsGeneric} to identify configurations in the 
closed orbit of certain real forms of $\PGL(n,\C)$.

\subsection{Decorated triangulations of 3-manifolds and representations}

We consider an ideal triangulated compact oriented 3-manifold $M$ with boundary $\partial M$. That is, there exists a finite number of oriented 3-simplices $T_\nu$ ($1\leq \nu \leq N$) and an orientation reversing matching of their faces so that if one truncates all the simplices at their vertices one obtains, after identifying the truncated faces, a manifold homeomorphic 
to $M$ with $\partial M$ identified to a triangulated 2-surface.  

Let $V = \C^n$ and  the flag variety $F$ of complete flags.  We let $G= \PGL(n,\C)$ be the projective group acting on the space of flags. 

\begin{definition}
A decoration of an ideal triangulation of a manifold $M$ is a collection of maps from tetrahedra $T_\nu$ 
to the space of configurations $F^{4,\gen}/G$ compatible with the matchings.  More precisely, to the 0-simplex of $T_\nu$ one associates 
a configuration of flags up to conjugation such that to each matching face one associates the same point in $F^{3, \gen}/G$.
 \end{definition}

We will impose in the following that  the decorations are chosen such that the configurations in $F^{3, \gen}$ have trivial stabilizer. This condition is clearly satisfied for generic configurations as defined in the previous section.
We also assume that the \emph{holonomy} around the 1-skeleton be trivial.
That is,  around each edge one choses an abutted face and consider a configuration in $F^{3, \gen}$ associated to it (a lift of the element in $F^{3, \gen}/G$ associated to the face).  Turning around the edge one constructs 
a sequence of flags corresponding to the vertices (not contained in the edge) of the faces of each of the simplices around the edge.  After completing a turn around the edge we suppose that
the we recover the same original configuration  in $F^{3, \gen}$.

One can then obtain a holonomy representation (defined up to conjugation)
$$
\rho: \pi_1(M)\too \PGL(n,\C).
$$

Indeed, chose a base face and consider a configuration in $F^{3, \gen}$ associated to it (a lift of the element in $F^{3, \gen}/G$ associated to the face).  Any closed path $\gamma$ (based at the chosen face and not intersecting any edge) determines a sequence of configurations in  $F^{3, \gen}$ corresponding to the configurations for each successive face crossed by the path.
The last face configuration of flags along the path is the image of the reference configuration of flags by a unique element in $G$.  This element is called the \emph{holonomy} 
along $\gamma$.  Clearly, if the holonomy around all edges are trivial, the paths may cross edges without modifying the holonomy and therefore the holonomy representation is well defined on  $\pi_1(M)$.

\subsubsection{Unipotent decorations}

An important class of decorations is obtained by restricting the boundary holonomy, that is, the holonomy  along paths on the boundary of $M$, to be 
in a subgroup which is conjugate to a unipotent radical of a Borel subgroup of $G$.  In this paper, we consider only the case of $\SL(n,\C)$ or $\PGL(n,\C)$ and therefore, for a boundary component, the boundary holonomy may be considered to consist of upper triangular matrices with equal entries in the diagonal.   

Computations of decorations with complete flags were implemented in \cite{C}.   Unipotent decorations were computed for
$\PGL(2,\C)$, $\PGL(3,\C)$ in many cases and for one manifold, the complement of the figure eight knot, for $\PGL(4,\C)$.

\subsubsection{$G_0$-decorations and representations}

\begin{definition}
Let $G_0$ be a real form of $G$ and $F_0$ the closed $G_0$-orbit in $F$.  A $G_0$-decoration of an ideal triangulation of $M$ is a decoration obtained through a family of maps from each tetrahedron to $F_0^{4, \gen}/G_0$ (here $F_0^{4, \gen} = F^{4, \gen} \cap F_0$).
\end{definition}

The holonomy representation representation
$
\rho: \pi_1(M)\rightarrow G_0
$ 
will be $G_0$-valued in this case.

In order to obtain explicit $G_0$-decorations we use coordinates for generic flags, excluding certain degenerate configurations.
For a given triangulation of a 3-manifold this might result in a loss of certain representations of its fundamental group but
passing to a barycentric subdivision one  obtains all representations.  In generic coordinates, the $G_0$-decorations are described as a constructible set given by algebraic equations
implying face matchings and trivial holonomy around edges.  The condition that the decoration be unipotent is described by additional algebraic equations.  We will not reproduce
these equations here but refer to references \cite{BFG,GGZ,DGG}.

Unfortunately, computations are difficult and were completed only for triangulations with less than or equal to four tetrahedra in the case of $\PGL(3,\C)$ or
2 tetrahedra in the case of  $\PGL(4,\C)$.  
The case $\PGL(3,\C)$, for manifolds obtained with less than 3 simplices, was treated in \cite{FKR} (see also \cite{C} for a census of manifolds triangulated by four tetrahedra).  $G_0$-decorations give rise to representations into the real forms $\PU(2,1)$ and $\SL(3,\R)$.  In both cases, one can, in certain cases, associate $(G_0,F_0)$-structures to the 3-manifold such that its holonomy coincides with the representation.  We refer to \cite{F,FS} for more details. 

In the case $\PGL(4,\C)$, a complete computation for unipotent decorated representations of the standard triangulation of the complement of the figure eight knot $M_8$ by two tetrahedra
was completed by Matthias Goerner (\cite{C}).  Using those  data, Antonin Guilloux checked, using the description in Proposition \ref{propositionz>w} and Example \ref{ConfigurationsTripletsGeneric}
that there exists, up to conjugation, only one family of unipotent decorated triangulations which gives rise to a unique representation in a real form.  Namely, the  representation  obtained from the geometric representation $\pi_1(M_8)\rightarrow \SO(3,1)$ which realizes a lift of the holonomy of the complete hyperbolic structure on the isometry group of hyperbolic space.  Observe that the image of the representation is contained in both $\SL(4,\R)$ and  $\SU(3,1)$.  

\begin{proposition}\label{proposition:figureeight}
The only $G_0$ decorations associated to the standard triangulation of the figure eight knot by two tetrahedra correspond to a unique representation, up to conjugation, $\pi_1(M_8)\rightarrow \SO(3,1)$ identified to the geometric representation.
\end{proposition}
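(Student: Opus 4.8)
The plan is to turn the statement into a finite verification over M.~Goerner's census of boundary-unipotent decorated triangulations of $M_8$, using the dictionary between cross-ratio coordinates and GIT invariants of partial flags set up in Section~\ref{section:coordiantesgeneric}. By \cite{C}, the boundary-unipotent decorated triangulations of the standard two-tetrahedron triangulation of $M_8$ with values in $\Fl(\C^4)^{4,\gen}/\PGL(4,\C)$ form an explicit finite list, each entry being recorded by the cross-ratio coordinates $\chi_\alpha$ of its two vertex configurations (equivalently, by the $z$-coordinates of Example~\ref{ConfigurationsTripletsGeneric}). Since the non-compact real forms of $\SL_4(\C)$ are $\SL_4(\R)$, $\SU(3,1)$, $\SU(2,2)$ and $\SL_2(\H)$, the proposition reduces to deciding, entry by entry, whether the corresponding configuration is a $G_0$-decoration for one of these four groups, and to identifying those that are.

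The second step is to write down necessary conditions for a given decoration to be a $G_0$-decoration. If it takes values in $F_0^{4,\gen}/G_0$ then, the projections $\Fl(\C^4)\to\Flag(1,3)$ and $\Fl(\C^4)\to\Gr(2,4)$ being $G_0$-equivariant and proper, the induced sub-configurations of flags line-hyperplane, respectively of planes, take values in the closed $G_0$-orbit of $\Flag(1,3)$, respectively of $\Gr(2,4)$, which by Wolf's theorem (\cite{W}) is the unique closed orbit in each case. For $G_0=\SU(3,1)$ this forces the lines $F^{(i)}_1$ to be isotropic with $F^{(i)}_3$ their orthogonals, so that by paragraph~\ref{par:GeneralQuotientLineHyperplaneHermitian} and Theorem~\ref{Thm:QuotientIsotropicLines} the nine line-hyperplane invariants of the sub-configuration, viewed as a point of $\P^8(\C)$, must lie in the real subspace $\P(\R^3\times\C^3)$, i.e.\ must, up to a common scalar, obey the conjugation pattern $\overline{s_\sigma}=s_{\sigma^{-1}}$. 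For $G_0=\SL_4(\R)$ those nine invariants must instead be real up to a common scalar, and in addition $[s_{1234}:s_{1324}:s_{1423}]$ must lie in $\P^2(\R)$; for $G_0=\SU(2,2)$ the planes $F^{(i)}_2$ must be totally isotropic, whence again $[s_{1234}:s_{1324}:s_{1423}]\in\P^2(\R)$; and for $G_0=\SL_2(\H)$ this point must moreover satisfy the inequality $t_0^2+t_1^2+t_2^2\le 2(t_0t_1+t_1t_2+t_0t_2)$ of Proposition~\ref{Prop:Quotient4planesSL(2,H)}. All the invariants appearing here are explicit rational functions of the $\chi_\alpha$: for the plane invariants by Proposition~\ref{proposition:generic4toinvariants}, and for the line-hyperplane invariants by combining the formulas preceding Proposition~\ref{propositionz>w} with Proposition~\ref{propositionz>w} itself, specialized to $n=4$.

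The third step is the computation itself: substitute the cross-ratio data of each census entry into these rational functions and test the reality and conjugation conditions, together with the $\SL_2(\H)$-inequality. This is the finite check carried out by A.~Guilloux, and it eliminates every entry except one. For the surviving entry one recognizes the decoration induced by the geometric representation $\pi_1(M_8)\to\SO(3,1)$ that lifts the holonomy of the complete hyperbolic structure — concretely, the decoration built from the two regular ideal tetrahedra — and one checks directly that it is a genuine $G_0$-decoration, its underlying flags lying in the closed $G_0$-orbit for $G_0=\SU(3,1)$ and the representation having image in $\SO(3,1)\subset\SL_4(\R)\cap\SU(3,1)$. Hence exactly one census entry is a $G_0$-decoration for some non-compact real form, and its holonomy is the geometric $\SO(3,1)$-representation; this is the proposition.

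The difficulty is computational rather than conceptual: the substance is to assemble the correct reality and conjugation conditions for each real form — which is exactly where the classification theorems of Sections~\ref{section:Sequences} and \ref{sec:QuadruplesOfPlanes} enter — and then to run the elimination over the full census of \cite{C}, the ``difficult computation'' alluded to in the text and not reproduced here. One should also keep in mind that these conditions are only necessary; this is all that is needed to discard an entry, and for the single surviving entry the converse is obtained by exhibiting it explicitly as the $\SO(3,1)$-decoration, while the bijectivity statements of Propositions~\ref{Prop:Quotient4PlanesSU(2,2)} and \ref{Prop:Quotient4planesSL(2,H)} and the injectivity in Theorem~\ref{Thm:QuotientIsotropicLines} guarantee that the tested conditions control the partial-flag sub-configurations exactly.
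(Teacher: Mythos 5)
Your proposal follows essentially the same route as the paper: the paper's argument for this proposition is exactly the reduction to Goerner's census of boundary-unipotent decorations of the two-tetrahedron triangulation of $M_8$, translated through the cross-ratio/invariant dictionary of Proposition \ref{propositionz>w}, Example \ref{ConfigurationsTripletsGeneric} and Proposition \ref{proposition:generic4toinvariants}, with the reality conditions for each real form coming from Sections \ref{section:Sequences} and \ref{sec:QuadruplesOfPlanes}, and the elimination carried out computationally by Guilloux; your write-up just makes the per-form necessary conditions explicit. The only minor caveat is that the census components are algebraic families (the paper speaks of ``one family'') rather than literally a finite list of points, but this does not affect the argument.
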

 We believe that this is the only irreducible boundary unipotent representation in a real form up to conjugation.

It is interesting to note that  (see Theorem 1.4 in \cite{BPW})  this unipotent representation is contained in a one parameter family of discrete and faithful boundary parabolic representations with values in $\SU(3,1)$.

\small

\providecommand{\bysame}{\leavevmode\hbox to3em{\hrulefill}\thinspace}
\providecommand{\MR}{\relax\ifhmode\unskip\space\fi MR }
\providecommand{\MRhref}[2]{%
  \href{http://www.ams.org/mathscinet-getitem?mr=#1}{#2}
}
\providecommand{\href}[2]{#2}


\begin{thebibliography}{BPW17}

\bibitem[ABP73]{AtiyahBottPatodi}
M.~Atiyah, R.~Bott, and V.~K. Patodi, \emph{On the heat equation and the index
  theorem}, Invent. Math. \textbf{19} (1973), 279--330. \MR{0650828}

\bibitem[BFG14]{BFG}
N.~Bergeron, E.~Falbel, and A.~Guilloux, \emph{Tetrahedra of flags, volume and
  homology of {${\rm SL}(3)$}}, Geom. Topol. \textbf{18} (2014), no.~4,
  1911--1971. \MR{3268771}

\bibitem[BG09]{BisiGentili}
C.~Bisi and G.~Gentili, \emph{M\"obius transformations and the {P}oincar\'e
  distance in the quaternionic setting}, Indiana Univ. Math. J. \textbf{58}
  (2009), no.~6, 2729--2764. \MR{2603766}

\bibitem[Bir71]{Birkes}
D.~Birkes, \emph{Orbits of linear algebraic groups}, Ann. of Math. (2)
  \textbf{93} (1971), 459--475. \MR{0296077}

\bibitem[Bor91]{Borel}
A.~Borel, \emph{Linear algebraic groups}, second ed., Graduate Texts in
  Mathematics, vol. 126, Springer-Verlag, New York, 1991.

\bibitem[BPW17]{BPW}
S.~A. Ballas, J.~Paupert, and P.~Will, \emph{Rank 1 deformations of
  non-cocompact hyperbolic lattices}, \texttt{arXiv:1702.00508} (2017).

\bibitem[C]{C}
\emph{Curve (computing and understanding representation varieties
  efficiently)}.

\bibitem[DGG16]{DGG}
T.~Dimofte, M.~Gabella, and A.~B. Goncharov, \emph{K-decompositions and 3d
  gauge theories}, J. High Energy Phys. (2016), no.~11, 151, front matter+144.
  \MR{3594814}

\bibitem[Fal08]{F}
E.~Falbel, \emph{A spherical {CR} structure on the complement of the figure
  eight knot with discrete holonomy}, J. Differential Geom. \textbf{79} (2008),
  no.~1, 69--110. \MR{2401419}

\bibitem[FG07]{FG}
V.~V. Fock and A.~B. Goncharov, \emph{Moduli spaces of convex projective
  structures on surfaces}, Adv. Math. \textbf{208} (2007), no.~1, 249--273.
  \MR{2304317}

\bibitem[FKR15]{FKR}
E.~Falbel, P.-V. Koseleff, and F.~Rouillier, \emph{Representations of
  fundamental groups of 3-manifolds into {$\textup{PGL}(3,\mathbb{C})$}: exact
  computations in low complexity}, Geom. Dedicata \textbf{177} (2015),
  229--255. \MR{3370032}

\bibitem[FST15]{FS}
E.~Falbel and R.~Santos~Thebaldi, \emph{A flag structure on a cusped hyperbolic
  3-manifold}, Pacific J. Math. \textbf{278} (2015), no.~1, 51--78.
  \MR{3404666}

\bibitem[GIT]{GIT}
D.~Mumford, J.~Fogarty, and F.~Kirwan, \emph{Geometric invariant theory}, third
  ed., Ergebnisse der Mathematik und ihrer Grenzgebiete (2) [Results in
  Mathematics and Related Areas (2)], vol.~34, Springer-Verlag, Berlin, 1994.
  \MR{1304906}  

\bibitem[GGZ15]{GGZ}
S.~Garoufalidis, M.~Goerner, and C.~K. Zickert, \emph{Gluing equations for
  {$\textup{PGL}(n,\mathbb{C})$}-representations of 3-manifolds}, Algebr. Geom.
  Topol. \textbf{15} (2015), no.~1, 565--622. \MR{3325748}

\bibitem[GL12]{GwynneLibine}
E.~Gwynne and M.~Libine, \emph{On a quaternionic analogue of the cross-ratio},
  Adv. Appl. Clifford Algebr. \textbf{22} (2012), no.~4, 1041--1053.
  \MR{2994138}

\bibitem[HV50]{HalmosMarriage}
P.~R. Halmos and H.~E. Vaughan, \emph{The marriage problem}, Amer. J. Math.
  \textbf{72} (1950), 214--215. \MR{0033330}

\bibitem[Kem78]{KempfInstability}
G.~R. Kempf, \emph{Instability in invariant theory}, Ann. of Math. (2)
  \textbf{108} (1978), no.~2, 299--316. \MR{506989}

\bibitem[Lun73]{LunaSlicesEtales}
D.~Luna, \emph{Slices \'etales}, Sur les groupes alg\'ebriques, Soc. Math.
  France, Paris, 1973, pp.~81--105. Bull. Soc. Math. France, Paris, M\'emoire
  33. \MR{0342523}

\bibitem[Lun75]{LunaRealGIT}
\bysame, \emph{Sur certaines op\'erations diff\'erentiables des groupes de
  {L}ie}, Amer. J. Math. \textbf{97} (1975), 172--181. \MR{0364272}

\bibitem[Lun76]{LunaFonctionsDifferentiables}
\bysame, \emph{Fonctions diff\'erentiables invariantes sous l'op\'eration d'un
  groupe r\'eductif}, Ann. Inst. Fourier (Grenoble) \textbf{26} (1976), no.~1,
  ix, 33--49. \MR{0423398}

\bibitem[MB12]{MoretBailly}
L.~Moret-Bailly, \emph{Un th\'eor\`eme de l'application ouverte sur les corps
  valu\'es alg\'ebriquement clos}, Math. Scand. \textbf{111} (2012), no.~2,
  161--168. \MR{3023520}

\bibitem[MS72]{MumfordOslo}
D.~Mumford and K.~Suominen, \emph{Introduction to the theory of moduli},
  Algebraic geometry, {O}slo 1970 ({P}roc. {F}ifth {N}ordic {S}ummer-{S}chool
  in {M}ath.), Wolters-Noordhoff, Groningen, 1972, pp.~171--222. \MR{0437531}

\bibitem[Rou81]{Rousseau}
G.~Rousseau, \emph{Instabilit\'e dans les espaces vectoriels}, Algebraic
  surfaces ({O}rsay, 1976--78), Lecture Notes in Math., vol. 868, Springer,
  Berlin-New York, 1981, pp.~263--276. \MR{638603}

\bibitem[RS90]{RichardsonSlodowy}
R.~W. Richardson and P.~J. Slodowy, \emph{Minimum vectors for real reductive
  algebraic groups}, J. London Math. Soc. (2) \textbf{42} (1990), no.~3,
  409--429. \MR{1087217}

\bibitem[Ser56]{SerreGAGA}
J.-P. Serre, \emph{G\'eom\'etrie alg\'ebrique et g\'eom\'etrie analytique},
  Ann. Inst. Fourier, Grenoble \textbf{6} (1955--1956), 1--42. \MR{0082175}

\bibitem[Ser94]{SerreCohomologieGaloisienne}
\bysame, \emph{Cohomologie galoisienne}, fifth ed., Lecture Notes in
  Mathematics, vol.~5, Springer-Verlag, Berlin, 1994. \MR{1324577}

\bibitem[Wol69]{W}
J.~A. Wolf, \emph{The action of a real semisimple group on a complex flag
  manifold. {I}. {O}rbit structure and holomorphic arc components}, Bull. Amer.
  Math. Soc. \textbf{75} (1969), 1121--1237. \MR{0251246}

\end{thebibliography}
\end{document}